\numberwithin{equation}{section}
\newtheorem{assumption}{Assumption}[section]
\newtheorem{lemma}[assumption]{Lemma}
\newtheorem{theorem}[assumption]{Theorem}
\newtheorem{definition}[assumption]{Definition}
\newtheorem{coro}[assumption]{Corollary}
\newtheorem{prop}[assumption]{Proposition}
\newtheorem{remark}[assumption]{Remark}
\newlength{\defbaselineskip}
\newcommand{\setlinespacing}[1]%
{\setlength{\baselineskip}{#1 \defbaselineskip}}
\newcommand{\RR}{{\mathbb R}}
\def\E{\mathbb{E}}
\def\P{\mathbb{P}}
\newcommand{\beql}[1]{\begin{equation}\label{#1}}
\newcommand{\eeq}{\end{equation}}
\newcommand{\beqal}[1]{\begin{eqnarray}\label{#1}}
\newcommand{\eeqa}{\end{eqnarray}}
\newcommand{\beq}{\begin{displaymath}}
\newcommand{\eeqno}{\end{displaymath}}
\newcommand{\bali}[1]{\begin{align*}\label{#1}}
\newcommand{\eali}{\begin{align*}}
\newcommand{\balino}{\begin{align*}}
\newcommand{\ealino}{\begin{align*}}
\newcommand{\ep}{\epsilon}
\newcommand{\Cov}{\text{\rm Cov}}
\newcommand{\sign}{\text{\rm sign}}
\newcommand{\R}  {\mathbb{R}}
\newcommand{\N}  {\mathbb{N}}
\newcommand{\bD}{{\mathbf D}}
\newcommand{\bC}{{\mathbf C}}
\newcommand{\bone}{{\mathbf 1}}
\newcommand{\non}{\nonumber}
\newcommand{\baa}{\begin{eqnarray*}}
\newcommand{\eaa}{\end{eqnarray*}}
\newcommand{\ttl}{\Large Functional central limit theorems for epidemic models
\\[5pt]
with varying infectivity and waning immunity}
\newcommand{\ttls}{\large FCLT for epidemic models with varying infectivity and waning immunity }
\begin{document}

\title[\ttls]{\ttl}

%
%
%

\author[Arsene--Brice Zotsa--Ngoufack]{Arsene--Brice Zotsa--Ngoufack}
\address{Aix Marseille Univ, CNRS, I2M, Marseille, France}
\email{arsene-brice.zotsa-ngoufack@univ-amu.fr}

\date{\today}

\begin{abstract} 
We study an individual-based stochastic epidemic model in which infected individuals become susceptible again following each infection (generalized SIS model). Specifically, after each infection, the infectivity is a random function of the time elapsed since the infection, and  each recovered individual loses immunity gradually  (equivalently, becomes gradually susceptible) after some time according to a random susceptibility function. 
The epidemic dynamics is described by the average infectivity and susceptibility processes in the population together with the numbers of infected and susceptible/uninfected individuals. 
In \cite{forien-Zotsa2022stochastic}, a functional law of large numbers (FLLN) is proved as the population size goes to infinity, and asymptotic endemic behaviors are also studied. 
In this paper, we prove a functional central limit theorem (FCLT) for the stochastic fluctuations of the epidemic dynamics around the FLLN limit.  The FCLT limit for the aggregate infectivity and susceptibility processes  is given by a system of stochastic non-linear integral equation driven by a two-dimensional Gaussian process.  



\end{abstract}

\keywords{epidemic model, varying infectivity, waning immunity,  Gaussian-driven stochastic Volterra integral equations, Poisson random measure, 
stochastic integral with respect to Poisson random measure, quarantine model
}

\maketitle
\allowdisplaybreaks

\section{Introduction}

Many infectious diseases become endemic over a long time horizon, for which waning of immunity plays a critical role in addition to the infection process. 
The classical compartment model, SIRS (susceptible-infectious-recovered-susceptible), assumes that immunity at the individual level is binary, that is, each individual is either fully immune or fully susceptible.  However, that is largely unrealistic since it does not allow for partial immunity or gradual waning of immunity. 
Various models have been developed to study the effects of the waning of immunity and the associated vaccination policies \cite{hethcote1976,hethcote1999,thieme2002endemic,barbarossa2015mathematical,ehrhardt2019,carlsson2020,strube2021,childs2022,safan2022,khalifi2022extending,foutel2023optimal,elgart2023}. 
All the models except \cite{carlsson2020,foutel2023optimal} start from an ODE model with the additional waning immunity characteristic. In particular, El Khalifi and Britton \cite{khalifi2022extending} recently studied an extension of the ODE for the classical SIRS model with a linear or exponential waning function. They started with an approximations using a fixed number of immunity levels and then discussed the corresponding ODE-PDE limiting model (similar to \cite{thieme2002endemic}) associated with the age of immunity as the number of immunity level goes to infinity.  See also \cite{elgart2023} for a perturbation analysis of a model with an arbitrarily large number of discrete compartments with varying levels of disease immunity. Carlsson et al. \cite{carlsson2020} study an age-structured PDE model that takes into account waning immunity. Despite the interesting findings, there has been lack of individual-based stochastic epidemic models that take into account waning immunity. 

In \cite{forien-Zotsa2022stochastic} the authors Forien, Pang, Pardoux and Zotsa first introduced an individual-based stochastic epidemic model that captures waning immunity as well as varying infectivity \cite{FPP2020b}. More precisely, they proposed a general stochastic epidemic model which takes into account  a random infectivity and a random and gradual loss of immunity (also referred to as waning immunity or varying susceptibility).  See Figure \ref{fig1} for a realization of the infectivity and susceptibility of an individual after an infection. 
Individuals experience the susceptible-infected-immune-susceptible cycle. When an individual becomes infected, the infected period may include a latent exposed period and then an infectious period. Once an individual recovers from the infection, after some potential immune period (whose duration can be zero), the immunity is gradually lost, and the individual progressively becomes susceptible again.
Then the individual may be infected again, and repeat the process at each new infection with a different realization of the random infectivity and susceptibility functions. This model can be regarded as a generalized SIS model. 
When ``I" is interpreted as ``infected" including exposed and infectious periods, and ``S" is interpreted as including immune and susceptible periods.
It can of course also be regarded as a generalized SEIRS model. We also mention the recent work  \cite{foutel2023optimal}, where a similar stochastic model of varying infectivity and waning immunity with vaccination is studied, where the focus is on the effect of vaccination policies to prevent endemicity. 
We notice the difference from our modeling approach besides the vaccination aspect: the random susceptibility function and varying infectivity function are taken independently  in each infection. However, we do not impose the independence between the random infectivity and susceptibility functions in each infection. 

\begin{figure}[tbp] \label{fig1}
\includegraphics[width=0.8\textwidth]{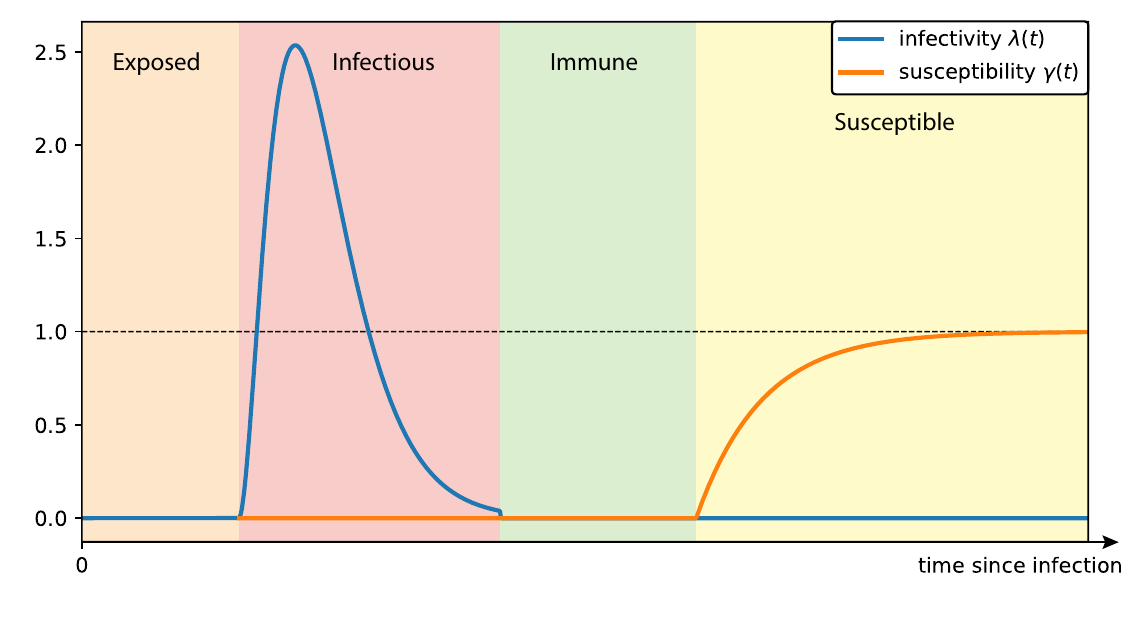}
\caption{Illustration of a typical realization of the random infectivity and susceptibility functions of an individual from the time of infection to the time of recovery, and then to the time of losing immunity and becoming fully susceptible (or in general, partially susceptible).}
\end{figure}

In \cite{forien-Zotsa2022stochastic}, the authors have proved a functional law of large numbers (FLLN)  in which both the average susceptibility and the force of infection converge, when the size of the population goes to infinity, to a deterministic limiting model given by a system of integral equations depending on the law of the susceptibility and on the mean of the infectivity function  (see Theorem~\ref{thm-FLLN} below). 
Under a particular set of random infectivity and susceptibility functions and initial conditions, they also show that a PDE model with infection-age can be derived from the limiting model, which reduces to the model introduced by Kermack and McKendrick in \cite{kermack_contributions_1932,kermack_contributions_1933} (see the reformulation in \cite{inaba2001kermack}). 
They also characterize the threshold of endemicity which depends on the law of susceptibility and not only on the mean, and prove the global asymptotical stability of the disease-free steady state when the basic reproduction number is lower than the above-mentioned threshold. When the basic reproduction number is larger than this threshold, they authors prove existence and uniqueness of the endemic equilibrium and under additional assumptions, they prove that the disease-free-steady state is unstable. 



The goal of this work is to  study the stochastic fluctuations of  the dynamics around the deterministic limits for the stochastic epidemic models with random varying infectivity and a random and gradual loss of immunity, see the result in Theorem~\ref{TCL}. More precisely we study jointly the fluctuation of the average total force of infection and average of susceptibility and then deduce the fluctuations of the proportions of the compartment counting processes. 
The fluctuation limit of the average total force of infection and average susceptibility is given by a system of stochastic non-linear integral equation driven by a two-dimensional Gaussian process. Given these, the limits of the compartment counting processes are expressed in terms of the solutions of the above non-linear stochastic integral equation driven by another two-dimensional Gaussian process.    
This result extend the functional central limit theorem (FCLT) results of Pang-Pardoux in \cite{pang2022-CLT-functional,PandPardoux-2020}  for the non-Markovian models without gradual loss of immunity and in \cite{britton2018stochastic,kurtz_limit_1971} for the Markovian case. 

To prove the FCLT (Theorem~\ref{TCL}), we first obtain a decomposition for the scaled infectivity and susceptibility processes, each of which has two component processes.
 We employ the central limit theorems for $\bD$-valued random variables \cite{hahn1978central}  to prove the convergence of one component since it can be regarded as a sum of i.i.d. $\bD$-valued random variables. The convergence of the other component is much more challenging, and we must develop novel methods to prove tightness and convergence.
We need more assumptions on the pair of random function $(\lambda,\gamma)$ than the ones used to establish the FLLN, and 
these assumptions are crucial to establish tightness. 
For that purpose, we need to establish moment estimates and maximal inequalities for the increments of the processes. 
This is extremely difficult because of the complicated interactions among the individuals, as well as the randomness in the infectivity and susceptibility. 
Some of the expressions involve stochastic integrals with respect to Poisson random measures, with integrands which are not predictable but depend on the future. 
The classical result for moment calculations of stochastic integrals cannot be used in our setting, for example,  \cite[Theorem~6.2]{ccinlar2011probability}. 
Thus, we establish a new theorem to calculate the moments for such stochastic integrals (see Theorem~\ref{TCL-th-1}). 

In addition, 
we develop an approximation technique by introducing a quarantine model, in which one infected individual is quarantined so that the number of infected descendants of that individual can be bounded conveniently (this scheme can be extended to more than one quarantined individual). Using this approximation, we compare the processes counting the number of infections of each individual for the original process to the number of infections of each individual for the quarantine model, and as a consequence, we obtain the moment estimates and maximal inequalities to prove tightness (see Lemmas~\ref{TCL-A-9-lem_inq} and \ref{TCL-A-9-lem-inq2}). 


Finally it is worth noting that our individual-based stochastic model resembles the recent studies of models with interactions, for instance, interacting age-dependent Hawkes process in \cite{chevallier2017mean,chevallier_fluctuations_2017}, age-structured population model in \cite{tran2006modeles} and stochastic excitable membrane models in \cite{riedler2012limit}. 
In the proof of the FLLN in \cite{forien-Zotsa2022stochastic}, the authors adapted the tools to the theory of  propagation of chaos (see Sznitman \cite{sznitman1991topics}) by constructing a family of i.i.d. processes with a well-chosen coupling. A similar approach was taken in  \cite{chevallier2017mean,tran2006modeles}. However, for the FCLT, we derive from the approach of studying fluctuations from the mean limit that were taken in \cite{chevallier2017mean,tran2006modeles}, since it is more challenging for our non-Markovian model. In that approach one has to work with processes taking values in a Hilbert space (dual of some Sobolev space of test functions) and the limit is characterized by an SDE in infinite dimension driven by a Gaussian noise. On the contrast, we work directly with the real-valued processes and prove their convergence with the conventional tightness criteria, which leads to a finite-dimensional stochastic integral equation driven by Gaussian processes. 


\subsection*{Organization of the paper} The rest of the paper is organized as follows. In Section \ref{TCL-sec-1}, we describe the model and recall the FLLN results from \cite{forien-Zotsa2022stochastic}. Next, we state the assumptions and the FCLT result.
The proof for the FCLT is presented in Section \ref{TCL-sec-2}. 
In Section \ref{TCL-US-R},  we present some preliminary results that will be used in the proofs. 
 In Section~\ref{TCL-sec-G10} we characterize the limit of the convergent subsequences. 
 In Section~\ref{TCL-sec-L1} we approximate the limit,  and finally we prove tightness in Section~\ref{TCL-sec-Th}.

\subsection*{Notation}
Throughout the paper, all the random variables and processes are defined on a common complete probability space $(\Omega, \mathcal{F},\P)$.  
We use $\xrightarrow[N\to+\infty]{\mathbb{P}}$ to denote convergence in probability as the parameter $N\to \infty$.
Let $\N$ denote the set of natural numbers and $\R^k (\R^k_+)$ the space of $k$-dimensional vectors with real (nonnegative) coordinates, with $\R(\R_+)$ for $k=1$.  We use $\mathds{1}{\{\cdot\}}$ for the indicator function. Let $\bD=\bD(\R_+; \R)$ be the space of $\R$-valued c{\`a}dl{\`a}g functions defined on $\R_+$, with convergence in $\bD$ meaning convergence in the Skorohod $J_1$ topology (see, e.g., \cite[Chapter 3]{billingsley1999convergence}).  Also, we use $\bD^k$ to denote the $k$-fold product with the product $J_1$ topology. Let $\bC$ be the subset of $\bD$ consisting of continuous functions and $\bD_+$ the subset of $\bD$ of c{\`a}dl{\`a}g functions with values in $\R_+$. We use $\Rightarrow$ to denote the weak convergence in $\bD$.

\bigskip

\section{Model and Results}\label{TCL-sec-1}
\subsection{Model description}\label{TCL-sec1}
We start with a population with a fixed finite size $N$, and enumerate the individuals of the population with the parameter $k,\,1\leq k\leq N$.   

Let $(\lambda_{k,i},\gamma_{k,i})_{k\geq1,i\geq1}$ be a collection of i.i.d. random functions and also, $(\lambda_{k,0},\gamma_{k,0})_{k\geq1}$ be a collection of i.i.d. random functions taking values in the same space, independent from the previous one.
Let $(Q_k)_{k\geq1}$ be a family of independent standard Poisson random measures on $\RR_+^2$, independent from the two previously defined families. $\lambda_{k,i}$ represents the infectivity of the $k$-th individual after its $i$-th infection and $\gamma_{k,i}$ represents the susceptibility of the $k$-th individual after its $i$-th infection. Similarly, $\lambda_{k,0}\;\; (\text{resp. }\gamma_{k,0})$ represents the infectivity (resp. susceptibility) of the $k$-th individual in the beginning of the epidemic. 


We assume that each infected individual has infectious contacts at a rate equal to its current infectivity. At each infectious contact, an individual is chosen uniformly in the population and this individual becomes infected with probability given by its susceptibility. Thus if we let $A^N_k(t)$ be the number of times that the $k$-th individual has been infected between time $0$ and $t$, then the infectivity of the $k$-th individual at time $t$ is given by $\lambda_{k,A^N_k(t)}(\varsigma^{N}_k(t))$ and its susceptibility is $\gamma_{k,A^N_k(t)}(\varsigma^{N}_k(t))$ where 
\begin{equation}\label{TCL-t-1}
\varsigma^N_k(t) := t - \left(\sup\lbrace s \in [0,t] : A^N_k(s) = A^N_k(s^-) + 1 \rbrace \vee 0\right)
\end{equation}
is the time elapsed since the last time when it was infected or since the start of the epidemic if it has not been infected yet (we use the convention
$\sup\emptyset=-\infty$).

Hence, let $\{A^N_k(t),\,t\ge0,\,1\leq k\leq N\}$ be the solution of
\begin{equation*}
A^N_k(t)=\int_{[0,t]\times\R_+}\mathds{1}_{u\leq\Upsilon^N_k(r^-)}Q_k(dr,du)\\
\end{equation*} 
where \[\Upsilon^N_k(t)=\gamma_{k,A_k^N(t)}(\varsigma^N_k(t))\overline{\mathfrak{F}}^N(t)\] is the instantaneous infectivity rate function at time $t$ with
\begin{equation}
\overline{\mathfrak{F}}^N(t)=\frac{1}{N}\sum_{k=1}^{N}\lambda_{k,A^N_k(t)}(\varsigma^{N}_k(t))\,.
\label{def-I}
\end{equation}
The total force of infection $\mathfrak{F}^N(t)$ at time $t$ is the sum of the infectivities of all the infected individuals at time $t$.

We also define the average susceptibility of the population by 
\begin{equation}\overline{\mathfrak{S}}^N(t)=\frac{1}{N}\sum_{k=1}^N\gamma_{k,A_k^N(t)}(\varsigma^N_k(t)).\label{def-G}\end{equation}
Let  \[\Upsilon^N(t)=\sum_{k=1}^N\Upsilon^N_k(t)=N\overline{\mathfrak{S}}^N(t)\overline{\mathfrak{F}}^N(t)\]
be the total instantaneous infection rate function  in the population at time $t$. 

 Define 
\[\eta_{k,i}=\sup\{t>0,\,\lambda_{k,i}(t)>0\}\]
for each $i\in\mathbb{N}_0$  and $k=1,\dots,N$, representing 
the duration of the $i$-th infection of the $k$-th individual.  By the i.i.d. assumption on $(\lambda_{k,i})_{k \ge 1, i\ge 1}$, the variables 
$(\eta_{k,i})_{k \ge 1, i \ge 1}$ are i.i.d., similarly for $(\eta_{k,0})_{k \ge 1}$. Also, the two families of random variables are independent. 
We denote their cumulative distribution functions by
\begin{equation*}F_0(t)=\mathbb{P}\left(\eta_{1,0} \le t\right),\quad\quad F(t)=\mathbb{P}\left(\eta_{1,1}\le t\right), \quad t \ge 0. \end{equation*}
Let $F_0^c=1-F_0(t)$ and $F^c(t) = 1-F(t)$ for $t\ge 0$.

We define the number of infectious individuals at time $t$ by 
\begin{equation}\label{n_inf}
I^N(t)=\sum_{k=1}^{N}\mathds{1}_{\varsigma_k^N(t)<\eta_{k,A^N_k(t)}} \,\,,
\end{equation}
and the number of uninfected individuals at time $t$ by
\begin{equation}
U^N(t)=\sum_{k=1}^{N}\mathds{1}_{\varsigma_k^N(t)\geq\eta_{k,A^N_k(t)}}=N-I^N(t)\,. \label{n_sus}
\end{equation}

	\subsection{Already known results}
	From \cite[Lemma~$6.1$]{forien-Zotsa2022stochastic}, there exists a unique $\overline{\mathfrak F}\in \bD(\R_+)$ such that 
	\begin{equation*}
	\overline{\mathfrak F}(t)=\E\left[\lambda_{1,A_1(t)}(\varsigma_{1}(t))\right]\text{ and }	
	\overline{\mathfrak S}(t)=\E\left[\gamma_{1,A_1(t)}(\varsigma_{1}(t))\right],
	\end{equation*} 
	where for $k\geq1$ the process $A_k$ is defined as:
	\[A_k(t)=\int_{[0,t]\times\R_+}\mathds{1}_{u\leq\Upsilon_k(r^-)}Q_k(dr,du),\] with
	\[\Upsilon_k(t)=\gamma_{k,A_k(t)}(\varsigma_{k}(t))\overline{\mathfrak{F}}(t),\] 
	and
	$\varsigma_k$ is defined in the same manner as $\varsigma^N_1$ with $ A_k$ instead of $ A^N_1$, see \eqref{TCL-t-1}.
	 In this definition  we use the same $(\lambda_{k,i},\gamma_{k,i},Q_k)$ as in the definition of the model in subsection~\ref{TCL-sec1}. 
Moreover, note that, as the $\left((\lambda_{k,i})_i,(\gamma_{k,i})_i,Q_k\right)_{k\geq1}$ are i.i.d, the $(A_k)_k$ are also i.i.d.

We make the following assumption. 
\begin{assumption}\label{TCL-AS-lambda-0}
	There exists a deterministic constant $ \lambda_* < \infty $ such that $ 0\leq\lambda_{k,i}(t) \leq \lambda_* $ almost surely, and $ 0 \leq \gamma_{k,i}(t) \leq 1 $ for all $ t \geq 0 $, for all $ i \geq 0 $ and $ 1 \leq k \leq N $, almost surely. 
	Moreover,
	\begin{equation}\label{eqq10}
	\sup\{t\geq0,\,\lambda_{k,i}(t)>0\}\leq\inf\{t\geq0,\,\gamma_{k,i}(t)>0\},
	\end{equation}
	almost surely for all $1 \leq k \leq N$ and $i \geq 0$. 
	
	Let us define
	\[	\overline{\lambda}_0(t)=\E\big[\lambda_{1,0}(t)\big|\eta_{1,0}>0\big],\,\quad \text{and} \quad \overline{\lambda}(t)=\E\left[\lambda_{1,1}(t)\right]\] and let $\mu$ be the law of $\gamma_{1,1}$, which is in $\mathcal{P}(\bD)$.
\end{assumption}
For simplicity, we write $\gamma$ and $\gamma_0$ as random functions with the same law as $\gamma_{1,1}$ and $\gamma_{1,0}$ respectively.
 
Then we recall the following FLLN result from \cite{forien-Zotsa2022stochastic}.  Let $\big(\overline{U}^N,\overline{I}^N\big) =N^{-1}  (U^N, I^N)$. 
\begin{theorem}\label{thm-FLLN}
	Under Assumption~\ref{TCL-AS-lambda-0},
	\begin{equation} \label{eqn-mfk-SF-conv}
	\big(\overline{\mathfrak{S}}^N,\overline{\mathfrak{F}}^N\big)\xrightarrow[N\to+\infty]{\mathbb{P}}(\overline{\mathfrak{S}},\overline{\mathfrak{F}})\quad\text{ in }\quad \bD^2
	\end{equation}
	where  $(\overline{\mathfrak{S}},\overline{\mathfrak{F}})$ satisfies the following system of equations, 
	\begin{numcases}{}
	\overline{\mathfrak{S}}(t)=\E \left[\gamma_{0}(t)\exp \left( - \int_{0}^{t} \gamma_0(r) \overline{\mathfrak F}(r) dr \right)\right] \nonumber \\
	\qquad \qquad + \int_{0}^{t} \E \left[\gamma(t-s) \exp \left( - \int_{s}^{t} \gamma(r-s) \overline{\mathfrak F}(r) dr \right) \right] \overline{\mathfrak S}(s) \overline{\mathfrak F}(s) ds, \label{eqG2s-G}\\
	\overline{\mathfrak{F}}(t)=\overline{I}(0)\overline{\lambda}_0(t)+\int_{0}^{t}\overline{\lambda}(t-s)\overline{\mathfrak S}(s)\overline{\mathfrak F}(s)ds\,.\label{eqG2s-F}
	\end{numcases}
	
	Given the solution   $(\overline{\mathfrak{S}},\overline{\mathfrak{F}})$, 
	\[(\overline{U}^N,\overline{I}^N)\xrightarrow[N\to+\infty]{\mathbb{P}} (\overline{U},\overline{I})\quad\text{ in }\quad \bD^2\]
	where $(\overline{U},\overline{I})$ is given by 
	\begin{align}
	\overline{U}(t)&=\E\left[ \mathds{1}_{t\geq\eta_0} \exp\left(-\int_0^t \gamma_0(r) \overline{\mathfrak{F}}(s) dr \right)\right]  \non
	\\ &\qquad+ \int_0^t \E\left[ \mathds{1}_{t-s\geq\eta} \exp\left(- \int_s^t \gamma(r-s) \overline{\mathfrak{F}}(r)dr \right)\right]\overline{\mathfrak S}(s) \overline{\mathfrak F}(s)ds\,, \label{eqn-barS}\\
	\overline{I}(t)&=\overline{I}(0)F^c_0(t)+\int_{0}^{t}F^c(t-s)\overline{\mathfrak{S}}(s)\overline{\mathfrak{F}}(s)ds\,. \label{eqn-barI}
	\end{align} 
\end{theorem}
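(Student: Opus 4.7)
The plan is to prove the FLLN via a coupling argument in the spirit of propagation of chaos, as already alluded to in the introduction. The first step is to construct, on the same probability space, a family of i.i.d. auxiliary processes $(A_k)_{k\ge 1}$ driven by the same Poisson measures $(Q_k)$ and the same random infectivity/susceptibility data $(\lambda_{k,i},\gamma_{k,i})$ as the original $N$-system, but with the empirical force of infection $\overline{\mathfrak{F}}^N$ replaced by a deterministic candidate limit $\overline{\mathfrak{F}}$. This is exactly the coupled system displayed in the excerpt just before the statement, and the crucial simplification is that the resulting $(A_k)$ are i.i.d., so their empirical averages obey a law of large numbers for $\bD$-valued variables.

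A preliminary step is existence and uniqueness of the solution $(\overline{\mathfrak{S}},\overline{\mathfrak{F}})$ to the integral system \eqref{eqG2s-G}--\eqref{eqG2s-F} on any compact interval $[0,T]$. Since $\gamma_0,\gamma\le 1$ and $\lambda_0,\lambda\le\lambda_*$ by Assumption~\ref{TCL-AS-lambda-0}, the right-hand sides define a locally Lipschitz operator on $\bD([0,T];\R_+^2)$ equipped with the sup-norm, so a Picard-type contraction argument on small intervals, iterated to cover $[0,T]$, yields a unique solution.

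The core step is the convergence $(\overline{\mathfrak{S}}^N,\overline{\mathfrak{F}}^N)\to(\overline{\mathfrak{S}},\overline{\mathfrak{F}})$. I would decompose
\[\overline{\mathfrak{F}}^N(t)-\overline{\mathfrak{F}}(t)=R_1^N(t)+R_2^N(t),\]
where $R_1^N(t)=\frac{1}{N}\sum_{k=1}^N\bigl[\lambda_{k,A_k^N(t)}(\varsigma_k^N(t))-\lambda_{k,A_k(t)}(\varsigma_k(t))\bigr]$ is the coupling error, and $R_2^N(t)=\frac{1}{N}\sum_{k=1}^N\lambda_{k,A_k(t)}(\varsigma_k(t))-\overline{\mathfrak{F}}(t)$ is a mean-zero empirical average of i.i.d. bounded $\bD$-valued variables. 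By a law of large numbers for i.i.d. $\bD$-valued processes with uniformly bounded sample paths, $\sup_{t\le T}|R_2^N(t)|\to 0$ in probability. For $R_1^N$, because $A_k^N$ and $A_k$ are driven by the same Poisson measure $Q_k$, a discrepancy between them can only occur at jumps whose intensities differ, which after using the boundedness $\lambda\le\lambda_*$ and $\gamma\le 1$ yields an estimate of the form
\[\E\Bigl[\sup_{s\le t}|R_1^N(s)|\Bigr]\le C\int_0^t\E\Bigl[\sup_{r\le s}\bigl(|\overline{\mathfrak{F}}^N(r)-\overline{\mathfrak{F}}(r)|+|\overline{\mathfrak{S}}^N(r)-\overline{\mathfrak{S}}(r)|\bigr)\Bigr]\,ds.\]
A parallel decomposition for $\overline{\mathfrak{S}}^N-\overline{\mathfrak{S}}$ and Gr\"onwall's lemma then close the estimate, giving the desired convergence uniformly on compacts, and hence in $\bD^2$ since $(\overline{\mathfrak{S}},\overline{\mathfrak{F}})$ is continuous.

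The main obstacle is controlling the coupling error $R_1^N$: because the model is non-Markovian, each individual's current state $(A_k^N(t),\varsigma_k^N(t))$ encodes its full past, so a single mismatch between $A_k^N$ and $A_k$ propagates forward through the entire subsequent infectivity and susceptibility trajectory. The synchronization through the common Poisson measure must be tracked carefully to ensure that mismatches are generated only at a rate proportional to $|\overline{\mathfrak{F}}^N-\overline{\mathfrak{F}}|+|\overline{\mathfrak{S}}^N-\overline{\mathfrak{S}}|$ before being amplified by future dynamics; this is where the propagation-of-chaos machinery of \cite{sznitman1991topics} is essential. Once $(\overline{\mathfrak{S}}^N,\overline{\mathfrak{F}}^N)$ is shown to converge, the same coupling immediately yields the convergence of the compartment counts: $\overline{U}^N$ and $\overline{I}^N$ are empirical averages of bounded functionals of $(A_k^N,\varsigma_k^N)$, whose i.i.d. counterparts $(A_k,\varsigma_k)$ obey the strong law of large numbers, producing the expected-value expressions \eqref{eqn-barS}--\eqref{eqn-barI}.
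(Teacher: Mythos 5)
Your proposal matches the approach the paper attributes to the reference \cite{forien-Zotsa2022stochastic} (and which the paper reuses as infrastructure): a propagation-of-chaos coupling of the $N$-system with i.i.d.\ auxiliary processes $(A_k)$ driven by the same Poisson measures $(Q_k)$ but with the deterministic $\overline{\mathfrak{F}}$ replacing $\overline{\mathfrak{F}}^N$, a law of large numbers in $\bD$ for the i.i.d.\ averages, and a Gr\"onwall closure of the coupling error. Note that Theorem~\ref{thm-FLLN} is only recalled here, not proved; the coupling estimate your sketch relies on is precisely Lemma~\ref{lem_inq} and its corollaries in Section~\ref{TCL-US-R}, i.e.\ $\delta^N(T)\le \lambda_*TN^{-1/2}\exp(2\lambda_*T)$, which encodes the $R_1^N$--$R_2^N$ decomposition and Gr\"onwall step you describe.
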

\begin{remark}
Note that for each $t\geq0,\,\overline{\mathfrak S}(t)\leq1,\,\overline{\mathfrak F}(t)\leq\lambda_*$ and  $\overline{U}(t)+\overline{I}(t)=1$.
\end{remark}
\subsection{Main Results}
The purpose of this section is to establish an FCLT for the fluctuations of the stochastic sequence around its deterministic limit. More precisely, we define the following fluctuation process: for all $t\geq0,$ 
\begin{equation}\label{TCL-fluc-1}
\hat{\mathfrak F}^N(t):=\sqrt{N}\big(\overline{\mathfrak{F}}^N(t)-\overline{\mathfrak{F}}(t)\big),\text{ and }\hat{\mathfrak S}^N(t):=\sqrt{N}\big(\overline{\mathfrak{S}}^N(t)-\overline{\mathfrak{S}}(t)\big),
\end{equation}
and we want to find the limiting law of the pair $(\hat{\mathfrak S}^N,\hat{\mathfrak F}^N)$. 

\subsubsection{Assumptions}We introduce the following Assumptions.

\begin{assumption} \label{TCL-AS-lambda-1}
	The random functions $(\lambda,\gamma)$, of which $(\lambda_{k,i},\gamma_{k,i})_{k\ge 1, i \ge 1}$ are i.i.d. copies, satisfy the following properties:
		There exist a number $\ell\in\mathbb{N}^*$, a two random sequences 
		$0=\xi^0<\xi^1<\cdots<\xi^\ell=+\infty$ and $0=\zeta^0<\zeta^1<\cdots<\zeta^\ell=+\infty$
		and random functions $\lambda^j\in \bC,\,\gamma^j\in\bC$, $1\le j\le \ell$ such that 
		\begin{equation} \label{eqn-lambda}
		\lambda(t)=\sum_{j=1}^\ell\lambda^j(t){\bf1}_{[\xi^{j-1},\xi^j)}(t) \quad \text{ and } \quad \gamma(t)=\sum_{j=1}^\ell\gamma^j(t){\bf1}_{[\zeta^{j-1},\zeta^j)}(t).
		\end{equation}
		In addition, for any $T>0$, there exists deterministic nondecreasing function $\varphi_T \in \bC$ with $\varphi_T(0)=0$ such that $|\lambda^j(t)-\lambda^j(s)|\le \varphi_T(t-s)$ and $|\gamma^j(t)-\gamma^j(s)|\le \varphi_T(t-s)$ almost surely, for all $0 \le t, s \le T$, $1 \le j \le k$. 
\end{assumption}
\begin{assumption} \label{TCL-AS-lambda-2}
	There exists $\alpha>1/2$  such that for all $0\leq t\leq T,$ the function $\varphi_T$ from Assumption \ref{TCL-AS-lambda-1} satisfy 
	\begin{equation}\label{eqn-lambda-inc}
	\varphi_T(t)\le C t^\alpha, 
	\end{equation} 
	for some constant $C>0$. 
	Also, if $F_j$ denotes the c.d.f. of the r.v. $\xi^j$, and $G_j$ denotes the c.d.f. of the r.v. $\zeta^j,$ there exist $C'>0,$ and  
	$\rho>1/2$ such that, for any $1\le j\le \ell-1$, $0\le s<t\leq T$,
	\begin{equation}\label{hypF}
	F_j(t)-F_j(s)\le C'(t-s)^\rho \quad \text{ and } \quad G_j(t)-G_j(s)\leq C'(t-s)^\rho.
	\end{equation}
\end{assumption}


\begin{assumption} \label{TCL-AS-lambda}
		
		 There exist non-decreasing continuous functions $\phi_1,\,\phi_2,\,\psi_1,\,\psi_2$ and constants $\alpha_1>\frac{1}{2},\,\alpha_2>\frac{1}{2},\,\beta_1>1,\,\beta_2>1$ such that for all $0\leq s<u<t$, 
		 		\begin{itemize}
			\item[i)]$\E\left[\left(\lambda(t)-\lambda(s)\right)^2\right]\leq\left(\phi_1(t)-\phi_1(s)\right)^{\alpha_1};$
			\item[ii)]$\E\left[\left(\gamma(t)-\gamma(s)\right)^2\right]\leq\left(\phi_2(t)-\phi_2(s)\right)^{\alpha_2};$
			\item[iii)]$\E\left[\left(\lambda(t)-\lambda(u)\right)^2\left(\lambda(u)-\lambda(s)\right)^2\right]\leq\left(\psi_1(t)-\psi_1(s)\right)^{\beta_1};$
			\item[iv)]$\E\left[\left(\gamma(t)-\gamma(u)\right)^2\left(\gamma(u)-\gamma(s)\right)^2\right]\leq\left(\psi_2(t)-\psi_2(s)\right)^{\beta_2}.$
		\end{itemize} 
	
\end{assumption}
We note that, Assumptions~\ref{TCL-AS-lambda-1}, \ref{TCL-AS-lambda-2} and~\ref{TCL-AS-lambda} are not required to establish the FLLN in  \cite{forien-Zotsa2022stochastic}. These additional Assumptions are used to establish the tightness, of $\hat{\mathfrak F}^N$ and $\hat{\mathfrak S}^N$, see the proof of Lemma~\ref{TCL-tight-F-2}. 
There are many examples of the pair $(\lambda,\gamma)$ that satisfy them. 
A typical example of pair $(\lambda,\gamma)$ can be given by:
\begin{align} \label{TCL-def_lambda_gamma_SIS-1}
\lambda(t) = \lambda \mathds{1}_{0 \leq t < \eta}, \text{ and } \gamma(t) = \mathds{1}_{t \geq \eta}\text{ or }\gamma(t)=\left(1-e^{-(t-\eta)}\right)\mathds{1}_{t \geq \eta}.
\end{align}
For more examples and discussions on $\lambda(\cdot)$ we refer to Section 2.3 in \cite{pang2022-CLT-functional}, and  on $\gamma(\cdot)$ in  \cite{khalifi2022extending}.


\subsubsection{Statement of the main theorem}
\begin{definition}\label{TCL-def-1}
	Let $(\hat{\mathfrak J},\hat{\mathfrak M})$ be a two-dimensional centered continuous Gaussian process, with covariance functions: for $t,t'\geq0,$
	\begin{gather*}
	\Cov\big(\hat{\mathfrak J}(t),\hat{\mathfrak J}(t')\big)=\Cov\left(\gamma_{1,A_1(t)}(\varsigma_1(t)),\gamma_{1,A_1(t')}(\varsigma_1(t'))\right),\\
	\Cov\big(\hat{\mathfrak M}(t),\hat{\mathfrak M}(t')\big)=\Cov\left(\lambda_{1,A_1(t)}(\varsigma_1(t)),\lambda_{1,A_1(t')}(\varsigma_1(t'))\right),\\
	\Cov\big(\hat{\mathfrak M}(t),\hat{\mathfrak J}(t')\big)=\Cov\left(\lambda_{1,A_1(t)}(\varsigma_1(t)),\gamma_{1,A_1(t')}(\varsigma_1(t'))\right).
	\end{gather*} 
\end{definition}
Note that, thanks to Assumption~\ref{TCL-AS-lambda}, the process $(\hat{\mathfrak J},\hat{\mathfrak M})$ is continuous by applying Kolmogorov's continuity theorem for Gaussian processes.
\begin{remark}
In subsection~\ref{TCL-sec-NLG} Lemma~\ref{TCL-cv-F2-G2'} we give another expression for $\big(\hat{\mathfrak J},\hat{\mathfrak M}\big)$.
\end{remark}

We consider the following system of stochastic integral equations for which we have $(x,y)\in \bC^2$: 
\begin{numcases}{}
	x(t)=-\int_{0}^{t}\E\left[\gamma_{0}(t)\gamma_{0}(s)\exp\left(-\int_{0}^{t}\gamma_{0}(r)\overline{\mathfrak F}(r)dr\right)\right]y(s)ds\nonumber\\
	\hspace{1.2cm}-\int_{0}^{t}\int_{s}^{t}\E\left[\gamma(t-s)\gamma(r-s)\exp\left(-\int_{s}^{t}\gamma(u-s)\overline{\mathfrak F}(u)du\right)\right]y(r)\overline{\mathfrak{F}}(s)\overline{\mathfrak{S}}(s)drds\nonumber\\
	\hspace{1.2cm} +\int_{0}^{t}\E\left[\gamma(t-s)\exp\left(-\int_{s}^{t}\gamma(r-s)\overline{\mathfrak F}(r)dr\right)\right]\left(x(s)\overline{\mathfrak F}(s)-\hat{\mathfrak J}(s)\overline{\mathfrak F}(s)+\overline{\mathfrak S}(s)y(s)\right)ds\nonumber\\
	\hspace*{1.2cm}+\hat{\mathfrak J}(t),\label{TCL-in-uni-1-1}\\\nonumber\\
	y(t)=\int_{0}^{t}\overline{\lambda}(t-s)\left(x(s)\overline{\mathfrak F}(s)-\hat{\mathfrak J}(s)\overline{\mathfrak F}(s)+\overline{\mathfrak S}(s)y(s)\right)ds+\hat{\mathfrak M}(t).\label{TCL-in-uni-2-1}
\end{numcases}

\begin{lemma}\label{TCL-exist}
 The set of equations	\eqref{TCL-in-uni-1-1}-\eqref{TCL-in-uni-2-1} has a unique solution $(x,y)\in \bC^2.$
 
 We denote its solution by $(\hat{\mathfrak S},\hat{\mathfrak F})\in \bC^2$.
\end{lemma}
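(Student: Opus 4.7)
The plan is to rewrite \eqref{TCL-in-uni-1-1}-\eqref{TCL-in-uni-2-1} as a linear Volterra system with bounded kernels and apply a standard Picard/Gronwall argument.

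First I would fix an arbitrary $T>0$ and work on $[0,T]$. Using Fubini's theorem, the double integral in \eqref{TCL-in-uni-1-1} can be rewritten as
\[
\int_0^t\!\int_s^t \E\!\left[\gamma(t-s)\gamma(r-s)\exp\!\left(-\!\int_s^t\!\gamma(u-s)\overline{\mathfrak F}(u)du\right)\right]y(r)\overline{\mathfrak F}(s)\overline{\mathfrak S}(s)\,dr\,ds=\int_0^t \widetilde K(t,r)\,y(r)\,dr,
\]
where $\widetilde K(t,r):=\int_0^r \E[\gamma(t-s)\gamma(r-s)\exp(-\int_s^t\gamma(u-s)\overline{\mathfrak F}(u)du)]\overline{\mathfrak F}(s)\overline{\mathfrak S}(s)\,ds$. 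Under Assumption~\ref{TCL-AS-lambda-0} one has $0\le\gamma_0,\gamma\le 1$, $\overline\lambda\le\lambda_*$, and from Theorem~\ref{thm-FLLN} (see the remark following it) $\overline{\mathfrak S}\le 1$, $\overline{\mathfrak F}\le\lambda_*$. Hence all the kernels appearing in \eqref{TCL-in-uni-1-1}-\eqref{TCL-in-uni-2-1} are uniformly bounded on $[0,T]^2$ by a constant $C_T<\infty$. Moving the $\hat{\mathfrak J}$- and $\hat{\mathfrak M}$-terms into the forcing, the system thus takes the canonical linear Volterra form
\[
x(t)=\int_0^t K_{11}(t,s)x(s)\,ds+\int_0^t K_{12}(t,s)y(s)\,ds+g_1(t),\qquad y(t)=\int_0^t K_{22}(t,s)y(s)\,ds+\int_0^t K_{21}(t,s)x(s)\,ds+g_2(t),
\]
with $\|K_{ij}\|_\infty\le C_T$ on $[0,T]^2$ and with continuous forcings $g_1,g_2$: indeed, $\hat{\mathfrak J}$ and $\hat{\mathfrak M}$ are continuous by Kolmogorov's criterion (as noted after Definition~\ref{TCL-def-1}, using Assumption~\ref{TCL-AS-lambda}), and the Volterra convolutions of bounded kernels against continuous integrands are themselves continuous.

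Next I would establish existence and uniqueness simultaneously by the weighted-norm trick. Endow $\bC([0,T];\R^2)$ with the norm $\|(x,y)\|_\theta:=\sup_{t\in[0,T]}e^{-\theta t}(|x(t)|+|y(t)|)$ for some large $\theta>0$. Denoting by $\Phi$ the right-hand side map of the system, the boundedness of the kernels gives
\[
e^{-\theta t}\bigl|\Phi(x_1,y_1)(t)-\Phi(x_2,y_2)(t)\bigr|\le 4C_T\int_0^t e^{-\theta(t-s)}\,e^{-\theta s}\bigl(|x_1(s)-x_2(s)|+|y_1(s)-y_2(s)|\bigr)ds\le\frac{4C_T}{\theta}\,\|(x_1-x_2,y_1-y_2)\|_\theta,
\]
so for $\theta>4C_T$ the map $\Phi$ is a contraction on $(\bC([0,T];\R^2),\|\cdot\|_\theta)$, and the Banach fixed-point theorem yields a unique solution $(x,y)\in\bC([0,T])^2$. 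Uniqueness may equivalently be deduced by subtracting two solutions and applying Gronwall's lemma to $|u(t)|+|v(t)|\le 4C_T\int_0^t(|u(s)|+|v(s)|)\,ds$.

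Finally, since $T>0$ was arbitrary and the restriction of the unique solution on $[0,T']$ to any $[0,T]\subset[0,T']$ again solves the Volterra system on $[0,T]$, the local solutions are compatible and paste into a unique element of $\bC(\R_+;\R^2)$. This step is essentially bookkeeping; the only mildly non-routine point is the Fubini manipulation that puts the triple-integral term of \eqref{TCL-in-uni-1-1} into standard Volterra form, after which everything reduces to the linear contraction argument above.
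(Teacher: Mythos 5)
Your proposal is correct and in essence takes the same route as the paper: both reduce \eqref{TCL-in-uni-1-1}-\eqref{TCL-in-uni-2-1} to a linear Volterra system with kernels bounded on $[0,T]^2$ (using $\gamma,\gamma_0\le1$, $\overline\lambda\le\lambda_*$, $\overline{\mathfrak S}\le1$, $\overline{\mathfrak F}\le\lambda_*$) and then run a Gronwall/Picard-type argument. The paper's proof is telegraphic (Gronwall for uniqueness, ``approximation procedure'' for local existence, then non-explosion); you spell out the Fubini step that puts the double integral into standard Volterra form, check that the forcings $\hat{\mathfrak J},\hat{\mathfrak M}$ are continuous, and replace ``Gronwall + Picard iteration'' by a single Bielecki-weighted contraction that delivers existence and uniqueness simultaneously via Banach's fixed-point theorem. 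That is a mild repackaging of the same idea; it buys you a cleaner one-shot argument at the cost of introducing the weighted norm, and it makes explicit the pasting step for global existence which the paper dispatches as ``the estimates forbid explosion.''
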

\begin{proof}
	If we denote by $(x^1,y^1)$ and $(x^2,y^2)$ two solutions of  \eqref{TCL-in-uni-1-1}-\eqref{TCL-in-uni-2-1},
	 as $\overline{\mathfrak S}\leq1$ and $\overline{\mathfrak F}\leq\lambda_*$,
	 we obtain by easy computations that there exists $C_T$ such that for all $t\in[0,T]$, 
	\begin{equation}\label{TCL-ex}
	|x^1(t)-x^2(t)|+|y^1(t)-y^2(t)|\leq C_T\int_0^t |x^1(s)-x^2(s)|+|y^1(s)-y^2(s)|ds.
	\end{equation} 
	Uniqueness then follows from Gronwall's Lemma. 
	
	Now local existence follows by an approximation procedure, which exploits the estimate \eqref{TCL-ex}. Global existence then follows 
	from the estimates \eqref{TCL-ex}, which forbid explosion. Lemma \ref{TCL-exist} is established.
\end{proof}

The following is our main result.
\begin{theorem}\label{TCL}
	Under Assumptions~\ref{TCL-AS-lambda-0}--\ref{TCL-AS-lambda},  
	\begin{equation}
		\big(\hat{\mathfrak S}^N,\hat{\mathfrak F}^N\big)\Rightarrow\big(\hat{\mathfrak S},\hat{\mathfrak F}\big)\quad\text{ in }  \quad \bD^2,
	\end{equation}
	where $\big(\hat{\mathfrak S},\hat{\mathfrak F}\big)$ is the unique continuous solution of the system of stochastic integral equations \eqref{TCL-in-uni-1-1}-\eqref{TCL-in-uni-2-1}, that is, 
	\begin{numcases}{}
\hat{\mathfrak S}(t)=-\int_{0}^{t}\E\left[\gamma_{0}(t)\gamma_{0}(s)\exp\left(-\int_{0}^{t}\gamma_{0}(r)\overline{\mathfrak F}(r)dr\right)\right]\hat{\mathfrak F}(s)ds\nonumber\\
\hspace{1cm}-\int_{0}^{t}\int_{s}^{t}\E\left[\gamma(t-s)\gamma(r-s)\exp\left(-\int_{s}^{t}\gamma(u-s)\overline{\mathfrak F}(u)du\right)\right]\hat{\mathfrak F}(r)\overline{\mathfrak{F}}(s)\overline{\mathfrak{S}}(s)drds\nonumber\\
\quad\quad+\int_{0}^{t}\E\left[\gamma(t-s)\exp\left(-\int_{s}^{t}\gamma(r-s)\overline{\mathfrak F}(r)dr\right)\right]\left(\hat{\mathfrak S}(s)\overline{\mathfrak F}(s)-\hat{\mathfrak J}(s)\overline{\mathfrak F}(s)+\overline{\mathfrak S}(s)\hat{\mathfrak F}(s)\right)ds\nonumber\\
\hspace*{1cm}+\hat{\mathfrak J}(t),\label{TCL-in-uni-1}\\\nonumber\\
\hat{\mathfrak F}(t)=\int_{0}^{t}\overline{\lambda}(t-s)\left(\hat{\mathfrak S}(s)\overline{\mathfrak F}(s)-\hat{\mathfrak J}(s)\overline{\mathfrak F}(s)+\overline{\mathfrak S}(s)\hat{\mathfrak F}(s)\right)ds+\hat{\mathfrak M}(t),\label{TCL-in-uni-2}
\end{numcases}
where we recall that $\left(\hat{\mathfrak J},\hat{\mathfrak M}\right)$ is specified by Definition~\ref{TCL-def-1}.
	
\end{theorem}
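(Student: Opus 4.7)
The plan is to decompose the two-dimensional fluctuation $(\hat{\mathfrak S}^N, \hat{\mathfrak F}^N)$ as ``Gaussian noise'' plus ``coupling remainder'', prove tightness of each piece, pass to the limit in the integral equations satisfied by the prelimit processes, and invoke the uniqueness statement of Lemma~\ref{TCL-exist} to upgrade subsequential convergence to convergence of the whole sequence. Concretely, I would reuse the i.i.d. coupling $(A_k, \varsigma_k)_{k\ge 1}$ from Section~2.2, built from the same random data $(\lambda_{k,i}, \gamma_{k,i}, Q_k)$ but driven by the deterministic $\overline{\mathfrak F}$, and write
\begin{equation*}
\hat{\mathfrak F}^N = \hat{\mathfrak M}^N + \hat R^{N,\lambda}, \qquad \hat{\mathfrak S}^N = \hat{\mathfrak J}^N + \hat R^{N,\gamma},
\end{equation*}
where $\hat{\mathfrak M}^N(t) = N^{-1/2}\sum_k\{\lambda_{k,A_k(t)}(\varsigma_k(t)) - \overline{\mathfrak F}(t)\}$, $\hat{\mathfrak J}^N(t)$ is the analogous centered sum for $\gamma$, and $\hat R^{N,\lambda}, \hat R^{N,\gamma}$ collect the coupling errors $\lambda_{k,A_k^N}(\varsigma_k^N) - \lambda_{k,A_k}(\varsigma_k)$ and $\gamma_{k,A_k^N}(\varsigma_k^N) - \gamma_{k,A_k}(\varsigma_k)$.

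The pair $(\hat{\mathfrak J}^N, \hat{\mathfrak M}^N)$ is a normalized sum of i.i.d.\ centered $\bD^2$-valued random variables, so I would obtain its convergence to the Gaussian limit $(\hat{\mathfrak J}, \hat{\mathfrak M})$ of Definition~\ref{TCL-def-1} via the CLT for $\bD$-valued random variables of Hahn~\cite{hahn1978central}: finite-dimensional convergence follows from the classical multivariate CLT with the stated covariances, while tightness in $\bD^2$ follows from the second- and fourth-moment increment bounds of Assumption~\ref{TCL-AS-lambda}(i)--(iv), the jump-spacing bound \eqref{hypF}, and a Kolmogorov--Chentsov criterion adapted to c\`adl\`ag processes with controlled jump locations.

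The delicate step, and the main obstacle, is the remainder $(\hat R^{N,\gamma}, \hat R^{N,\lambda})$, through which the nonlinear epidemic feedback enters. Subtracting the Poisson integral equations for $A_k^N$ and $A_k$ shows that $A_k^N(t) - A_k(t)$ is driven by the difference $\gamma_{k,A_k^N}\overline{\mathfrak F}^N - \gamma_{k,A_k}\overline{\mathfrak F}$, and after linearization one obtains a Volterra-type recursion for $(\hat R^{N,\gamma}, \hat R^{N,\lambda})$ driven by $(\hat{\mathfrak J}^N, \hat{\mathfrak M}^N, \hat{\mathfrak S}^N, \hat{\mathfrak F}^N)$. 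The hard part is proving tightness: each summand of $\hat R^{N,\lambda}$ is a stochastic integral against $Q_k$ whose integrand depends on the \emph{future} of $\lambda_{k,i}$ and $\gamma_{k,i}$ (through the sojourn $\varsigma_k^N$) and so is not predictable; the classical moment identity \cite[Thm.~6.2]{ccinlar2011probability} is therefore inapplicable. I would prove a tailored moment formula for such non-adapted Poisson integrals (the role played by Theorem~\ref{TCL-th-1}), and control the nonlinear feedback by a \emph{quarantine approximation}: freezing one infected individual and dominating the descendants of an infection by those of an independent comparison process yields uniform estimates of the form $\E|\hat R^{N,\cdot}(t) - \hat R^{N,\cdot}(s)|^p \le C(t-s)^{1+\delta}$ (with the exponents $\alpha, \rho > 1/2$ from Assumptions~\ref{TCL-AS-lambda-1}--\ref{TCL-AS-lambda-2} entering here), which with a Gronwall-type argument closed against the feedback term gives tightness of $(\hat{\mathfrak S}^N, \hat{\mathfrak F}^N)$ in $\bD^2$.

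Once tightness is established, I would extract a weak subsequential limit $(\hat{\mathfrak S}, \hat{\mathfrak F})$ and pass to the limit in the integral representation of $(\hat{\mathfrak S}^N, \hat{\mathfrak F}^N)$. The i.i.d.\ pieces converge to $(\hat{\mathfrak J}, \hat{\mathfrak M})$; the deterministic kernels $\overline{\lambda}$, $\overline{\mathfrak S}$, $\overline{\mathfrak F}$ are untouched; the product $\overline{\mathfrak S}(s)\overline{\mathfrak F}(s)$ linearizes via the product rule as $\overline{\mathfrak S}\,\hat{\mathfrak F} + \hat{\mathfrak S}\,\overline{\mathfrak F}$ minus the $\hat{\mathfrak J}\overline{\mathfrak F}$ term that accounts for the susceptibility used at each infection; and the exponential survival factor $\exp(-\int_s^t \gamma(r-s)\overline{\mathfrak F}(r)\,dr)$ linearizes to produce the $\E[\gamma(t-s)\gamma(r-s)\exp(\cdots)]\hat{\mathfrak F}(r)$ kernel in \eqref{TCL-in-uni-1}. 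This identifies every subsequential limit as a solution of \eqref{TCL-in-uni-1}--\eqref{TCL-in-uni-2}, and Lemma~\ref{TCL-exist} then promotes subsequential convergence to full convergence, completing the proof.
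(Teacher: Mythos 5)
Your proposal is correct and follows essentially the same strategy as the paper: the decomposition $\hat{\mathfrak F}^N=\hat{\mathfrak F}^N_2+\hat{\mathfrak F}^N_1$ and $\hat{\mathfrak S}^N=\hat{\mathfrak S}^N_2+\hat{\mathfrak S}^N_1$ (your $\hat{\mathfrak M}^N+\hat R^{N,\lambda}$ and $\hat{\mathfrak J}^N+\hat R^{N,\gamma}$) into an i.i.d. Gaussian part and a coupling remainder, Hahn's CLT in $\bD$ for the i.i.d. part, the tailored moment formula for stochastic integrals with non-predictable integrands (Theorem~\ref{TCL-th-1}), the quarantine approximation to control the feedback in the moment estimates, linearization of the exponential survival factor and of the product $\overline{\mathfrak S}\,\overline{\mathfrak F}$ (with the $\hat{\mathfrak J}\,\overline{\mathfrak F}$ correction coming from centering by $\tilde{\mathfrak S}^N$ rather than $\overline{\mathfrak S}$), and finally identification of every subsequential limit as the unique solution of \eqref{TCL-in-uni-1}--\eqref{TCL-in-uni-2} via Lemma~\ref{TCL-exist}. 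The only minor divergence is in the tightness bookkeeping: you invoke a Kolmogorov--Chentsov-type increment bound $\E|\hat R^{N,\cdot}(t)-\hat R^{N,\cdot}(s)|^p\le C(t-s)^{1+\delta}$ closed by Gronwall, whereas the paper works through the probability-based modulus criterion of Lemma~\ref{TCL-Lem-20} applied component-wise to the pieces $\hat{\mathfrak F}^N_{1}$, $\hat{\mathfrak S}^N_{1,0}$, $\hat{\mathfrak S}^N_{1,1}$, $\hat{\mathfrak S}^N_{1,2}$ after further decomposing $\hat{\mathfrak S}^N_1$ via the enlarged Poisson random measure on $\R_+\times\bD^2\times\R_+$; both routes rely on the exponents $\alpha,\rho>1/2$ of Assumptions~\ref{TCL-AS-lambda-1}--\ref{TCL-AS-lambda-2} in exactly the same way.
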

%


We define
\[\hat{ I}^N(t)=\sqrt{N}\big(\overline{I}^N(t)-\overline{I}(t)\big) \quad \text{ and } \quad \hat{U}^N(t)=\sqrt{N}\big(\overline{U}^N(t)-\overline{U}(t)\big), \quad t \ge 0. \]
Replacing $\lambda_{k,i}(t)$ by $ \mathds{1}_{t<\eta_{k,i}}$ and using the fact that $\overline{I}^N(t)+\overline{U}^N(t)=1$ and $\overline{I}(t)+\overline{U}(t)=1,$
We obtain the following Corollary from Theorem~\ref{TCL} and the system of equations \eqref{TCL-in-uni-1}-\eqref{TCL-in-uni-2}. 
\begin{coro} \label{coro-UI}
Under Assumptions~\ref{TCL-AS-lambda-0}--\ref{TCL-AS-lambda},  
\begin{equation}
\big(\hat{U}^N,\hat{I}^N\big)\Rightarrow\big(\hat{U},\hat{I}\big)\quad\text{ in } \quad \bD^2,
\end{equation}
where $\big(\hat{U},\hat{I}\big)$ is the unique continuous solution of the system of equations:
\begin{numcases}{}
\hat{U}(t)=-\int_{0}^{t}\E\left[\mathds{1}_{t\geq\eta_0}\gamma_{0}(s)\exp\left(-\int_{0}^{t}\gamma_{0}(r)\overline{\mathfrak F}(r)dr\right)\right]\hat{\mathfrak F}(s)ds\nonumber\\
\hspace{1cm}-\int_{0}^{t}\int_{s}^{t}\E\left[\mathds{1}_{t-s\geq\eta}\gamma(r-s)\exp\left(-\int_{s}^{t}\gamma(u-s)\overline{\mathfrak F}(u)du\right)\right]\hat{\mathfrak F}(r)\overline{\mathfrak{F}}(s)\overline{\mathfrak{S}}(s)drds\nonumber\\
\quad\quad+\int_{0}^{t}\E\left[\mathds{1}_{t-s\geq\eta}\exp\left(-\int_{s}^{t}\gamma(r-s)\overline{\mathfrak F}(r)dr\right)\right]\left(\hat{\mathfrak S}(s)\overline{\mathfrak F}(s)-\hat{\mathfrak J}_1(s)\overline{\mathfrak F}(s)+\overline{\mathfrak S}(s)\hat{\mathfrak F}(s)\right)ds\nonumber\\
\hspace*{1cm}+\hat{\mathfrak J}_1(t),\label{TCL-in-uni-1-U}\\\nonumber\\
\hat{I}(t)=\int_{0}^{t}F^c(t-s)\left(\hat{\mathfrak S}(s)\overline{\mathfrak F}(s)-\hat{\mathfrak J}(s)\overline{\mathfrak F}(s)+\overline{\mathfrak S}(s)\hat{\mathfrak F}(s)\right)ds+\hat{\mathfrak M}_1(t),\label{TCL-in-uni-2-I}
\end{numcases}
where $\big(\hat{\mathfrak J}_1,\hat{\mathfrak M}_1\big)$ is a centered continuous Gaussian process as given in Definition~\ref{TCL-def-1} where we replace $\lambda_{1,A_1(t)}$ and $\gamma_{1,A_1(t)}$ by $\mathds{1}_{t<\eta_{1,A_1(t)}}$ and $\mathds{1}_{t\geq\eta_{1,A_1(t)}}$ in the expressions of $\hat{\mathfrak J}$ and $\hat{\mathfrak M}$, respectively.
\end{coro}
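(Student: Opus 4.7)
The plan is to reduce Corollary~\ref{coro-UI} to Theorem~\ref{TCL} via the substitution of observation weights $(\lambda,\gamma)\mapsto(\tilde\lambda,\tilde\gamma)$, where $\tilde\lambda_{k,i}(t):=\mathds{1}_{t<\eta_{k,i}}$ and $\tilde\gamma_{k,i}(t):=\mathds{1}_{t\geq\eta_{k,i}}$. First I would note the algebraic identities
\[
I^N(t)=\sum_{k=1}^N \tilde\lambda_{k,A_k^N(t)}(\varsigma_k^N(t)),\qquad U^N(t)=\sum_{k=1}^N \tilde\gamma_{k,A_k^N(t)}(\varsigma_k^N(t)),
\]
which cast $\overline I^N$ and $\overline U^N$ exactly in the empirical-average form of $\overline{\mathfrak F}^N$ and $\overline{\mathfrak S}^N$, but built from the indicator weights $(\tilde\lambda,\tilde\gamma)$. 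The Poisson driving measures $Q_k$, the infection counts $A_k^N$, and the elapsed-time processes $\varsigma_k^N$ remain unchanged, since the dynamics is still governed by the original $(\lambda,\gamma)$.

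Next I would verify that $(\tilde\lambda,\tilde\gamma)$ inherits Assumptions~\ref{TCL-AS-lambda-0}--\ref{TCL-AS-lambda}. Boundedness by $1$ is immediate; the separation condition $\sup\{t:\tilde\lambda(t)>0\}=\eta=\inf\{t:\tilde\gamma(t)>0\}$ replaces \eqref{eqq10}; the piecewise decomposition of Assumption~\ref{TCL-AS-lambda-1} is trivial with $\ell=2$, $\xi^1=\zeta^1=\eta$ and constant pieces (so $\varphi_T\equiv 0$). The moment bounds (i)--(ii) of Assumption~\ref{TCL-AS-lambda} reduce to $\E[(\tilde\lambda(t)-\tilde\lambda(s))^2]=F(t)-F(s)\le C'(t-s)^\rho$, which follows from Assumption~\ref{TCL-AS-lambda-2}. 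Estimates (iii)--(iv) hold \emph{trivially} because for $s<u<t$ the events $\{u<\eta\le t\}$ and $\{s<\eta\le u\}$ are disjoint, so $(\tilde\lambda(t)-\tilde\lambda(u))^2(\tilde\lambda(u)-\tilde\lambda(s))^2\equiv 0$, and likewise for $\tilde\gamma$.

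With these verifications in hand, the entire proof of Theorem~\ref{TCL} --- the decomposition of the scaled processes, the CLT for $\bD$-valued i.i.d. sums, the moment estimates for stochastic integrals with respect to Poisson random measures, the quarantine-approximation bounds, and the limit identification --- carries over \emph{verbatim} to the pair $(\hat U^N,\hat I^N)$. Equation \eqref{TCL-in-uni-2-I} for $\hat I$ arises from \eqref{TCL-in-uni-2} by replacing the observation kernel $\overline\lambda(t-s)=\E[\lambda(t-s)]$ by $\E[\tilde\lambda(t-s)]=F^c(t-s)$, and the Gaussian driver $\hat{\mathfrak M}$ by its indicator analog $\hat{\mathfrak M}_1$. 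Equation \eqref{TCL-in-uni-1-U} for $\hat U$ arises from \eqref{TCL-in-uni-1} by substituting the \emph{outer} (observation) occurrences of $\gamma_0(t)$ and $\gamma(t-s)$ by $\mathds{1}_{t\ge\eta_0}$ and $\mathds{1}_{t-s\ge\eta}$, while the \emph{inner} occurrences $\gamma_0(r),\gamma(u-s),\gamma(r-s)$ inside integrals and exponentials --- which originate from the Poisson thinning rates of the \emph{unchanged} dynamics of $A_k^N$ --- are preserved. The covariance of $(\hat{\mathfrak J}_1,\hat{\mathfrak M}_1)$ is then read off from Definition~\ref{TCL-def-1} after the indicator substitution.

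Finally, joint convergence in $\bD^2$ is automatic: the pathwise identity $\hat U^N(t)+\hat I^N(t)\equiv 0$ (from $\overline U^N+\overline I^N=1=\overline U+\overline I$), together with the continuous mapping theorem, reduces joint convergence to that of a single marginal and simultaneously forces $\hat U+\hat I=0$ in the limit --- a nontrivial algebraic identity between \eqref{TCL-in-uni-1-U} and \eqref{TCL-in-uni-2-I} that, combined with $\hat{\mathfrak J}_1+\hat{\mathfrak M}_1\equiv 0$ (since $\tilde\lambda+\tilde\gamma\equiv 1$ implies vanishing variance of the sum), serves as a consistency check on the derivation. The only real work is a careful transcription of the proof of Theorem~\ref{TCL} under the substitution $(\lambda,\gamma)\mapsto(\tilde\lambda,\tilde\gamma)$ in the \emph{observation} roles only; no new tool is required.
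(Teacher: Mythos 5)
Your proof is correct and matches the paper's own terse argument: replace $\lambda_{k,i}$ and $\gamma_{k,i}$ by $\mathds{1}_{t<\eta_{k,i}}$ and $\mathds{1}_{t\geq\eta_{k,i}}$ in the observation role (keeping the Poisson dynamics driven by the original $(\lambda,\gamma)$), check that the indicator weights inherit Assumptions~\ref{TCL-AS-lambda-0}--\ref{TCL-AS-lambda}, rerun the proof of Theorem~\ref{TCL} with the new observation kernel, and use $\overline U^N+\overline I^N\equiv1$ for the joint limit. Your verification that Assumption~\ref{TCL-AS-lambda}(iii)--(iv) holds trivially by disjointness of $\{s<\eta\le u\}$ and $\{u<\eta\le t\}$ is a clean point the paper leaves implicit, and your outer/inner distinction in fact shows that the factor written as $\hat{\mathfrak J}_1(s)$ inside the third integral of~\eqref{TCL-in-uni-1-U} should read $\hat{\mathfrak J}(s)$ (matching~\eqref{TCL-in-uni-2-I}), since that parenthesized expression is the limit of $\sqrt{N}(\overline\Upsilon^N-\tilde\Upsilon^N)$ coming from the unchanged dynamics rather than from the observation weights.
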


\subsection{Relaxing Assumption~\ref{TCL-AS-lambda-0}}
From \cite{forien-Zotsa2022stochastic} without condition $\eqref{eqq10}$ of Assumption~\ref{TCL-AS-lambda-0}, this means that an infected individual can be reinfected, the limit obtained in the FLLN satisfies a different set of equations.
More precisely, equation \eqref{eqG2s-F} is replaced by \eqref{eqG2s-G-w} and \eqref{eqn-barI} by \eqref{eqG2s-F-w}, where \eqref{eqG2s-G-w} and \eqref{eqG2s-F-w} are given below:

\begin{align}
&\overline{\mathfrak{F}}(t)=\E \left[\lambda_{0}(t)\exp \left( - \int_{0}^{t} \gamma_0(r) \overline{\mathfrak F}(r) dr \right)\right] \nonumber\\
&\hspace{2cm} + \int_{0}^{t} \E \left[\lambda(t-s) \exp \left( - \int_{s}^{t} \gamma(r-s) \overline{\mathfrak F}(r) dr \right) \right] \overline{\mathfrak S}(s) \overline{\mathfrak F}(s) ds, \label{eqG2s-G-w}\\
&\overline{I}(t)=\E\left[ \mathds{1}_{\eta_0>t} \exp\left(-\int_0^t \gamma_0(r) \overline{\mathfrak{F}}(s) dr \right)\right] \nonumber\\
&\hspace{2cm}+ \int_0^t \E\left[ \mathds{1}_{\eta>t-s} \exp\left(- \int_s^t \gamma(r-s) \overline{\mathfrak{F}}(r)dr \right)\right]\overline{\mathfrak S}(s) \overline{\mathfrak F}(s)ds\,.\label{eqG2s-F-w}
\end{align}

In the same way without condition $\eqref{eqq10}$ of Assumption~\ref{TCL-AS-lambda-0}, the limit obtained in the FCLT (Theorem \ref{TCL} and Corollary \ref{coro-UI}) satisfies a different set of equations.
More precisely, equation \eqref{TCL-in-uni-2} is replaced by \eqref{TCL-eqG2s-G-w} and \eqref{TCL-in-uni-2-I} by \eqref{TCL-eqG2s-F-w}, where \eqref{TCL-eqG2s-G-w} and \eqref{TCL-eqG2s-F-w} are given below. In that case, the convergence of $\hat{\mathfrak F}^N(t)$ follows an analogous argument as that used for $\hat{\mathfrak S}^N(t)$ in  Theorem \ref{TCL}. 
%
More precisely, without condition $\eqref{eqq10}$ of Assumption~\ref{TCL-AS-lambda-0}, equation \eqref{TCL-n-lbda} is replaced by a similar equation in  \eqref{TCL-gamma-decom}. In fact, for each fixed $k$, we replace $\gamma_{k,A_k^N(t)}$ and $\gamma_{k,i}$ by $\lambda_{k,A_k^N(t)}$ and $\lambda_{k,i}$ in the expressions in \eqref{TCL-gamma-decom}. Consequently instead of the expression in \eqref{TCL-eqc10}, one gets a different expression, which resembles the expression in \eqref{TCL-eq-eqc10-1}, so that the proof follows from a similar argument.
\begin{align}
&\hat{\mathfrak F}(t)=-\int_{0}^{t}\E\left[\lambda_{0}(t)\gamma_{0}(s)\exp\left(-\int_{0}^{t}\gamma_{0}(r)\overline{\mathfrak F}(r)dr\right)\right]\hat{\mathfrak F}(s)ds\nonumber\\
&\hspace{1cm}-\int_{0}^{t}\int_{s}^{t}\E\left[\lambda(t-s)\gamma(r-s)\exp\left(-\int_{s}^{t}\gamma(u-s)\overline{\mathfrak F}(u)du\right)\right]\hat{\mathfrak F}(r)\overline{\mathfrak{F}}(s)\overline{\mathfrak{S}}(s)drds\nonumber\\
&\quad\quad+\int_{0}^{t}\E\left[\lambda(t-s)\exp\left(-\int_{s}^{t}\gamma(r-s)\overline{\mathfrak F}(r)dr\right)\right]\left(\hat{\mathfrak S}(s)\overline{\mathfrak F}(s)-\hat{\mathfrak J}(s)\overline{\mathfrak F}(s)+\overline{\mathfrak S}(s)\hat{\mathfrak F}(s)\right)ds\nonumber\\
&\hspace*{1cm}+\hat{\mathfrak M}(t), \label{TCL-eqG2s-G-w}\\
&\hat{ I}(t)=-\int_{0}^{t}\E\left[\mathds{1}_{\eta_0>t}\gamma_{0}(s)\exp\left(-\int_{0}^{t}\gamma_{0}(r)\overline{\mathfrak F}(r)dr\right)\right]\hat{\mathfrak F}(s)ds\nonumber\\
&\hspace{1cm}-\int_{0}^{t}\int_{s}^{t}\E\left[\mathds{1}_{\eta>t-s}\gamma(r-s)\exp\left(-\int_{s}^{t}\gamma(u-s)\overline{\mathfrak F}(u)du\right)\right]\hat{\mathfrak F}(r)\overline{\mathfrak{F}}(s)\overline{\mathfrak{S}}(s)drds\nonumber\\
&\quad\quad+\int_{0}^{t}\E\left[\mathds{1}_{\eta>t-s}\exp\left(-\int_{s}^{t}\gamma(r-s)\overline{\mathfrak F}(r)dr\right)\right]\left(\hat{\mathfrak S}(s)\overline{\mathfrak F}(s)-\hat{\mathfrak J}(s)\overline{\mathfrak F}(s)+\overline{\mathfrak S}(s)\hat{\mathfrak F}(s)\right)ds\nonumber\\
&\hspace*{1cm}+\hat{\mathfrak M}_1(t) \label{TCL-eqG2s-F-w}.
\end{align}


\bigskip

\section{Proof of Theorem~\ref{TCL}}\label{TCL-sec-2}
We recall the definitions of $\hat{\mathfrak F}^N$ and $\hat{\mathfrak S}^N$, from \eqref{TCL-fluc-1} below: 
\begin{equation*}
\hat{\mathfrak F}^N(t):=\sqrt{N}\left(\overline{\mathfrak{F}}^N(t)-\overline{\mathfrak{F}}(t)\right),\text{ and }
\hat{\mathfrak S}^N(t):=\sqrt{N}\left(\overline{\mathfrak{S}}^N(t)-\overline{\mathfrak{S}}(t)\right).
\end{equation*}
Let us write, 
\begin{align}\label{TCL-eq-eqc10-01}
\hat{\mathfrak F}^N(t)&=\sqrt{N}\left(\frac{1}{N}\sum_{k=1}^{N}\left(\lambda_{k,A^N_k(t)}(\varsigma^{N}_k(t))-\lambda_{k,A_k(t)}(\varsigma_k(t))\right)\right) \nonumber\\
& \qquad +\sqrt{N}\left(\frac{1}{N}\sum_{k=1}^{N}\lambda_{k,A_k(t)}(\varsigma_k(t))-\E\left[\lambda_{1,A_1(t)}(\varsigma_1(t))\right]\right)\nonumber\\
&=:\hat{\mathfrak F}^N_1(t)+\hat{\mathfrak F}^N_2(t).
\end{align}
and 
\begin{align}\label{TCL-eq-eqc10-0}
\hat{\mathfrak S}^N(t)&=\sqrt{N}\left(\frac{1}{N}\sum_{k=1}^{N}\left(\gamma_{k,A^N_k(t)}(\varsigma^{N}_k(t))-\gamma_{k,A_k(t)}(\varsigma_k(t))\right)\right) \nonumber\\
& \qquad +\sqrt{N}\left(\frac{1}{N}\sum_{k=1}^{N}\gamma_{k,A_k(t)}(\varsigma_k(t))-\E\left[\gamma_{1,A_1(t)}(\varsigma_1(t))\right]\right)\nonumber\\
&=:\hat{\mathfrak S}^N_1(t)+\hat{\mathfrak S}^N_2(t).
\end{align}

	Since $\left(\gamma_{k,A_k(\cdot)}(\varsigma_k(\cdot)),\lambda_{k,A_k(\cdot)}(\varsigma_k(\cdot))\right)_k$ are i.i.d $\bD^2$-valued random variables with each component satisfying Assumption~\ref{TCL-AS-lambda}, by applying the central limit theorem in $\bD$, each component (see Theorem~2 in \cite{hahn1978central}) it follows that $\hat{\mathfrak S}^N_2\Rightarrow \hat{\mathfrak J}$ and $\hat{\mathfrak F}^N_2\Rightarrow\hat{\mathfrak M}$ in $\bD$ as $N\to\infty$ respectively. Then, as $\bD$ is separable from \cite[Lemma~$5.2$]{pang2007martingale} the pair $\big(\hat{\mathfrak S}^N_2,\hat{\mathfrak F}^N_2\big)$ is $\bC-$tight in $\bD^2$ and using the uniqueness of the limit of $\hat{\mathfrak S}^N_2$ and $\hat{\mathfrak F}^N_2$, the convergence in $\bD^2$ of the pair $\big(\hat{\mathfrak S}^N_2,\hat{\mathfrak F}^N_2\big)$ follows. Moreover, given the convergence of $\big(\hat{\mathfrak S}^N_2,\hat{\mathfrak F}^N_2\big)$, by the continuous mapping theorem $\hat{\mathfrak S}^N_2\hat{\mathfrak F}^N_2$ converges in $\bD$. It follows that for each $t,t'\geq0$, the covariance of $\hat{\mathfrak S}^N_2(t')$ and $\hat{\mathfrak F}^N_2(t),$ which is given by $\Cov\left(\gamma_{1,A_1(t')}(\varsigma_1(t'),\lambda_{1,A_1(t)}(\varsigma_1(t)))\right),$ converges to the covariance of the limit process of $\hat{\mathfrak S}^N_2(t')$ and $\hat{\mathfrak F}^N_2(t)$. Hence we have the following Lemma: 
\begin{lemma}\label{TCL-cv-F2-G2}
	Under Assumption~\ref{TCL-AS-lambda}, as $N\to+\infty$,
	\begin{equation*}
	\big(\hat{\mathfrak S}^N_2,\hat{\mathfrak F}^N_2\big)\Rightarrow\big(\hat{\mathfrak J},\hat{\mathfrak M}\big) \quad \text{ in } \quad \bD^2.
	\end{equation*}
	where $\big(\hat{\mathfrak J},\hat{\mathfrak M}\big)$ is a centered continuous 2-dimensional Gaussian process 
 given in Definition \ref{TCL-def-1}.
\end{lemma}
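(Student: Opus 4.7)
The plan is to apply the central limit theorem for $\bD$-valued i.i.d.\ random variables (Theorem~2 of \cite{hahn1978central}) to each coordinate separately, then upgrade marginal convergence to joint convergence in $\bD^2$ using separability of $\bD$, and finally identify the joint limit through its covariance structure. The starting observation is that because $((\lambda_{k,i})_i,(\gamma_{k,i})_i,Q_k)_{k\ge 1}$ are i.i.d., the driving processes $A_k$ are i.i.d., and hence the $\bD^2$-valued random elements $X_k := \bigl(\gamma_{k,A_k(\cdot)}(\varsigma_k(\cdot)),\lambda_{k,A_k(\cdot)}(\varsigma_k(\cdot))\bigr)$ form an i.i.d.\ sequence. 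Thus both $\hat{\mathfrak S}^N_2$ and $\hat{\mathfrak F}^N_2$ are normalized centered sums of i.i.d.\ c\`adl\`ag random functions, precisely the setting of Hahn's CLT.

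Next, I would verify the hypotheses of Hahn's theorem coordinate by coordinate. Conditions (i) and (ii) of Assumption~\ref{TCL-AS-lambda} provide $L^2$-moduli of continuity for $\lambda$ and $\gamma$, while conditions (iii) and (iv) yield the Chentsov-type fourth-moment bounds $\E[(X(t)-X(u))^2(X(u)-X(s))^2]\le (\psi(t)-\psi(s))^\beta$ with $\beta>1$ which, via the standard Chentsov criterion, deliver tightness of the law of a single summand and then of the partial sums in $\bD$. Finite-dimensional convergence to a centered Gaussian vector follows from the classical multivariate CLT, the covariances being those of the i.i.d.\ summand evaluated at the specified times. Putting these together gives $\hat{\mathfrak S}^N_2\Rightarrow \hat{\mathfrak J}$ and $\hat{\mathfrak F}^N_2\Rightarrow \hat{\mathfrak M}$ in $\bD$, and sample-path continuity of each limit follows from Kolmogorov's criterion applied to the Gaussian limit using Assumption~\ref{TCL-AS-lambda}.

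To promote marginal convergence to joint convergence I would invoke \cite[Lemma~5.2]{pang2007martingale}: since $\bD$ is separable, tightness of the marginal laws implies $\bC$-tightness of $(\hat{\mathfrak S}^N_2,\hat{\mathfrak F}^N_2)$ in $\bD^2$, so every subsequential limit is a continuous $\R^2$-valued Gaussian process. Uniqueness of the joint limit then comes from computing covariances at arbitrary times $t,t'$ from the i.i.d.\ structure, e.g.
\[
\Cov\bigl(\hat{\mathfrak S}^N_2(t'),\hat{\mathfrak F}^N_2(t)\bigr)=\Cov\bigl(\gamma_{1,A_1(t')}(\varsigma_1(t')),\lambda_{1,A_1(t)}(\varsigma_1(t))\bigr),
\]
which is independent of $N$ and matches the cross-covariance in Definition~\ref{TCL-def-1}; combined with the marginal identifications, this pins down the joint limit as $(\hat{\mathfrak J},\hat{\mathfrak M})$.

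The delicate step is verifying the Chentsov hypothesis not for $\lambda$ and $\gamma$ directly but for the composite paths $t\mapsto \lambda_{k,A_k(t)}(\varsigma_k(t))$ and $t\mapsto \gamma_{k,A_k(t)}(\varsigma_k(t))$, which have jumps at the infection times encoded by $A_k$. To control contributions across such jump times one conditions on $A_k$ and $\varsigma_k$ and uses Assumption~\ref{TCL-AS-lambda-1} to decompose $\lambda,\gamma$ into their continuous pieces $\lambda^j,\gamma^j$, together with Assumption~\ref{TCL-AS-lambda-2} to bound the probability that $u\in(s,t)$ falls in a short interval around a jump of $\xi^j$ or $\zeta^j$ by $C'(t-s)^\rho$ with $\rho>1/2$. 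Combining these estimates with (iii)--(iv) yields an inequality of the form $\E[(X(t)-X(u))^2(X(u)-X(s))^2]\le C(t-s)^{1+\delta}$ for some $\delta>0$, which suffices to conclude tightness of each coordinate.
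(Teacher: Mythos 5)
Your proposal matches the paper's proof essentially step for step: marginal convergence via Theorem~2 of \cite{hahn1978central}, separability of $\bD$ together with Lemma~5.2 of \cite{pang2007martingale} to get joint $\bC$-tightness of the pair, uniqueness of the marginal limits to pass to joint convergence, and the i.i.d.\ covariance structure of the summands to identify the limit law as the $(\hat{\mathfrak J},\hat{\mathfrak M})$ of Definition~\ref{TCL-def-1}. Your final paragraph is a worthwhile elaboration: the paper's text invokes the phrase ``each component satisfying Assumption~\ref{TCL-AS-lambda}'' without explaining why the time-changed composite paths $t\mapsto\lambda_{k,A_k(t)}(\varsigma_k(t))$, $t\mapsto\gamma_{k,A_k(t)}(\varsigma_k(t))$ inherit the Chentsov-type moment bounds from $\lambda,\gamma$, and your sketch, drawing on the piecewise structure of Assumptions~\ref{TCL-AS-lambda-1}--\ref{TCL-AS-lambda-2} to control increments straddling infection times, correctly names and addresses this verification.
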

proving the convergence of the pair $\big(\hat{\mathfrak S}^N_1,\hat{\mathfrak F}^N_1\big)$ is highly nontrivial.
We start by the following decomposition of the pair in the next subsection.

\subsection{Decomposition of the fluctuations  $\big(\hat{\mathfrak S}^N_1,\hat{\mathfrak F}^N_1\big)$}

In \cite{forien-Zotsa2022stochastic} we had made a coupling between $A^N_k$ and $A_k$ and if instead we give ourselves a coupling on $\R_+\times \bD^2\times\R_+$, we can define a new coupling. However if we look at the law of the first time for which $A^N_k\neq A_k$, then the law of this time is the same for both couplings.
 
We then introduce a Poisson random measure $Q_k$ on $\R_+\times \bD^2\times\R_+$, so that the mean measure of the PRM is 
\[ds \times\mathbb{P}(d\lambda,d\gamma)\times du.\]
We denote by $\overline{Q}_k$ its compensated measure.

 For $0 \le s <t$, and $\gamma\in\bD,\,\phi\in\bD$ we set 
\begin{equation*}
P_{k}(s,t,\gamma,\phi)=\int_{[s,t]\times\bD^2\times\R_+}\mathds{1}_{u <\gamma(r-s)\phi(r)}Q_k(dr,d\lambda',d\gamma',du).
\end{equation*}
So we define $A^N_k$, as follows
\begin{equation*}
A^N_k(t)=\int_{[0,t]\times\bD^2\times\R_+}\mathds{1}_{u\leq\Upsilon^N_k(r^-)}Q_k(dr,d\lambda,d\gamma,du)\\
\end{equation*} 
where
\begin{align*}
\Upsilon^N_k(t)=\left(\gamma_{k,0}(t)\mathds{1}_{P_{k}(0,t,\gamma_{k,0},\overline{\mathfrak F}^N)=0}+\int_{0}^{t}\int_{\bD^2}\int_{0}^{\Upsilon_k^N(s-)}\gamma(t-s)\mathds{1}_{P_{k}(s,t,\gamma,\overline{\mathfrak F}^N)=0}Q_k(ds,d\lambda,d\gamma,du)\right)\overline{\mathfrak F}^N(t)
\end{align*}
and 
\begin{equation}\label{TCL-eq-force-2}
	\overline{\mathfrak F}^N(t)=\frac{1}{N}\sum_{k=1}^{N}\left\{\lambda_{k,0}(t)\mathds{1}_{P_{k}(0,t,\gamma_{k,0},\overline{\mathfrak F}^N)=0}+\int_{0}^{t}\int_{\bD^2}\int_{0}^{\Upsilon_k^N(s-)}\lambda(t-s)\mathds{1}_{P_{k}(s,t,\gamma,\overline{\mathfrak F}^N)=0}Q_k(ds,d\lambda,d\gamma,du)\right\}.
\end{equation}
Under the conditions on $\lambda_{k,i}$ in Assumption~\ref{TCL-AS-lambda-0},  \eqref{TCL-eq-force-2} becomes
\begin{equation}\label{TCL-n-lbda}
	\overline{\mathfrak F}^N(t)
	=\frac{1}{N}\sum_{k=1}^{N}\left\{\lambda_{k,0}(t)+\int_{0}^{t}\int_{\bD^2}\int_{0}^{\Upsilon^N_k(s^-)}\lambda(t-s)Q_k(ds,d\lambda,d\gamma,du)\right\}.
\end{equation}
On the other hand, we have
\begin{equation}\label{TCL-gamma-decom}
\overline{\mathfrak S}^N(t)=\frac{1}{N}\sum_{k=1}^{N}\left\{\gamma_{k,0}(t)\mathds{1}_{P_{k}(0,t,\gamma_{k,0},\overline{\mathfrak F}^N)=0}+\int_{0}^{t}\int_{\bD^2}\int_{0}^{\Upsilon_k^N(s-)}\gamma(t-s)\mathds{1}_{P_{k}(s,t,\gamma,\overline{\mathfrak F}^N)=0}Q_k(ds,d\lambda,d\gamma,du)\right\}.
\end{equation}

Similarly, 
\begin{equation*}
A_k(t)=\int_{[0,t]\times\bD^2\times\R_+}\mathds{1}_{u\leq\Upsilon_k(r^-)}Q_k(dr,d\lambda,d\gamma,du)\\
\end{equation*} 
with
\begin{align*}
\Upsilon_k(t)=\left(\gamma_{k,0}(t)\mathds{1}_{P_{k}(0,t,\gamma_{k,0},\overline{\mathfrak F})=0}+\int_{0}^{t}\int_{\bD^2}\int_{0}^{\Upsilon_k(s-)}\gamma(t-s)\mathds{1}_{P_{k}(s,t,\gamma,\overline{\mathfrak F})=0}Q_k(ds,d\lambda,d\gamma,du)\right)\overline{\mathfrak F}(t)
\end{align*}
where $\overline{\mathfrak{F}}$ is given by \eqref{eqG2s-F}.

Consequently we have
\begin{equation}\label{TCL-lbda-decomp}
\sum_{k=1}^{N}\lambda_{k,A_k(t)}(\varsigma_k(t)):=\sum_{k=1}^N\left\{\lambda_{k,0}(t)+\int_{0}^{t}\int_{\bD^2}\int_{0}^{\Upsilon_k(s^-)}\lambda(t-s)Q_k(ds,d\lambda,d\gamma,du)\right\},
\end{equation}
and
\begin{align}
\sum_{k=1}^{N}\gamma_{k,A_k(t)}(\varsigma_k(t))
:=\sum_{k=1}^{N}\left\{\gamma_{k,0}(t)\mathds{1}_{P_{k}(0,t,\gamma_{k,0},\overline{\mathfrak F})=0}+\int_{0}^{t}\int_{\bD^2}\int_{0}^{\Upsilon_k(s-)}\gamma(t-s)\mathds{1}_{P_{k}(s,t,\gamma,\overline{\mathfrak F})=0}Q_k(ds,d\lambda,d\gamma,du)\right\}\label{TCL-gamma-decom-2}\,. 
\end{align}

Using the fact that 
\begin{equation*}
\Upsilon_k^N(s)-\Upsilon_k(s)=\gamma_{k,A^N_k(s)}(\varsigma_{k}^N(s))\overline{\mathfrak F}^N(s)-\gamma_{k,A_k(s)}(\varsigma_{k}(s))\overline{\mathfrak F}(s),
\end{equation*}
from expression \eqref{TCL-eq-eqc10-0}, it follows that,
\begin{equation}
\frac{1}{\sqrt{N}}\sum_{k=1}^{N}\left(\Upsilon_k^N(s)-\Upsilon_k(s)\right)=\hat{\mathfrak S}^N(s)\overline{\mathfrak F}^N(s)-\hat{\mathfrak S}^N_2(s)\overline{\mathfrak F}^N(s)+\tilde{\mathfrak S}^N(s)\hat{\mathfrak F}^N(s),\label{eqc1-2}
\end{equation}
where 
\[\tilde{\mathfrak S}^N(t)=\frac{1}{N}\sum_{k=1}^{N}\gamma_{k,A_k(t)}(\varsigma_k(t)),\]
and from expressions \eqref{TCL-eq-eqc10-01} and \eqref{TCL-eq-eqc10-0}, we obtain the following Proposition.

\begin{prop}
	Under the condition \eqref{eqq10} in Assumption~\ref{TCL-AS-lambda-0},  for every $t\geq0,$
	\begin{align}
	\hat{\mathfrak F}^N_1(t)
	&=\frac{1}{\sqrt{N}}\sum_{k=1}^{N}\int_{0}^{t}\int_{\bD^2}\int_{\Upsilon_k(s-)\wedge\Upsilon^N_k(s-)}^{\Upsilon_k(s-)\vee\Upsilon^N_k(s-)}\lambda(t-s) \sign(\Upsilon^N_k(s^-)-\Upsilon_k(s^-))\overline{Q}_k(ds,d\lambda,d\gamma,du)\nonumber\\
	&\quad+\int_{0}^{t}\overline{\lambda}(t-s)\hat{\mathfrak S}^N(s)\overline{\mathfrak F}^N(s)ds-\int_{0}^{t}\overline{\lambda}(t-s)\hat{\mathfrak S}^N_2(s)\overline{\mathfrak F}^N(s)ds+\int_{0}^{t}\overline{\lambda}(t-s)\tilde{\mathfrak S}^N(s)\hat{\mathfrak F}^N(s)ds,\label{TCL-eqc10}
	\end{align}
	and
	\begin{align}\label{TCL-eq-eqc10-1}
	\hat{\mathfrak S}^N_1(t)&=\frac{1}{\sqrt{N}}\sum_{k=1}^{N}\gamma_{k,0}(t)\left(\mathds{1}_{P_{k}(0,t,\gamma_{k,0},\overline{\mathfrak F}^N)=0}-\mathds{1}_{P_{k}(0,t,\gamma_{k,0},\overline{\mathfrak F})=0}\right)\nonumber\\
	&+\frac{1}{\sqrt{N}}\sum_{k=1}^{N}\int_{0}^{t}\int_{\bD^2}\int_{0}^{\Upsilon_k^N(s)}\gamma(t-s)\left(\mathds{1}_{P_k\left(s,t,\gamma,\overline{\mathfrak{F}}^N\right)=0}-\mathds{1}_{P_k\left(s,t,\gamma,\overline{\mathfrak{F}}\right)=0}\right)Q_k(ds,d\lambda,d\gamma,du)\nonumber
	\\&+\frac{1}{\sqrt{N}}\sum_{k=1}^{N}\int_{0}^{t}\int_{\bD^2}\int_{\Upsilon_k(s)\wedge\Upsilon_k^N(s)}^{\Upsilon_k(s)\vee\Upsilon_k^N(s)}\gamma(t-s)\mathds{1}_{P_k\left(s,t,\gamma,\overline{\mathfrak{F}}\right)=0}\sign(\Upsilon_k^N(s^-)-\Upsilon_k(s^-))Q_k(ds,d\lambda,d\gamma,du)\nonumber\\
	&=:\hat{\mathfrak{S}}^N_{1,0}(t)+\hat{\mathfrak{S}}^N_{1,1}(t)+\hat{\mathfrak{S}}^N_{1,2}(t).
	\end{align}
\end{prop}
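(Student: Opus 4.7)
The plan is to derive both decompositions by algebraic manipulation of the Poisson random measure (PRM) representations, using the compensator $\overline{Q}_k$ for $\hat{\mathfrak F}^N_1$ and a direct telescoping of the integrand for $\hat{\mathfrak S}^N_1$. Throughout, the key algebraic fact is that the difference of two indicators of the form $\mathds{1}_{u\le a}-\mathds{1}_{u\le b}$ equals $\sign(a-b)\,\mathds{1}_{a\wedge b<u\le a\vee b}$, and integrating in $u$ against Lebesgue measure returns $|a-b|$, hence (with the sign) $a-b$.

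First I would prove \eqref{TCL-eqc10}. Starting from \eqref{TCL-eq-eqc10-01}, I subtract \eqref{TCL-lbda-decomp} from \eqref{TCL-n-lbda} term by term. The initial contributions $\lambda_{k,0}(t)$ cancel, and under condition \eqref{eqq10} neither of \eqref{TCL-n-lbda}, \eqref{TCL-lbda-decomp} carries a quarantine indicator (because $\lambda(t-s)>0$ forces the associated $\gamma(\cdot-s)$ to vanish on $[s,t]$, so $\mathds{1}_{P_k(s,t,\gamma,\cdot)=0}$ is automatic whenever it multiplies $\lambda$). Thus the difference collapses to a single PRM integral with integrand $\lambda(t-s)\bigl(\mathds{1}_{u\le\Upsilon^N_k(s^-)}-\mathds{1}_{u\le\Upsilon_k(s^-)}\bigr)$. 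Using the indicator identity above, I split $Q_k=\overline{Q}_k+ds\,\P(d\lambda,d\gamma)\,du$: the $\overline{Q}_k$ piece is exactly the martingale term written in \eqref{TCL-eqc10}. For the compensator, integration in $u$ collapses the interval indicator to $|\Upsilon^N_k(s)-\Upsilon_k(s)|$, combining with the sign to give $\Upsilon^N_k(s)-\Upsilon_k(s)$; integration in $(\lambda,\gamma)$ against $\P$ replaces $\lambda(t-s)$ by $\overline{\lambda}(t-s)$. Interchanging $\int_0^t$ with the sum and inserting \eqref{eqc1-2} for $N^{-1/2}\sum_k(\Upsilon^N_k(s)-\Upsilon_k(s))$ produces precisely the three drift integrals on the right side of \eqref{TCL-eqc10}.

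Next I would prove \eqref{TCL-eq-eqc10-1}. Here the quarantine indicators cannot be dropped, so I avoid compensation and telescope at the level of integrands. Subtracting \eqref{TCL-gamma-decom-2} from \eqref{TCL-gamma-decom}, the difference of the initial $\gamma_{k,0}(t)$ contributions is by definition $\hat{\mathfrak S}^N_{1,0}(t)$. For the PRM integrals I use the identity
\[
\mathds{1}_{u\le\Upsilon^N_k}\mathds{1}_{P_k(\cdot,\overline{\mathfrak F}^N)=0}-\mathds{1}_{u\le\Upsilon_k}\mathds{1}_{P_k(\cdot,\overline{\mathfrak F})=0}=\mathds{1}_{u\le\Upsilon^N_k}\bigl(\mathds{1}_{P_k(\cdot,\overline{\mathfrak F}^N)=0}-\mathds{1}_{P_k(\cdot,\overline{\mathfrak F})=0}\bigr)+\bigl(\mathds{1}_{u\le\Upsilon^N_k}-\mathds{1}_{u\le\Upsilon_k}\bigr)\mathds{1}_{P_k(\cdot,\overline{\mathfrak F})=0}.
\]
The first summand, after writing the $u$-integration as $\int_0^{\Upsilon^N_k(s^-)}$, is exactly $\hat{\mathfrak S}^N_{1,1}(t)$. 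For the second summand, I rewrite $\mathds{1}_{u\le\Upsilon^N_k}-\mathds{1}_{u\le\Upsilon_k}$ as $\sign(\Upsilon^N_k(s^-)-\Upsilon_k(s^-))\mathds{1}_{\Upsilon_k\wedge\Upsilon^N_k<u\le\Upsilon_k\vee\Upsilon^N_k}$, which produces $\hat{\mathfrak S}^N_{1,2}(t)$.

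The main obstacle is entirely bookkeeping rather than analytic: one must be careful that the indicator $\mathds{1}_{P_k(s,t,\gamma,\cdot)=0}$ depends on the PRM trajectory over the whole interval $[s,t]$, so the integrands are not predictable. This is precisely why I compensate only in the derivation of \eqref{TCL-eqc10}, where condition \eqref{eqq10} eliminates the quarantine indicator so the integrand becomes predictable, and why in \eqref{TCL-eq-eqc10-1} I leave the decomposition in terms of the raw $Q_k$. The moment analysis of the non-predictable stochastic integral $\hat{\mathfrak S}^N_{1,2}(t)$ is deferred to the dedicated Theorem~\ref{TCL-th-1} and is not part of this proposition; here it suffices to identify the three explicit pieces.
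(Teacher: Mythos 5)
Your proposal is correct and takes the same route the paper intends (the paper elides this derivation, merely stating "we obtain the following Proposition" after establishing \eqref{TCL-n-lbda}, \eqref{TCL-gamma-decom}, \eqref{TCL-lbda-decomp}, \eqref{TCL-gamma-decom-2}, and \eqref{eqc1-2}). Your two key steps — splitting $Q_k=\overline{Q}_k+ds\,\P(d\lambda,d\gamma)\,du$ to obtain the martingale plus drift form in \eqref{TCL-eqc10}, with condition \eqref{eqq10} guaranteeing predictability of the integrand, and the $ab-cd=a(b-d)+(a-c)d$ telescoping for \eqref{TCL-eq-eqc10-1}, deliberately kept uncompensated because the quarantine indicator depends on the PRM on $(s,t]$ — are precisely the bookkeeping underlying the paper's definitions of $\hat{\mathfrak S}^N_{1,0},\hat{\mathfrak S}^N_{1,1},\hat{\mathfrak S}^N_{1,2}$ and of the right-hand side of \eqref{TCL-eqc10}.
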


\begin{remark}
	Note that, in the rest $Q_k(dr,du)$ can be seen as the projection of $Q_k(dr,d\lambda,d\gamma,du)$ on $\R_+\times\R_+$.
\end{remark}
 
\subsection{Two continuous integral mappings}
Let $\phi_1:\R_+^3\to \R_+$ and $\phi_2:\R_+^2\to\R,$ be bounded functions and Borel measurable and let $\varPsi_1:\bD_+^7\to \bD$, be given by 
\begin{multline}\label{TCL-uni-c1}
\varPsi_1(f)(t)=f_2(t)-f_3(t)-f_4(t)+\int_{0}^{t}\int_{s}^{t}\phi_1(t,s,r)f_1(r)f_5(s)f_6(s)drds\\
-\int_{0}^{t}\phi_2(t,s)f_5(s)f_2(s)ds+\int_{0}^{t}\phi_2(t,s)f_5(s)f_4(s)ds-\int_{0}^{t}\phi_2(t,s)f_1(s)f_7(s)ds,
\end{multline}
 for all $t$,  where $f=(f_1,f_2,f_3,f_4,f_5,f_6,f_7)\in\bD_+^7 $.
\begin{lemma}\label{TCL-cont-map-1-c}
	If $f^n\to f$ in $\bD^7_+,$ as $n\to\infty$ and $f$ is continuous on $\R_+$, then $\varPsi_1(f^n)\to \varPsi_1(f)$ in $\bD$ as $n\to\infty$.
\end{lemma}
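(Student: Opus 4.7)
The plan is to reduce $J_1$-convergence in $\bD$ to uniform convergence on compact intervals and then bound each of the seven summands of $\varPsi_1(f^n)-\varPsi_1(f)$ term by term. Since $f^n\to f$ in $\bD^7$ and the limit $f$ is continuous on $\R_+$, a standard fact about the Skorohod $J_1$ topology (see, e.g., \cite[Ch.~3]{billingsley1999convergence}) yields that for every $T>0$,
\[
\Delta_i^n(T):=\sup_{t\in[0,T]}|f_i^n(t)-f_i(t)|\xrightarrow[n\to\infty]{}0,\qquad i=1,\dots,7.
\]
In particular the $f_i^n$ are uniformly bounded on $[0,T]$ for $n$ large, say by some constant $K_T<\infty$, and so are the products that appear in $\varPsi_1$. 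We will show that $\sup_{t\in[0,T]}|\varPsi_1(f^n)(t)-\varPsi_1(f)(t)|\to 0$, which is sufficient for convergence in $\bD$ to a continuous limit.

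First I would handle the three pointwise terms: $|f_2^n(t)-f_3^n(t)-f_4^n(t)-(f_2(t)-f_3(t)-f_4(t))|\le \Delta_2^n(T)+\Delta_3^n(T)+\Delta_4^n(T)$, uniformly in $t\in[0,T]$. Next, for each of the three single integrals of the form $\int_0^t \phi_2(t,s)g^n(s)\,ds$ with $g^n\in\{f_5^nf_2^n,\,f_5^nf_4^n,\,f_1^nf_7^n\}$, I would write
\[
\Big|\int_0^t \phi_2(t,s)\,g^n(s)\,ds-\int_0^t \phi_2(t,s)\,g(s)\,ds\Big|\le \|\phi_2\|_\infty\,T\,\sup_{s\in[0,T]}|g^n(s)-g(s)|,
\]
and then exploit the factorization $|ab-a'b'|\le |a-a'|\,|b|+|a'|\,|b-b'|$ together with the uniform bound $K_T$ to conclude that $\sup_{s\in[0,T]}|g^n(s)-g(s)|\to 0$. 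The double integral is treated identically: on the compact triangle $\{0\le s\le r\le t\le T\}$ one has
\[
\Big|\int_0^t\!\!\int_s^t \phi_1(t,s,r)\big(f_1^n(r)f_5^n(s)f_6^n(s)-f_1(r)f_5(s)f_6(s)\big)\,dr\,ds\Big|\le \|\phi_1\|_\infty\,\tfrac{T^2}{2}\,\varepsilon_n,
\]
where $\varepsilon_n\to 0$ is a three-fold application of the telescoping identity above. Summing the seven contributions gives $\sup_{t\in[0,T]}|\varPsi_1(f^n)(t)-\varPsi_1(f)(t)|\to 0$ for every $T>0$.

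Finally I would verify that $\varPsi_1(f)$ is itself continuous (so that uniform convergence on compacts implies $J_1$-convergence): $f_2-f_3-f_4$ is continuous by hypothesis, and the single and double integrals define continuous functions of $t$ by dominated convergence, using the uniform bound on $f$ over compacts and the boundedness of $\phi_1,\phi_2$. I do not anticipate a serious obstacle here; the only point requiring mild care is that $\phi_1$ and $\phi_2$ are only assumed Borel measurable and bounded, not continuous, which is precisely why the argument must pull the sup norm of the products out and not attempt any joint continuity argument on $(t,s,r)$. Boundedness of $\phi_1,\phi_2$ combined with uniform convergence of the integrands renders this costless.
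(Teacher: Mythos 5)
Your proposal is correct and takes essentially the same approach as the paper: both use that $J_1$-convergence to a continuous limit is equivalent to local uniform convergence, then bound $\sup_{t\le T}|\varPsi_1(f^n)(t)-\varPsi_1(f)(t)|$ by $C_T\|f^n-f\|_T$ using the uniform boundedness of $(f^n)$ on $[0,T]$ and the boundedness of $\phi_1,\phi_2$. (The final paragraph checking continuity of $\varPsi_1(f)$ is harmless but unnecessary, since uniform convergence on compacts already implies $J_1$-convergence regardless of continuity of the limit.)
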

\begin{proof}
Since $f^n\to f$ in $\bD^7_+,$ as $n\to\infty$ and $f$ is continuous, $\|f^n-f\|_T\to0$ as $n\to\infty$. Consequently $(\|f^n\|_T)_n$ is bounded and it follows easily that there exists $C_T>0$,
	\begin{equation*}
	\|\varPsi_1(f^n)-\varPsi_1(f)\|_T\leq C_T\|f^n-f\|_T.
	\end{equation*}
	Where for $f,g\in\bD^7_+,$ we define,
	\begin{equation*}
	\|f-g\|_T=\sup_{0\le t\le T}|f(t)-g(t)|,
	\end{equation*}
	with $|\cdot|$ the Euclidian norm with respect to the dimension of the space. 
\end{proof}
On the other hand, let $\varPsi_2:\bD_+^6\to \bD$, be given for all $t$ by
\begin{multline}
\varPsi_2(f)(t)
=f_1(t)-f_2(t)-\int_{0}^{t}\overline{\lambda}(t-s)f_3(s)f_4(s)ds-\int_{0}^{t}\overline{\lambda}(t-s)f_5(s)f_4(s)ds+\int_{0}^{t}\overline{\lambda}(t-s)f_6(s)f_1(s)ds,
\end{multline}
for any $f=(f_1,f_2,f_3,f_4,f_5,f_6)\in\bD_+^6$.
\begin{lemma}
	If $f^n\to f$ in $\bD^6_+,$ as $n\to\infty$ and $f$ is continuous on $\R_+$, then $\varPsi_2(f^n)\to \varPsi_2(f)$ in $\bD$ as $n\to\infty$.
\end{lemma}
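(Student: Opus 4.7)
The plan is to follow exactly the template of Lemma~\ref{TCL-cont-map-1-c}: first upgrade the $\bD^6_+$ convergence to uniform convergence on compacts using the continuity of the limit, then exploit the bilinear structure of $\varPsi_2$ together with the boundedness of $\overline{\lambda}$ to produce a Lipschitz-type estimate in the uniform norm.

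More concretely, I would first invoke the standard fact that if $f^n\to f$ in the Skorokhod $J_1$ topology and $f\in\bC$, then the convergence is uniform on every compact interval; by the product structure of $\bD^6_+$ this gives $\|f^n-f\|_T\to 0$ for each fixed $T>0$, where $\|\cdot\|_T$ is defined as in the proof of Lemma~\ref{TCL-cont-map-1-c}. In particular the family $(\|f^n\|_T)_n$ is bounded by some constant $M_T$. Since $\overline{\lambda}(t)=\E[\lambda_{1,1}(t)]\le \lambda_*$ by Assumption~\ref{TCL-AS-lambda-0}, the kernel appearing in each integral term of $\varPsi_2$ is uniformly bounded on $[0,T]$.

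Next I would write each difference of products using the telescoping identity $ab-a'b'=(a-a')b+a'(b-b')$; for instance,
\begin{equation*}
f^n_3(s)f^n_4(s)-f_3(s)f_4(s)=\big(f^n_3(s)-f_3(s)\big)f^n_4(s)+f_3(s)\big(f^n_4(s)-f_4(s)\big),
\end{equation*}
and similarly for the products $f_5 f_4$ and $f_6 f_1$. Substituting into $\varPsi_2(f^n)(t)-\varPsi_2(f)(t)$ and bounding the resulting integrals by $\lambda_* T M_T \|f^n-f\|_T$ on $[0,T]$ yields
\begin{equation*}
\|\varPsi_2(f^n)-\varPsi_2(f)\|_T\le C_T\|f^n-f\|_T,
\end{equation*}
for some constant $C_T$ depending only on $T$, $M_T$, and $\lambda_*$. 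This is the direct analogue of the estimate obtained in the proof of Lemma~\ref{TCL-cont-map-1-c}.

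Letting $n\to\infty$ in the displayed inequality gives $\|\varPsi_2(f^n)-\varPsi_2(f)\|_T\to 0$ for every $T>0$, i.e.\ uniform convergence on compacts, which is stronger than convergence in the $J_1$ topology on $\bD$; in particular $\varPsi_2(f)$ is continuous, so the limit lies in $\bC\subset\bD$, and the conclusion follows. There is no real obstacle here: the only points requiring care are (i) justifying the uniform-on-compacts upgrade via the continuity of $f$, and (ii) using the boundedness of $\overline{\lambda}$ (from Assumption~\ref{TCL-AS-lambda-0}) so that the integral terms are controlled solely by $\|f^n-f\|_T$.
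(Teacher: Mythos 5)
Your proposal is correct and follows essentially the same approach as the paper, which simply states that the proof is ``similar to Lemma~\ref{TCL-cont-map-1-c}'': upgrade the $J_1$-convergence to uniform convergence on compacts via continuity of the limit, then use the telescoping identity and the boundedness of $\overline{\lambda}$ to get a Lipschitz bound $\|\varPsi_2(f^n)-\varPsi_2(f)\|_T\le C_T\|f^n-f\|_T$. Nothing more is needed.
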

The proof is similar to Lemma~\ref{TCL-cont-map-1-c}.

\begin{remark}
If $\varPsi_1\big(\hat{\mathfrak F},\hat{\mathfrak S},\hat{\mathfrak S}_{1,0},\hat{\mathfrak J},\overline{\mathfrak{F}},\overline{\mathfrak{S}},\overline{\mathfrak{S}}\big)=0$ and
	$\varPsi_2\big(\hat{\mathfrak F},\hat{\mathfrak S},\hat{\mathfrak M},\hat{\mathfrak J},\overline{\mathfrak{F}},\overline{\mathfrak{S}},\overline{\mathfrak{S}}\big)=0,$ then we have a solution of \eqref{TCL-in-uni-1}-\eqref{TCL-in-uni-2}.
\end{remark}
\subsection{Proof of Theorem~\ref{TCL}}
We refer to Section~\ref{TCL-sec-tight-r} for the proof of the following tightness result.
\begin{lemma}\label{TCL-tight-F-2}
	The sequence $\big(\hat{\mathfrak S}^N,\hat{\mathfrak F}^N,\hat{\mathfrak S}^N_2,\hat{\mathfrak F}^N_2,\hat{\mathfrak S}^N_{1,0},\hat{\mathfrak S}^N_{1,1}\big)_{N\ge1}$ is tight in $\bD^6$. 
\end{lemma}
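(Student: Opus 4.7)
The plan is to exploit the fact that $\bD^6$ carries the product $J_1$ topology, so that tightness of the six-tuple reduces to tightness of each coordinate separately. Tightness of $\big(\hat{\mathfrak S}^N_2,\hat{\mathfrak F}^N_2\big)$ is immediate from Lemma~\ref{TCL-cv-F2-G2}, which already establishes their joint weak convergence. For each of the remaining four coordinates I would verify the Aldous--Kolmogorov criterion in $\bD$, namely uniform $L^2$-boundedness on compact intervals together with a fourth-moment increment estimate of the form
\[
\E\left[\left|\hat X^N(t)-\hat X^N(s)\right|^{4}\right]\le C_T\,(t-s)^{1+\delta},\qquad 0\le s\le t\le T,
\]
for some $\delta>0$; the choice of fourth rather than second moments is dictated by the presence of piecewise jumps in $\lambda$ and $\gamma$ at the random thresholds $\xi^j,\zeta^j$, which are absorbed via Assumption~\ref{TCL-AS-lambda-2}.

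For $\hat{\mathfrak S}^N_{1,0}$, conditionally on the processes $\overline{\mathfrak F}^N$ and $\overline{\mathfrak F}$ the summands are independent, centered, bounded random variables. The elementary pointwise bound
\[
\bigl|\mathds{1}_{P_k(0,t,\gamma_{k,0},\overline{\mathfrak F}^N)=0}-\mathds{1}_{P_k(0,t,\gamma_{k,0},\overline{\mathfrak F})=0}\bigr|\le \mathds{1}_{P_k(0,t,\gamma_{k,0},\overline{\mathfrak F}^N\vee\overline{\mathfrak F})\ge 1},
\]
combined with $\overline{\mathfrak F}^N,\overline{\mathfrak F}\le\lambda_*$, with the continuity estimate on each piece $\gamma^j$ in Assumption~\ref{TCL-AS-lambda-1}, and with the H{\"o}lder-type c.d.f.\ bound~\eqref{hypF} on the jump locations $\zeta^j$, yields the required increment estimate.

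For $\hat{\mathfrak S}^N_{1,1}$ the argument is substantially more delicate because the integrand contains $\mathds{1}_{P_k(s,t,\gamma,\overline{\mathfrak F}^N)=0}-\mathds{1}_{P_k(s,t,\gamma,\overline{\mathfrak F})=0}$, which is not $Q_k$-predictable: its value at time $s$ depends on the realization of $Q_k$ over the whole future window $[s,t]$. To compute fourth moments of such anticipating stochastic integrals I would invoke the moment formula of Theorem~\ref{TCL-th-1}. The resulting bound couples back to the number of infection descendants of each individual under $\overline{\mathfrak F}^N$; this is where the quarantine-coupling moment estimates of Lemmas~\ref{TCL-A-9-lem_inq}--\ref{TCL-A-9-lem-inq2} enter and close the fourth-moment increment bound. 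Tightness of $\hat{\mathfrak S}^N$ and $\hat{\mathfrak F}^N$ then follows from the self-referential decompositions \eqref{TCL-eqc10}--\eqref{TCL-eq-eqc10-1}: the driving pieces $\hat{\mathfrak S}^N_{1,0}$, $\hat{\mathfrak S}^N_{1,1}$, $\hat{\mathfrak S}^N_2$, $\hat{\mathfrak F}^N_2$ have the desired increment estimates by the above, while the remaining summands depend linearly on $\hat{\mathfrak S}^N,\hat{\mathfrak F}^N$ through kernels bounded by $\lambda_*$ and $1$. Writing the resulting integral inequality for $\E\bigl[|\hat{\mathfrak S}^N(t)-\hat{\mathfrak S}^N(s)|^4+|\hat{\mathfrak F}^N(t)-\hat{\mathfrak F}^N(s)|^4\bigr]$ and applying Gronwall's lemma transfers the bound to the full processes.

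The central obstacle is the treatment of $\hat{\mathfrak S}^N_{1,1}$: the non-predictability of $\mathds{1}_{P_k(s,t,\gamma,\cdot)=0}$ prevents any direct application of the classical It{\^o} isometry or Burkholder--Davis--Gundy inequality, and simultaneously $\Upsilon^N_k$ is itself a functional of $\overline{\mathfrak F}^N$, so the perturbations one is trying to control feed back into their own intensity through the whole population. The anticipating moment formula (Theorem~\ref{TCL-th-1}) and the quarantine approximation (Lemmas~\ref{TCL-A-9-lem_inq}--\ref{TCL-A-9-lem-inq2}) are precisely the tools designed to decouple these two sources of interaction and render the fourth-moment computation tractable at the $\sqrt{N}$ scale.
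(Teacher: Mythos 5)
Your high-level plan shares several elements with the paper's proof: reducing to coordinate-wise tightness, dispatching $(\hat{\mathfrak S}^N_2,\hat{\mathfrak F}^N_2)$ through Lemma~\ref{TCL-cv-F2-G2}, attacking the anticipating integrals via Theorem~\ref{TCL-th-1} together with the quarantine Lemmas~\ref{TCL-A-9-lem_inq}--\ref{TCL-A-9-lem-inq2}, and decomposing increments of $\lambda,\gamma$ into a H\"older piece plus a threshold-crossing piece controlled by \eqref{hypF}. However, two of your proposed steps have genuine problems.

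The fourth-moment Kolmogorov criterion $\E\big[|\hat X^N(t)-\hat X^N(s)|^4\big]\le C_T(t-s)^{1+\delta}$ uniformly in $N$ cannot hold for these processes, because for each fixed $N$ they have c\`adl\`ag paths with jumps of size $O(N^{-1/2})$ occurring at the infection times and at the random thresholds $\xi^j_k,\zeta^j_k$. Over an interval $(s,t)$ the expected number of such jumps is of order $N(t-s)$, so the jump contribution to the fourth moment is of order $N(t-s)\cdot N^{-2}=(t-s)/N$, which is linear in $t-s$; for $t-s<(CN)^{-1/\delta}$ this exceeds $C(t-s)^{1+\delta}$, so the bound fails for arbitrarily small increments at any fixed $N$. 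The paper avoids this by invoking Lemma~\ref{TCL-Lem-20}, which only demands $\limsup_N \sup_t \frac{1}{\delta}\P\big(\sup_{u\le\delta}|\hat X^N(t+u)-\hat X^N(t)|>\ep\big)\to 0$ as $\delta\to0$: that criterion tolerates an $N$-decaying error term in the increment-probability bound, and is precisely what makes \emph{second}-moment Markov estimates (using $2\alpha>1$, $2\rho>1$ from Assumption~\ref{TCL-AS-lambda-2}) sufficient. Switching to fourth moments is therefore both harder and, in the form you propose, unsound.

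Your closing step also leaves a gap. Equation \eqref{TCL-eq-eqc10-1} is not a self-referential integral equation for $\hat{\mathfrak S}^N$: it is the static identity $\hat{\mathfrak S}^N_1=\hat{\mathfrak S}^N_{1,0}+\hat{\mathfrak S}^N_{1,1}+\hat{\mathfrak S}^N_{1,2}$, and the third summand $\hat{\mathfrak S}^N_{1,2}$ --- the stochastic integral carrying the $\sign(\Upsilon_k^N-\Upsilon_k)$ factor and $\mathds{1}_{P_k(s,t,\gamma,\overline{\mathfrak F})=0}$ --- is neither among your declared driving pieces nor a linear functional of $(\hat{\mathfrak S}^N,\hat{\mathfrak F}^N)$, so a Gronwall iteration cannot absorb it. The paper devotes a separate lemma (Lemma~\ref{TCL-tight-F2}) to its tightness, whose proof requires the non-trivial cross-moment estimate \eqref{TCL-eq-in-1} and the quarantine coupling. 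Similarly, the leading stochastic integral $\hat{\mathfrak F}^N_{1,1}$ in \eqref{TCL-eqc10} is handled directly through the process $\Xi^N_1$ in Lemma~\ref{TCL-tight-F} rather than being absorbed by iteration. Your plan therefore leaves two of the anticipating stochastic integrals unaccounted for.
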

We have the following characterisation for the limit of any converging subsequence, of $\hat{\mathfrak S}_{1,0}^N$ in $\bD$ and we refer to Section~\ref{TCL-sec-G10} for the proof.

\begin{lemma}\label{TCL-lem-L2}
	Let $\big(\hat{\mathfrak F},\hat{\mathfrak S}_{1,0}\big)$ be a limit of a converging subsequence of $\big(\hat{\mathfrak F}^N,\hat{\mathfrak S}_{1,0}^N\big)$. Then, almost surely, 
	\begin{equation*}
	\hat{\mathfrak S}_{1,0}(t)=-\int_{0}^{t}\E\left[\gamma_{0}(t)\gamma_{0}(s)\exp\left(-\int_{0}^{t}\gamma_{0}(r)\overline{\mathfrak F}(r)dr\right)\right]\hat{\mathfrak F}(s)ds.
	\end{equation*}
\end{lemma}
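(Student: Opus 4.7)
The plan is to linearize the indicator differences in $\hat{\mathfrak S}^N_{1,0}$ via a first-order Taylor expansion of $e^{-\cdot}$, match them to a continuous linear functional of $\hat{\mathfrak F}^N$, and pass to the limit. Along the converging subsequence, Skorokhod's representation theorem combined with the $\bC$-tightness from Lemma~\ref{TCL-tight-F-2} and the continuity of $\overline{\mathfrak F}$ (from the FLLN) provides almost-sure uniform convergence on compacts of $(\overline{\mathfrak F}^N,\hat{\mathfrak F}^N,\hat{\mathfrak S}^N_{1,0})$ to $(\overline{\mathfrak F},\hat{\mathfrak F},\hat{\mathfrak S}_{1,0})$.

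The core difficulty is that $\overline{\mathfrak F}^N$ depends on $Q_k$, so the indicator $\mathds{1}_{P_k(0,t,\gamma_{k,0},\overline{\mathfrak F}^N)=0}$ does not directly have conditional expectation $\exp(-\int_0^t\gamma_{k,0}(r)\overline{\mathfrak F}^N(r)dr)$. To bypass this, I introduce the leave-one-out force of infection $\overline{\mathfrak F}^{N,-k}$, computed without the contribution (direct and through descendants on $[0,T]$) of individual $k$; this quantity is independent of $Q_k$. The quarantine/coupling estimates of Lemmas~\ref{TCL-A-9-lem_inq}--\ref{TCL-A-9-lem-inq2} give $\|\overline{\mathfrak F}^N-\overline{\mathfrak F}^{N,-k}\|_T=O(1/N)$ uniformly in $k$, so after the $\sqrt{N}$-scaling the replacement inside the indicator is $o_{\P}(1)$. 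I then decompose $\hat{\mathfrak S}^N_{1,0}=R^N+\mathfrak M^N$, where
\[R^N(t)=\frac{1}{\sqrt{N}}\sum_{k=1}^N\gamma_{k,0}(t)\Bigl(e^{-\int_0^t\gamma_{k,0}(r)\overline{\mathfrak F}^{N,-k}(r)dr}-e^{-\int_0^t\gamma_{k,0}(r)\overline{\mathfrak F}(r)dr}\Bigr)\]
is the conditional-mean part (using the independence of $Q_k$ and $\overline{\mathfrak F}^{N,-k}$) and $\mathfrak M^N$ is conditionally centered. Since each summand in $\mathfrak M^N$ is bounded and has probability $O(\|\overline{\mathfrak F}^N-\overline{\mathfrak F}\|_T)=O(1/\sqrt{N})$ of being nonzero, a (conditional) variance estimate yields $\E[\mathfrak M^N(t)^2]\to 0$, and the PRM maximal inequality (Theorem~\ref{TCL-th-1}) upgrades this to $\sup_{t\le T}|\mathfrak M^N(t)|\to 0$ in probability.

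For $R^N$, I apply $e^{-a}-e^{-b}=-(a-b)e^{-b}+O((a-b)^2)$ with $a-b=N^{-1/2}\int_0^t\gamma_{k,0}(r)\hat{\mathfrak F}^{N,-k}(r)\,dr$, so that the quadratic remainder contributes at most $O(\|\hat{\mathfrak F}^N\|_T^2/\sqrt{N})\to 0$ by the tightness of $\hat{\mathfrak F}^N$. After also swapping $\hat{\mathfrak F}^{N,-k}$ for $\hat{\mathfrak F}^N$ (error $O(1/\sqrt{N})$), $R^N(t)$ reduces to
\[-\int_0^t\Bigl(\frac{1}{N}\sum_{k=1}^N\gamma_{k,0}(t)\gamma_{k,0}(s)e^{-\int_0^t\gamma_{k,0}(r)\overline{\mathfrak F}(r)dr}\Bigr)\hat{\mathfrak F}^N(s)\,ds.\]
A strong law of large numbers for the bounded i.i.d.\ random functionals $\gamma_{k,0}(t)\gamma_{k,0}(s)e^{-\int_0^t\gamma_{k,0}(r)\overline{\mathfrak F}(r)dr}$, combined with the uniform convergence $\hat{\mathfrak F}^N\to\hat{\mathfrak F}$ and dominated convergence in the $s$-integral, identifies the almost-sure limit as $-\int_0^t\E\!\left[\gamma_0(t)\gamma_0(s)e^{-\int_0^t\gamma_0(r)\overline{\mathfrak F}(r)dr}\right]\hat{\mathfrak F}(s)\,ds$, which is the claim. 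The hardest ingredient is the uniform smallness of $\mathfrak M^N$, since it relies on both the leave-one-out coupling to secure the right conditional independence structure and the moment machinery for PRM-driven sums developed earlier; by contrast, the Taylor linearization and the final LLN step are routine given the tightness and bounded i.i.d.\ structure.
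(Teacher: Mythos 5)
Your proposal takes essentially the same route as the paper: decompose $\hat{\mathfrak S}^N_{1,0}$ into a (conditionally) centered fluctuation plus a smooth exponential part, kill the fluctuation via a variance estimate using a quarantine/coupling construction, Taylor-linearize the exponential part, and finish with the LLN plus Slutsky. However, two points need to be tightened.

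First, the cross-term control in $\E\big[\mathfrak M^N(t)^2\big]$ is not a one-line "conditional variance" step. By exchangeability, $\E\big[\mathfrak M^N(t)^2\big]=\E[\mathfrak m_1^2]+(N-1)\E[\mathfrak m_1\mathfrak m_2]$, so you need $\E[\mathfrak m_1\mathfrak m_2]=o(1/N)$, not merely $O(1/N)$. Conditioning on the leave-one-out force $\overline{\mathfrak F}^{N,-k}$ makes each $\mathfrak m_k$ individually centered, but $\mathfrak m_1$ and $\mathfrak m_2$ are not conditionally independent given any single $\sigma$-algebra, since $\overline{\mathfrak F}^{N,-1}$ depends on $Q_2$ and $\overline{\mathfrak F}^{N,-2}$ on $Q_1$. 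The paper resolves this in the proof of Lemma~\ref{TCL-Lem-eq-1} by passing to the two-out quarantine $\overline{\mathfrak F}^N_{(1,2)}$, which yields the exact factorization $\E[\tilde\chi^N_1(t)\tilde\chi^N_2(t)]=0$, and then shows the replacement error is $o(1/N)$; Cauchy--Schwarz with only the $O(N^{-1/2})$ bound on each summand gives $O(1/N)$, which is the borderline and does not suffice. Your leave-one-out must be upgraded to the leave-two-out coupling for this step to close.

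Second, Theorem~\ref{TCL-th-1} is not a maximal inequality: it is a first-moment identity for stochastic integrals with non-predictable integrands against Poisson random measures. It does not upgrade pointwise convergence to uniform. The paper in fact only proves $\Xi^N_{1,0}(t)\to 0$ in probability for each fixed $t$ (Corollary~\ref{TCL-eqr-2}); the uniform/tightness control is established independently in Lemma~\ref{TCL-tight-F4}. Once the subsequence $\hat{\mathfrak S}^N_{1,0}$ is already assumed to converge (as in the statement of Lemma~\ref{TCL-lem-L2}), pointwise identification of the limit for each $t$ is all that is needed, so you can drop the claimed uniform control of $\mathfrak M^N$ and the appeal to a nonexistent maximal inequality.
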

Taking
\begin{equation*}
	\phi_1(t,s,r)=\E\left[\gamma(t-s)\gamma(r-s)\exp\left(-\int_{s}^{t}\gamma(u-s)\overline{\mathfrak F}(u)du\right)\right],
\end{equation*}
and 
\begin{equation*}
	\phi_2(t,s)=\E\left[\gamma(t-s)\exp\left(-\int_{s}^{t}\gamma(u-s)\overline{\mathfrak F}(u)du\right)\right] 
\end{equation*}
in \eqref{TCL-uni-c1} we can establish the following Lemma:
\begin{lemma}\label{TCL-lem-L1} Under Assumptions~\ref{TCL-AS-lambda-0}--\ref{TCL-AS-lambda}, as $N\to\infty,$
	\begin{equation*}
	\varPsi_1\big(\hat{\mathfrak F}^N,\hat{\mathfrak S}^N,\hat{\mathfrak S}^N_{1,0},\hat{\mathfrak S}^N_2,\overline{\mathfrak{F}}^N,\overline{\mathfrak{S}}^N,\tilde{\mathfrak{S}}^N\big)\Rightarrow0 \quad \text{ in }\quad \bD,
	\end{equation*}
	and 
	\begin{equation*}
		\varPsi_2\big(\hat{\mathfrak F}^N,\hat{\mathfrak S}^N,\hat{\mathfrak F}^N_2,\hat{\mathfrak S}^N_2,\overline{\mathfrak{F}}^N,\overline{\mathfrak{S}}^N,\tilde{\mathfrak{S}}^N\big)\Rightarrow0 \quad \text{ in }\quad \bD.
	\end{equation*}
\end{lemma}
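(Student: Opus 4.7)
The plan is to recognize both $\varPsi_1(\ldots)$ and $\varPsi_2(\ldots)$ as compensated stochastic integrals with respect to the Poisson random measures $Q_k$, modulo negligible remainders: the drifts involving $\phi_1,\phi_2$ (resp.\ $\overline\lambda$) are tailored precisely to absorb the predictable compensators of the jump processes $\hat{\mathfrak S}^N_{1,1},\hat{\mathfrak S}^N_{1,2}$ (resp.\ of the centred PRM integral inside $\hat{\mathfrak F}^N_1$ in \eqref{TCL-eqc10}), leaving only vanishing martingale residues and a higher-order Taylor remainder.

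I would dispose of $\varPsi_2$ first, as it is simpler. Substituting \eqref{TCL-eqc10} and using $\hat{\mathfrak F}^N=\hat{\mathfrak F}^N_1+\hat{\mathfrak F}^N_2$, the expression collapses algebraically to the single compensated integral
\[
\mathcal M^N_2(t):=\frac{1}{\sqrt N}\sum_{k=1}^N\int_0^t\!\int_{\bD^2}\!\int_{\Upsilon_k(s^-)\wedge\Upsilon_k^N(s^-)}^{\Upsilon_k(s^-)\vee\Upsilon_k^N(s^-)}\!\lambda(t-s)\,\sign(\Upsilon_k^N(s^-)-\Upsilon_k(s^-))\,\overline Q_k(ds,d\lambda,d\gamma,du).
\]
For fixed $t$, the non-predictable $L^2$-isometry (Theorem~\ref{TCL-th-1}) gives $\E|\mathcal M^N_2(t)|^2\le\lambda_*^2\,\E\!\int_0^t|\Upsilon_1^N(s)-\Upsilon_1(s)|ds$, which vanishes by the FLLN $\Upsilon_1^N\to\Upsilon_1$, while Assumption~\ref{TCL-AS-lambda}(i) controls the modulus of continuity in $t$ and furnishes tightness in $\bD$; hence $\varPsi_2(\ldots)\Rightarrow 0$.

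For $\varPsi_1$, I would split $\hat{\mathfrak S}^N-\hat{\mathfrak S}^N_{1,0}-\hat{\mathfrak S}^N_2=\hat{\mathfrak S}^N_{1,1}+\hat{\mathfrak S}^N_{1,2}$ via \eqref{TCL-eq-eqc10-1} and compensate each summand against the mean measure $ds\,\P(d\lambda,d\gamma)\,du$ of $Q_k$. Conditioning on the atoms of $Q_k$ in the complement of $(s,t]\times\bD^2\times\R_+$ and on the marks of the atom at $s$, the conditional expectation of $\mathds 1_{P_k(s,t,\gamma,\overline{\mathfrak F}^N)=0}-\mathds 1_{P_k(s,t,\gamma,\overline{\mathfrak F})=0}$ equals $\exp(-\int_s^t\gamma(r-s)\overline{\mathfrak F}^N(r)dr)-\exp(-\int_s^t\gamma(r-s)\overline{\mathfrak F}(r)dr)$; a first-order Taylor expansion together with $\overline{\mathfrak F}^N-\overline{\mathfrak F}=N^{-1/2}\hat{\mathfrak F}^N$ and the identity $N^{-1}\sum_k\Upsilon_k^N(s)=\overline{\mathfrak S}^N(s)\overline{\mathfrak F}^N(s)$ shows that the compensator of $\hat{\mathfrak S}^N_{1,1}$ equals $-\int_0^t\!\int_s^t\phi_1(t,s,r)\hat{\mathfrak F}^N(r)\overline{\mathfrak S}^N(s)\overline{\mathfrak F}^N(s)\,dr\,ds$ plus a quadratic remainder $R^N$ bounded by $CN^{-1/2}\|\hat{\mathfrak F}^N\|_T^2$. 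For $\hat{\mathfrak S}^N_{1,2}$, the inner $u$-integral returns $\Upsilon_k^N-\Upsilon_k$, the conditional probability of $\{P_k(s,t,\gamma,\overline{\mathfrak F})=0\}$ is $\exp(-\int_s^t\gamma(r-s)\overline{\mathfrak F}(r)dr)$, and \eqref{eqc1-2} converts $N^{-1/2}\sum_k(\Upsilon_k^N-\Upsilon_k)$ into $\hat{\mathfrak S}^N\overline{\mathfrak F}^N-\hat{\mathfrak S}^N_2\overline{\mathfrak F}^N+\tilde{\mathfrak S}^N\hat{\mathfrak F}^N$, so its compensator is $\int_0^t\phi_2(t,s)\bigl[\hat{\mathfrak S}^N\overline{\mathfrak F}^N-\hat{\mathfrak S}^N_2\overline{\mathfrak F}^N+\tilde{\mathfrak S}^N\hat{\mathfrak F}^N\bigr](s)\,ds$. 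These two compensators cancel exactly the $\phi_1$- and $\phi_2$-drifts in $\varPsi_1$, and what remains are the martingale residues $\mathcal M^N_{1,1}+\mathcal M^N_{1,2}$ plus the Taylor remainder $R^N$. Theorem~\ref{TCL-th-1} combined with Assumption~\ref{TCL-AS-lambda} yields $\mathcal M^N_{1,j}\Rightarrow 0$ in $\bD$, and the tightness of $\hat{\mathfrak F}^N$ from Lemma~\ref{TCL-tight-F-2} gives $R^N\to 0$ in probability.

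The main obstacle is the variance estimation of the martingale residues $\mathcal M^N_2,\mathcal M^N_{1,1},\mathcal M^N_{1,2}$: their integrands---$\lambda(t-\cdot)$, $\gamma(t-\cdot)$ and the survival indicators $\mathds 1_{P_k(s,t,\gamma,\overline{\mathfrak F})=0}$---all depend on atoms of $Q_k$ in the future interval $(s,t]$, so they are \emph{not} predictable with respect to the natural filtration and the classical $L^2$-isometry for compensated PRM integrals fails. This is precisely the non-classical setting that motivates Theorem~\ref{TCL-th-1}; its application, together with the $L^2$-regularity in Assumption~\ref{TCL-AS-lambda}, is the indispensable technical input closing these estimates.
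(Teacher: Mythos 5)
Your high-level scheme --- Taylor-expand the exponentials, match the drift terms in $\varPsi_1,\varPsi_2$ with compensators of the jump terms $\hat{\mathfrak S}^N_{1,1},\hat{\mathfrak S}^N_{1,2},\hat{\mathfrak F}^N_{1,1}$, and send the residual martingale-type terms to zero --- is the same overall architecture as the paper's. However, the step you call "Conditioning on the atoms of $Q_k$ in the complement of $(s,t]$\dots the conditional expectation of $\mathds 1_{P_k(s,t,\gamma,\overline{\mathfrak F}^N)=0}-\mathds 1_{P_k(s,t,\gamma,\overline{\mathfrak F})=0}$ equals $\exp(-\int_s^t\gamma(r-s)\overline{\mathfrak F}^N(r)dr)-\exp(-\int_s^t\gamma(r-s)\overline{\mathfrak F}(r)dr)$" is false as stated, and this is a genuine gap, not a detail. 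The rate process $\overline{\mathfrak F}^N$ on $(s,t]$ is itself a function of the atoms of $Q_k$ inside $(s,t]$ (through $\Upsilon^N_k$), so it is \emph{not} measurable with respect to the $\sigma$-algebra you propose to condition on, and the conditional law of $P_k(s,t,\gamma,\overline{\mathfrak F}^N)$ given that $\sigma$-algebra is not Poisson with intensity $\gamma(\cdot-s)\overline{\mathfrak F}^N(\cdot)$. Consequently the "compensator of $\hat{\mathfrak S}^N_{1,1}$" you write down does not exist in that form, and the exact cancellation you claim against the $\phi_1$-drift cannot be justified this way.

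This is precisely the obstruction that forces the paper to introduce the quarantine model of Section~\ref{TCL-sub-qua}: one replaces $\overline{\mathfrak F}^N$ by the quarantined $\overline{\mathfrak F}^N_{(1,2)}$ (independent of $Q_1$ and $Q_2$), for which the conditional-expectation identity you need \emph{does} hold, and then bounds the discrepancy by the geometric random variable $Y$ of \eqref{TCL-eq1}. That decorrelation step is the backbone of the proofs of Lemmas~\ref{TCL-l2}, \ref{TCL-l3}, and \ref{TCL-lem7}, which compute $\E[\chi^N_1\chi^N_2]$ to be $o(1/N)$ by passing to $\tilde\chi^N_k$ built from $\overline{\mathfrak F}^N_{(1,2)}$; your proposal never addresses these cross terms, and simply summing $N$ correlated residues with a naive variance bound would not give the factor $N^{-1}$ needed. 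A secondary slip: Theorem~\ref{TCL-th-1} is a first-moment identity, not an $L^2$-isometry; the $L^2$ bound you cite for $\mathcal M^N_2$ actually follows from the \emph{classical} compensated-PRM isometry (the integrand $\lambda(t-s)$ depends on the mark of the atom and the bounds $\Upsilon^N_k(r^-),\Upsilon_k(r^-)$ are predictable), while for $\hat{\mathfrak S}^N_{1,1},\hat{\mathfrak S}^N_{1,2}$ no such isometry is available because of the forward-looking indicator, and that is where the quarantine argument is indispensable.
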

	We refer to Section~\ref{TCL-sec-L1} for the proof.

Hence the following characterisation follows:
\begin{lemma}\label{TCL-Lem-L3}
	Let $\big(\hat{\mathfrak S},\hat{\mathfrak F},\hat{\mathfrak S}_{1,0}\big)$ be a limit of a converging subsequence, of  $\big(\hat{\mathfrak S}^N,\hat{\mathfrak F}^N,\hat{\mathfrak S}_{1,0}^N\big)$ in $\bD^3$. Then almost surely,
	\begin{equation}\label{TCL-lem-eq-L1}
	\begin{cases}{}
	\varPsi_1\big(\hat{\mathfrak F},\hat{\mathfrak S},\hat{\mathfrak S}_{1,0},\hat{\mathfrak J},\overline{\mathfrak{F}},\overline{\mathfrak{S}},\overline{\mathfrak{S}}\big)=0, \\
	\varPsi_2\big(\hat{\mathfrak F},\hat{\mathfrak S},\hat{\mathfrak M},\hat{\mathfrak J},\overline{\mathfrak{F}},\overline{\mathfrak{S}},\overline{\mathfrak{S}}\big)=0,
	\end{cases}
	\end{equation}
	where we recall that the pair $(\hat{\mathfrak J},\hat{\mathfrak M})$ is given by Definition~\ref{TCL-def-1}.
\end{lemma}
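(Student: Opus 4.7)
The plan is straightforward in principle: combine Lemma~\ref{TCL-tight-F-2} (tightness), Lemma~\ref{TCL-cv-F2-G2} (identification of the limit of $(\hat{\mathfrak S}^N_2,\hat{\mathfrak F}^N_2)$), Theorem~\ref{thm-FLLN} (FLLN), and Lemma~\ref{TCL-lem-L1} (asymptotic identities) through a joint subsequence extraction, and then transport the identities of Lemma~\ref{TCL-lem-L1} to the limit point via the continuous mapping theorem (activated by Lemma~\ref{TCL-cont-map-1-c}).

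First, start from the given converging subsequence $(\hat{\mathfrak S}^{N_k},\hat{\mathfrak F}^{N_k},\hat{\mathfrak S}^{N_k}_{1,0})\Rightarrow(\hat{\mathfrak S},\hat{\mathfrak F},\hat{\mathfrak S}_{1,0})$ in $\bD^3$. Using Lemma~\ref{TCL-tight-F-2} and Prokhorov's theorem, extract a further sub-subsequence (still indexed by $N$) along which the whole $6$-tuple $(\hat{\mathfrak S}^N,\hat{\mathfrak F}^N,\hat{\mathfrak S}^N_2,\hat{\mathfrak F}^N_2,\hat{\mathfrak S}^N_{1,0},\hat{\mathfrak S}^N_{1,1})$ converges jointly in $\bD^6$; by Lemma~\ref{TCL-cv-F2-G2}, the marginal limit of $(\hat{\mathfrak S}^N_2,\hat{\mathfrak F}^N_2)$ is $(\hat{\mathfrak J},\hat{\mathfrak M})$. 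Next, adjoin the FLLN-level components $(\overline{\mathfrak F}^N,\overline{\mathfrak S}^N,\tilde{\mathfrak S}^N)$: the first two converge in probability in $\bD^2$ to the deterministic pair $(\overline{\mathfrak F},\overline{\mathfrak S})$ by Theorem~\ref{thm-FLLN}, and $\tilde{\mathfrak S}^N$, being an empirical mean of i.i.d. $\bD$-valued random variables whose increments are controlled by Assumption~\ref{TCL-AS-lambda}, converges in probability in $\bD$ to $\overline{\mathfrak S}$ by a $\bD$-valued law of large numbers. Since these three additional limits are deterministic, Slutsky's theorem yields
\begin{equation*}
(\hat{\mathfrak F}^N,\hat{\mathfrak S}^N,\hat{\mathfrak S}^N_{1,0},\hat{\mathfrak S}^N_2,\hat{\mathfrak F}^N_2,\overline{\mathfrak F}^N,\overline{\mathfrak S}^N,\tilde{\mathfrak S}^N)\Rightarrow(\hat{\mathfrak F},\hat{\mathfrak S},\hat{\mathfrak S}_{1,0},\hat{\mathfrak J},\hat{\mathfrak M},\overline{\mathfrak F},\overline{\mathfrak S},\overline{\mathfrak S}).
\end{equation*}

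Second, to apply Lemma~\ref{TCL-cont-map-1-c}, argue that the limit takes values in $\bC$ almost surely: $\hat{\mathfrak J},\hat{\mathfrak M}$ are continuous by Kolmogorov's criterion under Assumption~\ref{TCL-AS-lambda} (as already noted after Definition~\ref{TCL-def-1}); $\overline{\mathfrak F},\overline{\mathfrak S}$ are continuous from their integral representations \eqref{eqG2s-G}-\eqref{eqG2s-F}; and $\hat{\mathfrak S},\hat{\mathfrak F},\hat{\mathfrak S}_{1,0}$ lie in $\bC$ almost surely because the increment and maximal-inequality estimates (Lemmas~\ref{TCL-A-9-lem_inq}-\ref{TCL-A-9-lem-inq2}) used in the proof of Lemma~\ref{TCL-tight-F-2} in fact yield $\bC$-tightness, forcing any weak limit to have no jumps. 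Hence Lemma~\ref{TCL-cont-map-1-c} applies at the limit point and gives
\begin{equation*}
\varPsi_1(\hat{\mathfrak F}^N,\hat{\mathfrak S}^N,\hat{\mathfrak S}^N_{1,0},\hat{\mathfrak S}^N_2,\overline{\mathfrak F}^N,\overline{\mathfrak S}^N,\tilde{\mathfrak S}^N)\Rightarrow\varPsi_1(\hat{\mathfrak F},\hat{\mathfrak S},\hat{\mathfrak S}_{1,0},\hat{\mathfrak J},\overline{\mathfrak F},\overline{\mathfrak S},\overline{\mathfrak S}),
\end{equation*}
and likewise for $\varPsi_2$ with $\hat{\mathfrak F}^N_2\to\hat{\mathfrak M}$ in the third slot. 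Lemma~\ref{TCL-lem-L1} identifies the weak limit of each left-hand side with the deterministic zero process, so both right-hand sides must vanish almost surely, which is exactly the system~\eqref{TCL-lem-eq-L1}.

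The only delicate point is the $\bC$-valuedness of the limit, which is what permits invoking Lemma~\ref{TCL-cont-map-1-c}; once that is in hand the rest is a routine assembly of subsequence extraction, Slutsky, and the continuous mapping theorem. In particular, the proof does \emph{not} require uniqueness of the limit $(\hat{\mathfrak S},\hat{\mathfrak F},\hat{\mathfrak S}_{1,0})$ at this stage, only that any such limit satisfies \eqref{TCL-lem-eq-L1}; uniqueness is subsequently supplied by Lemma~\ref{TCL-exist}.
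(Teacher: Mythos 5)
Your proof is correct and follows essentially the same route as the paper: extract a further sub-subsequence along which the full tuple $(\hat{\mathfrak F}^N,\hat{\mathfrak S}^N,\hat{\mathfrak S}^N_{1,0},\hat{\mathfrak S}^N_2,\hat{\mathfrak F}^N_2,\overline{\mathfrak{F}}^N,\overline{\mathfrak{S}}^N,\tilde{\mathfrak{S}}^N)$ converges jointly, identify the extra limits via Lemma~\ref{TCL-cv-F2-G2} and Theorem~\ref{thm-FLLN}, and then push Lemma~\ref{TCL-lem-L1} through the continuous mapping theorem (enabled by Lemma~\ref{TCL-cont-map-1-c}). The paper compresses this into two sentences and leaves implicit the two points you spell out — namely that the deterministic FLLN-level limits can be adjoined by Slutsky, and that the limit tuple is $\bC$-valued (a prerequisite for Lemma~\ref{TCL-cont-map-1-c}) because the tightness in Lemma~\ref{TCL-tight-F-2} is in fact $\bC$-tightness — but your more explicit exposition does not constitute a different argument, only a more careful record of the same one.
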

\begin{proof}

	As the space $\bD$ is separable, and $\overline{\mathfrak{F}}^N,\,\overline{\mathfrak{S}}^N$, and $\tilde{\mathfrak{S}}^N$ are tight in $\bD$, from Lemma~\ref{TCL-tight-F-2}. 
	 $$\big(\hat{\mathfrak F}^N,\hat{\mathfrak S}^N,\hat{\mathfrak S}^N_{1,0},\hat{\mathfrak S}^N_2,\overline{\mathfrak{F}}^N,\overline{\mathfrak{S}}^N,\tilde{\mathfrak{S}}^N\big)$$ is tight in $\bD^7$. We can extract a subsequence denoted again $\big(\hat{\mathfrak F}^N,\hat{\mathfrak S}^N,\hat{\mathfrak S}^N_{1,0},\hat{\mathfrak S}^N_2,\overline{\mathfrak{F}}^N,\overline{\mathfrak{S}}^N,\tilde{\mathfrak{S}}^N\big)$ that converges to 
	 $\big(\hat{\mathfrak F},\hat{\mathfrak S},\hat{\mathfrak S}_{1,0},\hat{\mathfrak J},\overline{\mathfrak{F}},\overline{\mathfrak{S}},\overline{\mathfrak{S}} \big)$ in law in $\bD^7$. 
	By Lemma~\ref{TCL-lem-L1}, and the continuous mapping theorem, \eqref{TCL-lem-eq-L1} follows.
\end{proof}
Note that
 \begin{multline}
 \varPsi_1(\hat{\mathfrak F},\hat{\mathfrak S},\hat{\mathfrak S}_{1,0},\hat{\mathfrak J},\overline{\mathfrak{F}},\overline{\mathfrak{S}},\tilde{\mathfrak{S}})(t)=\hat{\mathfrak S}(t)-\hat{\mathfrak S}_{1,0}(t)-\hat{\mathfrak J}(t)\\
 \begin{aligned}
 &+\int_{0}^{t}\int_{s}^{t}\E\left[\gamma(t-s)\gamma(r-s)\exp\left(-\int_{s}^{t}\gamma(u-s)\overline{\mathfrak F}(u)du\right)\right]\hat{\mathfrak{F}}(r)\overline{\mathfrak{F}}(s)\overline{\mathfrak{S}}(s)drds\\
 &+\int_{0}^{t}\E\left[\gamma(t-s)\exp\left(-\int_{s}^{t}\gamma(r-s)\overline{\mathfrak F}(r)dr\right)\right]\left(\overline{\mathfrak F}(s)\hat{\mathfrak{J}}(s)-\overline{\mathfrak F}(s)\hat{\mathfrak{S}}(s)-\hat{\mathfrak F}(s)\tilde{\mathfrak{S}}(s)\right)ds,
 \end{aligned}
 \end{multline}
 and
 \begin{multline}
 \varPsi_2(\hat{\mathfrak F},\hat{\mathfrak S},\hat{\mathfrak M},\hat{\mathfrak J},\overline{\mathfrak{F}},\overline{\mathfrak{S}},\tilde{\mathfrak{S}})(t)\\
 \begin{aligned}
 &=\hat{\mathfrak F}(t)-\hat{\mathfrak{M}}(t)-\int_{0}^{t}\overline{\lambda}(t-s)\hat{\mathfrak S}(s)\overline{\mathfrak F}(s)ds-\int_{0}^{t}\overline{\lambda}(t-s)\hat{\mathfrak J}(s)\overline{\mathfrak F}(s)ds+\int_{0}^{t}\overline{\lambda}(t-s)\tilde{\mathfrak S}(s)\hat{\mathfrak F}(s)ds.
 \end{aligned}
 \end{multline}
 Combining Lemma~\ref{TCL-Lem-L3} with Lemma~\ref{TCL-lem-L2}, it follows that the pair $(\hat{\mathfrak S},\hat{\mathfrak F})$ satisfies the set of equations	\eqref{TCL-in-uni-1-1}-\eqref{TCL-in-uni-2-1} and from Lemma~\ref{TCL-exist} we conclude the proof of Theorem~\ref{TCL}.

 
\subsection{Alternative expressions of $(\hat{\mathfrak S}^N_2,\hat{\mathfrak F}^N_2)$ and their limits}\label{TCL-sec-NLG}

Note that the following expression follows from \eqref{TCL-lbda-decomp} and \eqref{TCL-gamma-decom} 

\begin{align*}
	\hat{\mathfrak S}^N_2(t) &=\hat{\mathfrak S}^N_{2,0}(t)+\hat{\mathfrak S}^N_{2,1}(t) +\hat{\mathfrak S}^N_{2,2}(t) \\
	& \quad + \int_{0}^{t}\E\left[\gamma_{1,1}(t-s)\exp\left(-\int_{s}^{t}\gamma_{1,1}(r-s)\overline{\mathfrak F}(r)dr\right)\right]\overline{\mathfrak F}(s)\hat{\mathfrak{S}}^N_2(s)ds,
\end{align*}
where 
\begin{align*}
\hat{\mathfrak{S}}^N_{2,0}(t)&:=\frac{1}{\sqrt N}\sum_{k=1}^{N}\left(\gamma_{k,0}(t)\mathds{1}_{P_{k}(0,t,\gamma_{k,0},\overline{\mathfrak F})=0}-\E\left[\gamma_{1,0}(t)\exp\left(-\int_{0}^{t}\gamma_{1,0}(s)\overline{\mathfrak F}(s)ds\right)\right]\right), \\
\hat{\mathfrak{S}}^N_{2,1}(t)&:=\frac{1}{\sqrt N}\sum_{k=1}^{N}\int_{0}^{t}\int_{\bD^2}\int_{0}^{\Upsilon_k(s-)}\left[\gamma(t-s)\mathds{1}_{P_{k}(s,t,\gamma,\overline{\mathfrak F})=0}\right.\\ &\hspace{4cm}\left.-\int_{\bD}\tilde\gamma(t-s)\exp\left(-\int_{s}^{t}\tilde\gamma(r-s)\overline{\mathfrak F}(r)dr\right)\mu(d\tilde\gamma)\right]Q_k(ds,d\lambda,d\gamma,du)\,,\\
 \hat{\mathfrak{S}}^N_{2,2}(t) &:=\frac{1}{\sqrt N}\sum_{k=1}^{N}\int_{0}^{t}\int_{\bD^2}\int_{0}^{\Upsilon_k(s-)}\int_{\bD}\tilde\gamma(t-s)\exp\left(-\int_{s}^{t}\tilde\gamma(r-s)\overline{\mathfrak F}(r)dr\right)\mu(d\tilde\gamma)\overline Q_k(ds,d\lambda,d\gamma,du)\,. 
\end{align*}
Similarly, we have  
\begin{align*}
	\hat{\mathfrak F}^N_2(t)&= 	\hat{\mathfrak F}^N_{2,0}(t) + \hat{\mathfrak F}^N_{2,1}(t) +\int_{0}^{t}\overline{\lambda}(t-s)\overline{\mathfrak F}(s)\hat{\mathfrak{S}}^N_2(s)ds\,,
\end{align*}
where
\begin{align*}
\hat{\mathfrak F}^N_{2,0}(t) &:=\frac{1}{\sqrt N}\sum_{k=1}^{N}\left(\lambda_{k,0}(t)-\E\left[\lambda_{1,0}(t)\right]\right)\,, \\
\hat{\mathfrak F}^N_{2,1}(t) &:= \frac{1}{\sqrt N}\sum_{k=1}^{N}\int_{0}^{t}\int_{\bD^2}\int_{0}^{\Upsilon_k(s-)}\lambda(t-s)\overline Q_k(ds,d\lambda,d\gamma,du)\,.
\end{align*}

Let 
\begin{align*}
\hat{\mathfrak{S}}_{2,1}(t)&:=\int_{0}^{t}\int_{\bD^2}\int_{0}^{\Upsilon_1(s-)}\left[\gamma(t-s)\mathds{1}_{P_{1}(s,t,\gamma,\overline{\mathfrak F})=0}\right.\\ &\hspace{4cm}\left.-\int_{\bD}\tilde\gamma(t-s)\exp\left(-\int_{s}^{t}\tilde\gamma(r-s)\overline{\mathfrak F}(r)dr\right)\mu(d\tilde\gamma)\right]Q_1(ds,d\lambda,d\gamma,du)\,,\\
\hat{\mathfrak{S}}_{2,2}(t) &:=\int_{0}^{t}\int_{\bD^2}\int_{0}^{\Upsilon_1(s-)}\int_{\bD}\tilde\gamma(t-s)\exp\left(-\int_{s}^{t}\tilde\gamma(r-s)\overline{\mathfrak F}(r)dr\right)\mu(d\tilde\gamma)\overline Q_1(ds,d\lambda,d\gamma,du)\,,\\
\hat{\mathfrak{F}}_{2,1}(t)&:=\int_{0}^{t}\int_{D}\int_{0}^{\Upsilon_1(s-)}\lambda(t-s)\overline Q_1(ds,d\lambda,du).
\end{align*}

Recall that $\left(\gamma_{k,A_k(\cdot)}(\varsigma_k(\cdot)),\lambda_{k,A_k(\cdot)}(\varsigma_k(\cdot))\right)_k$ are i.i.d. $\bD^2$-valued random variables and that each component satisfies Assumption~\ref{TCL-AS-lambda} and $(\overline{Q}_k)_k$ are also i.i.d. (Note that the processes $P_k$ depends only on $Q_k$ and $\Upsilon_k$ depends only on $\gamma_{k,A_k}$.)  
As a result, the processes $\hat{\mathfrak{S}}^N_{2,0}(t)$, $\hat{\mathfrak{S}}^N_{2,1}(t)$, $\hat{\mathfrak{S}}^N_{2,2}(t)$, $\hat{\mathfrak F}^N_{2,0}(t) $ and $\hat{\mathfrak F}^N_{2,1}(t) $ can all be regarded as sums of i.i.d. random processes in $\bD$. Thus, applying the central limit theorem in $\bD$ to each term (see Theorem~2 in \cite{hahn1978central}), we obtain the convergence of these processes: 
$\big(\hat{\mathfrak S}^N_{2,0}, \hat{\mathfrak S}^N_{2,1}, \hat{\mathfrak S}^N_{2,2}, \hat{\mathfrak F}^N_{2,0}, \hat{\mathfrak F}^N_{2,1}\big) \Rightarrow 
\big(\hat{\mathfrak{J}}_{0,1}, W^\gamma_{1}, W^\gamma_{2}, \hat{\mathfrak{M}}_{0,1}, W^\lambda\big) 
$ in $\bD^5$, where the limits are Gaussian processes as defined below: $\hat{\mathfrak{J}}_{0,1}$ has covariance function, for $t, t'\ge0$, 
\[\Cov(\hat{\mathfrak{J}}_{0,1}(t),\hat{\mathfrak{J}}_{0,1}(t'))=\Cov\left(\gamma_{1,0}(t)\mathds{1}_{P_{1}(0,t,\gamma_{1,0},\overline{\mathfrak F})=0},\gamma_{1,0}(t')\mathds{1}_{P_{1}(0,t',\gamma_{1,0},\overline{\mathfrak F})=0}\right),
\]
and $\hat{\mathfrak{M}}_{0,1} $ has covariance function, for $t, t'\ge0$, 
\[
		\Cov(\hat{\mathfrak{M}}_{0,1}(t),\hat{\mathfrak{M}}_{0,1}(t'))=\Cov\left(\lambda_{1,0}(t),\lambda_{1,0}(t')\right)\,,
\]
and the covariances between any two processes can be obtained, 
\[
		\Cov(\hat{\mathfrak{J}}_{0,1}(t),W^\gamma_{1}(t'))=\Cov\left(\gamma_{1,0}(t)\mathds{1}_{P_{1}(0,t,\gamma_{1,0},\overline{\mathfrak F})=0},\hat{\mathfrak{S}}_{2,1}(t')\right)
		\]
and so on. 
 Then by the continuous mapping theorem, we obtain the convergence of $\hat{\mathfrak S}^N_{2}$, and then given its convergence and the convergence of 
$\big( \hat{\mathfrak F}^N_{2,0}, \hat{\mathfrak F}^N_{2,1}\big)$, we obtain the convergence of $\hat{\mathfrak F}^N_{2}$. 
This leads to the following lemma. 

%
\begin{lemma}\label{TCL-cv-F2-G2'}
	Under Assumption~\ref{TCL-AS-lambda}, as $N\to+\infty$,
	\begin{equation*}
	\big(\hat{\mathfrak S}^N_2,\hat{\mathfrak F}^N_2\big)\Rightarrow\big(\hat{\mathfrak J},\hat{\mathfrak M}\big) \quad \text{ in } \quad \bD^2.
	\end{equation*}
	where $\big(\hat{\mathfrak J},\hat{\mathfrak M}\big)$ is a centered continuous 2-dimensional Gaussian process 
	given in Definition \ref{TCL-def-1}. More precisely, $\big(\hat{\mathfrak J},\hat{\mathfrak M}\big)$ satisfies the following system of equations: 
	\begin{numcases}{}
	\hat{\mathfrak{J}}(t)=
	\hat{\mathfrak{J}}_{0,1}(t)+W^\gamma_{1}(t)+W^\gamma_{2}(t)
	+\int_{0}^{t}\E\left[\gamma_{1,1}(t-s)\exp\left(-\int_{s}^{t}\gamma_{1,1}(r-s)\overline{\mathfrak F}(r)dr\right)\right]\overline{\mathfrak F}(s)\hat{\mathfrak{J}}(s)ds\,,\nonumber\\\nonumber\\
	\hat{\mathfrak{M}}(t)=
	\hat{\mathfrak{M}}_{0,1}(t)+W^\lambda(t)
	+\int_{0}^{t}\overline\lambda(t-s)\overline{\mathfrak F}(s)\hat{\mathfrak{J}}(s)ds.\nonumber
	\end{numcases}
	where 
	$\big(\hat{\mathfrak J}_{0,1},W^\gamma_{1},W^\gamma_{2},\hat{\mathfrak M}_{0,1},W^\lambda\big)$ 
	is a centered continuous 5-dimensional Gaussian process whose covariance functions as described above.
\end{lemma}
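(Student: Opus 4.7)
The plan is to exploit the decompositions of $\hat{\mathfrak S}^N_2$ and $\hat{\mathfrak F}^N_2$ displayed just before the lemma. By construction, each of the five processes $\hat{\mathfrak S}^N_{2,0}, \hat{\mathfrak S}^N_{2,1}, \hat{\mathfrak S}^N_{2,2}, \hat{\mathfrak F}^N_{2,0}, \hat{\mathfrak F}^N_{2,1}$ has the form $N^{-1/2}\sum_{k=1}^{N} X_k$ with the $X_k$'s i.i.d.\ centered $\bD$-valued random variables depending only on $(Q_k,(\lambda_{k,i},\gamma_{k,i})_{i\ge 0})$. The centering is explicit for the initial-condition pieces $\hat{\mathfrak S}^N_{2,0}$ and $\hat{\mathfrak F}^N_{2,0}$, arises from the compensated measure $\overline Q_k$ for $\hat{\mathfrak S}^N_{2,2}$ and $\hat{\mathfrak F}^N_{2,1}$, and from subtracting the $\mu$-conditional mean of the integrand for $\hat{\mathfrak S}^N_{2,1}$. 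Since the triples $((\lambda_{k,i})_i,(\gamma_{k,i})_i,Q_k)_{k\ge 1}$ are i.i.d., stacking the five summands yields a scaled i.i.d.\ sum of centered $\bD^5$-valued random variables.

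Next, I would invoke the central limit theorem in $\bD$ from \cite[Theorem~2]{hahn1978central}. It requires uniformly bounded second moments and a H\"older-type control on $\E[(X_k(t)-X_k(s))^2]$. Boundedness follows from $\gamma\le 1$, $\lambda\le\lambda_*$ and $\overline{\mathfrak F}\le\lambda_*$, while the increment estimates reduce, by an $L^2$-isometry for the compensated Poisson integrals and by conditioning on $(\lambda,\gamma)$, to the second-moment bounds of $\gamma(t)-\gamma(s)$ and $\lambda(t)-\lambda(s)$ supplied by Assumption~\ref{TCL-AS-lambda}. Applied to the 5-dimensional vector of summands this gives the joint convergence
\[
\big(\hat{\mathfrak S}^N_{2,0}, \hat{\mathfrak S}^N_{2,1}, \hat{\mathfrak S}^N_{2,2}, \hat{\mathfrak F}^N_{2,0}, \hat{\mathfrak F}^N_{2,1}\big) \Rightarrow \big(\hat{\mathfrak J}_{0,1}, W^\gamma_1, W^\gamma_2, \hat{\mathfrak M}_{0,1}, W^\lambda\big) \quad \text{in } \bD^5,
\]
with the covariances identified as the pointwise limits of the pre-limit covariances.

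To conclude, I would read off the pre-limit identities
\[
\hat{\mathfrak S}^N_2(t) = \hat{\mathfrak S}^N_{2,0}(t) + \hat{\mathfrak S}^N_{2,1}(t) + \hat{\mathfrak S}^N_{2,2}(t) + \int_0^t K(t,s)\, \hat{\mathfrak S}^N_2(s)\, ds,
\]
\[
\hat{\mathfrak F}^N_2(t) = \hat{\mathfrak F}^N_{2,0}(t) + \hat{\mathfrak F}^N_{2,1}(t) + \int_0^t \overline\lambda(t-s)\overline{\mathfrak F}(s)\, \hat{\mathfrak S}^N_2(s)\, ds,
\]
where $K(t,s) = \E\!\left[\gamma_{1,1}(t-s)\exp\!\left(-\int_s^t\gamma_{1,1}(r-s)\overline{\mathfrak F}(r)\,dr\right)\right]\overline{\mathfrak F}(s)$ is a bounded continuous Volterra kernel. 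The mapping sending the five driving noises to the unique solution $(\hat{\mathfrak J},\hat{\mathfrak M})$ of the analogous limiting system is Lipschitz continuous in the uniform norm on compacts (by a routine Gronwall argument). The continuous mapping theorem therefore delivers $(\hat{\mathfrak S}^N_2, \hat{\mathfrak F}^N_2)\Rightarrow (\hat{\mathfrak J},\hat{\mathfrak M})$ in $\bD^2$, and the covariance identities of Definition~\ref{TCL-def-1} follow by passing to the limit in the equality $\Cov(\hat{\mathfrak S}^N_2(t), \hat{\mathfrak S}^N_2(t')) = \Cov(\gamma_{1,A_1(t)}(\varsigma_1(t)), \gamma_{1,A_1(t')}(\varsigma_1(t')))$ and its cross- and $\lambda$-analogues.

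The main technical obstacle is the moment control of the Poisson-integral summands in $\hat{\mathfrak S}^N_{2,1}$ and $\hat{\mathfrak S}^N_{2,2}$: the integrand $\gamma(t-s)\mathds{1}_{P_k(s,t,\gamma,\overline{\mathfrak F})=0}$ is not predictable in the conventional sense, as it depends on atoms of $Q_k$ throughout $[s,t]$, so the classical $L^2$-isometry does not apply off-the-shelf. I would bridge this gap via the moment identity announced as Theorem~\ref{TCL-th-1}, combined with the products-of-increments estimates of Assumption~\ref{TCL-AS-lambda}~(iii)--(iv), to secure the H\"older bounds that \cite{hahn1978central} requires.
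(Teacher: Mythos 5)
Your proposal follows the paper's own route: decompose $\hat{\mathfrak S}^N_2$ and $\hat{\mathfrak F}^N_2$ into five $N^{-1/2}$-scaled sums of i.i.d.\ $\bD$-valued centered processes plus the Volterra-type remainder, apply Hahn's CLT in $\bD$ to the five summands jointly, and pass the pre-limit fixed-point identities to the limit via the continuous mapping theorem. The only material you add is an explicit verification plan for the hypotheses of Hahn's theorem (boundedness and increment moments via Assumption~\ref{TCL-AS-lambda} and Theorem~\ref{TCL-th-1}), which the paper invokes implicitly by appealing to the same argument it used for Lemma~\ref{TCL-cv-F2-G2}; that is a welcome clarification rather than a departure.
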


\bigskip

\section{Some preliminary results}\label{TCL-US-R}
To establish the tightness of $\mathfrak{F}^N,\,\mathfrak{S}^N$ we need a $\bC$-tightness criterion, a new result on stochastic integrals with respect to Poisson random measures,  the moment estimates,  an approximation of the original model by a quarantine model, and and an estimate on the pair $(\lambda,\gamma)$, which are given in the next five subsections. 

\subsection{Tightness criterion}
We recall the following theorem used to prove of $\bC$-tightness in $\bD$ (see Theorem $3.21$ in \cite[page 350]{jacod2013limit}). 


\begin{theorem}\label{th-tight}
	Let $(X^N)_N$ be a sequence of r.v. taking values in $\bD$. It is $\bC$-tight in $\bD$, if 
	\begin{itemize}
		\item[(i)]for any $T>0,\,\epsilon>0$ there exist $N_0\in\mathbb{N}$ and $C>0$ 
		such that
		\begin{equation}
		N\geq N_0\implies\mathbb{P}\left(\sup_{0\leq s\leq T}|X^N_s|>C\right)\leq\epsilon.
		\end{equation}
		\item[(ii)]for any $T>0,\,\epsilon>0,\,\theta>0$ there exist $N_0\in\mathbb{N}$ and $\delta>0$ such that
		\begin{equation}
		N\geq N_0\implies \mathbb{P}\left(w_T(X^N,\delta)\geq\theta\right)\leq\epsilon.
		\end{equation}
	\end{itemize}
	where 
	\[w_T(\alpha,\delta)=\sup_{0\leq s<t< T,|t-s|\leq\delta}|\alpha(t)-\alpha(s)|.\]
\end{theorem}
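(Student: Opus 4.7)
The plan is to prove tightness in $\bD$ via Prohorov's theorem, using the Arzelà--Ascoli-type characterization of relatively compact sets in $\bD$ for the Skorokhod $J_1$ topology, and then to upgrade to $\bC$-tightness by checking that every limit point is supported on continuous paths. Recall that $K\subseteq\bD$ is relatively compact for $J_1$ iff, for every $T>0$, $\sup_{x\in K}\|x\|_T<\infty$ and $\lim_{\delta\downarrow 0}\sup_{x\in K}w'_T(x,\delta)=0$, where $w'_T$ denotes the Skorokhod modulus. The key observation is the pointwise inequality $w'_T(x,\delta)\le w_T(x,\delta)$: controlling the stronger, continuous modulus $w_T$ will simultaneously furnish $\bD$-tightness and force any limit to lie in $\bC$.

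First, from hypothesis (i), fixing $T>0$ and $\epsilon>0$, one gets $N_0$ and $C_0$ with $\mathbb{P}(\|X^N\|_T>C_0)\le\epsilon/2$ for $N\ge N_0$; since each of the finitely many $X^1,\dots,X^{N_0-1}$ is itself a tight $\bD$-valued random variable, $C_0$ can be enlarged to $C$ so that $\sup_N\mathbb{P}(\|X^N\|_T>C)\le\epsilon/2$. Next, applying hypothesis (ii) with $\theta_k=1/k$ and target level $\epsilon\, 2^{-k-1}$ produces $\delta_k>0$ with $\sup_N\mathbb{P}(w_T(X^N,\delta_k)>1/k)\le\epsilon\, 2^{-k-1}$, after the same finite-$N$ correction.

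Define
\[
K_\epsilon=\bigl\{x\in\bD:\|x\|_T\le C\ \text{and}\ w_T(x,\delta_k)\le 1/k\ \text{for every}\ k\in\NN\bigr\}.
\]
Since $w_T(\,\cdot\,,\delta)$ is lower semicontinuous on $\bD$ for $J_1$, $K_\epsilon$ is closed. Moreover, every $x\in K_\epsilon$ satisfies $w_T(x,\delta_k)\to 0$, hence is uniformly continuous on $[0,T]$, so $K_\epsilon\subseteq\bC([0,T])$; the classical Arzelà--Ascoli theorem then makes $K_\epsilon$ relatively compact in $\bC([0,T])$, and a fortiori in $\bD$ for $J_1$. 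A union bound gives $\mathbb{P}(X^N\notin K_\epsilon)\le\epsilon/2+\sum_{k\ge1}\epsilon\, 2^{-k-1}=\epsilon$ uniformly in $N$, which is tightness by Prohorov. Repeating this for each $T\in\NN$ and intersecting the compacts yields a compact subset of $\bD(\RR_+;\RR)$ contained in $\bC(\RR_+;\RR)$ with complement of probability at most $\epsilon$, so every weak limit point sits on $\bC$, i.e.\ the sequence is $\bC$-tight.

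No single step is delicate once the two moduli are compared; the only point to watch is the switch between the $J_1$ topology on $\bD$ and the uniform topology on $\bC$, which is exactly what the inequality $w'_T\le w_T$ takes care of. The only minor bookkeeping is the diagonal selection over $k$ and $T$ to obtain a single compact set valid for all $N$.
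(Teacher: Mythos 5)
The paper does not actually prove this statement; it cites Theorem~VI.3.21 of Jacod--Shiryaev. So what follows is an assessment of your proof on its own merits.

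There is a genuine gap at the ``finite-$N$ correction'' you invoke for condition~(ii). The hypotheses control $w_T(X^N,\delta)$ only for $N\ge N_0$; for an index $n<N_0$, the c\`adl\`ag random variable $X^n$ is free to have a jump of size $a$ at some time $t_0<T$ with positive probability $p$. In that event, $w_T(X^n,\delta)\ge a$ for \emph{every} $\delta>0$ (take $t=t_0$ and $s\uparrow t_0$), so $\mathbb{P}(w_T(X^n,\delta_k)>1/k)\ge p$ for every choice of $\delta_k$ once $1/k<a$. Shrinking $\delta_k$ therefore cannot force this probability below $\epsilon\,2^{-k-1}$. The analogous correction for condition~(i) is unproblematic, because $\|X^n\|_T<\infty$ a.s.\ for any $\bD$-valued variable; but the modulus correction for~(ii) is of a different nature, and it fails precisely because $w_T$ (unlike the Skorokhod modulus $w'_T$) does not vanish along $\delta\downarrow0$ for paths with jumps. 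As a result, the bound $\mathbb{P}(X^N\notin K_\epsilon)\le\epsilon$ does not hold uniformly in $N$ --- your $K_\epsilon$ is contained in $\bC$, while the early terms may put nonnegligible mass on discontinuous paths.

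The conclusion is still true under the Jacod--Shiryaev definition of $\bC$-tightness (tightness in $\bD$ plus continuity of every limit point), and the remedy is to split the argument in two. First, establish ordinary tightness in $\bD$: the inequality $w'_T\le w_T$ turns your estimate on $w_T$ for $N\ge N_0$ into the classical Prohorov criterion in $\bD$, and the finitely many $X^n$ with $n<N_0$ are handled by $w'_T$-compacts in $\bD$ (a single $\bD$-valued random variable is always tight in $\bD$, but only via compacts characterized by $w'_T$, not $w_T$). Second, show continuity of the limit points: any weak limit arises along a subsequence $N_j\to\infty$, hence eventually $N_j\ge N_0$, and the uniform control $\mathbb{P}(w_T(X^{N_j},\delta_k)>1/k)\le\epsilon 2^{-k-1}$ on the tail --- combined with portmanteau and the observation that $\{x:w_T(x,\delta_k)\le 1/k\ \forall k\}\subseteq\bC$ --- forces every limit law to be supported on continuous paths. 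The rest of your argument (the inequality $w'_T\le w_T$, the construction of $K_\epsilon$ as an equicontinuous uniformly bounded family, the diagonal over $T$) is sound, but only applies cleanly to the tail of the sequence.
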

Since we work with processes in $\bD$, we will simply write $\bC$-tightness below for brevity. In fact, $\bC$-tightness means that the limit of subsequences are continuous.
We also recall the following result from \cite[Lemma~$3.1$]{pang2022-CLT-functional}.
\begin{lemma}\label{TCL-Lem-20}
	Let $\{X^N\}_{N\ge1}$ be a sequence of random elements in $\bD$ such that $X^N(0)=0$.
	If for all $T>0$, $\ep>0$, as $\delta\to0$,
	\begin{align*}
	\limsup_{N\to\infty}\sup_{0\le t\le T}\frac{1}{\delta}\P\bigg(\sup_{0\le u\le \delta}|X^N(t+u)-X^N(t)|>\ep\bigg)\to0,
	\end{align*}
	then the sequence $X^N$ is $\bC$-tight.
\end{lemma}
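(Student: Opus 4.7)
\textbf{Proof proposal for Lemma \ref{TCL-Lem-20}.} The plan is to deduce both conditions (i) and (ii) of Theorem \ref{th-tight} from the single hypothesis on the decay of the local oscillations. Introduce, for $\delta>0$, $\theta>0$, $t\in[0,T]$ and $N\ge1$,
\[
\omega^N_\delta(t,\theta)\;:=\;\P\!\Big(\sup_{0\le u\le\delta}|X^N(t+u)-X^N(t)|>\theta\Big).
\]
The hypothesis (applied with $\ep=\theta$) provides, for every $\eta>0$, some $\delta_0=\delta_0(\eta,\theta)$ and $N_0=N_0(\eta,\theta)$ such that
\[
\sup_{0\le t\le T}\omega^N_\delta(t,\theta)\;\le\;\eta\,\delta \qquad\text{whenever }\delta\le\delta_0,\;N\ge N_0.
\]
This is the only ingredient beyond $X^N(0)=0$ that the argument will use.

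The core step is the \emph{grid + union bound} argument for condition (ii). Set $t_i=i\delta$ for $i=0,1,\dots,K_\delta$ with $K_\delta:=\lceil T/\delta\rceil+1$, and consider the event
$\mathcal A_\delta^N(\theta):=\bigcap_{i=0}^{K_\delta-1}\{\sup_{0\le u\le\delta}|X^N(t_i+u)-X^N(t_i)|\le\theta\}$.
By the union bound,
\[
\P\bigl((\mathcal A_\delta^N(\theta))^c\bigr)\;\le\;K_\delta\cdot\eta\delta\;\le\;\eta(T+\delta_0).
\]
On $\mathcal A_\delta^N(\theta)$, any two points $s<t$ in $[0,T]$ with $t-s\le\delta$ lie in at most two consecutive grid sub-intervals, and a short triangle inequality yields $|X^N(t)-X^N(s)|\le 3\theta$. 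Hence $\P(w_T(X^N,\delta)>3\theta)\le\eta(T+\delta_0)$, and condition (ii) follows by choosing $\eta$ small enough (after replacing $\theta$ by $\theta/3$).

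Condition (i) is then obtained by bootstrapping the same estimate with $\theta$ fixed (say $\theta=1$) and $X^N(0)=0$: on $\mathcal A_\delta^N(1)$, a telescoping sum along the grid gives $|X^N(t_i)|\le i$, and therefore $\sup_{0\le s\le T}|X^N(s)|\le K_\delta+1=:C$. Choosing $\eta$ so that $\eta(T+\delta_0)\le\ep$ in the preceding union bound delivers the required constant $C=C(\ep,T)$ uniformly in $N\ge N_0$.

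I do not expect a serious obstacle here; the only subtlety is the correct choice of grid spacing relative to the threshold, ensuring that the two-interval overlap (a point $s$ near the end of one sub-interval and $t$ at the start of the next) is handled by the factor $3$ in the triangle inequality, and that the number of sub-intervals $K_\delta=O(1/\delta)$ is compensated by the $O(\delta)$ bound on each $\omega^N_\delta(t_i,\theta)$ coming from the hypothesis. Together with $X^N(0)=0$ this converts the pointwise oscillation control into the global oscillation control required by Theorem \ref{th-tight}, yielding the $\bC$-tightness of $(X^N)_{N\ge 1}$.
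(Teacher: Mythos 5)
Your proof is correct, and I'll note at the outset that the paper does not actually prove this lemma: it is recalled verbatim from \cite[Lemma~3.1]{pang2022-CLT-functional}, so there is no in-paper proof to compare against. Your grid-plus-union-bound argument is the standard route and is, I believe, essentially the one used in that reference.

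A few small points of hygiene, none of which damage the argument. First, the way you extract $\delta_0$ and $N_0$ slightly over-claims: for a fixed $\delta\le\delta_0$ the $\limsup$ gives an $N_0=N_0(\delta,\eta,\theta)$, so $N_0$ is not uniform over all $\delta\le\delta_0$. This is harmless because in both steps you \emph{fix} a grid spacing $\delta$ before invoking the bound, and the tightness criterion in Theorem~\ref{th-tight} only asks for a single pair $(\delta,N_0)$, not uniformity in $\delta$. Second, the constant accounting is slightly loose ($K_\delta\delta\le T+2\delta$, so the bound should read $\eta(T+2\delta_0)$; likewise $\sup_{[0,T]}|X^N|\le K_\delta$ rather than $K_\delta+1$), but these are cosmetic. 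Third, the potential circularity in choosing $\eta$ so that $\eta(T+2\delta_0(\eta,1))\le\ep$ while $\delta_0$ depends on $\eta$ is genuinely there and worth one line to dispel: one may always shrink $\delta_0$ to be $\le1$, so that $\eta(T+2)\le\ep$ suffices and $C=K_{\delta_0}$ depends only on $T$ and $\ep$, as required. Finally, when passing to condition (ii) you should work with the threshold $\theta'=\theta/4<\theta/3$ (or simply state the bound as $\P(w_T(X^N,\delta)\ge\theta)\le\P(\mathcal A^N_\delta(\theta/3)^c)$, since $w_T\ge\theta$ forces some grid oscillation $>\theta/3$), but the factor-of-$3$ triangle-inequality step and the overlap handling are exactly right.
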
 
\subsection{A property of stochastic integrals with respect to Poisson random measures}
\begin{theorem}\label{TCL-th-1}
	Let $(E,\mathcal{B}(E),\nu)$ be a $\sigma-$finite measured space and let $Q$ be a Poisson random measure on $\R_+\times E$ of intensity $ds\nu(du)$ and \,$(\mathcal{F}_t)_t$ a filtration which such that for all $t\geq0,\,Q|_{[0,t]\times E}$ is $\mathcal{F}_t-$measurable, and for $0\leq s<t,\,\mathcal{F}_s$ and $Q|_{]s,t]\times E}$ are independent. Let $h\,:\,\R_+\times E\to\R,$ be a predictable process such that for all $t\in\R_+$,
	\begin{equation*}
	\E\left[\int_{0}^{t}\int_E|h(s,u)|\nu(du)ds\right]<\infty.
	\end{equation*}
	Let $f\,:\,\R_+\times E\times\mathcal{M}_F(\R_+\times E)\to\R,$ be  a bounded and measurable deterministic function. Then
	\begin{equation*}
	\E\left[\int_{[0,t]\times E}h(s,u)f(s,u,Q|_{]s,t]\times E})Q(ds,du)\right]=\E\left[\int_{0}^{t}\int_E h(s,u)\overline{f}(s,u,t)\nu(du)ds\right],
	\end{equation*}
	where 
	\begin{equation*}
	\overline{f}(s,u,t)=\E\left[f(s,u,Q|_{]s,t]\times E})\right].
	\end{equation*}
\end{theorem}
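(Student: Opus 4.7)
The plan is to exploit the independence between the past of the filtration---which the predictable process $h(s,\cdot)$ is measurable against---and the future increment $Q|_{]s,t]\times E}$---upon which $f(s,u,\cdot)$ depends---combined with a Palm-calculus argument for Poisson random measures. The cleanest route uses the Slivnyak--Mecke (reduced Campbell) identity applied to the joint integrand
$$\Phi(s,u,Q) := h(s,u)\,f\bigl(s,u,Q|_{]s,t]\times E}\bigr)\,\mathds{1}_{[0,t]}(s).$$

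\textbf{Main step.} By the Mecke formula for a PRM of intensity $ds\,\nu(du)$,
$$\E\!\left[\int_{\R_+\times E}\Phi(s,u,Q)\,Q(ds,du)\right] = \int_{\R_+\times E}\E\!\left[\Phi\bigl(s,u,Q+\delta_{(s,u)}\bigr)\right]\nu(du)\,ds.$$
The crucial observation is that the added atom $\delta_{(s,u)}$ sits at time $s$, which is \emph{not} an element of the open-closed interval $]s,t]$, so $(Q+\delta_{(s,u)})|_{]s,t]\times E} = Q|_{]s,t]\times E}$. Hence the Palm perturbation leaves $f$ untouched, producing
$$\E\!\left[\int_{[0,t]\times E}h(s,u)f\bigl(s,u,Q|_{]s,t]\times E}\bigr)Q(ds,du)\right] = \int_{[0,t]\times E}\E\!\left[h(s,u)f\bigl(s,u,Q|_{]s,t]\times E}\bigr)\right]\nu(du)\,ds.$$

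\textbf{Conclusion.} By predictability $h(s,u)$ is $\mathcal{F}_{s}$-measurable, whereas $Q|_{]s,t]\times E}$ is independent of $\mathcal{F}_s$ by the filtration hypothesis; hence the two factors decouple inside the expectation,
$$\E\!\left[h(s,u)\,f\bigl(s,u,Q|_{]s,t]\times E}\bigr)\right] = \E[h(s,u)]\,\overline{f}(s,u,t),$$
and a final application of Fubini brings the $ds\,\nu(du)$-integral back inside the expectation, yielding exactly the claimed identity.

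\textbf{Alternative route and the hard part.} A self-contained proof avoiding Mecke would proceed via a functional monotone class reduction to the product form $f(s,u,\mu)=g(s,u)\varphi(\mu)$, followed by a time-discretization $0=t_0^n<\cdots<t_n^n=t$ in which $\varphi(Q|_{]s,t]\times E})$ is replaced, for $s\in {]t_k^n,t_{k+1}^n]}$, by the frozen version $\varphi(Q|_{]t_{k+1}^n,t]\times E})$. After conditioning on $\sigma(Q|_{]t_{k+1}^n,t]\times E})$ the integrand becomes predictable on each subinterval and the classical Campbell formula applies; summing in $k$ yields the identity with $\overline{f}(t_{k+1}^n,u,t)$ in place of $\overline{f}(s,u,t)$. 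The main obstacle is then the passage to the limit as the mesh tends to zero: for a generic bounded measurable $\varphi$ one has no continuity of $\mu\mapsto\varphi(\mu)$ in any natural topology, so one must first restrict to cylindrical $\varphi(\mu)=\Psi(\mu(A_1),\dots,\mu(A_m))$ (where the dependence on the mesh is handled via dominated convergence and Riemann-sum convergence of $s\mapsto\overline\varphi(s,t)$) and then extend by a second monotone class argument. The Mecke identity route sketched above bypasses this difficulty entirely.
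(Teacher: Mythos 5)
Your Mecke--formula route is genuinely different from the paper's argument and is a clean, valid alternative. The paper instead enumerates the atoms of $Q$ (implicitly ordering them by time so that each $s_i$ is a stopping time), conditions on $\mathcal{F}_{s_i}$ to replace $f(s_i,u_i,Q|_{]s_i,t]\times E})$ by $\overline f(s_i,u_i,t)$ via independence, and then finishes by invoking the classical Campbell formula for progressive integrands. Your Palm--calculus approach buys something concrete: the boundary time $s$ becomes deterministic after the reduced Campbell identity is applied, so you never need the (unstated in the paper) strong Markov property at the random jump times $s_i$, nor the choice of a time--ordering of the atoms; the key observation that the added atom $\delta_{(s,u)}$ falls outside both $]s,t]$ and $[0,s)$ does all the work.

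One point you should make explicit: when you define $\Phi(s,u,Q)=h(s,u)\,f(s,u,Q|_{]s,t]\times E})\,\mathds{1}_{[0,t]}(s)$ and apply Mecke, the notation hides the fact that $h(s,u)$ is itself a random variable. Mecke applies to measurable functionals of the point process (plus, by conditioning, of independent external randomness), so you need to record that $h(s,u)$ is a measurable function of $Q|_{[0,s)\times E}$ together with randomness independent of $Q$ — which is exactly the way $h$ arises in the paper — and that consequently $(Q+\delta_{(s,u)})|_{[0,s)\times E}=Q|_{[0,s)\times E}$ leaves $h$ untouched under the Palm perturbation. With that clarified, the decoupling step $\E\big[h(s,u)\,f(s,u,Q|_{]s,t]\times E})\big]=\E[h(s,u)]\,\overline f(s,u,t)$ follows from $\mathcal{F}_s\perp Q|_{]s,t]\times E}$, and your Fubini step closes the argument. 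Your discussion of the discretization route is a fair diagnosis of where a more hands--on proof would struggle; the Mecke identity does indeed sidestep the lack of any natural continuity of $\mu\mapsto\varphi(\mu)$.
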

\begin{proof}
	Let $(s_i,u_i)_i$ be an arbitrary ordering be the atoms of the measure $Q$.
	We note that 
	\begin{equation*}
	\int_{[0,t]\times E}h(s,u)f(s,u,Q|_{]s,t]\times E})Q(ds,du)=\sum_{i\geq1}h(s_i,u_i)f(s_i,u_i,Q|_{]s_i,t]\times E})\mathds{1}_{s_i\leq t}\,.
	\end{equation*}   
	As 
	\begin{equation*}
	\E\left[\int_{0}^{t}\int_E|h(s,u)|\nu(du)ds\right]<\infty,
	\end{equation*}
	by Fubini's theorem 
			\begin{equation*}
			\E\left[\int_{[0,t]\times E}h(s,u)f(s,u,Q|_{]s,t]\times E})Q(ds,du)\right]=\sum_{i\geq1}\E\left[h(s_i,u_i)\E\left[f(s_i,u_i,Q|_{]s_i,t]\times E})\big|\mathcal{F}_{s_i}\right]\mathds{1}_{s_i\leq t}\right],
		\end{equation*}
	and as $Q|_{]s_i,t]\times E}$ and $\mathcal{F}_{s_i}$ are independent, 
	\[\E\left[f(s_i,u_i,Q|_{]s_i,t]\times E})\big|\mathcal{F}_{s_i}\right]=\overline{f}(s_i,u_i,t),\]
	it follows that,
	\begin{align*}
	\E\left[\int_{[0,t]\times E}h(s,u)f(s,u,Q|_{]s,t]\times E})Q(ds,du)\right]&=\sum_{i\geq1}\E\left[h(s_i,u_i)\overline{f}(s_i,u_i,t)\mathds{1}_{s_i\leq t}\right]\\
	&=\E\left[\int_{[0,t]\times E}h(s,u)\overline{f}(s,u,t)Q(ds,du)\right].
	\end{align*}
	 Consequently, as $h$ is a progressive process, by a classical result on Poisson random measures \cite[Theorem~6.2]{ccinlar2011probability}
	\begin{equation*}
	\E\left[\int_{[0,t]\times E}h(s,u)f(s,u,Q|_{]s,t]\times E})Q(ds,du)\right]=\E\left[\int_0^t\int_E h(s,u)\overline{f}(s,u,t)\nu(du)ds\right].
	\end{equation*} 
\end{proof}

\subsection{Moment Inequalities}

We recall the following Lemma from \cite{forien-Zotsa2022stochastic}.

\begin{lemma}\label{lem_inq}For $k\in\mathbb{N}$ and $T\geq0$, 
	\begin{equation}
	\mathbb{E}\left[\sup_{t\in[0,T]}\left|A^N_k(t)-A_k(t)\right|\right]\leq\int_{0}^{T}\mathbb{E}\Big[\left|\Upsilon^N_k(t)-\Upsilon_k(t)\right|\Big]dt=:\delta^N(T)\label{eqA}
	\end{equation}
	and 
	\begin{equation*}
	\mathbb{P}\Big((\varsigma_{k}^N(s))_{t\in[0,T]}\neq(\varsigma_{k}(t))_{t\in[0,T]}\Big)\leq T\delta^N(T).
	\end{equation*}
	Moreover, 
	\begin{equation}\delta^N(T)\leq\frac{\lambda_*}{\sqrt{N}}T\exp(2\lambda_*T).\label{eqdelta}\end{equation}
\end{lemma}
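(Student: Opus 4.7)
The plan is to prove the three estimates in order: the first by a pointwise Poisson-coupling identity, the second by a Markov bound on the number of ``bad'' atoms of $Q_k$, and the third by closing a self-consistent integral inequality with Gronwall's lemma. Throughout, the essential structural fact is that $A^N_k$ and $A_k$ are driven by the \emph{same} Poisson random measure $Q_k$, so they disagree only on atoms falling between the two instantaneous rates $\Upsilon^N_k$ and $\Upsilon_k$.

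For the first inequality, I would subtract the two integral equations to obtain
\[
A^N_k(t)-A_k(t)=\int_{[0,t]\times\R_+}\bigl(\mathds{1}_{u\le\Upsilon^N_k(r^-)}-\mathds{1}_{u\le\Upsilon_k(r^-)}\bigr)Q_k(dr,du).
\]
Using $|\mathds{1}_{u\le a}-\mathds{1}_{u\le b}|=\mathds{1}_{a\wedge b<u\le a\vee b}$, the modulus of the integral is dominated by a nondecreasing counting process of the $Q_k$-atoms lying between the two rates, so the supremum over $[0,T]$ is attained at $t=T$. Since $r\mapsto\Upsilon^N_k(r^-),\Upsilon_k(r^-)$ are predictable and $Q_k$ has intensity $dr\,du$, taking expectation gives exactly $\int_0^T\E|\Upsilon^N_k(r)-\Upsilon_k(r)|\,dr=\delta^N(T)$. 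For the second inequality, note that $(\varsigma^N_k(t))_{t\in[0,T]}=(\varsigma_k(t))_{t\in[0,T]}$ iff the two integer-valued counting processes have the same jump instants on $[0,T]$, which (since both start at $0$ and jump by $+1$) is equivalent to $A^N_k\equiv A_k$ on $[0,T]$. Hence a disagreement forces $\sup_{[0,T]}|A^N_k-A_k|\ge 1$, and Markov together with the first inequality already yields probability at most $\delta^N(T)$, from which the stated (weaker) bound $T\delta^N(T)$ is immediate.

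For the final bound, I would decompose
\[
\Upsilon^N_k(t)-\Upsilon_k(t)=\bigl[\gamma_{k,A^N_k(t)}(\varsigma^N_k(t))-\gamma_{k,A_k(t)}(\varsigma_k(t))\bigr]\overline{\mathfrak F}^N(t)+\gamma_{k,A_k(t)}(\varsigma_k(t))\bigl[\overline{\mathfrak F}^N(t)-\overline{\mathfrak F}(t)\bigr].
\]
Using $0\le\gamma\le 1$, $\overline{\mathfrak F}^N\le\lambda_*$, and that the first bracket vanishes off the disagreement event, the first term has expectation at most $\lambda_*\delta^N(t)$ by the second inequality. Splitting
\[
\overline{\mathfrak F}^N(t)-\overline{\mathfrak F}(t)=\tfrac1N\textstyle\sum_{k=1}^N\bigl[\lambda_{k,A^N_k(t)}(\varsigma^N_k(t))-\lambda_{k,A_k(t)}(\varsigma_k(t))\bigr]+\bigl[\tfrac1N\textstyle\sum_{k=1}^N\lambda_{k,A_k(t)}(\varsigma_k(t))-\overline{\mathfrak F}(t)\bigr],
\]
the coupling-error average is again $\le\lambda_*\delta^N(t)$ in $L^1$ (by exchangeability in $k$), while the centred iid average has $L^1$ norm at most $\lambda_*/\sqrt N$ from the variance bound $\mathrm{Var}\le\lambda_*^2$. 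Combining gives $\E|\Upsilon^N_k(t)-\Upsilon_k(t)|\le 2\lambda_*\delta^N(t)+\lambda_*/\sqrt N$; integrating and invoking Gronwall produces $\delta^N(T)\le(\lambda_*T/\sqrt N)\exp(2\lambda_*T)$. The main obstacle is exactly the self-referential character of the coupling: $\Upsilon^N_k$ depends on $\overline{\mathfrak F}^N$, which in turn is an average over all $\Upsilon^N_j$'s, forcing an integral rather than a pointwise estimate; it is the CLT-scale $\lambda_*/\sqrt N$ from the iid mean-zero free-fluctuation piece, rather than a trivial $O(1)$ bound on $\overline{\mathfrak F}^N-\overline{\mathfrak F}$, that makes the advertised $N^{-1/2}$ decay possible.
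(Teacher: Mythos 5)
The paper itself does not prove this lemma --- it recalls it from \cite{forien-Zotsa2022stochastic} --- but your argument is correct and follows exactly the coupling-plus-Gronwall structure that the paper demonstrably uses for the parallel quarantine-model estimate, Lemma~\ref{TCL-A-9-lem_inq}: the same-PRM identity dominates $\sup_{[0,T]}|A^N_k-A_k|$ by the nondecreasing count of atoms landing between the two rate levels, whose first moment is $\delta^N(T)$; the decomposition of $\Upsilon^N_k-\Upsilon_k$ restricts the $\gamma$- and $\lambda$-coupling errors to the disagreement event (contributing $2\lambda_*\delta^N(t)$) and isolates the centred i.i.d.\ average $\frac1N\sum_k\lambda_{k,A_k(t)}(\varsigma_k(t))-\overline{\mathfrak F}(t)$ (contributing $\lambda_*/\sqrt N$); and Gronwall closes. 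One small logical slip: for the middle estimate you correctly prove $\mathbb{P}\le\delta^N(T)$ by Markov, but then assert the stated $T\delta^N(T)$ is an immediate, ``weaker'' consequence --- that implication only holds when $T\ge1$; for $T<1$ your bound does not imply the stated one. This is inconsequential for the rest of the argument (your $\delta^N(T)$ is the natural estimate, and in the companion Lemma~\ref{TCL-A-9-lem_inq} the factor $T$ actually arises from bounding $\sup_t|\varsigma^N_k-\vartheta^N_k|\le T$ inside an expectation rather than a probability), but the claim as phrased is not valid uniformly in $T$.
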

From Lemma~\ref{lem_inq} or \cite[Lemma~$6.3$]{forien-Zotsa2022stochastic}, we deduce the following Corollary. 
\begin{coro}\label{lem-c2}
	For $k\in\mathbb{N}$ and $T\geq0$, 
		\begin{equation}
	\E\left[\sup_{t\in[0,T]}\left|\gamma_{k,A_k^N(t)}(\varsigma^N_k(t))-\gamma_{k,A_k(t)}(\varsigma_k(t))\right|\right]\leq\frac{\lambda_*}{\sqrt{N}}T\exp(2\lambda_*T),\label{eqgam}
	\end{equation}
	\begin{equation}
	\E\left[\sup_{t\in[0,T]}\left|\lambda_{k,A_k^N(t)}(\varsigma^N_k(t))-\lambda_{k,A_k(t)}(\varsigma_k(t))\right|\right]\leq\frac{\lambda_*^{2}}{\sqrt{N}}T\exp(2\lambda_*T),\label{eqlam}
	\end{equation}
	and
	\begin{equation}
	\E\left[\sup_{t\in[0,T]}\left|\mathds{1}_{\varsigma^N_k(t)<\eta_{k,A_k^N(t)}}-\mathds{1}_{\varsigma_k(t)<\eta_{k,A_k(t)}}\right|\right]\leq\frac{\lambda_*}{\sqrt{N}}T\exp(2\lambda^*T).\label{eqind}
	\end{equation}
\end{coro}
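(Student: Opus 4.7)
The plan is to show that each of the three differences vanishes identically on the event where $A^N_k$ and $A_k$ share the same jump times up to time $T$, and then to invoke Lemma~\ref{lem_inq} to bound the probability of the complementary event. No new analytic input is needed beyond what is already in Lemma~\ref{lem_inq}.

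First, observe that from the definition \eqref{TCL-t-1}, $\varsigma^N_k(t)$ depends on $A^N_k|_{[0,t]}$ only through the location of its jumps. Since both $A^N_k$ and $A_k$ are nondecreasing, integer-valued, càdlàg processes, the event
$$\left\{\sup_{t\in[0,T]}|A^N_k(t)-A_k(t)|=0\right\}$$
coincides with the event that the trajectories of $A^N_k$ and $A_k$ are identical on $[0,T]$, and in particular that their jump times agree. On this event we have $\varsigma^N_k(t)=\varsigma_k(t)$ and $A^N_k(t)=A_k(t)$ for every $t\in[0,T]$, so that the three differences
$$\gamma_{k,A^N_k(t)}(\varsigma^N_k(t))-\gamma_{k,A_k(t)}(\varsigma_k(t)),\quad \lambda_{k,A^N_k(t)}(\varsigma^N_k(t))-\lambda_{k,A_k(t)}(\varsigma_k(t)),\quad \mathds{1}_{\varsigma^N_k(t)<\eta_{k,A^N_k(t)}}-\mathds{1}_{\varsigma_k(t)<\eta_{k,A_k(t)}}$$
vanish simultaneously on all of $[0,T]$. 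Consequently, each supremum is supported on the complementary event
$$E_k(T):=\left\{\sup_{t\in[0,T]}|A^N_k(t)-A_k(t)|\geq 1\right\}.$$

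On $E_k(T)$, Assumption~\ref{TCL-AS-lambda-0} yields the deterministic pointwise bounds $1$, $\lambda_*$, and $1$ for the three differences respectively (using $\gamma\in[0,1]$ and $\lambda\in[0,\lambda_*]$). Taking expectations and applying Markov's inequality together with \eqref{eqA}--\eqref{eqdelta} from Lemma~\ref{lem_inq} gives
$$\P\bigl(E_k(T)\bigr)\leq\E\left[\sup_{t\in[0,T]}|A^N_k(t)-A_k(t)|\right]\leq\delta^N(T)\leq\frac{\lambda_*}{\sqrt{N}}T\exp(2\lambda_* T).$$
Multiplying this estimate by the three uniform bounds $1$, $\lambda_*$, $1$ yields \eqref{eqgam}, \eqref{eqlam}, and \eqref{eqind} respectively.

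There is no genuine obstacle here: the substantive coupling estimate has already been carried out in Lemma~\ref{lem_inq}, and the Corollary only records that composing a uniformly bounded function of the pair $(A_k,\varsigma_k)$ with these two processes preserves the $O(N^{-1/2})$ rate, since the composed functions can differ only when the two underlying counting processes fail to coincide on $[0,T]$.
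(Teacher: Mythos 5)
Your argument is correct and is essentially the deduction the paper intends when it cites Lemma~\ref{lem_inq} (the paper gives no separate proof, leaving the routine step to the reader). You correctly identify that the key is Markov's inequality applied to the integer-valued supremum $\sup_{[0,T]}|A^N_k-A_k|$ rather than the weaker $T\delta^N(T)$ bound from the second statement of Lemma~\ref{lem_inq}; this is needed to obtain the stated constants, and your pointwise bounds $1$, $\lambda_*$, $1$ from Assumption~\ref{TCL-AS-lambda-0} are exactly right.
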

From \cite[Remark~$6.2$]{forien-Zotsa2022stochastic} we deduce the following Corollary. 
\begin{coro}\label{TCL-cor-1}
	For $k\in\mathbb{N}$ and $t\geq0$,
	\begin{align*}
	&\E\left[\left|\overline{\mathfrak F}^N(t)-\overline{\mathfrak{F}}(t)\right|\right]\leq \frac{\lambda_*}{\sqrt{N}}\left(1+\lambda_*t\exp(2\lambda_*t)\right),\qquad \E\left[\left|\overline{\mathfrak S}^N(t)-\overline{\mathfrak{S}}(t)\right|\right]\leq \frac{1}{\sqrt{N}}\left(1+\lambda_*t\exp(2\lambda_*t)\right)\\&\text{ and  } \quad 
	\E\left[\left|\Upsilon^N_k(t)-\Upsilon_k(t)\right|\right]\leq \frac{\lambda_*}{\sqrt{N}}\left(1+2\lambda_*t\exp(2\lambda_*t)\right).
	\end{align*}
\end{coro}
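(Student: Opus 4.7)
The plan is to decompose each of the three quantities into a ``coupling error'' part (comparing the interacting $N$-particle system to the i.i.d. limit system built from the same Poisson random measures) and a ``law of large numbers'' part (an empirical mean over the i.i.d. limit processes minus its expectation), then bound each piece by invoking Corollary~\ref{lem-c2} on the first and an elementary second-moment estimate on the second.

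For the first inequality I would write
\begin{equation*}
\overline{\mathfrak F}^N(t)-\overline{\mathfrak F}(t)
=\frac{1}{N}\sum_{k=1}^{N}\Big(\lambda_{k,A_k^N(t)}(\varsigma_k^N(t))-\lambda_{k,A_k(t)}(\varsigma_k(t))\Big)+\Big(\frac{1}{N}\sum_{k=1}^{N}\lambda_{k,A_k(t)}(\varsigma_k(t))-\E\big[\lambda_{1,A_1(t)}(\varsigma_1(t))\big]\Big),
\end{equation*}
apply the triangle inequality, and take expectations. The first average is controlled termwise by \eqref{eqlam}, yielding a contribution of $\lambda_*^2 t\,e^{2\lambda_* t}/\sqrt{N}$. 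The second term is an average of i.i.d.\ random variables (since the $(\lambda_{k,i},\gamma_{k,i},Q_k)_k$ are i.i.d., so are the $(\lambda_{k,A_k(t)}(\varsigma_k(t)))_k$), each bounded by $\lambda_*$; bounding its $L^1$ norm by its $L^2$ norm via Cauchy--Schwarz and using $\mathrm{Var}\le \lambda_*^2$ gives a contribution $\lambda_*/\sqrt{N}$. Adding the two yields the stated bound. The second inequality is identical in structure: replace $\lambda$ by $\gamma$ throughout, use \eqref{eqgam} for the coupling term, and use $\gamma\le 1$ so the i.i.d.\ term is bounded by $1/\sqrt{N}$ rather than $\lambda_*/\sqrt{N}$, which explains the pre-factor $1/\sqrt{N}$ in place of $\lambda_*/\sqrt{N}$.

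For the third inequality I would use the product decomposition
\begin{equation*}
\Upsilon_k^N(t)-\Upsilon_k(t)=\gamma_{k,A_k^N(t)}(\varsigma_k^N(t))\big(\overline{\mathfrak F}^N(t)-\overline{\mathfrak F}(t)\big)+\big(\gamma_{k,A_k^N(t)}(\varsigma_k^N(t))-\gamma_{k,A_k(t)}(\varsigma_k(t))\big)\overline{\mathfrak F}(t),
\end{equation*}
bound the leading factors by $1$ and $\lambda_*$ (using $\gamma\le 1$ and $\overline{\mathfrak F}\le \lambda_*$ from the remark following Theorem~\ref{thm-FLLN}), and plug in the first inequality together with \eqref{eqgam}. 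The two contributions $\lambda_*^2 t\,e^{2\lambda_* t}/\sqrt{N}$ then add, producing the factor $2$ in the third bound, while the constant term $\lambda_*/\sqrt{N}$ is inherited from the bound on $\overline{\mathfrak F}^N-\overline{\mathfrak F}$.

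There is no real obstacle here: the only point that requires mild care is to separate the coupling error from the empirical fluctuation and to keep track of whether the bounding constant is $\lambda_*$ or $1$ at each step, so that the final constants line up exactly with the statement (in particular, $1/\sqrt{N}$ versus $\lambda_*/\sqrt{N}$, and the factor $2$ in the last bound).
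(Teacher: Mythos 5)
Your proof is correct and the constants line up exactly with the statement. The paper itself gives no argument here, merely citing Remark~6.2 of \cite{forien-Zotsa2022stochastic}; your decomposition into a coupling error (handled termwise by \eqref{eqlam}, \eqref{eqgam}) plus an empirical fluctuation (handled by Cauchy--Schwarz and the almost-sure bounds $\lambda\le\lambda_*$, $\gamma\le1$) is precisely the argument those preliminary estimates are designed to support, so this is an honest filling-in of an omitted proof rather than a genuinely different route.

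One small bookkeeping remark: \eqref{eqlam} and \eqref{eqgam} control the expected supremum over $[0,T]$, so taking $T=t$ gives $\E\big[\big|\lambda_{k,A_k^N(t)}(\varsigma_k^N(t))-\lambda_{k,A_k(t)}(\varsigma_k(t))\big|\big]\le\lambda_*^2 t\,e^{2\lambda_* t}/\sqrt N$ as you use; and for the LLN term the bound $\mathrm{Var}(X_1)\le\E[X_1^2]\le\lambda_*^2$ (respectively $\le1$) suffices, so Cauchy--Schwarz indeed yields $\lambda_*/\sqrt N$ (respectively $1/\sqrt N$). The product decomposition for $\Upsilon_k^N-\Upsilon_k$ with $\gamma\le1$, $\overline{\mathfrak F}\le\lambda_*$ then produces the factor $2$ exactly as claimed.
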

By exchangeability we have the following Corollary. 
\begin{coro}
For  $t\ge 0$, 
	\begin{align}\label{fluc-1}
	\E\left[\left|\overline\Upsilon^N(t)-\tilde\Upsilon^N(t)\right|\right]\leq\frac{\lambda_*}{\sqrt{N}}\left(1+2\lambda_*t\exp(2\lambda_*t)\right), 
	\end{align}	
	where 
	\[\tilde\Upsilon^N(t)=\frac{1}{N}\sum_{k=1}^{N}\Upsilon_k(t).\]
\end{coro}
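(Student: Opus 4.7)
The plan is to reduce the statement to a one-individual estimate and then quote Corollary~\ref{TCL-cor-1}. First I would identify $\overline\Upsilon^N(t)$ with $\frac{1}{N}\sum_{k=1}^N\Upsilon^N_k(t)$, so that
\[\overline\Upsilon^N(t)-\tilde\Upsilon^N(t)=\frac{1}{N}\sum_{k=1}^N\big(\Upsilon^N_k(t)-\Upsilon_k(t)\big).\]
Taking absolute values, applying the triangle inequality, and then Fubini yields
\[\E\big[|\overline\Upsilon^N(t)-\tilde\Upsilon^N(t)|\big]\le\frac{1}{N}\sum_{k=1}^N\E\big[|\Upsilon^N_k(t)-\Upsilon_k(t)|\big].\]

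Next I would invoke exchangeability. Since the driving data $\big((\lambda_{k,i})_i,(\gamma_{k,i})_i,Q_k\big)_{k\ge1}$ are i.i.d.\ in $k$, and both $\overline{\mathfrak F}^N$ (the empirical average over all individuals) and its deterministic limit $\overline{\mathfrak F}$ are symmetric in the indices, the summands $|\Upsilon^N_k(t)-\Upsilon_k(t)|$ are identically distributed in $k$. Therefore
\[\E\big[|\overline\Upsilon^N(t)-\tilde\Upsilon^N(t)|\big]\le\E\big[|\Upsilon^N_1(t)-\Upsilon_1(t)|\big].\]

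Finally I would apply the third inequality from Corollary~\ref{TCL-cor-1}, which gives $\E\big[|\Upsilon^N_1(t)-\Upsilon_1(t)|\big]\le\frac{\lambda_*}{\sqrt N}\big(1+2\lambda_* t\exp(2\lambda_* t)\big)$, producing exactly \eqref{fluc-1}. There is no serious obstacle: the whole argument is a direct consequence of the i.i.d.\ structure together with the already-established single-coordinate bound, and the symmetry that makes exchangeability work is built into the definitions of $\overline{\mathfrak F}^N$ and $\overline{\mathfrak F}$.
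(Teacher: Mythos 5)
Your argument is correct and matches the paper's: the paper also derives \eqref{fluc-1} directly from the single-coordinate bound in Corollary~\ref{TCL-cor-1} via the decomposition $\overline\Upsilon^N-\tilde\Upsilon^N=\frac{1}{N}\sum_k(\Upsilon^N_k-\Upsilon_k)$, the triangle inequality, and exchangeability of the summands. You have simply spelled out the justification for exchangeability that the paper leaves implicit.
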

Now we establish similar inequalities as in Corollary~\ref{TCL-cor-1} for higher moments. 
Let
\begin{equation*}
\chi_N^{(k)}(t):=\mathbb{P}\Big((\varsigma_{k'}^N(s))_{s\in[0,t]}\neq(\varsigma_{k'}(s))_{s\in[0,t]},\,\forall k'=1,\cdots, k\Big)\,.
\end{equation*}
We establish the following Proposition.
\begin{prop}\label{prop-c1}
	For all $N\geq k$, and $t\in[0,T],$ there are positives constants $C_{k,T}$ and $C_{k,T}'$ depending on $k$ and $T$, such that 
	\begin{equation*}\chi_N^{(k)}(t)\leq C_{k,T}N^{-k/2},\quad \text{ and } \quad \xi_N^{(k)}(t):=\E\left[\left|\overline{\mathfrak F}^N(t)-\overline{\mathfrak F}(t)\right|^k\right]\leq C_{k,T}'(t)N^{-k/2}.\end{equation*}
\end{prop}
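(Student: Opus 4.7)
The plan is to prove both bounds simultaneously by induction on $k$, with $T$ fixed throughout. The base case $k=1$ is immediate: the $\chi$-bound is Lemma~\ref{lem_inq} combined with \eqref{eqdelta}, and the $\xi$-bound is Corollary~\ref{TCL-cor-1}.

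For the induction step, assume both bounds hold up to order $k-1$. I first reduce the force moments to the joint coupling probabilities, assuming the target bound $\chi_N^{(k)}(t)\leq C_kN^{-k/2}$ (proved afterwards). Introduce the stopping times $\tau_{k'}:=\inf\{s\geq 0:\varsigma_{k'}^N(s)\neq\varsigma_{k'}(s)\}$ and the auxiliary empirical $\tilde F^N(t):=\frac{1}{N}\sum_{k'=1}^N\lambda_{k',A_{k'}(t)}(\varsigma_{k'}(t))$, and split
\[
\overline{\mathfrak F}^N(t)-\overline{\mathfrak F}(t)=\big(\overline{\mathfrak F}^N(t)-\tilde F^N(t)\big)+\big(\tilde F^N(t)-\overline{\mathfrak F}(t)\big).
\]
The second difference is a centred i.i.d.\ sum of variables bounded by $\lambda_*$, so Rosenthal's inequality gives a $k$-th moment of order $N^{-k/2}$. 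For the first, the coupling forces the $k'$-th summand to vanish unless $\tau_{k'}\leq t$, so $|\overline{\mathfrak F}^N-\tilde F^N|\leq(\lambda_*/N)\sum_{k'=1}^N\mathds{1}_{\tau_{k'}\leq t}$. Expanding the $k$-th power and using exchangeability,
\[
\E\Big[\Big(\sum_{k'=1}^N\mathds{1}_{\tau_{k'}\leq t}\Big)^k\Big]=\sum_{j=1}^{k}c_{k,j}\,N(N-1)\cdots(N-j+1)\,\chi_N^{(j)}(t)\leq C\sum_{j=1}^{k}N^{j/2}\leq CN^{k/2},
\]
where the $c_{k,j}$ are combinatorial constants (Stirling numbers of the second kind) from grouping by the number of distinct indices. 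Division by $N^k$ yields $\xi_N^{(k)}(t)\leq C_k'N^{-k/2}$.

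The estimate on $\chi_N^{(k)}$ itself is the core difficulty and the main obstacle of the proof. The single-individual bound comes from $\mathds{1}_{\tau_{k'}\leq t}\leq\int_{[0,t]\times\R_+}\mathds{1}_{\Upsilon_{k'}(s^-)\wedge\Upsilon_{k'}^N(s^-)<u\leq\Upsilon_{k'}(s^-)\vee\Upsilon_{k'}^N(s^-)}Q_{k'}(ds,du)$, combined with the observation that strictly before $\tau_{k'}$ the $\gamma$-factors agree, forcing $|\Upsilon_{k'}^N-\Upsilon_{k'}|\leq|\overline{\mathfrak F}^N-\overline{\mathfrak F}|$. If the integrands for $k'=1,\dots,k$ were measurable with respect to a common external $\sigma$-algebra, then independence of $Q_1,\dots,Q_k$ would yield
\[
\chi_N^{(k)}(t)\leq\E\Big[\Big(\int_0^t|\overline{\mathfrak F}^N(s)-\overline{\mathfrak F}(s)|\,ds\Big)^k\Big]\leq Ct^{k-1}\int_0^t\xi_N^{(k)}(s)\,ds,
\]
and the preceding paragraph would close the loop via Gronwall. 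The genuine obstacle is that each $\Upsilon_{k'}^N$ still depends on $Q_{k'}$ through the empirical force $\overline{\mathfrak F}^N$. To break this circularity I would use the quarantine coupling developed later in the paper (Lemmas~\ref{TCL-A-9-lem_inq} and \ref{TCL-A-9-lem-inq2}): replace individuals $1,\dots,k$ by quarantined versions whose driving force is built only from $Q_{k+1},\dots,Q_N$ and the associated $\lambda,\gamma$ data. The quarantined $\tau_{k'}$'s are then conditionally independent, each with single-individual probability of order $N^{-1/2}$, so their joint probability is of order $N^{-k/2}$; the quarantine modification of paths is uniformly $O(k/N)$ on $[0,T]$ and contributes only a lower-order correction. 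Closing the induction at the exact $N^{-k/2}$ rate thus reduces to the tight quantification of the quarantine approximation error provided by Lemmas~\ref{TCL-A-9-lem_inq} and \ref{TCL-A-9-lem-inq2}.
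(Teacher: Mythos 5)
Your reduction of $\xi_N^{(k)}$ to the coupling probabilities (the centred i.i.d.\ sum via Rosenthal plus the expansion of $\E[(\sum_{k'}\mathds{1}_{\tau_{k'}\leq t})^k]$ in terms of $N^j\chi_N^{(j)}(t)$) is sound and close in spirit to what the paper does for $\xi^{(k)}_N$. The real issue is in the $\chi_N^{(k)}$ estimate, where your argument has a genuine gap. After quarantining individuals $1,\dots,k$, the replacement error $\P(\tau_{k'}\neq\tilde\tau_{k'})$ for each fixed $k'$ is of order $1/N$ (this is exactly the content of Lemma~\ref{TCL-A-9-lem_inq}, $\delta^N(T)=O(1/N)$). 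A union bound over $k'=1,\dots,k$ therefore gives an approximation error of order $1/N$ between $\chi_N^{(k)}(t)$ and its quarantined version, and this is \emph{not} a lower-order correction: for $k\geq 3$ you need an error of order $N^{-k/2}\ll N^{-1}$. To upgrade the $O(1/N)$ error to $O(N^{-k/2})$ you would have to show that the event $\{\tau_{k'}\neq\tilde\tau_{k'}\}$ is approximately independent of the remaining $k-1$ coupling-failure events, which is exactly as hard as the original decoupling problem and cannot be delegated to Lemmas~\ref{TCL-A-9-lem_inq}--\ref{TCL-A-9-lem-inq2}; note moreover that Lemma~\ref{TCL-A-9-lem-inq2} is itself proved ``by the same proof as Proposition~\ref{prop-c1},'' so invoking it here would be circular.

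The paper avoids this problem entirely by never taking a union bound over quarantine errors. Following \cite{chevallier_fluctuations_2017}, it works with $\varepsilon^{(k,p)}_N(t)=\E[\prod_{k'=1}^k(\Delta^N_{k'}(t))^p]$ and uses a jump expansion plus exchangeability to reduce $\varepsilon^{(k,p)}_N$ to a quantity $M^N(s)=\E[\prod_{i=2}^k(\Delta_i^N(s))^p\,|\overline{\mathfrak F}^N(s)-\overline{\mathfrak F}(s)|]$, which it then decouples by the \emph{block-averaging} trick: replace each $(\Delta_i^N(s))^p$ by the block empirical mean $\frac{1}{\lfloor N/k\rfloor}\sum_{j\in B_i}(\Delta^N_j(s))^p$ over disjoint index blocks $B_i$ (possible by exchangeability since the blocks are disjoint), then apply H\"older with exponent $k$. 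The concentration of the block means is what supplies the extra powers of $N^{-1/2}$ per factor, in place of the (insufficient) quarantine independence you rely on. The resulting system of inequalities for $(\varepsilon^{(k,p)}_N,\xi^{(k)}_N)$, with the lower-order terms carrying explicit $N^{k'-k}$ weights, then closes by joint induction and Gronwall. Your argument would need to be replaced by (or supplemented with) something of this nature for $k\geq 3$.
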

\begin{proof}
	We adapt the proof of \cite[Proposition~$3.1$]{chevallier_fluctuations_2017}. Let 
	\[\Delta^N_k(t):=\int_{0}^{t}\int_{\Upsilon_k^N(s^-)\wedge\Upsilon_k(s^-)}^{\Upsilon_k^N(s^-)\vee\Upsilon_k(s^-)}Q_k(ds,du).\]
	Observe that for all $k',\,(\varsigma_{k'}^N(s))_{s\in[0,t]}=(\varsigma_{k'}(s))_{s\in[0,t]}$ if and only if $\Delta^N_{k'}(t)=0$.
	As $\Delta^N_{k'}$ takes  integer values, 
	\begin{equation*}
	\chi_N^{(k)}(t)\leq\E\left[\prod_{k'=1}^{k}\Delta^N_{k'}(t)\right].
	\end{equation*}
	Let us set, for all $k,p\in\mathbb{N}$ such that $N\geq k$,
	\begin{equation*}
	\varepsilon^{(k,p)}_N(t)=\E\left[\prod_{k'=1}^{k}\left(\Delta^N_{k'}(t)\right)^p\right].
	\end{equation*}
	We next show by induction on $k$ that 
	\begin{equation}
	\varepsilon^{(k,p)}_N(t)\leq C_{T,k,p} N^{-k/2}.
	\end{equation}
	From Lemma~\ref{lem_inq}, 
	\begin{equation}
	\varepsilon_N^{(1,1)}(t)\leq C_TN^{-1/2}\label{eqc2}.
	\end{equation}
	Note that for all $p\leq q$,
	\[\varepsilon^{(k,p)}_N(t)\leq\varepsilon^{(k,q)}_N(t),\]
	because the counting process $\Delta^N_k$ takes values in $\mathbb{N}$.
	
	As in \cite[Proposition~$3.1$]{chevallier_fluctuations_2017} we have for all $p\in\mathbb{N}$,
	\begin{equation}\label{CLT-1}
	\varepsilon^{(1,p)}_N(t)=\E\left[\left(\Delta^N_{1}(t)\right)^p\right] \leq C_{T,p} N^{-1/2}.
	\end{equation}
	Indeed, noting that, as the process $(\Delta_1^N(t))_t$ jumps at each time from $\Delta_1^N(t^-)$ to $\Delta_1^N(t^-)+1,$ the process $((\Delta_1^N(t))^p)_t$ jumps from $(\Delta_1^N(t^-))^p$ to $(\Delta_1^N(t^-)+1)^p$. Consequently, from the fact that 
	\[(\Delta_1^N(t^-)+1)^p-(\Delta_1^N(t^-))^p=\sum_{p'=0}^{p-1}\dbinom{p}{p'}\left(\Delta_1^N(t^-)\right)^{p'},\]
	it follows that 
	\[\left(\Delta_1^N(t)\right)^p=\sum_{p'=0}^{p-1}\dbinom{p}{p'}\int_0^t \left(\Delta_1^N(s^-)\right)^{p'}\Delta_1^N(ds).\]
	Moreover, as $\left(\Delta_1^N(s^-)\right)^{p'}\leq\left(\Delta_1^N(s^-)\right)^{p }$ for $p'\leq p$, we deduce that
	\begin{align*}
	\varepsilon^{(1,p)}_N(t)=\E\left[\left(\Delta_1^N(t)\right)^p\right]&\leq\E\left[\int_0^t \Delta_1^N(ds)\right]+2^p\E\left[\int_0^t \left(\Delta_1^N(s^-)\right)^{p}\Delta_1^N(ds)\right]\\
	&=\varepsilon_N^{(1,1)}(t)+2^p\int_{0}^{t}\E\left[\left(\Delta_1^N(s)\right)^{p}|\Upsilon_1(s)-\Upsilon_1^N(s)|\right]ds\\
	&\leq C_{T,p}N^{-1/2}+2^{p}\lambda_*\int_{0}^{t}\varepsilon^{(1,p)}_N(s)ds,
	\end{align*}
	where the last inequality comes from \eqref{eqc2} and the fact that $|\Upsilon_1(s)-\Upsilon_1^N(s)|\leq\lambda^*.$ The conclusion~\eqref{CLT-1} follows by Gronwall's Lemma.
	
	When $k\geq2$ and $p\geq1$, as above,  noting that
	\begin{equation*}
	\prod_{i=1}^{k}\left(\Delta_i^N(t^-)\right)^p=\sum_{j=1}^{k}\sum_{p'=0}^{p-1}\dbinom{p}{p'}\int_0^t \left( \prod_{i\neq j,i=1}^k\left(\Delta_i^N(s^-)\right)^{p}\right)\left(\Delta_j^N(s^-)\right)^{p'}\Delta_j^N(ds)\,,
	\end{equation*} 
	almost surely, and using exchangeability of the processes $(\Delta_i^N)_i$ and the fact that the integrand is predictable, it follows that
	\begin{align*}
	\varepsilon^{(k,p)}_N(t)&=\sum_{j=1}^{k}\sum_{p'=0}^{p-1}\dbinom{p}{p'}\E\left[\int_0^t \prod_{i\neq j,i=1}^k\left(\Delta_i^N(s^-)\right)^{p}\left(\Delta_j^N(s^-)\right)^{p'}\Delta_j^N(ds)\right]\\
	&=k\sum_{p'=0}^{p-1}\dbinom{p}{p'}\int_0^t\E\left[\left(\Delta_1^N(s)\right)^{p'} \prod_{i=2}^k\left(\Delta_i^N(s)\right)^{p}|\Upsilon_1(s)-\Upsilon_1^N(s)|\right]ds\\
	&\leq k\int_{0}^{t} \Bigg(\E\left[\prod_{i=2}^k\left(\Delta_i^N(s)\right)^{p}|\Upsilon_1(s)-\Upsilon_1^N(s)|\right]+2^p\lambda_*\varepsilon^{(k,p)}_N(s) \Bigg) \, ds.
	\end{align*}
	However, as
	\begin{align*}
	|\Upsilon_1(s)-\Upsilon_1^N(s)|&\leq|\overline{\mathfrak F}^N(s)-\overline{\mathfrak F}(s)|+\lambda_\ast|\gamma_{1,A^N_1(s)}(\varsigma_1^N(s))-\gamma_{1,A_1(s)}(\varsigma_1(s))|\\
	&\leq|\overline{\mathfrak F}^N(s)-\overline{\mathfrak F}(s)|+\lambda_\ast\mathds{1}_{\varsigma_{1}^N(s)\neq\varsigma_{1}(s)\text{ or }A^N_1(s)\neq A_1(s)}\\
	&\leq|\overline{\mathfrak F}^N(s)-\overline{\mathfrak F}(s)|+\lambda_\ast\Delta_1^N(s),
	\end{align*}
	it follows that,
	\begin{equation*}
	\varepsilon^{(k,p)}_N(t)\leq 
	k\int_{0}^{t}\left(M^N(s)+2^{p+1}\lambda_*\varepsilon^{(k,p)}_N(s)\right)ds,
	\end{equation*}
	with $M^N(s)=\E\left[\prod_{i=2}^k\left(\Delta_i^N(s)\right)^{p}|\overline{\mathfrak F}^N(s)-\overline{\mathfrak F}(s)|\right]$.
	Using exchangeability we can replace each term $\left(\Delta_i^N(s)\right)^{p}$ in the expression of $M^N$ by the following sum
	\[\frac{1}{\lfloor\frac{N}{k}\rfloor}\sum_{j=(i-1)\lfloor\frac{N}{k}\rfloor+1}^{i\lfloor\frac{N}{k}\rfloor}(\Delta^N_j(s))^p\]
	without changing the value of $M^N,$ since the sums are taken on disjoint indices. Then,
	\begin{align*}
	M^N(s)&=\E\left[\prod_{i=2}^k\left(\frac{1}{\lfloor\frac{N}{k}\rfloor}\sum_{j=(i-1)\lfloor\frac{N}{k}\rfloor+1}^{i\lfloor\frac{N}{k}\rfloor}(\Delta^N_j(s))^p\right)|\overline{\mathfrak F}^N(s)-\overline{\mathfrak F}(s)|\right]\nonumber\\
	&\leq\left(\prod_{i=2}^k\E\left[\left(\frac{1}{\lfloor\frac{N}{k}\rfloor}\sum_{j=(i-1)\lfloor\frac{N}{k}\rfloor+1}^{i\lfloor\frac{N}{k}\rfloor}(\Delta^N_j(s))^p\right)^k\right]\right)^{1/k}\left(\E\left[|\overline{\mathfrak F}^N(s)-\overline{\mathfrak F}(s)|^k\right]\right)^{1/k}.
	\end{align*}  
	
	Consequently, using Young's inequality, it follows that
	\begin{equation}
	\varepsilon^{(k,p)}_N(t)\lesssim \int_{0}^{t} \bigg( \frac{k-1}{k}E_{N,k,p}(s)+\frac{1}{k}\xi_N^{(k)}(s)+\varepsilon_N^{(k,p)}(s) \bigg) ds\,,
	\end{equation}
	where
	\begin{align*}
	E_{N,k,p}(s)&:=\E\left[\left(\frac{1}{\lfloor\frac{N}{k}\rfloor}\sum_{j=1}^{\lfloor\frac{N}{k}\rfloor}(\Delta^N_j(s))^p\right)^k\right]\\
	&\leq C_k\left(\sum_{k'=1}^{k-1}N^{k'-k}\varepsilon_N^{(k',kp)}(s)+\varepsilon_N^{(k,p)}(s)\right),
	\end{align*}
	and 
	\begin{align*}
	\xi^{(k)}_N(t)&:=\E\left[\left|\overline{\mathfrak F}^N(t)-\overline{\mathfrak F}(t)\right|^k\right]\\
	&\leq2^{k-1}\bigg(\E\bigg[\bigg(\frac{1}{N}\sum_{i=1}^{N}\lambda_{i,A^N_i(t)}(\varsigma_i^N(t))-\lambda_{i,A_i(t)}(\varsigma_i(t))\bigg)^2\bigg]+\E\bigg[\bigg(\frac{1}{N}\sum_{i=1}^{N}\lambda_{i,A_i(t)}(\varsigma_i(t))-\overline{\mathfrak{F}}(t)\bigg)^2\bigg]\bigg)\\
	&\leq2^{k-1}\bigg(\E\bigg[\bigg(\frac{1}{N}\sum_{i=1}^{N}\Delta_{i}^N(t)\bigg)^2\bigg]+\E\bigg[\bigg(\frac{1}{N}\sum_{i=1}^{N}\lambda_{i,A_i(t)}(\varsigma_i(t))-\overline{\mathfrak{F}}(t)\bigg)^2\bigg]\bigg)\\
	&\leq\sum_{k'=1}^{k-1}N^{k'-k}\varepsilon_N^{(k',k)}(t)+\varepsilon_N^{(k,p)}(t)+C_kN^{-k/2},
	\end{align*}
	We refer to \cite[Proposition~$3.1$]{chevallier_fluctuations_2017} for more details.
	The conclusion of the proof of this Proposition follows by induction and Gronwall's Lemma.
\end{proof}
The proof of the following Corollary is similar to the proof of Proposition~\ref{prop-c1} replacing $\lambda_{i,j}$ by $\gamma_{i,j}$. 
\begin{coro}\label{inr}For all $\ell,\,k,\,T>0$, There exists $C_{T,k}>0$, such that for all $t\in[0,T],\,\forall N\geq k,$
	\begin{equation}\label{fluc-2}
	\E\left[\left|\overline{\mathfrak S}^N(t)-\overline{\mathfrak S}(t)\right|^k\right]\leq C_{T,k}N^{-k/2},\text{ and }\E\left[\left|\Upsilon^N_\ell(t)-\Upsilon_\ell(t)\right|^k\right]\leq C_{T,k}N^{-k/2}.
	\end{equation}
%
\end{coro}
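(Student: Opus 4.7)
The plan is to rerun the inductive argument of Proposition~\ref{prop-c1} with $\gamma$ in place of $\lambda$. The key observation is that the Poisson-measure combinatorics and the bounds $\varepsilon_N^{(k,p)}(t)\leq C_{T,k,p}N^{-k/2}$ established there depend only on the couplings $(Q_k,\Upsilon_k,\Upsilon_k^N)$, and thus carry over verbatim; what changes is only the bounded function evaluated along the trajectories.

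For the first estimate, I would decompose
\begin{equation*}
\overline{\mathfrak S}^N(t)-\overline{\mathfrak S}(t)=\frac{1}{N}\sum_{i=1}^N\bigl[\gamma_{i,A_i^N(t)}(\varsigma_i^N(t))-\gamma_{i,A_i(t)}(\varsigma_i(t))\bigr]+\frac{1}{N}\sum_{i=1}^N\bigl[\gamma_{i,A_i(t)}(\varsigma_i(t))-\overline{\mathfrak S}(t)\bigr]
\end{equation*}
and bound $|\overline{\mathfrak S}^N-\overline{\mathfrak S}|^k$ by $2^{k-1}$ times the sum of the $k$-th powers of the two averages. Since $\gamma\in[0,1]$ by Assumption~\ref{TCL-AS-lambda-0}, each summand in the first bracket is bounded by $\mathds{1}_{A_i^N(t)\neq A_i(t)\,\text{or}\,\varsigma_i^N(t)\neq\varsigma_i(t)}\leq\Delta_i^N(t)$, which is precisely the upper bound exploited in Proposition~\ref{prop-c1}; expanding the $k$-fold product and grouping by the number of distinct indices reduces it to linear combinations of the $\varepsilon_N^{(k',p)}$, each of order $O(N^{-k'/2})$. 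The second average is an i.i.d.\ empirical mean of centered variables bounded by $1$, whose $k$-th moment is $O(N^{-k/2})$ by the classical Marcinkiewicz--Zygmund inequality. Feeding these into the same Young-inequality plus Gronwall scheme as in Proposition~\ref{prop-c1} closes the induction on $k$ and yields the first bound.

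For the second estimate, I would split
\begin{equation*}
\Upsilon_\ell^N(t)-\Upsilon_\ell(t)=\bigl[\gamma_{\ell,A_\ell^N(t)}(\varsigma_\ell^N(t))-\gamma_{\ell,A_\ell(t)}(\varsigma_\ell(t))\bigr]\overline{\mathfrak F}^N(t)+\gamma_{\ell,A_\ell(t)}(\varsigma_\ell(t))\bigl[\overline{\mathfrak F}^N(t)-\overline{\mathfrak F}(t)\bigr],
\end{equation*}
and, using $\gamma\leq 1$ and $\overline{\mathfrak F}^N\leq\lambda_*$, bound the $k$-th moment of the right-hand side by $2^{k-1}\bigl(\lambda_*^k\,\E[|\gamma_{\ell,A_\ell^N(t)}(\varsigma_\ell^N(t))-\gamma_{\ell,A_\ell(t)}(\varsigma_\ell(t))|^k]+\E[|\overline{\mathfrak F}^N-\overline{\mathfrak F}|^k]\bigr)$. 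The second summand is controlled by Proposition~\ref{prop-c1}. For the first, since $|\gamma_\ell^N-\gamma_\ell|\leq 1$ and the difference vanishes on $\{\Delta_\ell^N(t)=0\}$, one has $|\gamma_\ell^N-\gamma_\ell|^k\leq\Delta_\ell^N(t)^k$, and the right-hand expectation equals $\varepsilon_N^{(1,k)}(t)$, again estimated by Proposition~\ref{prop-c1}.

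The only real work, as in Proposition~\ref{prop-c1}, is the bookkeeping: expanding $(\frac{1}{N}\sum_i X_i)^k$ and using exchangeability couples $\E[|\overline{\mathfrak S}^N-\overline{\mathfrak S}|^k]$ to the entire family $\{\varepsilon_N^{(k',p)}\}_{k'\leq k,\,p\leq k}$, which is closed by Young's inequality and Gronwall's lemma. No genuinely new ingredient is required; the boundedness properties of $\gamma$ from Assumption~\ref{TCL-AS-lambda-0} play exactly the role that $\lambda\leq\lambda_*$ played for $\lambda$ in Proposition~\ref{prop-c1}.
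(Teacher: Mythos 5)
For the first estimate $\E[|\overline{\mathfrak S}^N(t)-\overline{\mathfrak S}(t)|^k]\le C_{T,k}N^{-k/2}$, your plan is exactly what the paper does (the paper's proof is the one-line remark ``similar to Proposition~\ref{prop-c1} replacing $\lambda_{i,j}$ by $\gamma_{i,j}$''), and your decomposition plus the Marcinkiewicz--Zygmund step for the i.i.d.\ centered part is the standard way to make the replacement rigorous.

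The second estimate is where there is a genuine gap. You bound $\E\big[|\gamma_{\ell,A_\ell^N(t)}(\varsigma_\ell^N(t))-\gamma_{\ell,A_\ell(t)}(\varsigma_\ell(t))|^k\big]$ by $\varepsilon_N^{(1,k)}(t)=\E\big[(\Delta_\ell^N(t))^k\big]$ and say this is ``estimated by Proposition~\ref{prop-c1}'' --- but from the inductive argument there (equation~\eqref{CLT-1}), $\varepsilon_N^{(1,p)}(t)\le C_{T,p}N^{-1/2}$ for \emph{every} $p$: the exponent $-k/2$ in $\varepsilon_N^{(k,p)}\le C_{T,k,p}N^{-k/2}$ is driven by the number of \emph{distinct} individuals (the first superscript), not the power. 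The first superscript here is $1$, so you only obtain $O(N^{-1/2})$. Putting this into your decomposition yields
$\E[|\Upsilon_\ell^N(t)-\Upsilon_\ell(t)|^k]\le 2^{k-1}\big(\lambda_*^k\,O(N^{-1/2})+O(N^{-k/2})\big)=O(N^{-1/2})$,
which for $k\ge 2$ is strictly weaker than the stated $O(N^{-k/2})$. Moreover this is not a technical defect of your estimate: a single individual's $\gamma$-discrepancy equals zero when the coupled trajectories agree and is $O(1)$ otherwise, and the disagreement event has probability of order $N^{-1/2}$ (Lemma~\ref{lem_inq}), so the $k$-th moment of $\Upsilon_\ell^N-\Upsilon_\ell$ cannot decay faster than $N^{-1/2}$. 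The $N^{-k/2}$ rate in the second half of \eqref{fluc-2} therefore appears to overstate what is provable; the route that does deliver $N^{-k/2}$ is for the \emph{averaged} quantity $\overline\Upsilon^N-\tilde\Upsilon^N$ (combining the first half of \eqref{fluc-2}, Proposition~\ref{prop-c1} for $\overline{\mathfrak F}^N$, and Marcinkiewicz--Zygmund), and if you want to salvage the downstream uses of \eqref{fluc-2} with $k=2$ in the tightness proofs (e.g.\ \eqref{eqc7} and Lemma~\ref{TCL-tight-F}), the estimate should be carried out on the average and on cross terms $\E[|\Upsilon_1^N-\Upsilon_1||\Upsilon_2^N-\Upsilon_2|]$ (via the quarantine coupling), rather than by squeezing Cauchy--Schwarz through a single-individual second moment.
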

Let us recall the following result, which is a case of the Borel-Cantelli Lemma.
\begin{prop}\label{ins}
	Let $(X^N)_N$ be a family of reals random variables and $X$ be a random variable. If for each $\epsilon>0$,
	\begin{equation*}
	\sum_{N\geq1}\mathbb{P}\left(|X^N-X|\geq\epsilon\right)<\infty,
	\end{equation*} 
	then $(X^N)_N$ converges almost surely to $X$.  
\end{prop}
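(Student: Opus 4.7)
The plan is to reduce the statement to the first Borel--Cantelli lemma applied along a countable family of events. The key observation is that almost sure convergence $X^N \to X$ is equivalent to the assertion that for every $\varepsilon > 0$, the event $\{|X^N - X| \ge \varepsilon\}$ occurs for only finitely many $N$, almost surely; and by monotonicity in $\varepsilon$ it suffices to verify this for $\varepsilon$ ranging over the countable set $\{1/k : k \ge 1\}$.

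First I would fix an arbitrary $\varepsilon > 0$ and set $A_N^{\varepsilon} := \{|X^N - X| \ge \varepsilon\}$. The hypothesis gives $\sum_{N \ge 1} \P(A_N^{\varepsilon}) < \infty$, so the classical Borel--Cantelli lemma yields
\begin{equation*}
\P\!\left(\limsup_{N \to \infty} A_N^{\varepsilon}\right) = 0,
\end{equation*}
which means that on a set $\Omega_{\varepsilon}$ of full probability, there exists $N_0(\omega, \varepsilon)$ such that $|X^N(\omega) - X(\omega)| < \varepsilon$ for all $N \ge N_0$.

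Next, specializing to $\varepsilon = 1/k$ for each $k \in \mathbb{N}^*$, I would set $\Omega_\infty := \bigcap_{k \ge 1} \Omega_{1/k}$, which still has full probability as a countable intersection of full-probability events. For $\omega \in \Omega_\infty$ and any given $k$, one has $|X^N(\omega) - X(\omega)| < 1/k$ for all $N$ sufficiently large, so $\limsup_{N \to \infty} |X^N(\omega) - X(\omega)| \le 1/k$. Letting $k \to \infty$ yields $X^N(\omega) \to X(\omega)$, establishing the almost sure convergence.

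There is no real obstacle here: the only subtlety is to exploit countability (via $\varepsilon = 1/k$) in order to pass from the null-set assertion at each fixed $\varepsilon$ to a single null set outside of which convergence holds in the usual $\varepsilon$-$N_0$ sense.
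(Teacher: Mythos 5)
Your proof is correct and takes exactly the route the paper has in mind: the paper states this proposition without proof, referring to it simply as ``a case of the Borel--Cantelli Lemma,'' and your argument is the standard Borel--Cantelli deduction together with the countability reduction $\varepsilon = 1/k$. Nothing further is needed.
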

As a consequence of Proposition~\ref{ins} and the Corollary~\ref{inr}, the following result follows.
\begin{coro}
	For each $t\geq0,\,\overline{\mathfrak F}^N(t)$ and $\overline{\mathfrak S}^N(t)$ converge almost surely respectively to $\overline{\mathfrak F}(t)$ and $\overline{\mathfrak S}(t)$. 
\end{coro}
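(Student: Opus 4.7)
The plan is to deduce this corollary as an immediate consequence of the higher-moment bounds just established, combined with a Markov/Borel--Cantelli argument (Proposition~\ref{ins}). Fix $t \ge 0$ and set $T = t$. By Proposition~\ref{prop-c1} we have $\E\bigl[|\overline{\mathfrak F}^N(t) - \overline{\mathfrak F}(t)|^k\bigr] \le C'_{t,k} N^{-k/2}$ for every integer $k \ge 1$, and analogously for $\overline{\mathfrak S}^N$ by Corollary~\ref{inr}.

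For any fixed $\epsilon > 0$, Markov's inequality applied with exponent $k$ gives
\begin{equation*}
\P\bigl(|\overline{\mathfrak F}^N(t) - \overline{\mathfrak F}(t)| \ge \epsilon\bigr) \le \frac{\E\bigl[|\overline{\mathfrak F}^N(t) - \overline{\mathfrak F}(t)|^k\bigr]}{\epsilon^k} \le \frac{C'_{t,k}}{\epsilon^k\, N^{k/2}}.
\end{equation*}
Choosing any integer $k \ge 3$ so that $k/2 > 1$ makes the right-hand side summable in $N$, so $\sum_{N \ge 1} \P\bigl(|\overline{\mathfrak F}^N(t) - \overline{\mathfrak F}(t)| \ge \epsilon\bigr) < \infty$. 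Proposition~\ref{ins} then yields $\overline{\mathfrak F}^N(t) \to \overline{\mathfrak F}(t)$ almost surely. Repeating the same three-line argument with Corollary~\ref{inr} in place of Proposition~\ref{prop-c1} gives $\overline{\mathfrak S}^N(t) \to \overline{\mathfrak S}(t)$ almost surely.

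There is no genuine obstacle here: all the technical work sits in the moment estimates of Proposition~\ref{prop-c1} and Corollary~\ref{inr}. The corollary is really just the standard observation that $L^k$-convergence at rate $N^{-k/2}$ with $k \ge 3$ upgrades, via Borel--Cantelli, to almost sure convergence at each fixed time.
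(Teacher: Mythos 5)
Your proof is correct and takes exactly the approach the paper intends: the paper simply asserts the corollary ``as a consequence of Proposition~\ref{ins} and the Corollary~\ref{inr}'' without spelling out the Markov/Borel--Cantelli step, and you fill that in correctly (also properly citing Proposition~\ref{prop-c1} for the $\overline{\mathfrak F}^N$ moment bound, which the paper's citation arguably omits since Corollary~\ref{inr} only covers $\overline{\mathfrak S}^N$ and $\Upsilon^N_\ell$).
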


\subsection{Quarantine model }\label{TCL-sub-qua}
Fix $k\in \mathbb{N}$, let us consider the model above where the  $k$-th individual is quarantined. Then we define
\begin{equation*}
\overline{\mathfrak{F}}^N_{(k)}(t)=\frac{1}{N}\sum_{\ell=1,\ell\neq k}^{N}\lambda_{\ell,A^N_{\ell,(k)}(t)}(\varsigma^N_{\ell,(k)}(t)),\text{ and }\overline{\mathfrak{S}}^N_{(k)}(t)=\frac{1}{N}\sum_{\ell=1,\ell\neq k}^{N}\gamma_{\ell,(k)}(t)(\varsigma^N_{\ell,(k)}(t)),
\end{equation*} 
where 
\[A^N_{\ell,(k)}(t)=\int_{0}^{t}\int_{0}^{+\infty}\mathds{1}_{\gamma_{\ell,A^N_{\ell,(k)}(s^-)}(\varsigma^N_{\ell,(k)}(s^-))\overline{\mathfrak{F}}_{(k)}^N(s^-)>u}Q_\ell(ds,du),\,\ell\neq k,\]
and we recall that $Q_k$ is a standard Poisson random measure on $\R^2_+$.
In the original model, the number of individuals infected by the $k$-th individual  can be described by 
\[\sum_{\ell=1,\ell\neq k}^N\int_{0}^{t}\int_{0}^{+\infty}\mathds{1}_{\frac{1}{N}\lambda_{k,A^N_{k}(s^-)}(\varsigma^N_{k}(s^-))\gamma_{\ell,A^N_{\ell}(s^-)}(\varsigma^N_{\ell}(s^-))>u}Q_\ell(ds,du).\]

Since $\lambda_{k,A^N_{k}(s^-)}(\varsigma^N_{k}(s^-))\gamma_{\ell,A^N_{\ell}(s^-)}(\varsigma^N_{\ell}(s^-))\leq \lambda_\ast$, we have 
\[\sum_{\ell=1,\ell\neq k}^N\int_{0}^{t}\int_{0}^{+\infty}\mathds{1}_{\frac{1}{N}\lambda_{k,A^N_{k}(s^-)}(\varsigma^N_{k}(s^-))\gamma_{\ell,A^N_{\ell}(s^-)}(\varsigma^N_{\ell}(s^-))>u}Q_\ell(ds,du)\leq\int_{0}^{t}\int_{0}^{\lambda_\ast}Q'_k(ds,du),\]
where $Q'_k$ is a standard Poisson random measure on $\R^2_+$ independent of $Q_k$.

So we can bound the number of infected descendants of the individual $k$,  by a pure birth process with birth rate $\lambda_\ast$, denoted by $Y(t-\tau^N_{k,1})$ (where $\tau^N_{k,1}$ denotes the first time of infection of the individual $k$ in the full model). Note that $Y(t-\tau^N_{k,1}))\le Y(t)$, and that $Y(t)$ follows a geometric distribution with parameter $\exp(-\lambda_\ast t)$ and is independent of $Q_k$. As the random variable $Y$ bounds the number of individuals who do not have the same state between the two models, as a result, we have
\begin{equation*}
\left|\overline{\mathfrak F}^N(t)-\overline{\mathfrak{F}}^N_{(k)}(t)\right|\leq\frac{\lambda_\ast}{N} Y(t) \quad \text{ and } \quad \left|\overline{\mathfrak S}^N(t)-\overline{\mathfrak{S}}^N_{(k)}(t)\right|\leq \frac{Y(t)}{N}.
\end{equation*}
Similarly we can define a quarantine model $(\overline{\mathfrak{F}}^N_{(k,\ell)},\overline{\mathfrak{S}}^N_{(k,\ell)})$ where the $k$-th and $\ell$-th individuals are quarantined such that 
\begin{equation}\label{TCL-eq1}
\left|\overline{\mathfrak F}^N(t)-\overline{\mathfrak{F}}^N_{(k,\ell)}(t)\right|\leq\frac{\lambda_\ast}{N} \tilde Y(t)\quad \text{ and } \quad \left|\overline{\mathfrak S}^N(t)-\overline{\mathfrak{S}}^N_{(k,\ell)}(t)\right|\leq \frac{\tilde Y(t)}{N},
\end{equation}
where $\tilde Y(t)$ follows a geometric distribution with parameter $\exp(-2\lambda_\ast t)$ and is independent of $Q_k$ and $Q_\ell$.	

%
\subsection{Useful inequalities for $\lambda$ and $\gamma$}
We establish the following Lemma where $\lambda$ and $\gamma$ are given in Assumption \ref{TCL-AS-lambda-1}-\ref{TCL-AS-lambda-2}.
\begin{lemma} \label{TCL-lem-barlambda-inc-bound}
	For $t\ge s\ge 0$, with $\alpha>1/2$ as in Assumption \ref{TCL-AS-lambda-1}-\ref{TCL-AS-lambda-2}, 
	\begin{align*}
	\big|\lambda(t) - \lambda(s) \big| \le (t-s)^\alpha + \lambda_*  \sum_{j=1}^{\ell-1} \bone_{s < \xi^j \le t}\,,
	\end{align*}
	\begin{align*}
	\big|\gamma(t) - \gamma(s) \big| \le (t-s)^\alpha + \sum_{j=1}^{\ell-1} \bone_{s < \zeta^j \le t}\,,
	\end{align*}
	and
	\begin{align*}
	|\bar\lambda(t) -\bar \lambda(s)| \le (t-s)^\alpha + \lambda^* \sum_{j=1}^{\ell-1} (F_j(t) - F_j(s))\,. 
	\end{align*}
\end{lemma}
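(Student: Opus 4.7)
The plan is to handle all three inequalities by a straightforward case analysis on whether the random breakpoints $\xi^j$ (resp.\ $\zeta^j$) fall in the interval $(s,t]$, exploiting the piecewise-continuous structure of $\lambda$ and $\gamma$ given by Assumption \ref{TCL-AS-lambda-1}, together with the Hölder bound on each piece from Assumption \ref{TCL-AS-lambda-2} and the boundedness from Assumption \ref{TCL-AS-lambda-0}.

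For the first inequality, I would split on the event $E := \{\exists\, 1\le j\le \ell-1 : s<\xi^j\le t\}$. On $E^c$, both $s$ and $t$ lie in the same random interval $[\xi^{j-1},\xi^j)$, so $\lambda(t)=\lambda^j(t)$ and $\lambda(s)=\lambda^j(s)$, and Assumptions \ref{TCL-AS-lambda-1}--\ref{TCL-AS-lambda-2} give
\[
|\lambda(t)-\lambda(s)| = |\lambda^j(t)-\lambda^j(s)| \le \varphi_T(t-s) \le C(t-s)^\alpha.
\]
On $E$, the sum $\sum_{j=1}^{\ell-1}\bone_{s<\xi^j\le t}\ge 1$, and the crude bound $|\lambda(t)-\lambda(s)|\le \lambda_*$ from Assumption \ref{TCL-AS-lambda-0} gives $|\lambda(t)-\lambda(s)|\le \lambda_*\sum_{j=1}^{\ell-1}\bone_{s<\xi^j\le t}$. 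Adding the two nonnegative bounds yields the stated inequality in both cases (up to the constant $C$, which the statement absorbs). The same argument, verbatim, with $(\gamma,\zeta^j)$ in place of $(\lambda,\xi^j)$ and the trivial bound $|\gamma(t)-\gamma(s)|\le 1$, gives the second inequality.

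For the third inequality, I would simply take expectations in the first one. Since $\bar\lambda(t)=\E[\lambda(t)]$ and the events $\{s<\xi^j\le t\}$ have probability $F_j(t)-F_j(s)$, Jensen's inequality applied to $|\cdot|$ yields
\[
|\bar\lambda(t)-\bar\lambda(s)| \le \E\bigl[|\lambda(t)-\lambda(s)|\bigr] \le C(t-s)^\alpha + \lambda^*\sum_{j=1}^{\ell-1}\bigl(F_j(t)-F_j(s)\bigr).
\]

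There is no real obstacle here; the lemma is structural and exploits only the decomposition \eqref{eqn-lambda} combined with two kinds of controls (Hölder continuity on each piece and uniform boundedness across jumps). The one subtlety worth flagging is that a naive telescoping through every breakpoint would require summing $\varphi_T$ across subintervals, and since $t\mapsto t^\alpha$ is concave for $\alpha\in(0,1]$ the inequality $\sum (t_i-t_{i-1})^\alpha \le (t-s)^\alpha$ goes the wrong way; this is precisely why the dichotomy (no breakpoint vs.\ at least one breakpoint, with the jump term absorbing the full $\lambda_*$ in the latter case) is the right way to organise the proof rather than a piecewise summation.
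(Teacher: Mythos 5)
Your proof is correct and takes essentially the same approach as the paper: the paper's one-line argument writes the same dichotomy (no breakpoint in $(s,t]$ versus at least one breakpoint) as an algebraic decomposition of $\lambda(t)-\lambda(s)$ and then cites Assumption~\ref{TCL-AS-lambda-2}, which is precisely your case analysis; the third inequality is, as you say, obtained by taking expectations. Your explicit framing is arguably cleaner (the paper's displayed ``identity'' double-counts when more than one $\xi^j$ falls in $(s,t]$, though this does not affect the bound), and you correctly flag both the absorbed constant $C$ and the reason a telescoping argument would fail.
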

\begin{proof}
	We have 
	\begin{align*}
	\lambda(t) - \lambda(s) =\sum_{j=1}^\ell \big( \lambda^j(t) - \lambda^j(s) \big) \bone_{\xi^{j-1} \le s,\, t < \xi^j}
	+  \big(  \lambda(t) - \lambda(s)\big) \sum_{j=1}^{\ell-1} \bone_{s < \xi^j \le t}\,. 
	\end{align*}
	Thus the statement follows from Assumption \ref{TCL-AS-lambda-2}.
\end{proof}

\section{Characterization of the limit of converging subsequences of $(\hat{\mathfrak F}^N,\hat{\mathfrak S}_{1,0}^N)$ }\label{TCL-sec-G10}
The aim of this section is to prove Lemma~\ref{TCL-lem-L2}.
We recall that 
\begin{equation*}
\hat{\mathfrak S}_{1,0}^N(t)=\frac{1}{\sqrt{N}}\sum_{k=1}^{N}\gamma_{k,0}(t)\left(\mathds{1}_{P_{k}(0,t,\gamma_{k,0},\overline{\mathfrak F}^N)=0}-\mathds{1}_{P_{k}(0,t,\gamma_{k,0},\overline{\mathfrak F})=0}\right).
\end{equation*}
Let 
\begin{equation}\label{TCL-eqr-1}
\Xi^N_{1,0}(t)=\frac{1}{\sqrt{N}}\sum_{k=1}^{N}\chi^N_{k}(t),
\end{equation}
where 
\begin{align*}
\chi^N_k(t)&=\gamma_{k,0}(t)\bigg(\mathds{1}_{P_{k}(0,t,\gamma_{k,0},\overline{\mathfrak F}^N)=0}-\mathds{1}_{P_{k}(0,t,\gamma_{k,0},\overline{\mathfrak F})=0} \\&\hspace{3cm}-\exp\left(-\int_{0}^{t}\gamma_{k,0}(r)\overline{\mathfrak F}^N(r)dr\right)+\exp\left(-\int_{0}^{t}\gamma_{k,0}(r)\overline{\mathfrak F}(r)dr\right)\bigg).
\end{align*}
\begin{lemma}\label{TCL-Lem-eq-1}Under Assumption~\ref{TCL-AS-lambda},
	for all $t\ge 0$, 
	\begin{equation}\label{TCL-eq-1}
	\lim_{N\to\infty}\E\left[\left(\Xi^N_{1,0}(t)\right)^2\right]=0.
	\end{equation}
\end{lemma}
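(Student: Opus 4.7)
The plan is to expand, by exchangeability of the $(\chi^N_k)_{1\le k\le N}$,
\[
\E\bigl[\bigl(\Xi^N_{1,0}(t)\bigr)^2\bigr] \;=\; \E\bigl[(\chi^N_1(t))^2\bigr] + (N-1)\,\E\bigl[\chi^N_1(t)\chi^N_2(t)\bigr],
\]
so it suffices to show that $\E[(\chi^N_1(t))^2]\to 0$ and that $\E[\chi^N_1(t)\chi^N_2(t)] = o(1/N)$. Write $\chi^N_k = \gamma_{k,0}(t)\bigl[(A_k - B_k) - (C_k - D_k)\bigr]$ in the obvious notation (so $A_k, C_k$ use $\overline{\mathfrak F}^N$ and $B_k, D_k$ use $\overline{\mathfrak F}$). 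The tool I will use repeatedly is that $|A_k - B_k|$ is bounded by the indicator that $Q_k$ has at least one atom in the symmetric difference between the two integration regions, and this symmetric difference can be dominated by a $Q_k$-independent region via the quarantine construction of Section~\ref{TCL-sub-qua}. For the diagonal term, one uses the one-individual quarantine $\overline{\mathfrak F}^N_{(1)}$ and the geometric $Y$ independent of $Q_1$: the bound $|\overline{\mathfrak F}^N - \overline{\mathfrak F}|\le \lambda_\ast Y/N + |\overline{\mathfrak F}^N_{(1)} - \overline{\mathfrak F}|$ shows the symmetric difference lies in an enlarged region measurable with respect to $\sigma(\gamma_{1,0}, \overline{\mathfrak F}^N_{(1)}, Y)$, and Corollary~\ref{TCL-cor-1}/Corollary~\ref{inr} give that its expected volume is $O(N^{-1/2})$. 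Hence $\E[(A_1-B_1)^2] = \P(A_1\ne B_1) = O(N^{-1/2})$, and together with $\E[(C_1-D_1)^2] = O(N^{-1})$ (from Proposition~\ref{prop-c1}) this yields $\E[(\chi^N_1(t))^2] = O(N^{-1/2})\to 0$.

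For the off-diagonal term I introduce the two-individual quarantine $\overline{\mathfrak F}^N_{(1,2)}$ and the quarantined version
\[
\tilde\chi^N_k(t) \;:=\; \gamma_{k,0}(t)\bigl[(\tilde A_k - \tilde C_k) - (B_k - D_k)\bigr], \qquad k\in\{1,2\},
\]
where $\tilde A_k, \tilde C_k$ are obtained from $A_k, C_k$ by substituting $\overline{\mathfrak F}^N_{(1,2)}$ for $\overline{\mathfrak F}^N$, and I work with the $\sigma$-algebra $\mathcal G := \sigma(\overline{\mathfrak F}^N_{(1,2)}, \gamma_{1,0}, \gamma_{2,0}, \tilde Y)$. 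Since $\mathcal G$ is built from objects independent of $(Q_1, Q_2)$, conditionally on $\mathcal G$ the random measures $Q_1$ and $Q_2$ remain independent PRMs with their original intensity; Theorem~\ref{TCL-th-1} then yields $\E[\tilde A_k\mid\mathcal G] = \tilde C_k$ and $\E[B_k\mid\mathcal G] = D_k$, so $\E[\tilde\chi^N_k\mid\mathcal G] = 0$, and since $\tilde\chi^N_1$ and $\tilde\chi^N_2$ are conditionally independent given $\mathcal G$ this yields the crucial identity $\E[\tilde\chi^N_1\tilde\chi^N_2] = 0$. Writing $\delta^N_k := \chi^N_k - \tilde\chi^N_k$, the quarantine bound $|\overline{\mathfrak F}^N - \overline{\mathfrak F}^N_{(1,2)}|\le \lambda_\ast \tilde Y/N$ gives $|C_k - \tilde C_k|\le \lambda_\ast t\tilde Y/N$ and, more delicately, lets us dominate $|A_k - \tilde A_k|$ by $\mathds 1_{Q_k(R_k)\ge 1}$ with $R_k$ a $\mathcal G$-measurable region of volume at most $2t\lambda_\ast \tilde Y/N$. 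Exploiting the conditional independence of $Q_1, Q_2$ given $\mathcal G$, one obtains $\E[\delta^N_1\delta^N_2] = O(N^{-2})$, while for the cross terms the conditional factorisation gives
\[
\E[|\tilde\chi^N_1\,\delta^N_2|] \;\le\; \frac{C}{N}\,\E\bigl[\tilde Y\cdot|\tilde\chi^N_1|\bigr] \;\le\; \frac{C}{N}\sqrt{\E[\tilde Y^2]\,\E[(\tilde\chi^N_1)^2]} \;=\; O(N^{-5/4}),
\]
where $\E[(\tilde\chi^N_1)^2] = O(N^{-1/2})$ by repeating the diagonal argument. Summing yields $\E[\chi^N_1\chi^N_2] = O(N^{-5/4})$ and thus $(N-1)\E[\chi^N_1\chi^N_2] = O(N^{-1/4})\to 0$.

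The main difficulty is the refinement of the cross terms $\E[\tilde\chi^N_1 \delta^N_2]$ and $\E[\delta^N_1\tilde\chi^N_2]$ beyond the naive Cauchy--Schwarz bound $\sqrt{\E[(\tilde\chi^N_1)^2]\E[(\delta^N_2)^2]} = O(N^{-3/4})$, which is too weak because $(N-1)\cdot N^{-3/4}$ diverges. The improvement hinges on two ingredients used in tandem: first, constructing a $\mathcal G$-measurable envelope $R_k$ for the indicator difference in $\delta^N_2$ (so that $Q_2$ can be integrated out conditionally and yields a factor of order $\tilde Y/N$), and second, using the conditional independence of $Q_1, Q_2$ given $\mathcal G$ to separate $|\tilde\chi^N_1|$ from the $\delta^N_2$-envelope, at which point the remaining expectation $\E[\tilde Y|\tilde\chi^N_1|]$ is tamed by Cauchy--Schwarz together with the $L^2$ smallness of $\tilde\chi^N_1$.
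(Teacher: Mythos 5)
Your proposal is correct and follows essentially the same route as the paper: expand by exchangeability, introduce the two-individual quarantine $\overline{\mathfrak F}^N_{(1,2)}$ with the independent geometric $\tilde Y$, and exploit the conditional centering $\E[\tilde\chi^N_k\mid\mathcal G]=0$ (and conditional independence of $Q_1,Q_2$ given $\mathcal G$) to make $\E[\tilde\chi^N_1\tilde\chi^N_2]$ vanish exactly, reducing the cross term to the small corrections $\delta^N_k$. The one place you diverge is in finishing the cross term: the paper factorizes $\E[\tilde Y\,|\tilde\chi^N_1|]=\E[\tilde Y]\,\E[|\tilde\chi^N_1|]$ using the independence of $\tilde Y$ from $\tilde\chi^N_1$ and then concludes from $\E[|\tilde\chi^N_1|]\to0$, whereas you use Cauchy--Schwarz together with the quantitative estimate $\E[(\tilde\chi^N_1)^2]=O(N^{-1/2})$ (and likewise treat the diagonal term quantitatively rather than by dominated convergence); both yield the required $o(1/N)$, yours being slightly more work but also slightly more explicit about rates.
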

\begin{proof}
	By exchangeability, we have 
	\begin{equation}
	\E\left[\left(\Xi^N_{1,0}(t)\right)^2\right]=\E\left[\left(\chi^N_1(t)\right)^2\right]+(N-1)\E\left[\chi^N_1(t)\chi^N_2(t)\right].
	\end{equation} 
	From Theorem~\ref{thm-FLLN}, as $N\to\infty,\,\overline{\mathfrak F}^N\Rightarrow\overline{\mathfrak F},$ in $\bD$, consequently as $N\to\infty,\,\chi^N_1(t)\to0$ in probability and as $|\chi^N_{1}(t)|\leq2$, it follows that, as $N\to\infty,$
	\begin{equation}
	\E\left[\left(\chi^N_1(t)\right)^2\right]\to0.
	\end{equation}
	To obtain \eqref{TCL-eq-1} it remains to show that 
	\begin{equation}\label{TCL-A-n-0}
	\left|\E\left[\chi^N_1(t)\chi^N_2(t)\right]\right|=o\left(\frac{1}{N}\right).
	\end{equation} 
	We consider a quarantine model where the first and second individuals are in quarantine (see subsection~\ref{TCL-sub-qua}). We denote by $\overline{\mathfrak{F}}^N_{(1,2)}(t)$ the force of infection in the population at time $t$ in this model. As in \eqref{TCL-eq1}, for any $T\geq0$, there exists a geometric random variable $Y$ independent from $Q_1$ and $Q_2$ such that almost surely for all $t\in[0,T]$,
	\[\overline{\mathfrak F}^N_{(1,2)}(t)-\frac{Y}{N}\leq\overline{\mathfrak F}^N(t)\leq \overline{\mathfrak F}^N_{(1,2)}(t)+\frac{Y}{N}.\] 
	Therefore,
	\[P_{k}(0,t,\gamma_{k,0},\overline{\mathfrak F}_{(1,2)}^N-\frac{Y}{N})\leq P_{k}(0,t,\gamma_{k,0},\overline{\mathfrak F}^N)\leq P_{k}(0,t,\gamma_{k,0},\overline{\mathfrak F}^N_{(1,2)}+\frac{Y}{N}).\] 
	Let 
	\[E^N_k=\left\{P_{k}(0,t,\gamma_{k,0},\overline{\mathfrak F}_{(1,2)}^N-\frac{Y}{N})=P_{k}(0,t,\gamma_{k,0},\overline{\mathfrak F}_{(1,2)}^N+\frac{Y}{N})\right\}.\]
	Then, for $k\in\{1,2\}$ as $Q_k$ is independent of $(Q_i,\gamma_{k,0},Y,\overline{\mathfrak F}^N_{(1,2)})$ for $k\neq i$, it follows by Markov's inequality  that 
	\begin{equation}\label{TCL-A-n-1}
	\mathbb{P}\left(\left(E^N_k\right)^c\big|\gamma_{k,0},\overline{\mathfrak F}^N_{(1,2)},Y,Q_i\right)\leq\frac{2tY}{N}.
	\end{equation} 
	Let 
	\begin{align*}
	\tilde\chi^N_k(t)&=\gamma_{k,0}(t)\left(\mathds{1}_{P_{k}(0,t,\gamma_{k,0},\overline{\mathfrak F}^N_{(1,2)})=0}-\mathds{1}_{P_{k}(0,t,\gamma_{k,0},\overline{\mathfrak F})=0}\right.\\&\hspace{3cm}\left.-\exp\left(-\int_{0}^{t}\gamma_{k,0}(r)\overline{\mathfrak F}^N_{(1,2)}(r)dr\right)+\exp\left(-\int_{0}^{t}\gamma_{k,0}(r)\overline{\mathfrak F}(r)dr\right)\right).
	\end{align*}
	For $k\in\{1,2\}$ as $Q_k,\,\gamma_{k,0},$ and $\overline{\mathfrak F}^N_{(1,2)}$ are independent, it follows that
	\begin{equation*}
	\E\left[\tilde \chi^N_k(t)\big|\overline{\mathfrak F}^N_{(1,2)},\gamma_{k,0}\right]=0.
	\end{equation*} 
	Moreover, as $Q_1,\,\gamma_{1,0},\,Q_2,\,\gamma_{2,0}$ and $\overline{\mathfrak F}^N_{(1,2)}$ are independent, it follows that
	\begin{equation}\label{TCL-A-n-1'}
	\E\left[\tilde\chi^N_1(t)\tilde\chi^N_2(t)\right]=\E\left[\E\left[\tilde\chi^N_1(t)\big|\overline{\mathfrak F}_{(1,2)}^N\right]\E\left[\tilde\chi^N_2(t)\big|\overline{\mathfrak F}_{(1,2)}^N\right]\right]=0.
	\end{equation}
	We have 
	\begin{align}\label{TCL-A-n-3}
	\E\left[\chi^N_1(t)\chi^N_2(t)-\tilde\chi^N_1(t)\tilde\chi^N_2(t)\right]&=\E\left[\tilde\chi^N_1(t)\left(\chi^N_2(t)-\tilde\chi^N_2(t)\right)\right]+\E\left[\left(\chi^N_1(t)-\tilde\chi^N_1(t)\right)\tilde\chi^N_2(t)\right]\nonumber\\&\hspace*{3.5cm}+\E\left[\left(\chi^N_1(t)-\tilde\chi^N_1(t)\right)\left(\chi^N_2(t)-\tilde\chi^N_2(t)\right)\right]. 
	\end{align}
	However,
	\begin{equation}\label{TCL-A-n-2}
	\left|\chi^N_k(t)-\tilde\chi_k^N(t)\right|\leq\mathds{1}_{\left(E^N_k\right)^c}+\frac{Yt}{N}.
	\end{equation}
	Hence from \eqref{TCL-A-n-1} and \eqref{TCL-A-n-2}, 
	\begin{align*}
	\left|\E\left[\tilde\chi^N_1(t)\left(\chi^N_2(t)-\tilde\chi^N_2(t)\right)\right]\right|&\leq\E\left[\left|\tilde\chi^N_1(t)\right|\mathds{1}_{\left(E^N_2\right)^c}\right]+\frac{t}{N}\E\left[\left|\tilde\chi^N_1(t)\right|Y\right]\nonumber\\
	&=\E\left[\left|\tilde\chi^N_1(t)\right|\mathbb{P}\left(\left(E^N_2\right)^c\big|\gamma_{1,0},\overline{\mathfrak F}^N_{(1,2)},Y,Q_1\right)\right]+\frac{t}{N}\E\left[\left|\tilde\chi^N_1(t)\right|Y\right]\nonumber\\
	&\leq\frac{3t}{N}\E\left[\left|\tilde\chi^N_1(t)\right|Y\right]\nonumber\\
	&=\frac{3t}{N}\E\left[\left|\tilde\chi^N_1(t)\right|\right]\E\left[Y\right], 
	\end{align*}
	where we use the fact that $Y$ and $\tilde\chi^N_1$ are independent.
	
	From \eqref{TCL-cor-1} as $\overline{\mathfrak F}^N\to\overline{\mathfrak F}$, and $|\overline{\mathfrak F}^N-\overline{\mathfrak F}^N_{(1,2)}|\leq\frac{Y}{N}$, it follows that as $N\to\infty$,
	\begin{align*}
	\E\left[\left|\tilde\chi^N_1(t)\right|\right]&\leq\E\left[\int_0^t\int_{\gamma_{1,0}(r^-)\left(\overline{\mathfrak F}^N_{(1,2)}(r^-)\wedge\overline{\mathfrak{F}}(r^-)\right)}^{\gamma_{1,0}(r^-)\left(\overline{\mathfrak F}^N_{(1,2)}(r^-)\vee\overline{\mathfrak{F}}(r^-)\right)}Q_1(dr,du)\right]+\int_{0}^{t}\E\left[\gamma_{1,0}(r)\left|\overline{\mathfrak F}^N_{(1,2)}(r)-\overline{\mathfrak{F}}(r)\right|\right]dr\\
	&=2\int_{0}^{t}\E\left[\gamma_{1,0}(r)\left|\overline{\mathfrak F}^N_{(1,2)}(r)-\overline{\mathfrak{F}}(r)\right|\right]dr\to0.
	\end{align*}  
	Thus
	\begin{equation}\label{TCL-A-n-4}
	\left|\E\left[\tilde\chi^N_1(t)\left(\chi^N_2(t)-\tilde\chi^N_2(t)\right)\right]\right|=o\left(\frac{1}{N}\right).
	\end{equation}
	Similarly, we show that,
	\begin{equation}\label{TCL-A-n-5}
	\left|\E\left[\left(\chi^N_1(t)-\tilde\chi^N_1(t)\right)\tilde\chi^N_2(t)\right]\right|=o\left(\frac{1}{N}\right).
	\end{equation}
	Moreover, from \eqref{TCL-A-n-2}, by the fact that $Q_1$ and $Q_2$ are independent, and exchangeability,
	\begin{multline*}
	\left|\E\left[\left(\chi^N_1(t)-\tilde\chi^N_1(t)\right)\left(\chi^N_2(t)-\tilde\chi^N_2(t)\right)\right]\right|\\
	\begin{aligned}
	&\leq\E\left[\left(\mathds{1}_{\left(E^N_1\right)^c}+\frac{Yt}{N}\right)\left(\mathds{1}_{\left(E^N_2\right)^c}+\frac{Yt}{N}\right)\right]\\
	&=\E\left[\mathbb{P}\left(\left(E^N_1\right)^c\big|\overline{\mathfrak F}^N_{(1,2)},Y\right)\mathbb{P}\left(\left(E^N_2\right)^c\big|\overline{\mathfrak F}^N_{(1,2)},Y\right)\right]+\frac{2t}{N}\E\left[Y\mathbb{P}\left(\left(E^N_1\right)^c\big|\overline{\mathfrak F}^N_{(1,2)},Y\right)\right]+\frac{t^2}{N^2}\E\left[Y^2\right]\\
	&\leq\frac{10t^2}{N^2}\E\left[Y^2\right],
	\end{aligned}
	\end{multline*}
	where we use \eqref{TCL-A-n-1}.
	
	Consequently,
	\begin{equation}\label{TCL-A-n-6}
	\left|\E\left[\left(\chi^N_1(t)-\tilde\chi^N_1(t)\right)\left(\chi^N_2(t)-\tilde\chi^N_2(t)\right)\right]\right|=o\left(\frac{1}{N}\right).
	\end{equation}
	Thus from \eqref{TCL-A-n-6}, \eqref{TCL-A-n-5}, \eqref{TCL-A-n-4}, \eqref{TCL-A-n-3} and \eqref{TCL-A-n-1'}, \eqref{TCL-A-n-0} holds.
\end{proof}

\begin{coro}\label{TCL-eqr-2}
	Under Assumption~\ref{TCL-AS-lambda}, for any $t\geq0$, as $N\to+\infty$,
	\begin{equation*}
	\Xi^N_{1,0}(t)\rightarrow0 \quad \text{in probability.}
	\end{equation*}	
\end{coro}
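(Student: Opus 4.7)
The plan is to deduce the stated convergence in probability as a direct corollary of the preceding Lemma~\ref{TCL-Lem-eq-1}, by converting the $L^{2}$ bound into a tail bound via Chebyshev's (Markov's) inequality. Specifically, Lemma~\ref{TCL-Lem-eq-1} already establishes that for every fixed $t\geq 0$,
\begin{equation*}
\E\left[\left(\Xi^N_{1,0}(t)\right)^2\right]\xrightarrow[N\to\infty]{}0,
\end{equation*}
so all the real work has already been carried out: the nontrivial step was controlling the cross term $(N-1)\E[\chi^N_1(t)\chi^N_2(t)]$ via the quarantine coupling, which gave $\E[\chi^N_1(t)\chi^N_2(t)]=o(1/N)$.

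Given this second-moment estimate, I would simply apply Chebyshev's inequality: for any $\varepsilon>0$,
\begin{equation*}
\P\bigl(|\Xi^N_{1,0}(t)|\geq\varepsilon\bigr)\;\leq\;\frac{1}{\varepsilon^{2}}\E\left[\left(\Xi^N_{1,0}(t)\right)^2\right]\;\xrightarrow[N\to\infty]{}\;0,
\end{equation*}
which is exactly the claimed convergence in probability of $\Xi^N_{1,0}(t)$ to $0$.

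There is no genuine obstacle at this stage, since the hard analytical estimates (the quarantine-type decoupling of $\chi^N_1$ and $\chi^N_2$, the bound $\P((E^N_k)^c\mid\cdots)\lesssim Y/N$, and the use of Corollary~\ref{TCL-cor-1} to show $\E[|\tilde\chi^N_1(t)|]\to 0$) have already been absorbed into Lemma~\ref{TCL-Lem-eq-1}. The corollary is therefore a routine one-line reduction of $L^{2}$ convergence to convergence in probability.
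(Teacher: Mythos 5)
Your proposal is correct and takes exactly the same route the paper intends: the corollary is an immediate consequence of the $L^{2}$-convergence in Lemma~\ref{TCL-Lem-eq-1} via Chebyshev's inequality, which is why the paper states it without a separate proof. There is nothing more to add.
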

We set,
	\begin{equation*}
	\Xi^N_{1,0,1}(t)=\frac{1}{\sqrt N}\sum_{k=1}^{N}\gamma_{k,0}(t)\left(\exp\left(-\int_{0}^{t}\gamma_{k,0}(r)\overline{\mathfrak F}^N(r)dr\right)-\exp\left(-\int_{0}^{t}\gamma_{k,0}(r)\overline{\mathfrak F}(r)dr\right)\right).
	\end{equation*}
	Consequently, from \eqref{TCL-eqr-1} for any $t\geq0,\,\hat{\mathfrak S}_{1,0}^{N}(t)=\Xi^N_{1,0}(t)+\Xi^N_{1,0,1}(t)$. Therefore from Corollary~\ref{TCL-eqr-2} as $\Xi^N_{1,0}(t)\rightarrow0$ in probability, to conclude the proof of Lemma~\ref{TCL-lem-L2}, it suffices to establish the following Lemma.
	\begin{lemma}
		Let $\hat{\mathfrak F}$ be a limit of a converging subsequence of $\hat{\mathfrak F}^N$. Then, for any $t\geq0,$ almost surely,  as $N\to\infty,$
		\[\Xi^N_{1,0,1}(t)\to Z(t),\] where
		\begin{equation*}
		Z(t)=-\int_{0}^{t}\E\left[\gamma_{0}(t)\gamma_{0}(s)\exp\left(-\int_{0}^{t}\gamma_{0}(r)\overline{\mathfrak F}(r)dr\right)\right]\hat{\mathfrak F}(s)ds.
		\end{equation*}
	\end{lemma}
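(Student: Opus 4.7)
The plan is to linearise the exponential difference in first order in $\overline{\mathfrak F}^N - \overline{\mathfrak F} = \hat{\mathfrak F}^N/\sqrt N$, identify the resulting empirical kernel via a law of large numbers, and absorb the second-order remainder into a $o_\P(1)$ term. Set
\begin{equation*}
A^N_k := \int_0^t \gamma_{k,0}(r)\overline{\mathfrak F}^N(r)\,dr, \qquad A_k := \int_0^t \gamma_{k,0}(r)\overline{\mathfrak F}(r)\,dr.
\end{equation*}

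\textbf{Step 1 (Taylor expansion).} From $|e^{-x}-e^{-y}+(x-y)e^{-y}|\le \tfrac12(x-y)^2$ for $x,y\ge 0$, we write
\begin{equation*}
\Xi^N_{1,0,1}(t)\;=\;-\,\frac{1}{\sqrt N}\sum_{k=1}^N \gamma_{k,0}(t)\,e^{-A_k}\,(A^N_k-A_k)\;+\;R^N(t),
\end{equation*}
where $|R^N(t)|\le \tfrac{1}{2\sqrt N}\sum_{k=1}^N(A^N_k-A_k)^2 \le \tfrac{t^2}{2}\|\overline{\mathfrak F}^N-\overline{\mathfrak F}\|_t \cdot \|\hat{\mathfrak F}^N\|_t \le \tfrac{t^2}{2\sqrt N}\|\hat{\mathfrak F}^N\|_t^2$. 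Since $\{\hat{\mathfrak F}^N\}_N$ is tight in $\bD$ by Lemma~\ref{TCL-tight-F-2}, $\|\hat{\mathfrak F}^N\|_t$ is tight in $\R$, so $R^N(t)\to 0$ in probability.

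\textbf{Step 2 (Fubini).} Since $A^N_k-A_k = \tfrac{1}{\sqrt N}\int_0^t \gamma_{k,0}(s)\hat{\mathfrak F}^N(s)\,ds$, the leading term becomes
\begin{equation*}
-\int_0^t K^N(t,s)\,\hat{\mathfrak F}^N(s)\,ds, \qquad K^N(t,s):=\frac{1}{N}\sum_{k=1}^N \gamma_{k,0}(t)\gamma_{k,0}(s)\exp\!\left(-\int_0^t \gamma_{k,0}(r)\overline{\mathfrak F}(r)\,dr\right).
\end{equation*}
Note that $\overline{\mathfrak F}$ is deterministic, so the summands above are i.i.d.\ (across $k$), bounded by $1$.

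\textbf{Step 3 (LLN + passage to the limit).} For each fixed $s\in[0,t]$, the strong law of large numbers yields $K^N(t,s)\to K(t,s)$ a.s., where $K(t,s):=\E[\gamma_0(t)\gamma_0(s)\exp(-\int_0^t\gamma_0(r)\overline{\mathfrak F}(r)\,dr)]$. Since $0\le K^N\le 1$, the dominated convergence theorem (applied to Lebesgue measure $ds$ on $[0,t]$, with the countable exceptional sets combined outside of a Fubini null set) gives $\int_0^t|K^N(t,s)-K(t,s)|\,ds\to 0$ a.s. Along the converging subsequence, by Skorohod representation we may assume $\hat{\mathfrak F}^N\to\hat{\mathfrak F}$ a.s.\ in $\bD$; since $\hat{\mathfrak F}$ is continuous, this convergence is uniform on $[0,t]$ a.s., so $\|\hat{\mathfrak F}^N\|_t$ is a.s.\ bounded. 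Splitting
\begin{equation*}
\int_0^t K^N(t,s)\hat{\mathfrak F}^N(s)\,ds-\int_0^t K(t,s)\hat{\mathfrak F}(s)\,ds=\int_0^t(K^N-K)\hat{\mathfrak F}^N\,ds+\int_0^t K(t,s)(\hat{\mathfrak F}^N-\hat{\mathfrak F})(s)\,ds,
\end{equation*}
the first term is bounded by $\|\hat{\mathfrak F}^N\|_t\cdot\int_0^t|K^N-K|\,ds\to 0$ a.s., and the second by $\int_0^t K(t,s)\,ds\cdot\|\hat{\mathfrak F}^N-\hat{\mathfrak F}\|_t\to 0$ a.s. Combined with Step~1, this gives $\Xi^N_{1,0,1}(t)\to Z(t)$ (in probability in general, almost surely on the Skorohod-representation space).

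\textbf{Main obstacle.} The delicate point is making the two kinds of convergence cooperate: $K^N\to K$ is an a.s.\ LLN statement (pointwise in $s$, extended to $L^1([0,t],ds)$ by boundedness and dominated convergence), while $\hat{\mathfrak F}^N\to \hat{\mathfrak F}$ is only distributional. Invoking Skorohod representation, together with the continuity of $\hat{\mathfrak F}$ established in Definition~\ref{TCL-def-1} (to upgrade Skorohod convergence to uniform convergence), resolves this cleanly; the uniform bound $K^N\le 1$ makes the dominated convergence step effortless.
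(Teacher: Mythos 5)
Your proof follows the same overall strategy as the paper's: Taylor-expand the exponentials, identify the empirical kernel $K^N(t,s)$, pass to its LLN limit, and control the second-order remainder. Where you differ is in execution, and in two places your treatment is actually more careful. For the remainder, the paper bounds it in $L^1$ via the moment estimate $\E[(\overline{\mathfrak F}^N(s)-\overline{\mathfrak F}(s))^2]\le C_T/N$ from Proposition~\ref{prop-c1}, whereas you bound it pathwise by $\tfrac{t^2}{2\sqrt N}\|\hat{\mathfrak F}^N\|_t^2$ and invoke tightness; both work, though the paper's moment route avoids having to cite tightness of $\hat{\mathfrak F}^N$ at this stage. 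For the main term, the paper states that $K^N(t,s)\to K(t,s)$ a.s.\ pointwise by LLN and then invokes Slutsky plus continuity of $f\mapsto\int_0^t f\,ds$ on $\bD$ — this is a correct but compressed argument that leaves implicit why the pointwise LLN on the kernel and the purely distributional convergence of $\hat{\mathfrak F}^N$ combine. Your explicit split $\int K^N\hat{\mathfrak F}^N - \int K\hat{\mathfrak F} = \int(K^N-K)\hat{\mathfrak F}^N + \int K(\hat{\mathfrak F}^N-\hat{\mathfrak F})$, together with dominated convergence (using $0\le K^N\le 1$) for the first piece and Skorohod representation plus continuity of the limit $\hat{\mathfrak F}$ for the second, makes the two convergence modes cooperate transparently, and your closing remark that the conclusion is in probability in general (a.s.\ only on the Skorohod representation space) is an honest clarification of the mode of convergence that the lemma's phrasing glosses over. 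In short: same decomposition, same key ingredients, somewhat more explicit bookkeeping on your side.
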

\begin{proof}
	By the Taylor formula we have
	\begin{align}\label{TCL-eq-3}
	\Xi^N_{1,0,1}(t)
	&=-Z^N(t)+\frac{1}{N}\sum_{k=1}^{N}r^N_k(t),
	\end{align}
	where 
	\begin{equation*}
	|r^N_k(t)|\leq \frac{t}{2}\sqrt{N}\int_{0}^{t}\left(\overline{\mathfrak F}^N(s)-\overline{\mathfrak F}(s)\right)^2ds,
	\end{equation*}
	and 
	\begin{equation*}
	Z^N(t)=\int_{0}^{t}\left(\frac{1}{N}\sum_{k=1}^{N}\gamma_{k,0}(t)\gamma_{k,0}(s)\exp\left(-\int_{0}^{t}\gamma_{k,0}(r)\overline{\mathfrak F}(r)dr\right)\right)\hat{\mathfrak F}^N(s)ds.
	\end{equation*}
	From Proposition~\ref{prop-c1}, it follows that,
	\begin{align}\label{TCL-eq-4}
	\E\left[\sup_{0\le t\le T}\left|\frac{1}{N}\sum_{k=1}^{N}r^N_k(t)\right|\right]&\leq T\sqrt N\int_{0}^{T}\E\left[\left(\overline{\mathfrak F}^N(s)-\overline{\mathfrak F}(s)\right)^2\right]ds\leq\frac{C_T}{\sqrt N}.	
	\end{align}
	Hence $\frac{1}{N}\sum_{k=1}^{N}r^N_k(t)\to0$ in probability.
	
	On the other hand, for any fixed $t$, as $(\gamma_{k,0})_k$ are i.i.d by the classical law of large numbers it follows that, for all $s\in[0,t]$, as $N\to\infty$,
	\begin{equation*}
	\frac{1}{N}\sum_{k=1}^{N}\gamma_{k,0}(t)\gamma_{k,0}(s)\exp\left(-\int_{0}^{t}\gamma_{k,0}(r)\overline{\mathfrak F}(r)dr\right)\to\E\left[\gamma_{0}(t)\gamma_{0}(s)\exp\left(-\int_{0}^{t}\gamma_{0}(r)\overline{\mathfrak F}(r)dr\right)\right]\quad a.s.
	\end{equation*} 
	and as $N\to\infty,\hat{\mathfrak F}^N\Rightarrow\hat{\mathfrak F}$ in $\bD$, by Slutsky's theorem,
	\begin{equation*} 	
	\left(\frac{1}{N}\sum_{k=1}^{N}\gamma_{k,0}(t)\gamma_{k,0}(s)\exp\left(-\int_{0}^{t}\gamma_{k,0}(r)\overline{\mathfrak F}(r)dr\right)\right)\hat{\mathfrak F}^N(s)\to\E\left[\gamma_{0}(t)\gamma_{0}(s)\exp\left(-\int_{0}^{t}\gamma_{0}(r)\overline{\mathfrak F}(r)dr\right)\right]\hat{\mathfrak F}(s),
	\end{equation*} 
	in law.
	For any fixed $t$, since the mapping $f\to\int_0^tf(s)ds$ is continuous from $\bD$ into $\R$, as $N\to\infty,$
	\begin{equation*}
	Z^N(t)\to\int_0^t \E\left[\gamma_{0}(t)\gamma_{0}(s)\exp\left(-\int_{0}^{t}\gamma_{0}(r)\overline{\mathfrak F}(r)dr\right)\right]\hat{\mathfrak F}(s)ds.
	\end{equation*}
	This concludes the proof.
	
	\end{proof}

\section{Approximation of the limit}\label{TCL-sec-L1}
In this section we establish the proof of Lemma~\ref{TCL-lem-L1}. 
 We organise this section as follows: In subsection~\ref{TCL-sub-secc-1} we give the proof of Lemma~\ref{TCL-lem-L1} with the proofs of the supporting lemmas  in subsection~\ref{TCL-Appr-L}.
\subsection{Proof of Lemma~\ref{TCL-lem-L1}}\label{TCL-sub-secc-1}


We first make the following observation related to the process $\hat{\mathfrak{S}}^N_{0}$. 

\begin{lemma}\label{TCL-l4} For $t\geq0$,
	\begin{multline*}
	\sqrt{N}\int_{0}^{t}\int_{\bD}\gamma(t-s)\left(\exp\left(-\int_{s}^{t}\gamma(r-s)\overline{\mathfrak F}^N(r)dr\right)-\exp\left(-\int_{s}^{t}\gamma(r-s)\overline{\mathfrak F}(r)dr\right)\right)\overline\Upsilon^N(s)\mu(d\gamma)ds\\
	=-\int_{0}^{t}\int_{s}^{t}\int_{\bD}\gamma(t-s)\gamma(r-s)\hat{\mathfrak{F}}^N(r)\exp\left(-\int_{s}^{t}\gamma(u-s)\overline{\mathfrak F}(u)du\right)\overline\Upsilon^N(s)\mu(d\gamma)drds+r^N(t),
	\end{multline*}
	where as $N\to\infty,\,r^N\to0$ in probability in $\bD$.
\end{lemma}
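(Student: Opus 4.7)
The plan is to expand the difference of exponentials by Taylor's formula to first order and control the quadratic remainder. Fix $t\in[0,T]$, $s\in[0,t]$ and $\gamma\in\bD$, and set
$$a := \int_s^t \gamma(r-s)\overline{\mathfrak F}^N(r)\,dr, \qquad b := \int_s^t \gamma(r-s)\overline{\mathfrak F}(r)\,dr.$$
Then
$$a-b = \frac{1}{\sqrt N}\int_s^t \gamma(r-s)\hat{\mathfrak F}^N(r)\,dr,$$
and writing $e^{-a}-e^{-b} = -(a-b)e^{-b} + R^N(s,t,\gamma)$ with
$$R^N(s,t,\gamma) = (a-b)\int_0^1\bigl(e^{-b}-e^{-(b+\theta(a-b))}\bigr)d\theta,$$
we have the estimate $|R^N(s,t,\gamma)| \le (a-b)^2$, since $b$ and $b+\theta(a-b)$ are nonnegative and $|e^{-x}-e^{-y}|\le|x-y|$ on $\R_+$.

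Multiplying the identity $e^{-a}-e^{-b} = -(a-b)e^{-b}+R^N$ by $\sqrt N\,\gamma(t-s)\overline\Upsilon^N(s)$, integrating against $\mu(d\gamma)\,ds$, and using Fubini to swap the $dr$ and $ds$ integrations in the leading term, one recovers exactly the explicit expression on the right-hand side of the lemma. The residual is
$$r^N(t) := \sqrt N\int_0^t\!\!\int_\bD \gamma(t-s)R^N(s,t,\gamma)\overline\Upsilon^N(s)\,\mu(d\gamma)\,ds,$$
and it remains to show $r^N\to 0$ in probability in $\bD$.

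For the uniform control, bound the integrand by $\gamma\le1$, $\overline\Upsilon^N\le\lambda_*$, and $|R^N|\le(a-b)^2$. Applying Cauchy–Schwarz gives
$$(a-b)^2\le \frac{1}{N}(t-s)\int_s^t (\hat{\mathfrak F}^N(r))^2\,dr,$$
so that (since $\mu$ is a probability measure on $\bD$)
$$\sup_{0\le t\le T}|r^N(t)| \;\le\; \frac{\lambda_* T^2}{2\sqrt N}\int_0^T (\hat{\mathfrak F}^N(r))^2\,dr.$$
By Proposition~\ref{prop-c1}, $\E[(\hat{\mathfrak F}^N(r))^2] = N\,\xi_N^{(2)}(r)\le C_T$ uniformly in $N$, hence
$$\E\Bigl[\sup_{0\le t\le T}|r^N(t)|\Bigr] \;\le\; \frac{C_T\,\lambda_* T^3}{2\sqrt N} \xrightarrow[N\to\infty]{} 0.$$
This gives uniform convergence to $0$ in probability on $[0,T]$, which is stronger than convergence in $\bD$ with the Skorokhod topology, and completes the proof.

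The only mildly delicate point is obtaining a bound on the remainder that is uniform in $t$; once the quadratic estimate $|R^N|\le(a-b)^2$ is combined with the $L^2$ control of $\hat{\mathfrak F}^N$ from Proposition~\ref{prop-c1}, the $\sqrt N$ prefactor is absorbed and no tightness or weak convergence argument is needed here. Everything else is bookkeeping (Fubini and the boundedness of $\gamma$, $\overline\Upsilon^N$, and $\mu(\bD)=1$).
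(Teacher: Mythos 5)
Your proof is correct and follows essentially the same route as the paper's: a first-order Taylor expansion of the exponential, a quadratic bound on the remainder, Cauchy--Schwarz (H\"older) to pull out the square, and the $L^2$ moment estimate $\E\big[|\overline{\mathfrak F}^N(r)-\overline{\mathfrak F}(r)|^2\big]\le C_T'N^{-1}$ from Proposition~\ref{prop-c1} to conclude. One small cosmetic point: your integral-remainder representation actually gives the slightly sharper bound $|R^N(s,t,\gamma)|\le\tfrac12(a-b)^2$, which is where the factor $\tfrac12$ in your final display actually comes from, but either constant suffices.
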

\begin{proof}
	By Taylor's formula we have 
	\begin{multline*}
	\sqrt{N}\int_{0}^{t}\int_{\bD}\gamma(t-s)\left(\exp\left(-\int_{s}^{t}\gamma(r-s)\overline{\mathfrak F}^N(r)dr\right)-\exp\left(-\int_{s}^{t}\gamma(r-s)\overline{\mathfrak F}(r)dr\right)\right)\overline\Upsilon^N(s)\mu(d\gamma)ds\\
	=-\int_{0}^{t}\int_{s}^{t}\int_{\bD}\gamma(t-s)\gamma(r-s)\hat{\mathfrak{F}}^N(r)\exp\left(-\int_{s}^{t}\gamma(u-s)\overline{\mathfrak F}(u)du\right)\overline\Upsilon^N(s)\mu(d\gamma)drds+r^N(t), 
	\end{multline*}
	where 
	\begin{equation}
	|r^N(t)|\leq\frac{\lambda_*\sqrt{N}}{2}\int_{0}^{t}\int_{\bD}\left(\int_{s}^{t}\gamma(r-s)\left(\overline{\mathfrak F}^N(r)-\overline{\mathfrak F}(r)\right)dr\right)^2\mu(d\gamma)ds\,.
	\end{equation}
	By H\"older's inequality and Proposition~\ref{prop-c1} it follows that,
	\begin{align*}
	\E\left[\sup_{0\le t\le T}|r^N(t)|\right]&\leq\frac{T\lambda_*\sqrt{N}}{2}\int_{0}^{T}\int_{s}^{T}\E\left[\left(\overline{\mathfrak F}^N(r)-\overline{\mathfrak F}(r)\right)^2\right]drds\leq\frac{T^3\lambda_*}{2\sqrt{N}}\,.
	\end{align*}
\end{proof}

Hence, from \eqref{TCL-eq-eqc10-0}, \eqref{TCL-eq-eqc10-1}, \eqref{TCL-uni-c1} and Lemma~\ref{TCL-l4}, it follows that 
\begin{align*}
& \varPsi_1(\hat{\mathfrak F}^N,\hat{\mathfrak S}^N,\hat{\mathfrak S}^N_{1,0},\hat{\mathfrak S}^N_2,\overline{\mathfrak{F}}^N,\overline{\mathfrak{S}}^N,\tilde{\mathfrak{S}}^N)(t) \\
& =\hat{\mathfrak S}^N(t)-\hat{\mathfrak S}^N_{1,0}(t)-\hat{\mathfrak S}^N_2(t)\\
&+\int_{0}^{t}\int_{s}^{t}\int_{\bD}\gamma(t-s)\gamma(r-s)\exp\left(-\int_{s}^{t}\gamma(u-s)\overline{\mathfrak F}(u)du\right)\mu(d\gamma)\hat{\mathfrak{F}}^N(r)\overline{\mathfrak{F}}^N(s)\overline{\mathfrak{S}}^N(s)drds\\
&+\int_{0}^{t}\int_{\bD}\gamma(t-s)\exp\left(-\int_{s}^{t}\gamma(r-s)\overline{\mathfrak F}(r)dr\right)\mu(d\gamma)\left(\overline{\mathfrak F}^N(s)\hat{\mathfrak{S}}^N_2(s)-\overline{\mathfrak F}^N(s)\hat{\mathfrak{S}}^N(s)-\hat{\mathfrak F}^N(s)\tilde{\mathfrak{S}}^N(s)\right)ds\\
&:=\Xi^N_3(t)+r^N(t)+\Xi^N_4(t),
\end{align*}
where 
\begin{align}\label{TCL-eq-eqc8-1}
\Xi^N_3(t)&=\hat{\mathfrak{S}}^N_{1,1}(t)-\sqrt{N}\int_{0}^{t}\int_{\bD}\gamma(t-s)\left(\exp\left(-\int_{s}^{t}\gamma(r-s)\overline{\mathfrak F}^N(r)dr\right)\right.\nonumber\\&\hspace{6cm}\left.-\exp\left(-\int_{s}^{t}\gamma(r-s)\overline{\mathfrak F}(r)dr\right)\right)\overline\Upsilon^N(s)\mu(d\gamma)ds,
\end{align}
\begin{equation}\label{TCL-eq-eqc8-2}
\Xi^N_4(t)=\hat{\mathfrak S}^N_{1,2}(t)-\int_{0}^{t}\int_{\bD}\gamma(t-s)\exp\left(-\int_{s}^{t}\gamma(r-s)\overline{\mathfrak F}(r)dr\right)\mu(d\gamma)\sqrt{N}\left(\overline{\Upsilon}^N(s)-\tilde{\Upsilon}^N(s)\right)ds,
\end{equation}
and where we use the fact that,
\[\sqrt{N}(\overline\Upsilon^N(s)-\tilde\Upsilon^N(s))=\overline{\mathfrak F}^N(s)\hat{\mathfrak{S}}^N_2(s)-\overline{\mathfrak F}^N(s)\hat{\mathfrak{S}}^N(s)-\hat{\mathfrak F}^N(s)\tilde{\mathfrak{S}}^N(s).\]
On the other hand, from \eqref{TCL-eq-eqc10-01} and \eqref{TCL-eqc10},
\begin{align}
& \varPsi_2(\hat{\mathfrak F}^N,\hat{\mathfrak S}^N,\hat{\mathfrak F}^N_2,\hat{\mathfrak S}^N_2,\overline{\mathfrak{F}}^N,\overline{\mathfrak{S}}^N,\tilde{\mathfrak{S}}^N)(t) \nonumber \\
&=\hat{\mathfrak F}^N(t)-\hat{\mathfrak{F}}_2^N(t)-\int_{0}^{t}\overline{\lambda}(t-s)\hat{\mathfrak S}^N(s)\overline{\mathfrak F}^N(s)ds-\int_{0}^{t}\overline{\lambda}(t-s)\hat{\mathfrak S}^N_2(s)\overline{\mathfrak F}^N(s)ds \nonumber\\
& \qquad \qquad +\int_{0}^{t}\overline{\lambda}(t-s)\tilde{\mathfrak S}^N(s)\hat{\mathfrak F}^N(s)ds \nonumber \\
&=\frac{1}{\sqrt{N}}\sum_{k=1}^{N}\int_{0}^{t}\int_{\bD^2}\int_{\Upsilon_k^N(r^-)\wedge\Upsilon_k(r^-)}^{\Upsilon_k^N(r^-)\vee\Upsilon_k(r^-)}\lambda(t-s)\sign(\Upsilon_k^N(s^-)-\Upsilon_k(s^-))\overline{Q}_k(ds,d\lambda,d\gamma,du)\nonumber \\
&=:\hat{\mathfrak F}^N_{1,1}(t).
\end{align}
To conclude the proof of Lemma~\ref{TCL-lem-L1} it suffices to establish the following three lemmas, whose proofs are given in the next subsection. 
\begin{lemma}\label{TCL-cv-F1}
	Under Assumption~\ref{TCL-AS-lambda}, as $N\to+\infty$,
	\begin{equation*}
	\hat{\mathfrak F}^N_{1,1}\Rightarrow 0 \quad \text{ in} \quad \bD,
	\end{equation*}	
	in probability. 
\end{lemma}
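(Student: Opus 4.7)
Recall
\[
\hat{\mathfrak F}^N_{1,1}(t) = \frac{1}{\sqrt N}\sum_{k=1}^N M_k^N(t),\qquad
M_k^N(t):=\int_0^t\!\!\int_{\bD^2}\!\!\int_{\Upsilon^N_k(r^-)\wedge\Upsilon_k(r^-)}^{\Upsilon^N_k(r^-)\vee\Upsilon_k(r^-)}\!\lambda(t-r)\,\sign\bigl(\Upsilon^N_k(r^-)-\Upsilon_k(r^-)\bigr)\,\overline Q_k(dr,d\lambda,d\gamma,du').
\]
For every fixed $t$ the integrand is predictable in $r$ (all process-dependence enters through $\Upsilon^N_k(r^-),\Upsilon_k(r^-)$, and $\lambda(t-r)$ is a measurable function of the PRM mark and $r$), so the $L^2$-isometry for stochastic integrals against compensated PRMs applies to each $M_k^N(t)$. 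The strategy is the classical two-step scheme: (1) prove pointwise $L^2$-convergence of $\hat{\mathfrak F}^N_{1,1}(t)$ to $0$, and (2) prove $\bC$-tightness in $\bD$ via a moment estimate on two consecutive increments.

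\textbf{Step 1 (finite-dimensional convergence to $0$).} For $j\neq k$, $\overline Q_j$ and $\overline Q_k$ come from independent PRMs and thus have almost surely no common jumps, so the martingales $M_j^N$ and $M_k^N$ are strongly orthogonal and $\E[M_j^N(t)M_k^N(t)]=0$. Using exchangeability together with the $L^2$-isometry on each diagonal term,
\[
\E\bigl[\hat{\mathfrak F}^N_{1,1}(t)^2\bigr]
= \E\bigl[M_1^N(t)^2\bigr]
= \int_0^t \E[\lambda(t-r)^2]\,\E\bigl[|\Upsilon^N_1(r)-\Upsilon_1(r)|\bigr]\,dr.
\]
Since $\lambda\le\lambda_\ast$ and $\E[|\Upsilon^N_1(r)-\Upsilon_1(r)|]\le C_T/\sqrt N$ by Corollary~\ref{TCL-cor-1}, the right-hand side is $O(1/\sqrt N)\to 0$. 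Hence $\hat{\mathfrak F}^N_{1,1}(t)\to 0$ in $L^2$, and in particular in probability, for each $t$.

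\textbf{Step 2 ($\bC$-tightness).} For $0\le s<u<t\le T$ decompose each increment into two pieces with disjoint time-integration or $\lambda$-increment support:
\[
\hat{\mathfrak F}^N_{1,1}(t)-\hat{\mathfrak F}^N_{1,1}(u)=R^N_{u,t}+S^N_{u,t},\qquad
\hat{\mathfrak F}^N_{1,1}(u)-\hat{\mathfrak F}^N_{1,1}(s)=R^N_{s,u}+S^N_{s,u},
\]
where $R^N_{a,b}$ is the integral over $r\in(a,b]$ with integrand $\lambda(b-r)$ and $S^N_{a,b}$ is the integral over $r\in(0,a]$ with integrand $\lambda(b-r)-\lambda(a-r)$, in each case with the same indicator and sign factor as in $M_k^N$. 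Expanding
$|\hat{\mathfrak F}^N_{1,1}(u)-\hat{\mathfrak F}^N_{1,1}(s)|^2|\hat{\mathfrak F}^N_{1,1}(t)-\hat{\mathfrak F}^N_{1,1}(u)|^2$ into the sixteen products of $R^N$'s and $S^N$'s and applying the $L^2$-isometry for each $\overline Q_k$ together with the independence across $k\neq k'$, the bulk of the cross-terms vanishes either because the time-integration intervals are disjoint (so the martingale property yields a zero conditional expectation) or because different PRMs have no common jumps. The surviving contributions are controlled, via Cauchy--Schwarz and the boundedness $\lambda\le\lambda_\ast$, by the quantity
\[
\int_0^T\!\!\int_0^T \E\bigl[(\lambda(t-r)-\lambda(u-r))^2(\lambda(u-r')-\lambda(s-r'))^2\bigr]\,dr\,dr'
\]
(plus lower-order terms coming from $R^N\cdot R^N$ contributions, which are bounded by $(t-s)^2\lambda_\ast^4 C_T/\sqrt N$). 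Assumption~\ref{TCL-AS-lambda}(iii) yields the bound $(\psi_1(t)-\psi_1(s))^{\beta_1}$ with $\beta_1>1$, uniformly in $N$, for the dominant contribution, giving
\[
\E\bigl[|\hat{\mathfrak F}^N_{1,1}(u)-\hat{\mathfrak F}^N_{1,1}(s)|^2\,|\hat{\mathfrak F}^N_{1,1}(t)-\hat{\mathfrak F}^N_{1,1}(u)|^2\bigr] \le C_T\bigl(\psi_1(t)-\psi_1(s)\bigr)^{\beta_1}.
\]
This is Billingsley's moment criterion for tightness in $\bD$; combined with the pointwise convergence of Step~1 (so any subsequential weak limit has continuous sample paths identically zero), tightness is in fact $\bC$-tightness.

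\textbf{Conclusion and main difficulty.} Tightness together with convergence of all finite-dimensional distributions to $0$ yields $\hat{\mathfrak F}^N_{1,1}\Rightarrow 0$ in $\bD$, equivalent to convergence in probability to the zero process. The main obstacle is the bookkeeping in Step~2: the integration bounds $\Upsilon^N_k(r^-),\Upsilon_k(r^-)$ depend on the full averaged force of infection $\overline{\mathfrak F}^N$ and hence on all $(Q_\ell)_{\ell\ne k}$, so one must separate carefully contributions from distinct individuals (handled by independence of the $(\overline Q_k)_k$) from contributions from the same individual (handled by independence of PRM marks across distinct times and by the moment assumption \ref{TCL-AS-lambda}(iii) on $\lambda$). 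Once the sixteen-term expansion is reduced to quantities of the above form, the tightness bound follows from Assumption~\ref{TCL-AS-lambda} and Corollary~\ref{TCL-cor-1}.
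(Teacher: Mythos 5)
Your Step 1 matches the paper's argument essentially verbatim (isometry, exchangeability, and Corollary~\ref{TCL-cor-1}), so that part is fine. The tightness step, however, departs from the paper in a way that is not yet sound, and it also papers over the central technical obstruction.

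The paper does not apply Billingsley's product-moment criterion to the compensated process $\hat{\mathfrak F}^N_{1,1}$. Instead it writes $\hat{\mathfrak F}^N_{1,1}$ as (non-compensated integral) $\pm$ (compensator), namely as $\Xi^N_1$ and $\Xi^N_2$, and proves $\bC$-tightness of each separately using the oscillation criterion (Lemma~\ref{TCL-Lem-20}), second-moment bounds via the isometry, and crucially the \emph{pathwise} decomposition of $\lambda$-increments from Lemma~\ref{TCL-lem-barlambda-inc-bound}. That pathwise bound rests on Assumptions~\ref{TCL-AS-lambda-1} and \ref{TCL-AS-lambda-2}, not on Assumption~\ref{TCL-AS-lambda}(iii). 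Your route is genuinely different: you treat the compensated martingale directly and try to invoke the fourth-moment condition (iii) through Billingsley's criterion. That is a legitimate strategy in principle, but as written it has two genuine problems.

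First, the assertion that ``the bulk of the cross-terms vanishes either because the time-integration intervals are disjoint \dots or because different PRMs have no common jumps'' is not correct. No-common-jumps gives you pairwise strong orthogonality, which kills $\E[M^N_j(t)M^N_k(t)]$ for $j\neq k$ (as you correctly use in Step~1), but the mixed fourth moments $\E\bigl[(\Delta_1)_j(\Delta_1)_k(\Delta_2)_\ell(\Delta_2)_m\bigr]$ do \emph{not} vanish when the indices are distinct, precisely because the integrands $\mathds{1}_{\Upsilon^N_k\wedge\Upsilon_k<u\le\Upsilon^N_k\vee\Upsilon_k}$ for different $k$ are all coupled through $\overline{\mathfrak F}^N$. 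This is exactly the obstruction the paper works around by separating compensated and non-compensated parts; you have to estimate, not dismiss, those cross-terms, and you give no mechanism for doing so. (For comparable cross-moments elsewhere the paper introduces the quarantine model, Theorem~\ref{TCL-th-1}, and Lemmas~\ref{TCL-A-9-lem_inq}--\ref{TCL-A-9-lem-inq2}; your sketch uses none of these.)

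Second, the displayed quantity $\int_0^T\int_0^T\E\bigl[(\lambda(t-r)-\lambda(u-r))^2(\lambda(u-r')-\lambda(s-r'))^2\bigr]\,dr\,dr'$ cannot be bounded by Assumption~\ref{TCL-AS-lambda}(iii). For $r\neq r'$ the two marks are drawn from \emph{different} atoms of the PRM and are therefore independent copies of $\lambda$, so the expectation factors and is controlled by Assumption~\ref{TCL-AS-lambda}(i) (this gives $(t-u)^{\alpha_1}(u-s)^{\alpha_1}$, which is indeed $\le(t-s)^{2\alpha_1}$ with $2\alpha_1>1$, so the conclusion you want could still hold, but by a different assumption). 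Assumption~\ref{TCL-AS-lambda}(iii) is relevant only for the contribution from a \emph{single} atom appearing in both squared increments, i.e.\ the diagonal $r=r'$, and there it requires the ordered configuration $s-r<u-r<t-r$, which does hold, but that is a single integral in $r$, not a double integral. So the quantity you write down is not the right one, and Assumption~\ref{TCL-AS-lambda}(iii) is being applied where it does not apply. Until the cross-term bookkeeping is carried out and the diagonal/off-diagonal contributions are separated (with (i) used off-diagonal and (iii) on-diagonal), the moment bound is unproven.
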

\begin{lemma}\label{TCL-lem-10}
	As $N\to+\infty$,
	\[\Xi^N_3\to0 \quad \text{ in } \quad \bD,
	\]
	in probability. 
\end{lemma}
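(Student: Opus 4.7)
My plan is to mirror Lemma~\ref{TCL-Lem-eq-1}: for each fixed $t$, show $\mathbb{E}[\Xi_3^N(t)^2]\to 0$ as $N\to\infty$ and combine this with tightness to conclude convergence to $0$ in $\bD$ in probability. Tightness of $\Xi_3^N$ is inherited from that of $\hat{\mathfrak S}_{1,1}^N$ (Lemma~\ref{TCL-tight-F-2}) together with tightness of the subtracted integral, which via a first-order Taylor expansion of the exponential difference is a continuous functional of the tight family $(\hat{\mathfrak F}^N,\overline{\mathfrak F}^N,\overline{\mathfrak S}^N)$ plus a remainder of order $1/\sqrt N$ controlled by Proposition~\ref{prop-c1}. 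Writing $\Xi_3^N(t)=\tfrac{1}{\sqrt N}\sum_{k=1}^N\chi_k^N(t)$ with $\chi_k^N(t)$ the $k$-th individual's contribution to both the Poisson stochastic integral in $\hat{\mathfrak S}_{1,1}^N$ and to the subtracted integral against $\mu$, exchangeability gives
\[
\mathbb{E}[\Xi_3^N(t)^2]=\mathbb{E}[\chi_1^N(t)^2]+(N-1)\,\mathbb{E}[\chi_1^N(t)\chi_2^N(t)],
\]
so it suffices to prove $\mathbb{E}[\chi_1^N(t)^2]\to 0$ and $\mathbb{E}[\chi_1^N(t)\chi_2^N(t)]=o(1/N)$.

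The main difficulty is that the integrand of the stochastic integral in $\hat{\mathfrak S}_{1,1}^N$ is not predictable: the indicator $\mathds{1}_{P_k(s,t,\gamma,\overline{\mathfrak F}^N)=0}$ depends on $Q_k|_{(s,t]}$ (future), and moreover $\overline{\mathfrak F}^N$ itself depends on $Q_k$. To remove the second dependence I would employ the quarantine approximations of Subsection~\ref{TCL-sub-qua}: replace $\overline{\mathfrak F}^N$ by $\overline{\mathfrak F}^N_{(1)}$ throughout $\chi_1^N$ (respectively by $\overline{\mathfrak F}^N_{(1,2)}$ in both $\chi_1^N$ and $\chi_2^N$ for the cross term), producing a quarantined version $\tilde\chi_k^N$. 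The bound $|\overline{\mathfrak F}^N-\overline{\mathfrak F}^N_{(1)}|\le\lambda_*Y/N$ for a geometric $Y$ independent of $Q_1$, combined with an estimate of the symmetric difference $|\mathds{1}_{P_1(s,t,\gamma,\overline{\mathfrak F}^N)=0}-\mathds{1}_{P_1(s,t,\gamma,\overline{\mathfrak F}^N_{(1)})=0}|$ by the indicator of the event that $Q_1$ places a point in a corridor of conditional intensity $\le\gamma(r-s)\lambda_*Y/N$ (exactly as in the treatment of the event $E_k^N$ in Lemma~\ref{TCL-Lem-eq-1}), shows that the substitution errors contribute only $o(1)$ to $\mathbb{E}[\chi_1^N(t)^2]$ and $o(1/N)$ to the cross expectation.

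After quarantining, conditioning on $\mathcal{G}:=\sigma((Q_j)_{j\ne 1},(\lambda_{1,i},\gamma_{1,i})_{i\ge0})$ renders $\overline{\mathfrak F}^N_{(1)}$ deterministic, while $\Upsilon_1^N(s^-)$ remains $\mathcal{F}_{s^-}$-measurable, so that Theorem~\ref{TCL-th-1} applies with $h(s,(\lambda,\gamma,u))=\gamma(t-s)\mathds{1}_{u\le\Upsilon_1^N(s^-)}$ predictable and $f(s,(\lambda,\gamma,u),Q_1|_{(s,t]})=\mathds{1}_{P_1(s,t,\gamma,\overline{\mathfrak F}^N_{(1)})=0}$ a deterministic function (given $\mathcal{G}$) whose conditional mean is $\exp(-\int_s^t\gamma(r-s)\overline{\mathfrak F}^N_{(1)}(r)dr)$. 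This identifies the conditional expectation of the Poisson-integral part of $\tilde\chi_1^N(t)$ with the deterministic-integral part of $\tilde\chi_1^N(t)$, so $\mathbb{E}[\tilde\chi_1^N(t)\mid\mathcal{G}]=0$. A compensator-type variance estimate for $\tilde\chi_1^N(t)$ then bounds $\mathbb{E}[\tilde\chi_1^N(t)^2]$ by a constant times the integral of $\mathbb{E}[(\exp(-\int_s^t\gamma(r-s)\overline{\mathfrak F}^N_{(1)}(r)dr)-\exp(-\int_s^t\gamma(r-s)\overline{\mathfrak F}(r)dr))^2\,\Upsilon_1^N(s)]$ against $\mu(d\gamma)\,ds$, which tends to $0$ by dominated convergence using $\overline{\mathfrak F}^N_{(1)}\to\overline{\mathfrak F}$ in probability from Theorem~\ref{thm-FLLN}. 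For the cross term, the analogous construction with the double quarantine and conditioning on $\mathcal{G}_{(1,2)}$ makes $\tilde\chi_1^N(t)$ and $\tilde\chi_2^N(t)$ conditionally centered and conditionally independent, so $\mathbb{E}[\tilde\chi_1^N(t)\tilde\chi_2^N(t)]=0$, and combined with the substitution error this yields $o(1/N)$. The hardest technical step is to ensure this conditional independence cleanly: the bounds $\Upsilon_1^N(s^-)$ and $\Upsilon_2^N(s^-)$ both depend on $\overline{\mathfrak F}^N$ and hence on both $Q_1$ and $Q_2$, so conditional independence requires either also replacing these bounds by their double-quarantined counterparts or, equivalently, performing a secondary approximation whose error is controlled by Proposition~\ref{prop-c1}.
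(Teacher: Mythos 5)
Your plan coincides with the paper's own proof: for each fixed $t$, exchangeability reduces the $L^2$ estimate of $\Xi_3^N(t)$ to a diagonal term plus $(N-1)$ times a cross term, the cross term is killed by quarantining $\overline{\mathfrak F}^N$ and invoking Theorem~\ref{TCL-th-1} for the non-predictable Poisson integrals, and finite-dimensional convergence is upgraded to $\bD$-convergence using tightness of $\hat{\mathfrak S}^N_{1,1}$ from Lemma~\ref{TCL-tight-F-2} together with the Taylor-remainder control of Lemma~\ref{TCL-l4}. The one structural difference is that the paper first splits $\Xi_3^N=\tilde\Xi_{3,31}^N+\Psi_{32}^N$ (Lemmas~\ref{TCL-l2}--\ref{TCL-l3}): $\Psi_{32}^N$ is a compensated Poisson integral whose integrand becomes predictable (in fact $\tilde{\mathcal F}_0$-measurable) once $\overline{\mathfrak F}^N$ is replaced by $\overline{\mathfrak F}^N_{(1,2)}$, so conditional martingale orthogonality of $\overline{Q}_1$ and $\overline{Q}_2$ gives the cross-term vanishing directly, while the non-predictable indicator-difference part is isolated in $\tilde\Xi_{3,31}^N$; your bundled $\chi_k^N$ is exactly their sum and costs you nothing besides having to argue both features at once. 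Your closing caveat is the point worth emphasizing: because $\Upsilon_k^N(s^-)$ carries the full $\overline{\mathfrak F}^N$, replacing $\overline{\mathfrak F}^N\to\overline{\mathfrak F}^N_{(1,2)}$ only in the indicators and exponentials does \emph{not} render $\tilde\chi_1^N$ and $\tilde\chi_2^N$ conditionally independent given $\overline{\mathfrak F}^N_{(1,2)}$ — the one-line assertion to this effect in Lemmas~\ref{TCL-l2}--\ref{TCL-l3} elides this, and for the non-predictable piece the martingale-orthogonality escape is unavailable — so the secondary substitution $\Upsilon_k^N\to\varTheta_k^N$ you propose (the $B_k^N$-model of Section~\ref{TCL-sub-qua}, with error quantified in Lemma~\ref{TCL-A-9-lem-inq2} rather than Proposition~\ref{prop-c1}) is indeed the right care, and it also repairs the paper's argument.
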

\begin{lemma}\label{TCL-lem-11}
	As $N\to\infty$, 
	\begin{equation*}
	\Xi^N_4\to0 \quad \text{ in } \quad \bD,
	\end{equation*}
	in probability. 
\end{lemma}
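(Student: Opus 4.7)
I would prove $\Xi^N_4\to 0$ in probability in $\bD$ by combining convergence of one-dimensional marginals to $0$ in $L^2$ at each fixed $t\ge 0$ with $\bC$-tightness of the sequence $\{\Xi^N_4\}_{N\ge1}$. The starting observation is that the integral subtracted from $\hat{\mathfrak S}^N_{1,2}(t)$ in \eqref{TCL-eq-eqc8-2} is exactly what one obtains by replacing, in $\hat{\mathfrak S}^N_{1,2}(t)$, the non-predictable factor $\mathds{1}_{P_k(s,t,\gamma,\overline{\mathfrak F})=0}$ by its conditional mean
\[
\psi(s,t,\gamma):=\exp\bigg(-\int_s^t\gamma(r-s)\overline{\mathfrak F}(r)\,dr\bigg),
\]
and then integrating out the $u$-variable (using $\int_{a\wedge b}^{a\vee b}\sign(b-a)\,du=b-a$) and the $\lambda$-variable of the intensity. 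Applying Theorem~\ref{TCL-th-1} to each summand of $\hat{\mathfrak S}^N_{1,2}(t)$, with predictable factor $h_k(s,u,\gamma)=\mathds{1}_{\{u\in[\Upsilon_k(s^-)\wedge\Upsilon_k^N(s^-),\,\Upsilon_k(s^-)\vee\Upsilon_k^N(s^-)]\}}\gamma(t-s)\sign(\Upsilon_k^N(s^-)-\Upsilon_k(s^-))$ and future-measurable functional $f=\mathds{1}_{P_k=0}$, therefore already yields $\E[\Xi^N_4(t)]=0$.

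\textbf{$L^2$-control.} To estimate $\E[\Xi^N_4(t)^2]$ I would use the identity
\[
\mathds{1}_{P_k(s,t,\gamma,\overline{\mathfrak F})=0}\,Q_k-\psi(s,t,\gamma)\,ds\,\P(d\lambda,d\gamma)\,du=\big(\mathds{1}_{P_k=0}-\psi\big)Q_k+\psi\,\overline{Q}_k,
\]
writing $\Xi^N_4=N^{-1/2}\sum_{k=1}^N(A_k^N+B_k^N)$ where $B_k^N=\int h_k\psi\,\overline{Q}_k$ is a compensated Poisson stochastic integral with predictable integrand (whose second moment is controlled by the It\^o isometry) and $A_k^N=\int h_k(\mathds{1}_{P_k=0}-\psi)\,Q_k$ has $\E[A_k^N]=0$ by Theorem~\ref{TCL-th-1}. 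Both terms contain the $u$-indicator of width $|\Upsilon_k^N(s)-\Upsilon_k(s)|$, whose $p$-th moment is $O(N^{-p/2})$ by Proposition~\ref{prop-c1} and Corollary~\ref{inr}; this gives the diagonal contribution $N^{-1}\sum_k\E[(A_k^N+B_k^N)^2]=O(N^{-1/2})$. For the off-diagonal terms with $k\neq j$, the only coupling is that $\Upsilon_k^N$ and $\Upsilon_j^N$ both depend on $\overline{\mathfrak F}^N$ and hence on $(Q_i)_{i\neq k,j}$; I would replace $\overline{\mathfrak F}^N$ by the two-individual quarantine version $\overline{\mathfrak F}^N_{(k,j)}$ of Subsection~\ref{TCL-sub-qua}, pay an error of order $N^{-1}$ via \eqref{TCL-eq1}, and then exploit independence of $Q_k$ from $(Q_j,\overline{\mathfrak F}^N_{(k,j)},\gamma_{j,\cdot})$ to cancel the leading off-diagonal contribution, mirroring the decoupling argument used in Lemma~\ref{TCL-Lem-eq-1}.

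\textbf{Tightness and main obstacle.} For $\bC$-tightness I would apply Lemma~\ref{TCL-Lem-20}, bounding $\P(\sup_{0\le u\le\delta}|\Xi^N_4(t+u)-\Xi^N_4(t)|>\eps)$ uniformly in $N$ and $t\in[0,T]$. The increment splits into three pieces: the stochastic integral over $s\in(t,t+u]$; the contribution from replacing $\gamma(t-s)$ by $\gamma(t+u-s)$; and the contribution from extending the upper time argument of $P_k(s,\cdot,\gamma,\overline{\mathfrak F})$ and of $\psi(s,\cdot,\gamma)$ from $t$ to $t+u$. The first two are controlled by the moment estimates above together with Assumption~\ref{TCL-AS-lambda}(ii),(iv) and Lemma~\ref{TCL-lem-barlambda-inc-bound}. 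The hardest piece, and the main obstacle, is the third: the difference $\mathds{1}_{P_k(s,t+u,\gamma,\overline{\mathfrak F})=0}-\mathds{1}_{P_k(s,t,\gamma,\overline{\mathfrak F})=0}$ depends on $Q_k|_{(t,t+u]}$ and carries only $O(u)$ conditional mean, which is too weak after squaring, summing in $k$, and integrating in $s$. The sharp estimate will require combining Theorem~\ref{TCL-th-1} (to handle the non-predictable integrand) with the quarantine coupling of Subsection~\ref{TCL-sub-qua} (to decorrelate summands indexed by different $k$) and the joint-increment bounds of Assumption~\ref{TCL-AS-lambda} to obtain a modulus of the form $C\delta^{1+\eta}$ needed by Lemma~\ref{TCL-Lem-20}. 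Finite-dimensional $L^2$-convergence to $0$ together with $\bC$-tightness then delivers $\Xi^N_4\Rightarrow 0$ in $\bD$, which, since the limit is deterministic, is the claimed convergence in probability.
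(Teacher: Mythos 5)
Your global strategy (one--dimensional $L^2$-marginals to $0$ plus $\bC$-tightness, then conclude by the deterministic limit) is the paper's strategy, and your $L^2$-decomposition is essentially the paper's. Writing $\Xi^N_4=N^{-1/2}\sum_k(A^N_k+B^N_k)$ with $A^N_k$ a $Q_k$-integral whose integrand is centered at its conditional mean and $B^N_k$ a compensated Poisson integral is precisely the split $\Xi^N_4=\Xi^N_{31}+(\Xi^N_4-\Xi^N_{31})$ used in the paper, modulo a cosmetic difference: you center $\mathds{1}_{P_k(s,t,\gamma,\overline{\mathfrak F})=0}$ at $\psi(s,t,\gamma)$ for the sampled $\gamma$, whereas the paper centers $\gamma(t-s)\mathds{1}_{P_k=0}$ at its full $\mu$-average $\int_{\bD}\tilde\gamma(t-s)\psi(s,t,\tilde\gamma)\mu(d\tilde\gamma)$; both put the diagonal term at $O(N^{-1/2})$ via Corollary~\ref{inr}. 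For the off-diagonal cross terms the approaches genuinely diverge: you propose the two-individual quarantine decoupling of Subsection~\ref{TCL-sub-qua} (mirroring Lemmas~\ref{TCL-Lem-eq-1}, \ref{TCL-l2}, \ref{TCL-l3}), while the paper's Lemma~\ref{TCL-lem7} conditions on $\mathcal{G}^N$ at the later of the two jump times $\vartheta^N_{k,i}\vee\vartheta^N_{\ell,j}$ to make the later summand have conditional mean zero, so the cross terms vanish exactly without paying a quarantine error. Your route is more uniform with the rest of Section~\ref{TCL-sec-L1}; the paper's is slightly slicker here. Either should work.

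The one place where you misjudge the structure is tightness, which you elevate to the ``main obstacle'' and leave open, saying only that the right tools ``will'' give a modulus $C\delta^{1+\eta}$. In the paper this step is trivial: $\Xi^N_4$ is built in \eqref{TCL-eq-eqc8-2} as $\hat{\mathfrak S}^N_{1,2}$ minus a Lipschitz functional (with a kernel that is bounded and continuous in $t$) of $\sqrt N(\overline\Upsilon^N-\tilde\Upsilon^N)=\hat{\mathfrak S}^N\overline{\mathfrak F}^N-\hat{\mathfrak S}^N_2\overline{\mathfrak F}^N+\tilde{\mathfrak S}^N\hat{\mathfrak F}^N$ via \eqref{eqc1-2}, and every ingredient here is already declared $\bC$-tight by Lemma~\ref{TCL-tight-F-2}, whose proof (including $\bC$-tightness of $\hat{\mathfrak S}^N_{1,2}$, Lemma~\ref{TCL-tight-F2}) is carried out once and for all in Section~\ref{TCL-sec-Th}. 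The paper therefore simply notes ``from Lemma~\ref{TCL-tight-F-2}, $\Xi^N_4$ is tight'' and spends no further effort on the modulus of continuity inside the proof of Lemma~\ref{TCL-lem-11}. So the increment estimate you flag is real, but it belongs to the separately-proved tightness section, not to this lemma; your plan would re-derive it and currently leaves that piece as an admitted gap.
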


\subsection{Proofs of Lemmas \ref{TCL-cv-F1}-\ref{TCL-lem-11}}\label{TCL-Appr-L}
\subsubsection{Proof of Lemma~\ref{TCL-cv-F1}}\label{TCL-pr-F1}
We recall that 
\begin{equation}\label{eqc3}
\hat{\mathfrak F}^N_{1,1}(t)=\frac{1}{\sqrt{N}}\sum_{k=1}^{N}\int_{0}^{t}\int_{\bD^2}\int_{\Upsilon_k^N(r^-)\wedge\Upsilon_k(r^-)}^{\Upsilon_k^N(r^-)\vee\Upsilon_k(r^-)}\lambda(t-s)\sign(\Upsilon_k^N(s^-)-\Upsilon_k(s^-))\overline{Q}_k(ds,d\lambda,d\gamma,du).
\end{equation}
As $(\overline{Q}_k)_k$ are i.i.d, by exchangeability and from \eqref{eqgam},  it follows that 
\begin{align*}
\E\left[\left(\hat{\mathfrak F}^N_{1,1}(t)\right)^2\right]&=\E\left[\int_{0}^{t}\lambda(t-s)^2|\Upsilon^N_1(s)-\Upsilon_1(s)|ds\right]\\
&\leq(\lambda_\ast)^2\int_{0}^{t}\E\left[\left|\Upsilon^N_1(s)-\Upsilon_1(s)\right|\right]ds\to0\text{ as }N\to+\infty.
\end{align*}
To conclude,  it suffices to prove tightness of $(\hat{\mathfrak F}^N_{1,1})_N$. By the expression in \eqref{eqc3}, tightness of the sequence processes $(\hat{\mathfrak F}^N_{1,1})_N$ can be deduced from the tightness of the following sequence processes since $\sign(\Upsilon_k^N(s^-)-\Upsilon_k(s^-))$ does not modify the setting:
\begin{align*}
&\Xi^N_1(t)=\frac{1}{\sqrt{N}}\sum_{k=1}^{N}\int_{0}^{t}\int_{\bD^2}\int_{\Upsilon_k^N(r^-)\wedge\Upsilon_k(r^-)}^{\Upsilon_k^N(r^-)\vee\Upsilon_k(r^-)}\lambda(t-s)Q_k(ds,d\lambda,d\gamma,du),\\
&\Xi^N_2(t)=\sqrt N\int_{0}^{t}\overline{\lambda}(t-s)\left|\tilde{\Upsilon}^N(s)-\overline\Upsilon^N(s)\right|ds,
\end{align*}
where we recall that 
\begin{equation*}
\overline{\Upsilon}^N(t)=\frac{1}{N}\sum_{k=1}^{N}\Upsilon_k^N(t)\text{ and }\tilde{\Upsilon}^N(t)=\frac{1}{N}\sum_{k=1}^{N}\Upsilon_k(t).
\end{equation*}
The tightness of $(\Xi^N_1)_N$ will be established in Lemma~\ref{TCL-tight-F} below. Hence it remains to prove the tightness of $(\Xi^N_2)_N$.

From \eqref{fluc-1} we have 
\begin{equation}
\E\left[\sup_{0\leq t\leq T}\Xi^N_2(t)\right]\leq\lambda_\ast\int_{0}^{T}\E\left[\left|\sqrt{N}\left(\tilde{\Upsilon}^N(r)-\overline\Upsilon^N(r)\right)\right|\right]dr
\leq\lambda_* C_T.\label{eqc4-11}
\end{equation}
From Lemma~\ref{TCL-lem-barlambda-inc-bound}, for any $0\leq s\leq t\leq T$, 
\begin{align}\label{eqc6-1}
\left|\Xi^N_2(t)-\Xi^N_2(s)\right|&\leq\lambda_*\sqrt{N}\int_{s}^{t}\left|\tilde{\Upsilon}^N(r)-\overline\Upsilon^N(r)\right|dr+\sqrt N\int_{0}^{s}\left|\overline\lambda(t-r)-\overline\lambda(s-r)\right|\left|\tilde{\Upsilon}^N(r)-\overline\Upsilon^N(r)\right|dr\nonumber\\
&\leq\lambda_*\sqrt{N}\int_{s}^{t}\left|\tilde{\Upsilon}^N(r)-\overline\Upsilon^N(r)\right|dr+\sqrt{N}(t-s)^\alpha\int_{0}^{s}\left|\tilde{\Upsilon}^N(r)-\overline\Upsilon^N(r)\right|dr\nonumber\\
&\hspace*{3cm}+\lambda_*\sqrt{N}\sum_{j=1}^{\ell-1}\int_{0}^{s}\left(F_j(t-r)-F_j(s-r)\right)\left|\tilde{\Upsilon}^N(r)-\overline\Upsilon^N(r)\right|dr.
\end{align}
By Markov's inequality, 
\begin{align}
&\mathbb{P}\left(\sup_{0\leq s<t< T,|t-s|\leq\delta}\sqrt N\int_{s}^{t}\left|\tilde{\Upsilon}^N(r)-\overline\Upsilon^N(r)\right|dr\geq\theta\right)\nonumber\\
&\leq\frac{N}{\theta^2}\E\left[\sup_{0\leq s<t< T,|t-s|\leq\delta}(t-s)\int_{s}^{t}\left|\tilde{\Upsilon}^N(r)-\overline\Upsilon^N(r)\right|^2dr\right]\nonumber\\
&\leq\frac{\delta}{\theta^2}N\int_{0}^{T}\E\left[\left|\overline\Upsilon^N(r)-\tilde\Upsilon^N(r)\right|^2\right]dr \nonumber \\
&\leq \frac{\delta C_T}{\theta^2},\label{eqc7}
\end{align}
where the last line follows from applying \eqref{fluc-2} with $k=2$.

On the other hand, 
\begin{align}
\mathbb{P}\left(\sup_{0\leq s<t< T,|t-s|\leq\delta}(t-s)^\alpha\sqrt{N}\int_{0}^{s}\left|\tilde{\Upsilon}^N(r)-\overline\Upsilon^N(r)\right|dr\geq\theta\right)&\leq\frac{\delta^\alpha}{\theta}\sqrt N\int_{0}^{T}\E\left[\left|\tilde{\Upsilon}^N(r)-\overline\Upsilon^N(r)\right|\right]dr\nonumber\\
&\leq\frac{C_T}{\theta}\delta^\alpha,\label{eqc8}
\end{align}
where the last line follows from applying \eqref{fluc-2} with $k=1$. 

Applying again \eqref{fluc-2} with $k=1$, by Markov's inequality and Assumption~\ref{TCL-AS-lambda-2},
\begin{multline}\label{TCL-eqc8'}
\mathbb{P}\left(\sup_{0\leq s<t< T,|t-s|\leq\delta}\sqrt{N}\sum_{j=1}^{\ell-1}\int_{0}^{s}\left(F_j(t-r)-F_j(s-r)\right)\left|\tilde{\Upsilon}^N(r)-\overline\Upsilon^N(r)\right|dr\geq\theta\right)\\
\begin{aligned}
&\leq\frac{\ell\delta^\rho}{\theta}\sqrt N\int_{0}^{T}\E\left[\left|\tilde{\Upsilon}^N(r)-\overline\Upsilon^N(r)\right|\right]dr\\
&\leq\frac{\ell C_T}{\theta}\delta^\rho.
\end{aligned}
\end{multline}
Thus from \eqref{eqc6-1}, \eqref{eqc7}, \eqref{eqc8}and \eqref{TCL-eqc8'}, 
\begin{equation*}
\lim_{\delta\to0}\limsup_N\mathbb{P}\left(w_T(\Xi^N_2,\delta)\geq\theta\right)=0,
\end{equation*}
combined with \eqref{eqc4-11}, it follows from Theorem~\ref{th-tight} that $(\Xi^N_2)_N$ is $\bC$-tight.


\subsubsection{Proof of Lemma~\ref{TCL-lem-10}}\label{TCL-pr-F2}
We recall that 
\begin{align*}
\hat{\mathfrak S}^N_{1,1}(t)=\frac{1}{\sqrt{N}}\sum_{k=1}^{N}\int_{0}^{t}\int_{\bD^2}\int_{0}^{\Upsilon^N_k(s^-)}\gamma(t-s)\left(\mathds{1}_{P_{k}(s,t,\gamma,\overline{\mathfrak F}^N)=0} -\mathds{1}_{P_{k}(s,t,\gamma,\overline{\mathfrak F})=0}\right)Q_k(ds,d\lambda,d\gamma,du)\,. 
\end{align*}
Define 
\begin{equation*}
\tilde{\Xi}^N_{3,31}(t)=\frac{1}{\sqrt{N}}\sum_{k=1}^{N}\chi^N_k(t)\,. 
\end{equation*}
where 
\begin{align*}
\chi^N_k(t)&=\int_{0}^{t}\int_{\bD^2}\int_{0}^{\Upsilon^N_k(s^-)}\left\{\gamma(t-s)\left(\mathds{1}_{P_{k}(s,t,\gamma,\overline{\mathfrak F}^N)=0} -\mathds{1}_{P_{k}(s,t,\gamma,\overline{\mathfrak F})=0}\right)\right.\\
&\left.-\int_{\bD}\tilde\gamma(t-s)\left(\exp\left(-\int_{s}^{t}\tilde\gamma(r-s)\overline{\mathfrak F}^N(r)dr\right)-\exp\left(-\int_{s}^{t}\tilde\gamma(r-s)\overline{\mathfrak F}(r)dr\right)\right)\mu(d\tilde\gamma)\right\}Q_k(ds,d\lambda,d\gamma,du).
\end{align*}
\begin{lemma}\label{TCL-l2} For any $t\geq0$, as $N\to+\infty$,
	\[\E\left[\left(\tilde{\Xi}^N_{3,31}(t)\right)^2\right]\to0.\]
\end{lemma}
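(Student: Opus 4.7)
The plan is to follow closely the proof of Lemma~\ref{TCL-Lem-eq-1}. By exchangeability, I would first write
\begin{equation*}
\E\left[\left(\tilde{\Xi}^N_{3,31}(t)\right)^2\right]=\E\left[\left(\chi^N_1(t)\right)^2\right]+(N-1)\,\E\left[\chi^N_1(t)\chi^N_2(t)\right],
\end{equation*}
so that it suffices to show $\E[(\chi^N_1(t))^2]\to 0$ and $\E[\chi^N_1(t)\chi^N_2(t)]=o(1/N)$.

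For the diagonal term, the integrand of $\chi^N_1(t)$ is uniformly bounded (each of $\gamma(t-s)$ times an indicator difference and the $\mu$-averaged exponential difference is bounded by $2$), so $|\chi^N_1(t)|\le C\,Q_1([0,t]\times\bD^2\times[0,\lambda_*])$, giving a uniformly integrable $L^2$ envelope. From Theorem~\ref{thm-FLLN}, $\overline{\mathfrak F}^N\to\overline{\mathfrak F}$ in probability, hence a.s.\ along a subsequence; since almost surely no atom of $Q_1$ lies on the null boundary $u=\gamma(r-s)\overline{\mathfrak F}(r)$, both indicator and exponential differences tend to $0$ at each atom. Dominated convergence then yields $\E[(\chi^N_1(t))^2]\to 0$, and in particular $\E[|\chi^N_1(t)|]\to 0$.

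For the cross term I would invoke the quarantine construction of Section~\ref{TCL-sub-qua} with individuals $1$ and $2$ simultaneously in quarantine. Replacing $\overline{\mathfrak F}^N$ by $\overline{\mathfrak F}^N_{(1,2)}$ and $\Upsilon^N_k$ by the virtual intensity $\tilde\Upsilon^N_k$ driven by $Q_k$ and $\overline{\mathfrak F}^N_{(1,2)}$ produces auxiliary variables $\tilde\chi^N_1,\tilde\chi^N_2$. Since $\overline{\mathfrak F}^N_{(1,2)}$ is independent of $(Q_1,Q_2)$, conditionally on $\overline{\mathfrak F}^N_{(1,2)}$ the pair $(\tilde\chi^N_1,\tilde\chi^N_2)$ is independent. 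Moreover, applying Theorem~\ref{TCL-th-1} to $Q_k$ conditionally on $\overline{\mathfrak F}^N_{(1,2)}$ gives
\begin{equation*}
\E\left[\mathds{1}_{P_k(s,t,\gamma,\overline{\mathfrak F}^N_{(1,2)})=0}\,\big|\,\overline{\mathfrak F}^N_{(1,2)},\gamma,s\right]=\exp\left(-\int_s^t\gamma(r-s)\overline{\mathfrak F}^N_{(1,2)}(r)\,dr\right),
\end{equation*}
and after integrating the conditional integrand against $\mu(d\gamma)$ the indicator term cancels the compensating expectation term exactly, so $\E[\tilde\chi^N_k\mid\overline{\mathfrak F}^N_{(1,2)}]=0$, and hence $\E[\tilde\chi^N_1\tilde\chi^N_2]=0$. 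The remaining step is to bound
\begin{equation*}
\left|\E[\chi^N_1\chi^N_2]\right|\le \E\bigl[|\chi^N_1-\tilde\chi^N_1|\,|\chi^N_2|\bigr]+\E\bigl[|\tilde\chi^N_1|\,|\chi^N_2-\tilde\chi^N_2|\bigr]
\end{equation*}
using the quarantine estimate $|\overline{\mathfrak F}^N-\overline{\mathfrak F}^N_{(1,2)}|\le\lambda_*Y/N$ as in Lemma~\ref{TCL-Lem-eq-1}: this controls the $\Upsilon^N_k$-range difference and the exponential differences pointwise by $O(Y/N)$, and (via Markov's inequality for the relevant Poisson event, as was done in \eqref{TCL-A-n-1}) the conditional probability that the two indicators $\mathds{1}_{P_k(s,t,\gamma,\cdot)=0}$ disagree is bounded by $2tY/N$. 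Combining these with $\E[|\tilde\chi^N_k|]\to 0$ (inherited from the first step) yields an $o(1/N)$ bound on the displayed quantity, so $(N-1)\E[\chi^N_1\chi^N_2]\to 0$.

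The main obstacle, as in Lemma~\ref{TCL-Lem-eq-1}, is the non-predictability of the integrand: the indicators $\mathds{1}_{P_k(s,t,\gamma,\overline{\mathfrak F}^N)=0}$ depend on $Q_k|_{(s,t]}$ and $\overline{\mathfrak F}^N$ itself depends on $Q_k$, so standard Poisson isometry or compensator arguments cannot be applied directly. The quarantine device, combined with Theorem~\ref{TCL-th-1}, is what decouples individuals $1$ and $2$ from $\overline{\mathfrak F}^N_{(1,2)}$ and permits the clean conditional computation that produces the vanishing cross term.
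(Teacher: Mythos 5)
Your overall strategy is the same as the paper's: exchangeability reduces the problem to the diagonal term $\E[(\chi^N_1)^2]$ and the cross term $(N-1)\E[\chi^N_1\chi^N_2]$; the diagonal term goes to $0$ by a domination/convergence argument; and the cross term is handled by introducing quarantined auxiliary variables $\tilde\chi^N_k$ built from $\overline{\mathfrak F}^N_{(1,2)}$ so that $\E[\tilde\chi^N_1\tilde\chi^N_2]=0$ via Theorem~\ref{TCL-th-1}, and then controlling the replacement error through the $O(Y/N)$ quarantine bound and a Markov estimate on the indicator disagreement event $E^N_k$. Your variation of also replacing $\Upsilon^N_k$ by a quarantined intensity is in fact a cleaner way to obtain the conditional factorization (the paper keeps $\Upsilon^N_k$ and absorbs the resulting dependence into the error estimates), and your DCT argument for the diagonal is a valid substitute for the paper's route via the dominating envelope $\hat\chi^N_1$.

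However, there is a real gap in the cross-term estimate as written. Your two-term splitting $|\E[\chi^N_1\chi^N_2]|\le\E\bigl[|\chi^N_1-\tilde\chi^N_1|\,|\chi^N_2|\bigr]+\E\bigl[|\tilde\chi^N_1|\,|\chi^N_2-\tilde\chi^N_2|\bigr]$ leaves the \emph{unquarantined} $\chi^N_2$ in the first term, and $\chi^N_2$ depends on $Q_1$ through $\overline{\mathfrak F}^N$ (both in the indicator $\mathds{1}_{P_2(s,t,\gamma,\overline{\mathfrak F}^N)=0}$ and in the range $\Upsilon^N_2$). Conditioning on $\bigl(\overline{\mathfrak F}^N_{(1,2)},Y,Q_2\bigr)$ therefore does \emph{not} make $\chi^N_2$ measurable, so the bound $\E[\,|\chi^N_1-\tilde\chi^N_1|\mid\cdot\,]\le CY/N$ cannot be pulled through to give $o(1/N)$; and the crude Cauchy--Schwarz route only gives $O(N^{-1/2})\cdot o(1)$, which is not $o(1/N)$. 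The fix is exactly the paper's three-term decomposition
\[
\E[\chi^N_1\chi^N_2-\tilde\chi^N_1\tilde\chi^N_2]
=\E[\tilde\chi^N_1(\chi^N_2-\tilde\chi^N_2)]
+\E[(\chi^N_1-\tilde\chi^N_1)\tilde\chi^N_2]
+\E[(\chi^N_1-\tilde\chi^N_1)(\chi^N_2-\tilde\chi^N_2)],
\]
equivalently splitting $\chi^N_2=\tilde\chi^N_2+(\chi^N_2-\tilde\chi^N_2)$ inside your first term, so that in every product one factor is quarantined (hence factorizes under conditioning) and the other carries a factor $O(Y/N)$, with the small $\E[Y|\tilde\chi^N_k|]\to0$ prefactor supplying the extra $o(1)$. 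This is a local repair, not a change of strategy.
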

\begin{proof}
	
	By exchangeability we have
	\begin{equation}\label{TCL-m0}
	\E\left[\left(\tilde{\Xi}^N_{3,31}(t)\right)^2\right]=\E\left[\left(\chi^N_1(t)\right)^2\right]+(N-1)\E\left[\chi^N_1(t)\chi^N_2(t)\right].
	\end{equation}
	Set
	\begin{align*}
	\tilde\chi^N_k(t)&=\int_{0}^{t}\int_{\bD^2}\int_{0}^{\Upsilon^N_k(s^-)}\left\{\gamma(t-s)\left(\mathds{1}_{P_{k}(s,t,\gamma,\overline{\mathfrak F}^N_{(1,2)})=0} -\mathds{1}_{P_{k}(s,t,\gamma,\overline{\mathfrak F})=0}\right)\right.\\
	&\left.-\int_{\bD}\tilde\gamma(t-s)\left(\exp\left(-\int_{s}^{t}\tilde\gamma(r-s)\overline{\mathfrak F}^N_{(1,2)}(r)dr\right)-\exp\left(-\int_{s}^{t}\tilde\gamma(r-s)\overline{\mathfrak F}(r)dr\right)\right)\mu(d\tilde\gamma)\right\}Q_k(ds,d\lambda,d\gamma,du),
	\end{align*}
	and 
	\begin{align*}
	\hat\chi^N_{k}(t)&:=\int_{0}^{t}\int_{\bD^2}\int_{0}^{\lambda_*}\left\{\gamma(t-s)\left|\mathds{1}_{P_{k}(s,t,\gamma,\overline{\mathfrak F}^N_{(1,2)})=0} -\mathds{1}_{P_{k}(s,t,\gamma,\overline{\mathfrak F})=0}\right|\right.\\
	&\left.+\int_{\bD}\tilde\gamma(t-s)\left|\exp\left(-\int_{s}^{t}\tilde\gamma(r-s)\overline{\mathfrak F}^N_{(1,2)}(r)dr\right)-\exp\left(-\int_{s}^{t}\tilde\gamma(r-s)\overline{\mathfrak F}(r)dr\right)\right|\mu(d\tilde\gamma)\right\}Q_k(ds,d\lambda,d\gamma,du).
	\end{align*}
	For $0\leq s\leq t$, and $\gamma\in D$, let $E^N_k(s,\gamma)$ denote the event 
	\begin{equation*}
	E^N_k(s,\gamma)=\left\{P_{k}\left(s,t,\gamma,\overline{\mathfrak F}^N_{(1,2)}-\frac{Y}{N}\right)=P_{k}\left(s,t,\gamma,\overline{\mathfrak F}^N_{(1,2)}+\frac{Y}{N}\right)\right\},
	\end{equation*}
	where $Y$ is a geometric random variable independent from $Q_1$ and $Q_2$ chosen as in \eqref{TCL-eq1}, such that almost surely for all $t\in[0,T]$,
	\[\overline{\mathfrak F}^N_{(1,2)}(t)-\frac{Y}{N}\leq\overline{\mathfrak F}^N(t)\leq \overline{\mathfrak F}^N_{(1,2)}(t)+\frac{Y}{N}.\]
	We denote
	\begin{equation*}
	\mathcal{F}_t^N=\sigma\{(\lambda_{k,i}(\varsigma^N_k(\cdot)))_{1\leq k\leq N,i\leq A^N_k(t)},\,(\gamma_{k,i}(\varsigma^N_k(\cdot)))_{1\leq k\leq N,i\leq A^N_k(t)},\,(Q_k|_{[0,t]\times E})_{k\leq N}\}.
	\end{equation*} 
	As
	\begin{equation*}
	P_{k}(s,t,\gamma,\overline{\mathfrak F}^N)\mathds{1}_{E_k^N(s,\gamma)}=P_{k}\left(s,t,\gamma,\overline{\mathfrak F}^N_{(1,2)}\right)\mathds{1}_{E_k^N(s,\gamma)},
	\end{equation*}
	and $\Upsilon^N_k(s^-)\leq\lambda_*,$ it follows that 
	\begin{equation}
	\left|\chi^N_k(t)-\tilde\chi^N_k(t)\right|\leq\int_{0}^{t}\int_{\bD^2}\int_{0}^{\lambda_*}\mathds{1}_{(E^N_k(s,\gamma))^c}Q_k(ds,d\lambda,d\gamma,du)
	+\frac{Yt}{N}B_k(t) ,\label{TCL-m1}
	\end{equation}
	where
	\begin{equation*}
	B_k(t)=\int_{0}^{t}\int_{\bD^2}\int_{0}^{\lambda_*}Q_k(ds,d\lambda,d\gamma,du).
	\end{equation*}
	
	Moreover, as $Q_k|_{]s,t]}$ is independent of $(\mathcal{F}_s^N,\,\overline{\mathfrak F}^N_{(1,2)},\,Y,\,Q_\ell)$, for $\ell\neq k$ and $k\in\{1,2\}$,
	\begin{equation}
	\mathbb{P}\left((E^N_k(s,\gamma))^c\Big|\mathcal{F}_s^N,\,\overline{\mathfrak F}^N_{(1,2)},\,Y,Q_\ell\right)\leq\frac{2tY}{N}.\label{TCL-m2}
	\end{equation}
	We have 
	\begin{align}\label{TCL-m2'}
	\E\left[\chi^N_1(t)\chi^N_2(t)-\tilde\chi^N_1(t)\tilde\chi^N_2(t)\right]&=\E\left[\tilde\chi^N_1(t)\left(\chi^N_2(t)-\tilde\chi^N_2(t)\right)\right]+\E\left[\left(\chi^N_1(t)-\tilde\chi^N_1(t)\right)\tilde\chi^N_2(t)\right]\nonumber\\&\hspace*{1cm}+\E\left[\left(\chi^N_1(t)-\tilde\chi^N_1(t)\right)\left(\chi^N_2(t)-\tilde\chi^N_2(t)\right)\right]. 
	\end{align}
	Note that \[|\chi^N_k|\leq\hat\chi^N_{k} \text{ and }|\tilde\chi^N_k|\leq\hat\chi^N_{k}.\]

	As $Q_2$ is independent of $(\overline{\mathfrak F}^N_{(1,2)},\,Y,\,Q_1)$, from \eqref{TCL-m1}, \eqref{TCL-m2} and Theorem~\ref{TCL-th-1},
	\begin{align*}
	\left|\E\left[\tilde\chi^N_1(t)\left(\chi^N_2(t)-\tilde\chi^N_2(t)\right)\right]\right|
	&\leq\lambda_*\E\left[\hat\chi^N_{1}(t)\int_{0}^{t}\int_{\bD}\mathbb{P}\left((E^N_2(s,\gamma))^c\Big|\mathcal{F}_s^N,\,\overline{\mathfrak F}^N_{(1,2)},\,Y,Q_1\right)\mu(d\gamma)ds\right]\\&\hspace*{8cm}+\frac{t}{N}\E\left[\hat\chi^N_{1}(t)YB_2(t)\right]\\
	&\leq\frac{3t^2\lambda_*}{N}\E\left[Y\hat\chi^N_{1}(t)\right],
	\end{align*}
	where as $N\to\infty,\,\E\left[Y\hat\chi^N_{1}(t)\right]\to0,$ because $\hat\chi^N_{1}(t)\to0$ in probability as $N\to\infty$ and $\hat\chi^N_{1}$ is bounded.

	Thus it follows that
	\begin{equation*}
	\left|\E\left[\tilde\chi^N_1(t)\left(\chi^N_2(t)-\tilde\chi^N_2(t)\right)\right]\right|=o\left(\frac{1}{N}\right).
	\end{equation*}
	Similarly we show that
	\begin{equation*}
	\left|\E\left[\left(\chi^N_1(t)-\tilde\chi^N_1(t)\right)\tilde\chi^N_2(t)\right]\right|=o\left(\frac{1}{N}\right).
	\end{equation*}
	From \eqref{TCL-m1},
	\begin{multline*}
	\E\left[\left|\chi^N_1(t)-\tilde\chi^N_1(t)\right|\left|\chi^N_2(t)-\tilde\chi^N_2(t)\right|\right]\\
	\begin{aligned}
	&\leq\E\left[\int_{0}^{t}\int_{\bD^2}\int_{0}^{\lambda_*}\mathds{1}_{(E^N_1(s,\gamma))^c}Q_1(ds,d\lambda,d\gamma,du)\int_{0}^{t}\int_{\bD^2}\int_{0}^{\lambda_*}\mathds{1}_{(E^N_2(s,\gamma))^c}Q_2(ds,d\lambda,d\gamma,du)\right]\\
	&+\frac{t}{N}\E\left[YB_2(t)\int_{0}^{t}\int_{\bD^2}\int_{0}^{\lambda_*}\mathds{1}_{(E^N_1(s,\gamma))^c}Q_1(ds,d\lambda,d\gamma,du)\right]\\
	&+\frac{t}{N}\E\left[YB_1(t)\int_{0}^{t}\int_{\bD^2}\int_{0}^{\lambda_*}\mathds{1}_{(E^N_2(s,\gamma))^c}Q_2(ds,d\lambda,d\gamma,du)\right]+\frac{t^2}{N^2}\E\left[Y^2B_1(t)B_2(t)\right].
	\end{aligned}
	\end{multline*}
	Conditionally on $\overline{\mathfrak F}^N_{(1,2)}$ and $Y$, the terms inside the first expectation above are independent because $Q_1$ and $Q_2$ are independent, and from \eqref{TCL-m2} and Theorem~\ref{TCL-th-1}, it follows that
	\begin{multline*}
	\E\left[\int_{0}^{t}\int_{\bD^2}\int_{0}^{\lambda_*}\mathds{1}_{(E^N_1(s,\gamma))^c}Q_1(ds,d\lambda,d\gamma,du)\int_{0}^{t}\int_{\bD^2}\int_{0}^{\lambda_*}\mathds{1}_{(E^N_2(s,\gamma))^c}Q_2(ds,d\lambda,d\gamma,du)\right]\\
	\begin{aligned}
	&=\lambda_*^2\E\left[\int_{0}^{t}\int_{\bD}\mathbb{P}\left((E^N_1(s,\gamma))^c\Big|\overline{\mathfrak F}^N_{(1,2)},\,Y\right)\mu(d\gamma)ds\int_{0}^{t}\int_{\bD}\mathbb{P}\left((E^N_2(s,\gamma))^c\Big|\overline{\mathfrak F}^N_{(1,2)},\,Y\right)\mu(d\gamma)ds\right]\\
	&\leq\frac{4\lambda_*^2t^4}{N^2}\E\left[Y^2\right].
	\end{aligned}
	\end{multline*}
	Hence using again \eqref{TCL-m1} and the fact that $Q_k$ is independent of $(\overline{\mathfrak F}^N_{(1,2)},\,Y,\,Q_\ell),$ with $k\neq\ell$ and $k,\ell\in\{1,2\}$, we deduce that 
	\begin{multline*}
	\E\left[\left|\chi^N_1(t)-\tilde\chi^N_1(t)\right|\left|\chi^N_2(t)-\tilde\chi^N_2(t)\right|\right]\\
	\begin{aligned}
	&\leq\frac{4\lambda_*^2t^4}{N^2}\E\left[Y^2\right]
	+\frac{\lambda_*t}{N}\E\left[YB_2(t)\int_{0}^{t}\int_{\bD}\mathbb{P}\left((E^N_1(s,\gamma))^c\Big|\overline{\mathfrak F}^N_{(1,2)},\,Y,Q_2\right)\mu(d\gamma)ds\right]\\
	&\qquad+\frac{\lambda_*t}{N}\E\left[YB_1(t)\int_{0}^{t}\int_{\bD}\mathbb{P}\left((E^N_2(s,\gamma))^c\Big|\overline{\mathfrak F}^N_{(1,2)},\,Y,Q_1\right)\mu(d\gamma)ds\right]+\frac{t^2}{N^2}\E\left[Y^2B_1(t)B_2(t)\right]\\
	&\leq\frac{4\lambda_*^2t^4}{N^2}\E\left[Y^2\right]+\frac{2\lambda_*t^2}{N^2}\E\left[YB_2(t)\right]+\frac{2\lambda_*t^2}{N^2}\E\left[YB_1(t)\right]+\frac{t^2}{N^2}\E\left[Y^2B_1(t)B_2(t)\right].
	\end{aligned}
	\end{multline*}
	Hence,
	\begin{equation*}
	\left|\E\left[\left(\chi^N_1(t)-\tilde\chi^N_1(t)\right)\left(\chi^N_2(t)-\tilde\chi^N_2(t)\right)\right]\right|=o\left(\frac{1}{N}\right).
	\end{equation*}
	In conclusion, coming back to \eqref{TCL-m2'}
	\begin{equation}\label{TCL-m3}
	\left|\E\left[\chi^N_1(t)\chi^N_2(t)-\tilde\chi^N_1(t)\tilde\chi^N_2(t)\right]\right|=o\left(\frac{1}{N}\right).
	\end{equation}
	On the other hand, since $Q_1,\,Q_2$ and $\overline{\mathfrak F}^N_{(1,2)}$ are independent, 
	\begin{equation*}
	\E\left[\tilde\chi^N_1(t)\tilde\chi^N_2(t)\big|\overline{\mathfrak F}^N_{(1,2)}\right]=0.
	\end{equation*}
	Hence from \eqref{TCL-m0} and \eqref{TCL-m3}, it follows that 
	\begin{align*}
	\E\left[\left(\tilde{\Xi}^N_{3,31}(t)\right)^2\right]&\leq\E\left[\left(\hat\chi^N_1(t)\right)^2\right]+(N-1)\left|\E\left[\chi^N_1(t)\chi^N_2(t)-\tilde\chi^N_1(t)\tilde\chi^N_2(t)\right]\right|\\
	&=\E\left[\left(\hat\chi^N_1(t)\right)^2\right]+o\left(1\right).
	\end{align*} 
	However, as $Q_1$ and $\overline{\mathfrak F}^N_{(1,2)}$ are independent, from Theorem~\ref{TCL-th-1}
	\begin{multline*}
	\E\left[\hat\chi^N_{1}(t)\Big|\overline{\mathfrak F}^N_{(1,2)}\right]=\lambda_*\int_{0}^{t}\int_{\bD}\gamma(t-s)\E\left[\left|\mathds{1}_{P_{1}(s,t,\gamma,\overline{\mathfrak F}^N_{(1,2)})=0} -\mathds{1}_{P_{1}(s,t,\gamma,\overline{\mathfrak F})=0}\right|\Big|\overline{\mathfrak F}^N_{(1,2)}\right]\mu(d\gamma)ds\\
	+\lambda_*\int_{0}^{t}\int_{\bD}\tilde\gamma(t-s)\E\left[\left|\exp\left(-\int_{s}^{t}\tilde\gamma(r-s)\overline{\mathfrak F}^N_{(1,2)}(r)dr\right)-\exp\left(-\int_{s}^{t}\tilde\gamma(r-s)\overline{\mathfrak F}(r)dr\right)\right|\Big|\overline{\mathfrak F}^N_{(1,2)}\right]\mu(d\tilde\gamma)ds.
	\end{multline*}
	Hence since $\overline{\mathfrak F}^N_{(1,2)}\to\overline{\mathfrak F}$, as $N\to\infty,$ and $\hat\chi^N_{1}$ is bounded, it follows that as $N\to\infty,\,\E\left[\hat\chi^N_{1}(t)\right]\to0$ and $\E\left[(\hat\chi^N_{1}(t))^2\right]\to0$.
\end{proof}
Now, let 
\begin{equation*}
\Psi^N_{32}(t)=\frac{1}{\sqrt{N}}\sum_{k=1}^N\chi^N_k(t),	
\end{equation*}
where
\begin{align*}
\chi^N_k(t)=\int_{0}^{t}\int_{0}^{\Upsilon^N_k(s^-)}\int_{\bD}\tilde\gamma(t-s)\left(\exp\left(-\int_{s}^{t}\tilde\gamma(r-s)\overline{\mathfrak F}^N(r)dr\right)-\exp\left(-\int_{s}^{t}\tilde\gamma(r-s)\overline{\mathfrak F}(r)dr\right)\right)\mu(d\tilde\gamma)\\\hspace{14cm}\overline{Q}_k(ds,d\lambda,d\gamma,du).
\end{align*}
\begin{lemma}\label{TCL-l3}
For any $t\geq0$, as $N\to\infty$,
	\begin{equation*}
	\E\left[|\Psi^N_{32}(t)|\right]\to0.
	\end{equation*}
\end{lemma}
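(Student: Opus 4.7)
The plan is to mirror the strategy of Lemma~\ref{TCL-l2}. By the Cauchy--Schwarz inequality and the exchangeability of $(\chi^N_k)_{k=1}^N$,
\[\big(\E[|\Psi^N_{32}(t)|]\big)^2 \le \E\big[(\Psi^N_{32}(t))^2\big] = \E\big[(\chi^N_1(t))^2\big] + (N-1)\,\E[\chi^N_1(t)\chi^N_2(t)],\]
so it suffices to establish $\E[(\chi^N_1(t))^2]\to 0$ together with $|\E[\chi^N_1(t)\chi^N_2(t)]| = o(1/N)$. The key analytic input is the mean value bound
\[|H^N(s,t)| \le \int_s^t |\overline{\mathfrak F}^N(r) - \overline{\mathfrak F}(r)|\,dr,\]
where $H^N(s,t) := \int_{\bD}\tilde\gamma(t-s)\big(e^{-\int_s^t\tilde\gamma(r-s)\overline{\mathfrak F}^N(r)dr} - e^{-\int_s^t\tilde\gamma(r-s)\overline{\mathfrak F}(r)dr}\big)\mu(d\tilde\gamma)$, combined with the moment estimate $\E[|\overline{\mathfrak F}^N(r)-\overline{\mathfrak F}(r)|^p] = O(N^{-p/2})$ from Proposition~\ref{prop-c1}.

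\textbf{Diagonal term.} Write $\chi^N_1(t) = X^N_1(t) - C^N_1(t)$, where $X^N_1$ is the raw Poisson integral and $C^N_1(t) = \int_0^t H^N(s,t)\Upsilon^N_1(s^-)\,ds$ is its compensator. Since $\Upsilon^N_1\le\lambda_*$, the bound on $H^N$ together with Proposition~\ref{prop-c1} gives $\E[(C^N_1(t))^2] = O(1/N)$ directly. For $\E[(X^N_1(t))^2]$, the idea is to condition on $\sigma((Q_\ell)_{\ell\ne 1},(\lambda_{1,i},\gamma_{1,i}),Q_1|_{[0,s]})$ so that $H^N(s,t)$ becomes a functional of $Q_1|_{]s,t]}$ alone, and then apply Theorem~\ref{TCL-th-1} to reduce the second moment to an integral involving $\E[(H^N(s,t))^2]$, which is also $O(1/N)$. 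Hence $\E[(\chi^N_1(t))^2] \to 0$.

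\textbf{Cross term.} Introduce the quarantine approximation $\tilde\chi^N_k$ obtained from $\chi^N_k$ by replacing $\overline{\mathfrak F}^N$ with $\overline{\mathfrak F}^N_{(1,2)}$ inside $H^N$, and simultaneously replacing the upper bound $\Upsilon^N_k(s^-)$ with $\Upsilon^{N,(1,2)}_k(s^-)$, the infection rate of individual $k$ when the dynamics are driven by $\overline{\mathfrak F}^N_{(1,2)}$ (using the same $Q_k$), in the spirit of Section~\ref{TCL-sub-qua}. Since $\overline{\mathfrak F}^N_{(1,2)}$ is independent of $(Q_1,Q_2)$, conditionally on $\sigma(\overline{\mathfrak F}^N_{(1,2)},(\lambda_{k,i},\gamma_{k,i})_k)$ the integrand in $\tilde\chi^N_k$ is deterministic and $\Upsilon^{N,(1,2)}_k$ is predictable with respect to $Q_k$, so each $\tilde\chi^N_k$ is a compensated Poisson integral with conditional mean zero, and $\tilde\chi^N_1,\tilde\chi^N_2$ are conditionally independent as functionals of $Q_1$ and $Q_2$. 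Thus $\E[\tilde\chi^N_1\tilde\chi^N_2] = 0$. The coupling bound $|\overline{\mathfrak F}^N(t)-\overline{\mathfrak F}^N_{(1,2)}(t)| \le \lambda_* Y/N$ from \eqref{TCL-eq1} (together with the analogous bound for $\Upsilon^N_k - \Upsilon^{N,(1,2)}_k$) then allows us to estimate each of the three remainder terms in
\[\E[\chi^N_1\chi^N_2 - \tilde\chi^N_1\tilde\chi^N_2] = \E[(\chi^N_1-\tilde\chi^N_1)\tilde\chi^N_2] + \E[\tilde\chi^N_1(\chi^N_2-\tilde\chi^N_2)] + \E[(\chi^N_1-\tilde\chi^N_1)(\chi^N_2-\tilde\chi^N_2)]\]
by means of Theorem~\ref{TCL-th-1}, the independence of the geometric dominator $Y$ from $(Q_1,Q_2)$, and the vanishing of $\E[|\tilde\chi^N_k|]$, yielding $|\E[\chi^N_1\chi^N_2]| = o(1/N)$ by exactly the same scheme as in the proof of Lemma~\ref{TCL-l2}.

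\textbf{Main obstacle.} The essential difficulty is that $H^N(s,t)$ fails to be $\mathcal{F}_s$-predictable, as it depends on $\overline{\mathfrak F}^N(r)$ for $r\in[s,t]$, so the classical $L^2$ isometry for compensated Poisson integrals cannot be invoked directly. This is precisely where Theorem~\ref{TCL-th-1} and the quarantine coupling of Section~\ref{TCL-sub-qua} intervene. The most delicate point is that the upper bound $\Upsilon^N_k(s^-)$ itself depends on $\overline{\mathfrak F}^N$ and hence on all the $(Q_\ell)_\ell$, so both the integrand $H^N$ and the upper limit $\Upsilon^N_k$ must be replaced by their quarantined counterparts simultaneously, and the resulting coupling error has to be tracked carefully in terms of the moments of $Y$.
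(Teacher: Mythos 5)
Your proposal is correct in its overall architecture (Cauchy--Schwarz, exchangeability, diagonal term, quarantine coupling for the cross term) and this matches the paper's strategy, but it differs from the paper in one substantive technical choice. In the paper's proof of this lemma, the auxiliary process $\tilde\chi^N_k$ is obtained from $\chi^N_k$ by replacing $\overline{\mathfrak F}^N$ with $\overline{\mathfrak F}^N_{(1,2)}$ \emph{only} inside the exponential integrand $H^N$; the upper integration limit $\Upsilon^N_k(s^-)$ is left untouched. Consequently $\tilde\chi^N_1$ and $\tilde\chi^N_2$ are \emph{not} conditionally independent given $\overline{\mathfrak F}^N_{(1,2)}$ (both still depend on $\overline{\mathfrak F}^N$ through $\Upsilon^N_1,\Upsilon^N_2$, and $\overline{\mathfrak F}^N$ involves both $Q_1$ and $Q_2$); what the paper is implicitly using to get $\E[\tilde\chi^N_1\tilde\chi^N_2\mid\overline{\mathfrak F}^N_{(1,2)}]=0$ is that, conditionally on $\sigma(\overline{\mathfrak F}^N_{(1,2)},(Q_\ell)_{\ell\ge 3},(\lambda_{k,i},\gamma_{k,i}))$, the two integrals are $L^2$-bounded martingales in $t$ driven by the independent Poisson measures $\overline Q_1,\overline Q_2$, which share no common jumps and hence have zero covariance. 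You instead replace $\Upsilon^N_k$ as well by its quarantined analogue $\varTheta^N_k$ (the process with the dynamics driven by $\overline{\mathfrak F}^N_{(1,2)}$), which gives genuine conditional independence of $\tilde\chi^N_1$ and $\tilde\chi^N_2$, so the vanishing of the cross expectation is immediate. The trade-off is that you must then control the coupling error between $\chi^N_k$ and your $\tilde\chi^N_k$, which has two sources: the change of integrand (bounded almost surely by $O(Y/N)$) and the change of the integration range $[0,\Upsilon^N_k(s^-)]\to[0,\varTheta^N_k(s^-)]$, for which no almost-sure $O(Y/N)$ bound holds — on the event that $A^N_k\ne B^N_k$ at some time the rates can differ by order one — and one must instead use a moment estimate like $\E[|\Upsilon^N_k(t)-\varTheta^N_k(t)|^p]=O(N^{-p})$ (the paper establishes this in Lemma~\ref{TCL-A-9-lem-inq2}, in a later section for a different purpose). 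You flag that the coupling error "has to be tracked carefully," which is accurate; just be aware that this tracking is slightly more delicate than for the paper's leaner replacement, while your conditional-independence justification for $\E[\tilde\chi^N_1\tilde\chi^N_2]=0$ is more transparent than the paper's.
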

\begin{proof}
	Let 
	\begin{align*}
	\tilde\chi^N_k(t)=\int_{0}^{t}\int_{0}^{\Upsilon^N_k(s^-)}\int_{\bD}\tilde\gamma(t-s)\left(\exp\left(-\int_{s}^{t}\tilde\gamma(r-s)\overline{\mathfrak F}^N_{(1,2)}(r)dr\right)-\exp\left(-\int_{s}^{t}\tilde\gamma(r-s)\overline{\mathfrak F}(r)dr\right)\right)\mu(d\tilde\gamma)\\\hspace{14cm}\overline{Q}_k(ds,d\lambda,d\gamma,du).
	\end{align*}
	As in Lemma~\ref{TCL-l2} we have 
	\begin{align*}
	\left|\chi^N_k(t)-\tilde\chi^N_k(t)\right|\leq\frac{tY}{N}\left(B_k(t)+\lambda_*\right).
	\end{align*}
	On the other hand, since $Q_1,\,Q_2$ and $\overline{\mathfrak F}^N_{(1,2)}$ are independent, it follows that 
	\begin{equation*}
	\E\left[\tilde\chi^N_1(t)\tilde\chi^N_2(t)\big|\overline{\mathfrak F}^N_{(1,2)}\right]=0.
	\end{equation*}
	The Lemma follows by exchangeability and the fact that as $N\to\infty,\,\tilde\chi^N_k(t)\to0$, proceeding as in \eqref{TCL-m0} and \eqref{TCL-m2'}
\end{proof}
We recall that 
\begin{align*}
	\Xi^N_3(t)&=\hat{\mathfrak{S}}^N_{1,1}(t)-\sqrt{N}\int_{0}^{t}\int_{\bD}\gamma(t-s)\left(\exp\left(-\int_{s}^{t}\gamma(r-s)\overline{\mathfrak F}^N(r)dr\right)\right.\nonumber\\&\hspace{6cm}\left.-\exp\left(-\int_{s}^{t}\gamma(r-s)\overline{\mathfrak F}(r)dr\right)\right)\overline\Upsilon^N(s)\mu(d\gamma)ds.
\end{align*}

	From Lemmas~\ref{TCL-l2} and~\ref{TCL-l3}, for each $t\geq0,$ as $N\to\infty,\,\Xi^N_3(t)\to0$ in probability.
	As a consequence, for all $n\in\mathbb{N},\,t_0<t_1<\cdots<t_{n},$
	\[\left(\Xi^N_{3}(t_0),\Xi^N_{3}(t_1),\cdots,\Xi^N_{3}(t_n)\right)\to\left(0,0,\cdots,0\right).\]
	To conclude it remains to show the tightness of $(\Xi^N_{3})_N$, but from Lemma~\ref{TCL-tight-F-2} $(\hat{\mathfrak S}^N_0)_N$ is $\bC$-tight, hence using Lemma~\ref{TCL-l4}, it remains to show the tightness of the following sequence of processes: 
	\begin{equation}
		\Xi^N_{3,10}(t)=\int_{0}^{t}\int_{s}^{t}\int_{\bD}\gamma(t-s)\gamma(r-s)\hat{\mathfrak{F}}^N(r)\exp\left(-\int_{s}^{t}\gamma(u-s)\overline{\mathfrak F}(u)du\right)\overline\Upsilon^N(s)\mu(d\gamma)drds.
	\end{equation}  
	Since the pair $(\hat{\mathfrak{F}}^N,\overline\Upsilon^N)$ is tight in $\bD^2$, the result follows using the continuous mapping theorem. 

This concludes the proof of Lemma~\ref{TCL-lem-10}.

\subsubsection{Proof of Lemma~\ref{TCL-lem-11}}\label{TCL-pr-F3}
We recall that 
\begin{equation*}
\hat{\mathfrak S}^N_{1,2}(t):=\frac{1}{\sqrt{N}}\sum_{k=1}^{N}\int_{0}^{t}\int_{\bD^2}\int_{\Upsilon_k(s^-)\wedge\Upsilon_k^N(s^-)}^{\Upsilon_k(s^-)\vee\Upsilon_k^N(s^-)}\gamma(t-s)\mathds{1}_{P_k\left(s,t,\gamma,\overline{\mathfrak F}\right)=0}\sign(\Upsilon_k^N(s^-)-\Upsilon_k(s^-))Q_k(ds,d\lambda,d\gamma,du).
\end{equation*}
We define for any $k\in\mathbb{N}$, 
\begin{equation}\label{TCL-Delta}
\Delta^N_k(t)=\int_{0}^{t}\int_{\Upsilon_k(s^-)\wedge\Upsilon_k^N(s^-)}^{\Upsilon_k(s^-)\vee\Upsilon_k^N(s^-)}Q_k(ds,du)\,\,.
\end{equation}
and let $(\vartheta_{k,i}^N)_{k,i}$ be such that 
\begin{equation*}
	\Delta^N_k(t)=\sum_{i\geq1}\mathds{1}_{\vartheta_{k,i}^N\leq t}.
\end{equation*}

We note that, for some i.i.d $(\gamma_{k,i},\,i\geq1)$, 
\begin{equation}
\hat{\mathfrak S}^N_{1,2}(t)=\frac{1}{\sqrt{N}}\sum_{k=1}^{N}\sum_{i\geq1}\gamma_{k,i}(t-\vartheta_{k,i}^N)\mathds{1}_{P_{k}(\vartheta_{k,i}^N,t,\gamma_{k,i},\overline{\mathfrak F})=0}\sign(\Upsilon_k^N(\vartheta_{k,i}^N)-\Upsilon_k(\vartheta_{k,i}^N))\mathds{1}_{\vartheta_{k,i}^N\leq t }\,\,.
\end{equation}
We recall that 
\[\overline{\Upsilon}^N(t)=\overline{\mathfrak S}^N(t)\overline{\mathfrak F}^N(t)\text{ and }\tilde{\Upsilon}^N(t)=\frac{1}{N}\sum_{k=1}^{N}\gamma_{k,A_k(t)}(\varsigma_k(t))\overline{\mathfrak F}(t).\] 
We set
\begin{equation}
\mathcal{G}_t^N=\sigma\{(\lambda_{k,i})_{1\leq k\leq N,i< A^N_k(t)},\,(\gamma_{k,i})_{1\leq k\leq N,i< A^N_k(t)},\,(Q_k|_{[0,t]\times E})_{k\leq N}\},
\end{equation} 
and from \eqref{TCL-eq-eqc8-2} we recall
\begin{equation}\label{TCL-eq-eqc8-20}
\Xi^N_4(t)=\hat{\mathfrak S}^N_{1,2}(t)-\int_{0}^{t}\int_{\bD}\gamma(t-s)\exp\left(-\int_{s}^{t}\gamma(r-s)\overline{\mathfrak F}(r)dr\right)\mu(d\gamma)\sqrt{N}\left(\overline{\Upsilon}^N(s)-\tilde{\Upsilon}^N(s)\right)ds.
\end{equation}
Note that from Lermma~\ref{TCL-tight-F-2} $\Xi^N_4$ is tight. 

	We want to show that, as $N$ tends to $\infty,\,\Xi^N_4\to0$ in $\bD$ in probability. 
	To do this, 
	we define
	\begin{equation}
	\Xi^N_{31}(t)=\frac{1}{\sqrt{N}}\sum_{k=1}^{N}\sum_{i\geq1}\chi^N_{k,i}(t),
	\end{equation}
	with
	\begin{align}
	\chi^N_{k,i}(t)&=\left(\gamma_{k,i}(t-\vartheta_{k,i}^N)\mathds{1}_{P_{k}(\vartheta_{k,i}^N,t,\gamma_{k,i},\overline{\mathfrak F})=0}
	-\int_{\bD}\gamma(t-\vartheta_{k,i}^N)\exp\left(-\int_{\vartheta_{k,i}^N}^{t}\gamma(r-\vartheta_{k,i}^N)\overline{\mathfrak F}(r)dr\right)\mu(d\gamma)\right)\nonumber\\
	&\hspace*{6cm} \times \sign(\Upsilon_k^N(\vartheta_{k,i}^N)-\Upsilon_k(\vartheta_{k,i}^N))\mathds{1}_{\vartheta_{k,i}^N\leq t }\,\,.
	\end{align}
	We establish the following Lemma.
	\begin{lemma}\label{TCL-lem7}
		For any $t\geq0$, as $N\to\infty$,
		\begin{equation*}
		\E\left[\Xi^N_{31}(t)^2\right]\to0.
		\end{equation*}
	\end{lemma}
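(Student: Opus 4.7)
The plan is to follow the template of Lemma~\ref{TCL-l2}: expand by exchangeability, control the diagonal part by the moment estimates of Proposition~\ref{prop-c1}, and decouple the off-diagonal part via a quarantine model. Setting $Z^N_k(t):=\sum_{i\ge 1}\chi^N_{k,i}(t)$, exchangeability gives
\[
\E[\Xi^N_{31}(t)^2]=\E[Z^N_1(t)^2]+(N-1)\,\E[Z^N_1(t)Z^N_2(t)].
\]
For the diagonal term, since $|\gamma|\le 1$ one has $|\chi^N_{1,i}|\le 2$, and at most $\Delta^N_1(t)$ summands are nonzero, so $Z^N_1(t)^2\le 4\,\Delta^N_1(t)^2$. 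By Proposition~\ref{prop-c1} with $k{=}1,\,p{=}2$, $\E[\Delta^N_1(t)^2]=O(N^{-1/2})$, and the diagonal contribution vanishes.

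For the off-diagonal term I will mimic the quarantine coupling used in Lemma~\ref{TCL-l2}. Introduce the model with individuals $1$ and $2$ quarantined, yielding a force of infection $\overline{\mathfrak F}^N_{(1,2)}$ that is $\sigma(Q_\ell:\ell\ne 1,2)$-measurable, with $|\overline{\mathfrak F}^N-\overline{\mathfrak F}^N_{(1,2)}|\le \lambda_\ast Y/N$ a.s.\ for a geometric $Y$ independent of $(Q_1,Q_2)$, see \eqref{TCL-eq1}. Define $\tilde\chi^N_{k,i}$, $\tilde Z^N_k$ by replacing $\overline{\mathfrak F}^N$ by $\overline{\mathfrak F}^N_{(1,2)}$ everywhere it appears (in the sign factor and in the redefined jump times $\tilde\vartheta^N_{k,i}$ arising from the symmetric-difference region between $\tilde\Upsilon^N_k$ and $\Upsilon_k$).

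The crucial decoupling is $\E[\tilde Z^N_1(t)\tilde Z^N_2(t)]=0$. Indeed, given $\overline{\mathfrak F}^N_{(1,2)}$, the PRMs $Q_1$ and $Q_2$ remain independent, so $\tilde Z^N_1$ and $\tilde Z^N_2$ are conditionally independent; and for each $k\in\{1,2\}$, conditional on the jump time $\tilde\vartheta^N_{k,i}$, its sign and $\overline{\mathfrak F}^N_{(1,2)}$, the mark $\gamma_{k,i}$ carried by the atom has law $\mu$ while $Q_k|_{(\tilde\vartheta^N_{k,i},t]}$ is an independent Poisson measure, so
\[
\E\!\left[\gamma_{k,i}(t{-}\tilde\vartheta^N_{k,i})\mathds{1}_{P_k(\tilde\vartheta^N_{k,i},t,\gamma_{k,i},\overline{\mathfrak F})=0}\,\Big|\,\tilde\vartheta^N_{k,i},\mathrm{sgn},\overline{\mathfrak F}^N_{(1,2)}\right]=\int_{\bD}\gamma(t{-}\tilde\vartheta^N_{k,i})\exp\!\Big({-}\!\!\int_{\tilde\vartheta^N_{k,i}}^{t}\gamma(r{-}\tilde\vartheta^N_{k,i})\overline{\mathfrak F}(r)dr\Big)\mu(d\gamma),
\]
matching the centering in $\chi^N_{k,i}$ and yielding $\E[\tilde Z^N_k\,|\,\overline{\mathfrak F}^N_{(1,2)}]=0$.

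It then remains to bound the replacement error. Decomposing
\[
\E[Z^N_1Z^N_2]=\E[\tilde Z^N_1(Z^N_2{-}\tilde Z^N_2)]+\E[(Z^N_1{-}\tilde Z^N_1)\tilde Z^N_2]+\E[(Z^N_1{-}\tilde Z^N_1)(Z^N_2{-}\tilde Z^N_2)],
\]
each discrepancy $|Z^N_k-\tilde Z^N_k|$ localizes on an event whose $Q_k$-conditional probability is $O(Y/N)$ (the analogue of the event $E^N_k$ in Lemma~\ref{TCL-l2}), plus a term of order $Yt/N$ from the Lipschitz perturbation of the exponentials. Using Theorem~\ref{TCL-th-1} to convert these Poisson-integral indicators into deterministic intensity bounds, together with the $Y$-geometric tail and Proposition~\ref{prop-c1}, one obtains $\E[Z^N_1Z^N_2]=o(N^{-1})$, so $(N-1)\E[Z^N_1Z^N_2]\to 0$. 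The main obstacle is precisely this step: replacing $\overline{\mathfrak F}^N$ by $\overline{\mathfrak F}^N_{(1,2)}$ shifts the jump-time set $\{\vartheta^N_{k,i}\}$ itself, so one cannot simply bound the integrand difference; a careful pathwise decomposition separating the "shifted atoms" from the "common atoms", paralleling the treatment of $\chi^N_k-\tilde\chi^N_k$ in Lemma~\ref{TCL-l2}, is what makes the argument work.
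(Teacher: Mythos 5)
Your proposal is correct in its diagonal estimate but takes a genuinely different — and unnecessarily complicated — route for the off-diagonal term, and you leave the crucial step of that route incomplete. The paper's proof is substantially simpler, and your own reasoning already contains the ingredient that makes it work: exact conditional centering.

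The key structural fact is that, in $\chi^N_{k,i}$, the only random force of infection that appears inside the integrand is the \emph{deterministic} $\overline{\mathfrak F}$ (both in the indicator $\mathds{1}_{P_k(\vartheta^N_{k,i},t,\gamma_{k,i},\overline{\mathfrak F})=0}$ and in the compensating exponential). The dependence on $\overline{\mathfrak F}^N$ enters only through the jump time $\vartheta^N_{k,i}$ and the sign factor, which are measurable with respect to the past filtration $\mathcal G^N_{\vartheta^N_{k,i}}$. Since the mark $\gamma_{k,i}$ has law $\mu$ independently of $\mathcal G^N_{\vartheta^N_{k,i}}$ and of the other $Q_\ell$'s, and $Q_k$ restricted to $(\vartheta^N_{k,i},t]$ is a fresh Poisson measure independent of everything that determined $\vartheta^N_{k,i}$, one has $\E[\chi^N_{k,i}(t)\mid \mathcal G^N_{\vartheta^N_{k,i}}]=0$ \emph{exactly}. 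Conditioning on $\mathcal G^N_{\max(\vartheta^N_{k,i},\vartheta^N_{\ell,j})}$ and the other PRM then gives $\E[\chi^N_{k,i}(t)\chi^N_{\ell,j}(t)]=0$ for all $k\neq\ell$, with no quarantine coupling, no replacement error, and no "shifted jump times" to worry about. Your own computation of the conditional law of the mark — which you invoke for the tilde version — is precisely this observation, and it applies verbatim to the original process without modifying $\overline{\mathfrak F}^N$ at all. Having established the exact vanishing of the cross terms, the proof reduces to $\E[\Xi^N_{31}(t)^2]\le\E[(\Delta^N_1(t))^2]$, which is $O(N^{-1/2})$ as you correctly note.

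The quarantine argument is the right tool for Lemmas~\ref{TCL-Lem-eq-1}, \ref{TCL-l2} and~\ref{TCL-l3}, where the integrand itself contains $\overline{\mathfrak F}^N$ so the centering is only approximate. Here it buys nothing and creates a real problem (the jump-time shift you flag as "the main obstacle"). Your proposal does not resolve that obstacle — you gesture at a "careful pathwise decomposition separating shifted atoms from common atoms" but do not carry it out — whereas the exact-martingale argument renders it moot.
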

\begin{proof}
	For all $k,i\in\mathbb{N},\,\E\left[\chi^N_{k,i}(t)\big|\mathcal{G}^N_{\vartheta_{k,i}^N}\right]=0.$ 
	Moreover, for any $k\neq\ell$,
	\begin{align}
	\E\left[\chi^N_{k,i}(t)\chi^N_{\ell,j}(t)\right]&=\E\left[\mathds{1}_{\vartheta_{k,i}^N<\vartheta_{\ell,j}^N}\E\left[\chi^N_{k,i}(t)\chi^N_{\ell,j}(t)\big|\mathcal{G}^N_{\vartheta_{\ell,j}^N},Q_k\right]\right]+\E\left[\mathds{1}_{\vartheta_{\ell,j}^N<\vartheta_{k,i}^N}\E\left[\chi^N_{k,i}(t)\chi^N_{\ell,j}(t)\big|\mathcal{G}^N_{\vartheta_{k,i}^N},Q_\ell\right]\right]\nonumber\\
	&=\E\left[\chi^N_{k,i}(t)\mathds{1}_{\vartheta_{k,i}^N<\vartheta_{\ell,j}^N}\E\left[\chi^N_{\ell,j}(t)\big|\mathcal{G}^N_{\vartheta_{\ell,j}^N},Q_k\right]\right]+\E\left[\chi^N_{\ell,j}(t)\mathds{1}_{\vartheta_{\ell,j}^N<\vartheta_{k,i}^N}\E\left[\chi^N_{k,i}(t)\big|\mathcal{G}^N_{\vartheta_{k,i}^N},Q_\ell\right]\right]\nonumber
	\\
	&=0\label{ch-1}
	\end{align}
	because $\chi^N_{\ell,j}$ and $Q_k$ are independent and similarly for $\chi^N_{k,i}$ and $Q_\ell$. 
	Hence, as $\big|\chi^N_{k,i}(t)\big|\leq\mathds{1}_{\vartheta_{k,i}^N\leq t }$,  from \eqref{ch-1} and exchangeability,
	\begin{align*}
	\E\left[\Xi^N_{31}(t)^2\right]&=\frac{1}{N}\sum_{k=1}^N\E\left[\left(\sum_{i\geq1}\chi^N_{k,i}(t)\right)^2\right]
	+\frac{N-1}{2}\sum_{1\leq k<\ell\leq N}\sum_{i,j\geq1}\E\left[\chi^N_{k,i}(t)\chi^N_{\ell,j}(t)\right]\\
	&\leq\frac{1}{N}\sum_{k=1}^N\E\left[\left(\Delta^N_k(t)\right)^2\right]\\
	&=\E\left[\left(\Delta^N_1(t)\right)^2\right].
	\end{align*}
	where $\Delta^N_k$ is given by \eqref{TCL-Delta}.
	
	However from Corollary~\ref{TCL-cor-1},
	\begin{align*}
	\E\left[\left(\Delta^N_1(t)\right)^2\right]&\leq2\left(\int_{0}^{t}\E\left[\left|\Upsilon^N_1(s)-\Upsilon_1(s)\right|\right]ds+\E\left[\left(\int_{0}^{t}\left|\Upsilon^N_1(s)-\Upsilon_1(s)\right|ds\right)^2\right]\right)\\
	&\leq2\left(\int_{0}^{t}\E\left[\left|\Upsilon^N_1(s)-\Upsilon_1(s)\right|\right]ds+t\int_{0}^{t}\E\left[\left|\Upsilon^N_1(s)-\Upsilon_1(s)\right|^2\right]ds\right)\\
	&\leq \frac{C_t}{\sqrt{N}}.
	\end{align*}
	Consequently
	\begin{equation*}
		\E\left[\Xi^N_{31}(t)^2\right]\leq\frac{C_t}{\sqrt{N}}.
	\end{equation*}
	\end{proof}
	Note that from \eqref{TCL-eq-eqc8-20}, 
	\begin{multline*}
	\Xi^N_4(t)-\Xi^N_{31}(t)\\=\frac{1}{\sqrt{N}}\sum_{k=1}^{N}\int_{0}^{t}\int_{\Upsilon_k^N(s^-)\wedge\Upsilon_k(s^-)}^{\Upsilon_k^N(s^-)\vee\Upsilon_k(s^-)}\int_{\bD}\gamma(t-s)\exp\left(-\int_{s}^{t}\gamma(r-s)\overline{\mathfrak F}(r)dr\right)\mu(d\gamma)\sign(\Upsilon_k^N(s^-)-\Upsilon_k(s^-))\overline Q_k(ds,du)\,.
	\end{multline*}
	Hence, as $(Q_k)_k$ are i.i.d.,  
	\begin{multline*}
	\E\left[\Bigg(\frac{1}{\sqrt{N}}\sum_{k=1}^{N}\int_{0}^{t}\int_{\Upsilon_k^N(s^-)\wedge\Upsilon_k(s^-)}^{\Upsilon_k^N(s^-)\vee\Upsilon_k(s^-)}\int_{\bD}\gamma(t-s)\exp\left(-\int_{s}^{t}\gamma(r-s)\overline{\mathfrak F}(r)dr\right)\mu(d\gamma)\right.\\\hspace*{8cm}\left.\times sign(\Upsilon_k^N(s^-)-\Upsilon_k(s^-))\overline Q_k(ds,du)\Bigg)^2\right]\\
	=\frac{1}{N}\sum_{k=1}^{N}\E\left[\left(\int_{0}^{t}\int_{\Upsilon_k^N(s^-)\wedge\Upsilon_k(s^-)}^{\Upsilon_k^N(s^-)\vee\Upsilon_k(s^-)}\int_{\bD}\gamma(t-s)\exp\left(-\int_{s}^{t}\gamma(r-s)\overline{\mathfrak F}(r)dr\right)\mu(d\gamma)\right.\right.\\\hspace*{8cm}\left.\times sign(\Upsilon_k^N(s^-)-\Upsilon_k(s^-))\overline Q_k(ds,du)\Bigg)^2\right]\\
	=\int_{0}^{t}\left(\int_{\bD}\gamma(t-s)\exp\left(-\int_{s}^{t}\gamma(r-s)\overline{\mathfrak F}(r)dr\right)\mu(d\gamma)\right)^2\E\left[\left|\overline{\Upsilon}^N(s)-\tilde{\Upsilon}^N(s)\right|\right]ds\,. 
	\end{multline*}
	From Corollary~\ref{inr} and Lemma~\ref{TCL-lem7} it follows that, as $N$ tends to $\infty,\,\Xi^N_4(t)\to0$ in probability for any $t\geq0$. This implies the finite dimensional convergence and as $(\Xi^N_4)_N$ is tight in $\bD$, it follows that as $N\to\infty,\Xi^N_4\to0$ in law in $\bD$ and hence in probability. 
	

\section{Proof of Tightness}\label{TCL-sec-Th}
In this section we prove the tightness properties stated in Lemma~\ref{TCL-tight-F-2}. 
We summarize the proof strategy in subsection~\ref{TCL-subb-sec-1} and then prove the supporting lemmas in subsection~\ref{TCL-sec-tight-r}.

\subsection{Proof of Lemma~\ref{TCL-tight-F-2}}\label{TCL-subb-sec-1}

To prove Lemma~\ref{TCL-tight-F-2}, since $\bD$ is separable, it suffices to establish that each component of $\big(\hat{\mathfrak S}^N,\hat{\mathfrak F}^N,\hat{\mathfrak S}^N_2,\hat{\mathfrak F}^N_2,\hat{\mathfrak S}^N_{1,0}\big)$ is tight in $\bD$. Moreover, from Lemma~\ref{TCL-cv-F2-G2} the pair $(\hat{\mathfrak S}^N_2,\hat{\mathfrak F}^N_2)$ is C-tight in $\bD^2$, since it converges in $\bD^2$ to a continuous limit. The tightness of the rest follows from the following Lemmas, which will be proved in the next subsection. 
\begin{lemma}\label{TCL-tight-F}
	$\hat{\mathfrak F}^N$ is $\bC$-tight.
\end{lemma}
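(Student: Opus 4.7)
Starting from the decomposition $\hat{\mathfrak F}^N = \hat{\mathfrak F}^N_1 + \hat{\mathfrak F}^N_2$ in \eqref{TCL-eq-eqc10-01}, and using the already-established $\bC$-tightness of $\hat{\mathfrak F}^N_2$ via Lemma \ref{TCL-cv-F2-G2}, it suffices to prove $\bC$-tightness of $\hat{\mathfrak F}^N_1$. Using the representation \eqref{TCL-eqc10}, and observing that
\begin{equation*}
\sqrt N\big(\overline\Upsilon^N(s)-\tilde\Upsilon^N(s)\big) = \hat{\mathfrak S}^N(s)\overline{\mathfrak F}^N(s) - \hat{\mathfrak S}^N_2(s)\overline{\mathfrak F}^N(s) + \tilde{\mathfrak S}^N(s)\hat{\mathfrak F}^N(s),
\end{equation*}
I write $\hat{\mathfrak F}^N_1 = M^N + L^N$, where $M^N$ is the compensated Poisson stochastic integral and $L^N(t)=\int_0^t \overline{\lambda}(t-s)\sqrt N(\overline\Upsilon^N(s)-\tilde\Upsilon^N(s))\,ds$. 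I verify conditions (i)--(ii) of Theorem \ref{th-tight} for each piece.

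\textbf{Treatment of $L^N$.} This proceeds along the lines of the estimate for $\Xi^N_2$ in subsection \ref{TCL-pr-F1}: combine the bound $\overline{\lambda}\le \lambda_*$ with the moment estimates \eqref{fluc-1}--\eqref{fluc-2} on $\overline\Upsilon^N-\tilde\Upsilon^N$ to get uniform bounds in probability. For the modulus of continuity, use Lemma \ref{TCL-lem-barlambda-inc-bound} to split $\overline{\lambda}(t-r)-\overline{\lambda}(s-r)$ into a Hölder piece in $|t-s|^\alpha$ and a sum $\lambda_*\sum_j(F_j(t-r)-F_j(s-r))$; invoking Assumption~\ref{TCL-AS-lambda-2} with $\rho>1/2$, Markov's inequality delivers an $O(\delta^{\alpha\wedge\rho})$ bound on $w_T(L^N,\delta)$ in probability, uniformly in $N$.

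\textbf{Treatment of $M^N$.} Using exchangeability, the $L^2$ isometry for compensated PRMs yields
\begin{equation*}
\E\big[M^N(t)^2\big] = \int_0^t \E\big[\lambda(t-s)^2\,|\Upsilon^N_1(s)-\Upsilon_1(s)|\big]\,ds \le C_T/\sqrt N
\end{equation*}
by Corollary \ref{TCL-cor-1}, giving pointwise tightness. For the modulus of continuity, split $M^N(t)-M^N(s) = I^N_1(s,t) + I^N_2(s,t)$, where $I^N_1$ is the compensated integral over $(s,t]$ (controlled by $\int_s^t \E[|\Upsilon^N_1-\Upsilon_1|]\,dr$ via exchangeability), and $I^N_2$ arises from the increment $\lambda(t-r)-\lambda(s-r)$ for $r\le s$. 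For $I^N_2$, applying Lemma \ref{TCL-lem-barlambda-inc-bound} produces a Hölder piece plus a jump piece $\lambda_*\sum_j \bone_{s-r<\xi^j\le t-r}$; the latter must be controlled in expectation over all $N$ individuals simultaneously.

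\textbf{Main obstacle.} The jump contribution to $I^N_2$: the integrand $\lambda(t-r)$ depends on the ``upper'' time $t$, so $M^N$ is not a standard adapted stochastic integral, and the classical isometry does not directly apply to $\lambda(t-r)^2$-type moments involving indicators $\bone_{\xi^j\le t-r}$ summed across indices $k$. The strategy here is to use Theorem~\ref{TCL-th-1} together with the quarantine approximation of Section \ref{TCL-sub-qua} (which bounds the cascade of descendants of any given individual by an independent geometric variable $Y$), combined with the second-moment estimate on the jump counts provided by the upcoming Lemmas \ref{TCL-A-9-lem_inq}--\ref{TCL-A-9-lem-inq2}. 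These together should yield $\E[I^N_2(s,t)^2]\le C(t-s)^{2\rho}$ with $\rho>1/2$, from which Lemma~\ref{TCL-Lem-20} (or direct application of Theorem~\ref{th-tight}(ii)) closes the argument.
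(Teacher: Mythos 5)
Your overall architecture — decompose $\hat{\mathfrak F}^N = \hat{\mathfrak F}^N_1 + \hat{\mathfrak F}^N_2$, dispose of $\hat{\mathfrak F}^N_2$ via Lemma~\ref{TCL-cv-F2-G2}, and handle the compensator piece $L^N$ exactly as the paper does for $\Xi^N_2$ in Section~\ref{TCL-pr-F1} (bound $\bar\lambda$ by $\lambda_*$, split the increment of $\bar\lambda$ via Lemma~\ref{TCL-lem-barlambda-inc-bound}, Markov with \eqref{fluc-1}--\eqref{fluc-2}) — is sound and matches the paper's strategy. The gap is in your treatment of $M^N$.

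You claim that because the integrand $\lambda(t-r)$ (and the indicators $\bone_{s-r<\xi^j\le t-r}$ it produces) depends on the upper limit $t$, the ``classical isometry does not directly apply'' and one must invoke Theorem~\ref{TCL-th-1} together with the quarantine construction of Section~\ref{TCL-sub-qua} and Lemmas~\ref{TCL-A-9-lem_inq}--\ref{TCL-A-9-lem-inq2}. This misdiagnoses the structure. In $\hat{\mathfrak F}^N_1$ the function $\lambda$ and its breakpoints $\xi^1,\dots,\xi^{\ell-1}$ are coordinates of the mark of the Poisson random measure $Q_k$ at the jump time $r$, so for each fixed pair $(s,t)$ the quantities $\lambda(t-r)$, $\lambda(s-r)$ and $\bone_{s-r<\xi^j\le t-r}$ are $\mathcal F_r$-measurable: there is nothing anticipating here. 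The standard isometry applies for fixed $t$, the cross terms $\E[X_kX_j]$, $k\neq j$, vanish because the $Q_k$ are independent PRMs with no common jumps, and the resulting second moments are controlled by $\E|\Upsilon^N_1-\Upsilon_1|$, $\E|\Upsilon^N_1-\Upsilon_1|^2$ via Corollaries~\ref{TCL-cor-1} and~\ref{inr} and by $F_j(t+\delta-r)-F_j(t-r)\le C'\delta^\rho$ from Assumption~\ref{TCL-AS-lambda-2}. Theorem~\ref{TCL-th-1}, the quarantine model, and Lemmas~\ref{TCL-A-9-lem_inq}--\ref{TCL-A-9-lem-inq2} are reserved in the paper for the $\hat{\mathfrak S}^N_1$ terms (Lemmas~\ref{TCL-tight-F2}--\ref{TCL-tight-F3}), where the indicator $\mathds{1}_{P_k(s,t,\gamma,\cdot)=0}$ genuinely depends on $Q_k|_{(s,t]}$; they play no role in Lemma~\ref{TCL-tight-F}.

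There is also a subtler obstacle in splitting off the compensated martingale up front. The paper bounds $|\hat{\mathfrak F}^N_1(t)-\hat{\mathfrak F}^N_1(s)|$ by \emph{uncompensated}, nonnegative counting integrals (display~\eqref{TCL-eqc4}), which are monotone in the upper limit; this lets $\sup_{0\le u\le\delta}$ in the criterion of Lemma~\ref{TCL-Lem-20} collapse to the endpoint $u=\delta$, and the decomposition into $\overline Q_k$ plus compensator is introduced only afterward, inside the second-moment calculation. Your $u\mapsto M^N(t+u)-M^N(t)$ is neither monotone nor a martingale in $u$ (the $t+u$ sits inside $\lambda$), so you cannot pass the supremum through Markov's inequality without an additional maximal-inequality argument. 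The paper's route — Lemma~\ref{TCL-lem-barlambda-inc-bound} applied to the raw Poisson integral, then \eqref{eqc5}, \eqref{TCL-in-eqc6}, \eqref{TCL-eqc7'} via Markov, H\"older, exchangeability and \eqref{fluc-2} — avoids both difficulties and is what you should follow for $M^N$.
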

\begin{lemma}\label{TCL-tight-F2}
	$\hat{\mathfrak S}_{1,2}^N$ is $\bC$-tight.
\end{lemma}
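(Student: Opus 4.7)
The plan is to verify the two criteria of Theorem~\ref{th-tight} for the sequence $\hat{\mathfrak S}_{1,2}^N$. For part~(i), observe that with $\Delta_k^N$ defined by \eqref{TCL-Delta}, we have the pathwise bound $|\hat{\mathfrak S}_{1,2}^N(t)|\le N^{-1/2}\sum_{k=1}^N\Delta_k^N(t)$; since each $\Delta_k^N(\cdot)$ is monotone non-decreasing, $\sup_{t\le T}|\hat{\mathfrak S}_{1,2}^N(t)|\le N^{-1/2}\sum_k\Delta_k^N(T)$, and by exchangeability and Lemma~\ref{lem_inq} the right-hand side has expectation bounded by $\sqrt{N}\,\E[\Delta_1^N(T)]\le C_T$, yielding part~(i).

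For part~(ii), I will apply the Aldous-type criterion of Lemma~\ref{TCL-Lem-20} after decomposing the increment as $\hat{\mathfrak S}_{1,2}^N(t+u)-\hat{\mathfrak S}_{1,2}^N(t)=A^N(t,t+u)+B^N(t,t+u)$, where $B^N$ collects the contribution of $Q_k$-atoms in $(t,t+u]$ and $A^N$ collects the ``past'' contribution on $[0,t]$, which comes from the differences $\gamma(t+u-r)-\gamma(t-r)$ and $\mathds{1}_{P_k(r,t+u,\gamma,\overline{\mathfrak F})=0}-\mathds{1}_{P_k(r,t,\gamma,\overline{\mathfrak F})=0}$. The $B^N$ piece is dominated by $N^{-1/2}\sum_k[\Delta_k^N(t+u)-\Delta_k^N(t)]$, so monotonicity gives $\sup_{u\in[0,\delta]}|B^N(t,t+u)|\le N^{-1/2}\sum_k[\Delta_k^N(t+\delta)-\Delta_k^N(t)]$, and the higher-moment estimates on $\Delta_1^N(t+\delta)-\Delta_1^N(t)$ derivable from Proposition~\ref{prop-c1} and Corollary~\ref{inr}, combined with Markov's inequality, yield the required $o(\delta)$ probability bound. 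For $A^N$, use the telescoping identity
\[
\gamma(t+u-r)\mathds{1}_{P_k(r,t+u)=0}-\gamma(t-r)\mathds{1}_{P_k(r,t)=0}=\bigl(\gamma(t+u-r)-\gamma(t-r)\bigr)\mathds{1}_{P_k(r,t+u)=0}+\gamma(t-r)\bigl(\mathds{1}_{P_k(r,t+u)=0}-\mathds{1}_{P_k(r,t)=0}\bigr);
\]
the first sub-term is controlled by Lemma~\ref{TCL-lem-barlambda-inc-bound} and Assumption~\ref{TCL-AS-lambda-2}, producing a continuous contribution of order $u^\alpha$ and a jump contribution of order $u^\rho$ coming from the discontinuity times $\zeta^j$; the second sub-term is non-zero only when $P_k$ admits an atom in $(t,t+u]$ on the region $\{u'<\gamma(r-s)\overline{\mathfrak F}(r)\}$, an event whose conditional probability given $Q_k|_{[0,t]}$ is at most $\lambda_\ast u$.

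The main obstacle is establishing these moment bounds uniformly in $N$, because the integrand in $A^N$ and the driving measure $Q_k$ are not independent: $\overline{\mathfrak F}^N$ depends on all the $(Q_\ell)_{\ell=1}^N$, and the factor $\mathds{1}_{P_k(r,t,\gamma,\overline{\mathfrak F})=0}$ involves the ``future'' portion $Q_k|_{(r,t]}$ of the very measure against which one integrates. I will handle this by combining Theorem~\ref{TCL-th-1}, which allows us to replace the indicator-difference term by its future-conditional mean (collapsing it into an explicit function of $\overline{\mathfrak F}$ and $\gamma$), together with the quarantine-model coupling of Section~\ref{TCL-sub-qua} used in the style of the proofs of Lemmas~\ref{TCL-l2} and \ref{TCL-l3}: quarantining the indices involved decouples $\overline{\mathfrak F}^N$ from the relevant $Q_k$ up to an $O(N^{-1})$ correction governed by a geometric random variable, so that cross moments across $k\neq\ell$ vanish after conditioning on the quarantined force of infection and the diagonal contributes a variance of order $o(\delta)$, completing the verification of Lemma~\ref{TCL-Lem-20} and hence the $\bC$-tightness of $\hat{\mathfrak S}_{1,2}^N$.
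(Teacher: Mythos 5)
Your overall plan is aligned with the paper's: both decompose the increment into a "future atoms on $(t,t+u]$" piece, a Hölder-continuity piece in $\gamma$, a $\zeta^j$-jump piece, and the indicator-difference piece, and both use Lemma~\ref{TCL-Lem-20}, Lemma~\ref{TCL-lem-barlambda-inc-bound}, Theorem~\ref{TCL-th-1} and the quarantine model. But there are two genuine gaps where your proposal invokes the right names without the right mechanism.

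First, for the $B^N$-piece (atoms in $(t,t+u]$) you propose a second-moment Markov bound for $N^{-1/2}\sum_k[\Delta_k^N(t+\delta)-\Delta_k^N(t)]$ and say the needed moments come from Proposition~\ref{prop-c1} and Corollary~\ref{inr}. This leaves the $k\neq\ell$ cross term untamed: Proposition~\ref{prop-c1} gives $\E[\Delta_1^N(t+\delta)\Delta_2^N(t+\delta)]\le C/N$, which after multiplying by $N-1$ yields $O(1)$, not $o(\delta)$. The paper avoids this entirely by first splitting $\int Q_k=\int\overline Q_k+\int|\Upsilon_k^N-\Upsilon_k|$; the $\overline Q_k$'s are orthogonal across $k$, so the $k\neq\ell$ contributions drop out, and the remaining drift piece is bounded by Hölder plus \eqref{fluc-2} with $k=2$ to give $O(\delta^2)$ (see the derivation of \eqref{TCL-in-eqc6}). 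Without this compensation step your bound does not close.

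Second, for the indicator-difference piece you write that after quarantining, "cross moments across $k\neq\ell$ vanish after conditioning on the quarantined force of infection," modeled on Lemmas~\ref{TCL-l2}--\ref{TCL-l3}. That is not the structure here. The paper bounds $|\chi^N_k|\le C_k(t,t+\delta)D^N_k(t)$, factors $C_k$ out of the expectation by Poisson independence of increments, and is then left with the cross moment $N\E[D^N_1(t)D^N_2(t)]$. This quantity does \emph{not} vanish — it is merely $O(1)$ — and Cauchy--Schwarz with $\E[(D_1^N)^2]=O(N^{-1/2})$ only gives $O(\sqrt N)$, so a separate argument is required. The paper proves the bound \eqref{TCL-eq-in-1} by introducing a new coupling: the auxiliary process $B^N_k$ (the dynamics of individual $k$ driven by the quarantined force $\overline{\mathfrak F}^N_{(1,2)}$), together with Lemmas~\ref{TCL-A-9-lem_inq} and~\ref{TCL-A-9-lem-inq2} giving $\E[|\Upsilon^N_k-\varTheta^N_k|^p]\le C/N^p$, and then a martingale decomposition of $D_k^N$ so each term in $\E[D^N_1D^N_2]$ can be shown to be $O(1/N)$. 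The centered-integrand trick that makes the conditional cross moment exactly zero in Lemmas~\ref{TCL-l2}--\ref{TCL-l3} is unavailable here; you would need to discover and prove the analogue of \eqref{TCL-eq-in-1}, which is the actual crux of this tightness lemma.
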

\begin{lemma}\label{TCL-tight-F3}
	$\hat{\mathfrak S}_{1,1}^N$ is $\bC$-tight.
\end{lemma}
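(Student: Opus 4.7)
The plan is to apply the $\bC$-tightness criterion of Lemma~\ref{TCL-Lem-20}, namely to show that for every $T>0$ and $\ep>0$,
\[
\limsup_{N\to\infty}\,\sup_{0\le t\le T}\,\frac{1}{\delta}\,\P\Big(\sup_{0\le u\le \delta}\big|\hat{\mathfrak S}_{1,1}^N(t+u)-\hat{\mathfrak S}_{1,1}^N(t)\big|>\ep\Big)\xrightarrow[\delta\to0]{}0.
\]
I would reduce this to a second-moment estimate of the form $\E\big[\sup_{0\le u\le \delta}(\hat{\mathfrak S}^N_{1,1}(t+u)-\hat{\mathfrak S}^N_{1,1}(t))^2\big]\le C_T\,\delta^{1+\beta}$ for some $\beta>0$, uniform in $t\in[0,T]$ and $N$ large; the conclusion then follows from Markov's inequality.

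To obtain the moment bound I would decompose the increment as the sum of three pieces: first, the contribution from the new integration window $s\in(t,t+u]$, where the integrand $\gamma(t+u-s)(\mathds{1}_{P_k(s,t+u,\gamma,\overline{\mathfrak F}^N)=0}-\mathds{1}_{P_k(s,t+u,\gamma,\overline{\mathfrak F})=0})$ is integrated against $Q_k$ on a small interval; second, the contribution over $s\in[0,t]$ coming from the difference $\gamma(t+u-s)-\gamma(t-s)$; third, the contribution over $s\in[0,t]$ coming from the change of the endpoint of $P_k$ from $t$ to $t+u$, i.e.\ from the presence of atoms of $Q_k$ in the strip $\{(r,\tilde u)\in(t,t+u]\times\R_+:\tilde u\le\gamma(r-s)\overline{\mathfrak F}^N(r)\}$ and the analogous strip for $\overline{\mathfrak F}$. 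For each piece I would compute $\E[(\tfrac{1}{\sqrt N}\sum_k\chi^N_k)^2]$, splitting it by exchangeability into the diagonal $\E[(\chi^N_1)^2]$ and the $(N-1)$ off-diagonal $\E[\chi^N_1\chi^N_2]$.

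The diagonal contribution is handled directly by Theorem~\ref{TCL-th-1}, using $0\le\Upsilon^N_k\le\lambda_\ast$ and the $L^2$ estimates on $|\overline{\mathfrak F}^N-\overline{\mathfrak F}|$ from Proposition~\ref{prop-c1}; for the piece arising from the change $\gamma(t-s)\to\gamma(t+u-s)$, I would combine Assumption~\ref{TCL-AS-lambda}(ii), Lemma~\ref{TCL-lem-barlambda-inc-bound}, and Assumption~\ref{TCL-AS-lambda-2}(applied to the $\zeta^j$) to obtain an increment bound $\E[(\gamma(t+u-s)-\gamma(t-s))^2]\le (\phi_2(t+u-s)-\phi_2(t-s))^{\alpha_2}+C u^{2\rho}$ with $\alpha_2,\rho>1/2$, and Cauchy--Schwarz yields a $u^{1+\beta}$ factor. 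The off-diagonal contributions are controlled exactly as in Lemma~\ref{TCL-l2}: I introduce the quarantine field $\overline{\mathfrak F}^N_{(1,2)}$, which makes the individual contributions $\tilde\chi^N_1,\tilde\chi^N_2$ conditionally independent and mean zero given $\overline{\mathfrak F}^N_{(1,2)}$, and then use the estimate $|\overline{\mathfrak F}^N-\overline{\mathfrak F}^N_{(1,2)}|\le \lambda_\ast Y/N$ with $Y$ a geometric random variable independent of $Q_1,Q_2$ to bound the coupling error. As in \eqref{TCL-A-n-4}--\eqref{TCL-A-n-6}, this yields each cross term of size $o(1/N)\cdot u^{1+\beta}$, hence a summed contribution $o(1)\cdot u^{1+\beta}$.

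The main obstacle is the third piece: quantifying, in $L^2$, the probability that $\mathds{1}_{P_k(s,t,\gamma,\overline{\mathfrak F}^N)=0}$ changes when $t$ is replaced by $t+u$, uniformly in $s\in[0,t]$ and jointly for two individuals after quarantine decoupling. Here the indicator depends on the future values of the PRM $Q_k$ on $(t,t+u]$, so the integrand in $\chi^N_k$ is not predictable; Theorem~\ref{TCL-th-1} is the right tool to take expectations against the outer $Q_k(ds,d\lambda,d\gamma,du)$. The extra $u$-factor needed for the $u^{1+\beta}$ bound comes from the fact that the probability that an atom of $Q_k$ falls in the strip on $(t,t+u]$ is $O(u)$, and combining this with the $O(1/\sqrt N)$ displacement of $\overline{\mathfrak F}^N$ from $\overline{\mathfrak F}$ (upgraded to $O(1/N)$ on squaring, by Proposition~\ref{prop-c1} with $k=2$) gives the required smallness. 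Finally, to pass from the pointwise moment bound to the supremum over $u\in[0,\delta]$ I would either invoke monotonicity of the three pieces in $u$ or apply a standard chaining argument, after which Lemma~\ref{TCL-Lem-20} delivers $\bC$-tightness.
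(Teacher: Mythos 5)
Your proposal follows the same route as the paper: you use the criterion of Lemma~\ref{TCL-Lem-20}, decompose the increment $\hat{\mathfrak S}_{1,1}^N(t+u)-\hat{\mathfrak S}_{1,1}^N(t)$ into a new-window piece, a $\gamma$-increment piece, and an endpoint-shift piece (the paper further splits the $\gamma$-increment into a H\"older part and a jump part via Lemma~\ref{TCL-lem-barlambda-inc-bound}, which is exactly what you invoke), control the second moment by exchangeability (diagonal plus $(N-1)$ times the cross term), and decouple the cross term via the quarantine field $\overline{\mathfrak F}^N_{(1,2)}$ together with Theorem~\ref{TCL-th-1}. All of these are the ingredients of the paper's proof, and the $O(u)$ smallness of the strip argument for the endpoint-shift piece is exactly how the paper handles the fourth term.

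The one point you should correct is the framing of the reduction. You claim a uniform-in-$N$ moment bound of the form $\E\big[\sup_{0\le u\le\delta}(\hat{\mathfrak S}^N_{1,1}(t+u)-\hat{\mathfrak S}^N_{1,1}(t))^2\big]\le C_T\delta^{1+\beta}$. This is stronger than what actually holds, and the obstruction is precisely the diagonal term $\E[(\chi^N_1)^2]$. For the $\gamma$-increment and jump pieces, the $\delta$-smallness of $\chi^N_1$ comes only with exponent $\alpha_2$ or $2\rho$, and the Assumptions only guarantee $\alpha_2,\rho>1/2$, not $>1$; for fixed $N$ the diagonal term is therefore $O(\delta^{2\rho})$ or similar, which after dividing by $\delta$ need not go to zero. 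What saves the argument is exactly what the paper does: the diagonal term vanishes as $N\to\infty$ (since $\overline{\mathfrak F}^N\to\overline{\mathfrak F}$, so $H_k(r,\cdot,\gamma,\overline{\mathfrak F}^N,\overline{\mathfrak F})\to0$, and the bounds are dominated); the $\delta^{2\rho}$ or $\delta^{2\alpha}$ power (with $2\rho,2\alpha>1$) then only needs to be extracted from the $(N-1)$ cross terms, which it is, uniformly in $N$, via the quarantine decoupling and Theorem~\ref{TCL-th-1} (see \eqref{TCL-A-7-key} and \eqref{TCL-A-7-key-1}). So you should aim for $\lim_{\delta\to0}\limsup_N\sup_t\frac{1}{\delta}\P(\cdots)=0$, not a uniform second-moment bound. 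A related but smaller warning: in the $\gamma$-increment piece the realized $\gamma$ appears both as the multiplicative factor $\gamma(t-s)$ and inside the indicator $\mathds{1}_{P_k(s,t,\gamma,\cdot)=0}$, so applying Assumption~\ref{TCL-AS-lambda}(ii) in $L^2$ and then Cauchy--Schwarz will cost you a factor of $1/2$ on the exponent; the pathwise bound of Lemma~\ref{TCL-lem-barlambda-inc-bound}, which keeps the $\gamma$-increment deterministic up to the jump indicators in $\zeta^j$, is the cleaner and in fact necessary device here, as the paper uses.
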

\begin{lemma}\label{TCL-tight-F4}
	$\hat{\mathfrak S}_{1,0}^N$ is $\bC$-tight.
\end{lemma}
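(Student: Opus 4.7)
The plan is to use the decomposition $\hat{\mathfrak S}_{1,0}^N = \Xi^N_{1,0} + \Xi^N_{1,0,1}$ introduced in Section~\ref{TCL-sec-G10}, with
\[
\Xi^N_{1,0,1}(t)=\frac{1}{\sqrt N}\sum_{k=1}^{N}\gamma_{k,0}(t)\left(\exp\left(-\int_{0}^{t}\gamma_{k,0}(r)\overline{\mathfrak F}^N(r)dr\right)-\exp\left(-\int_{0}^{t}\gamma_{k,0}(r)\overline{\mathfrak F}(r)dr\right)\right),
\]
and $\Xi^N_{1,0}=\hat{\mathfrak S}_{1,0}^N-\Xi^N_{1,0,1}$. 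Since $\bC$-tightness is preserved under finite sums, it suffices to establish $\bC$-tightness for each of the two pieces separately.

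For $\Xi^N_{1,0,1}$, apply the Taylor expansion already carried out before Lemma~\ref{TCL-lem-L2} to write $\Xi^N_{1,0,1}(t) = -Z^N(t) + R^N(t)$, where $\E[\sup_{0\le t\le T}|R^N(t)|] = O(N^{-1/2})$ by the estimate following \eqref{TCL-eq-3}--\eqref{TCL-eq-4}, and
\[
Z^N(t) = \int_{0}^{t}G^N(t,s)\hat{\mathfrak F}^N(s)\,ds, \qquad G^N(t,s)=\frac{1}{N}\sum_{k=1}^{N}\gamma_{k,0}(t)\gamma_{k,0}(s)\exp\left(-\int_{0}^{t}\gamma_{k,0}(r)\overline{\mathfrak F}(r)dr\right).
\]
Using $0\le G^N\le 1$, the $\bC$-tightness of $\hat{\mathfrak F}^N$ from Lemma~\ref{TCL-tight-F}, the trivial bound $|\int_t^{t+h}G^N(t+h,s)\hat{\mathfrak F}^N(s)ds|\le h\,\|\hat{\mathfrak F}^N\|_T$, and applying Lemma~\ref{TCL-lem-barlambda-inc-bound} to $\gamma$ together with Assumption~\ref{TCL-AS-lambda-2} to control $\int_0^t|G^N(t+h,s)-G^N(t,s)|\,ds$ by $C(h^\alpha + \sum_j (G_j(t+h)-G_j(t)))\le C h^{\alpha\wedge\rho}$ in expectation, Theorem~\ref{th-tight} yields the $\bC$-tightness of $Z^N$.

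For $\Xi^N_{1,0}=\frac{1}{\sqrt N}\sum_k\chi^N_k$, which is the core difficulty, I will establish the modulus estimate
\[
\E\!\left[\bigl(\Xi^N_{1,0}(t)-\Xi^N_{1,0}(s)\bigr)^2\right] \le \omega(t-s)+\epsilon_N, \qquad 0\le s<t\le T,
\]
with $\omega(u)\to 0$ as $u\to 0$ and $\epsilon_N\to 0$, which combined with Lemma~\ref{TCL-Lem-20} (or Theorem~\ref{th-tight}) and the pointwise vanishing from Corollary~\ref{TCL-eqr-2} yields $\bC$-tightness with limit identically zero. Expanding the square and using exchangeability,
\[
\E\!\left[\bigl(\Xi^N_{1,0}(t)-\Xi^N_{1,0}(s)\bigr)^2\right] = \E[(\chi^N_1(t)-\chi^N_1(s))^2] + (N-1)\,\E[(\chi^N_1(t)-\chi^N_1(s))(\chi^N_2(t)-\chi^N_2(s))].
\]
The diagonal term is bounded by $\E[|\chi^N_1(t)-\chi^N_1(s)|]$ (since $|\chi^N_k|\le 2$), and the latter is controlled via the smoothness of $\gamma_{k,0}$ from Assumptions~\ref{TCL-AS-lambda-1}--\ref{TCL-AS-lambda}, the Lipschitz continuity of the exponential factors, and the Poisson-event probability $\mathbb{P}(P_k(s,t,\gamma_{k,0},\overline{\mathfrak F})\neq 0\mid \gamma_{k,0})\le \lambda_*(t-s)$. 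The cross term is shown to be $o(1)$ by the two-individual quarantine coupling of Lemma~\ref{TCL-Lem-eq-1}: replacing $\overline{\mathfrak F}^N$ by $\overline{\mathfrak F}^N_{(1,2)}$ renders the corresponding $\tilde\chi^N_1,\tilde\chi^N_2$ conditionally independent and conditionally centered given $\overline{\mathfrak F}^N_{(1,2)}$, so $\E[(\tilde\chi^N_1(t)-\tilde\chi^N_1(s))(\tilde\chi^N_2(t)-\tilde\chi^N_2(s))]=0$, and the replacement error is controlled exactly as in the proof of \eqref{TCL-A-n-0}.

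The main obstacle will be upgrading the one-time cross-term estimate of Lemma~\ref{TCL-Lem-eq-1} to a two-time increment bound of order $o(1/N)$ with a quantitative dependence on $t-s$: one must track the two replacement errors on the interval $[0,t]$ simultaneously via the events $E^N_k$ of \eqref{TCL-A-n-1}, combining the uniform bound $|\overline{\mathfrak F}^N-\overline{\mathfrak F}^N_{(1,2)}|\le \lambda_* Y/N$ with the Poisson-matching probability $\mathbb{P}((E^N_k)^c\mid \cdots)\le 2tY/N$, and then integrating the resulting bound in $(s,t)$ against the single-individual modulus. Once this is in place, the remaining pieces slot in routinely via exchangeability and Markov's inequality.
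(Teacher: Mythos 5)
Your decomposition $\hat{\mathfrak S}_{1,0}^N = \Xi^N_{1,0} + \Xi^N_{1,0,1}$ is genuinely different from the paper's proof, which never splits off the exponential part: the paper estimates the pathwise increment $|\hat{\mathfrak S}_{1,0}^N(t)-\hat{\mathfrak S}_{1,0}^N(s)|$ directly, using $|\mathds{1}_{A=0}-\mathds{1}_{B=0}|\le|A-B|$ and Lemma~\ref{TCL-lem-barlambda-inc-bound} to dominate it by a $(t-s)^\alpha$--weighted sum of $H_k(0,t,\gamma_{k,0},\overline{\mathfrak F}^N,\overline{\mathfrak F})$, a jump--indicator term, and an $N$-vanishing term, exactly as in the proofs of Lemmas~\ref{TCL-tight-F}--\ref{TCL-tight-F3}. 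Your handling of $\Xi^N_{1,0,1}$ through $Z^N(t)=\int_0^t G^N(t,s)\hat{\mathfrak F}^N(s)\,ds$ together with the $\bC$-tightness of $\hat{\mathfrak F}^N$ from Lemma~\ref{TCL-tight-F} is a reasonable alternative and can be made to work, provided one spells out a pathwise (not merely expected) control of $\sup_{t,h\le\delta}\int_0^t|G^N(t+h,s)-G^N(t,s)|\,ds$, e.g.\ via a Glivenko--Cantelli argument for the empirical distribution of the jump times $\zeta^j_{k,0}$.

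The gap is in your treatment of $\Xi^N_{1,0}$. You propose to prove a modulus bound $\E\big[(\Xi^N_{1,0}(t)-\Xi^N_{1,0}(s))^2\big]\le\omega(t-s)+\epsilon_N$ and claim this, together with Corollary~\ref{TCL-eqr-2}, yields $\bC$-tightness via Lemma~\ref{TCL-Lem-20} or Theorem~\ref{th-tight}. It does not. Both criteria require controlling a supremum over a small time window: Lemma~\ref{TCL-Lem-20} needs
$\P\big(\sup_{0\le u\le\delta}|\Xi^N_{1,0}(t+u)-\Xi^N_{1,0}(t)|>\epsilon\big)=o(\delta)$
uniformly in $t$ and for large $N$, and Theorem~\ref{th-tight}(ii) needs a bound on $w_T(\Xi^N_{1,0},\delta)$, which again is a supremum. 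A second-moment bound at fixed pairs $(s,t)$ controls neither supremum directly. Markov's inequality gives $\P(|\Xi^N_{1,0}(t+\delta)-\Xi^N_{1,0}(t)|>\epsilon)\lesssim \omega(\delta)+\epsilon_N$, and for this to be $o(\delta)$ one would need $\omega(u)=o(u)$, i.e.\ an exponent strictly larger than $1$. But your diagonal-term estimate naturally produces $\omega(u)\sim u^{\alpha\wedge\rho}$ with $\alpha,\rho>1/2$ only, which is sub-critical for both Lemma~\ref{TCL-Lem-20} and for a Kolmogorov--Chentsov--type chaining argument. Pointwise vanishing from Corollary~\ref{TCL-eqr-2} does not rescue this: $\sup_{0\le t\le T}|\Xi^N_{1,0}(t)|\to 0$ in probability is precisely what you would need, and that cannot be deduced from pointwise convergence plus a sub-critical modulus. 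The paper avoids this obstruction entirely by \emph{first} dominating $|\hat{\mathfrak S}_{1,0}^N(t)-\hat{\mathfrak S}_{1,0}^N(s)|$ pathwise by nondecreasing-in-$t$ random quantities (the $H_k(0,\cdot,\gamma_{k,0},\overline{\mathfrak F}^N,\overline{\mathfrak F})$ terms), so that $\sup_{0\le u\le\delta}$ can be moved inside and reduced to a fixed-time probability bound, and only then applies the second-moment/quarantine machinery. To make your route work for $\Xi^N_{1,0}$ you would have to restore this pathwise monotone domination, at which point you are essentially back to the paper's argument.
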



\subsection{Proofs of  Lemmas \ref{TCL-tight-F}--\ref{TCL-tight-F4}} \label{TCL-sec-tight-r}	

\subsubsection{Proof of Lemma~\ref{TCL-tight-F}}
Since $(\hat{\mathfrak F}^N_2)_N$ is $\bC-$tight in $\bD$, it suffices to prove that $(\hat{\mathfrak F}^N_1)_N$ is $\bC-$tight in $\bD$. By the expression in \eqref{TCL-eqc10}, 
that claim can be deduced from the tightness of the following sequence of processes
\begin{align*}
&\Xi^N_1(t)=\frac{1}{\sqrt{N}}\sum_{k=1}^{N}\int_{0}^{t}\int_{\bD^2}\int_{\Upsilon_k(s-)\wedge\Upsilon^N_k(s-)}^{\Upsilon_k(s-)\vee\Upsilon^N_k(s-)}\lambda(t-s)Q_k(ds,d\lambda,d\gamma,du).
\end{align*}

From Assumption~\ref{TCL-AS-lambda-1} and Lemma~\ref{TCL-lem-barlambda-inc-bound}, for any $0\leq s\leq t\leq T$, 
\begin{align}\label{TCL-eqc4}
|\Xi^N_1(t)-\Xi^N_1(s)|&\leq\frac{\lambda_*}{\sqrt{N}}\sum_{k=1}^{N}\int_{s}^{t}\int_{\bD^2}\int_{\Upsilon_k(r^-)\wedge\Upsilon_k^N(r^-)}^{\Upsilon_k(r^-)\vee\Upsilon_k^N(r^-)}Q_k(dr,d\lambda,d\gamma,du)\nonumber\\
&\quad+\frac{\ell(t-s)^\alpha}{\sqrt{N}}\sum_{k=1}^{N}\int_{0}^{s}\int_{\bD^2}\int_{\Upsilon_k(r^-)\wedge\Upsilon_k^N(r^-)}^{\Upsilon_k(r^-)\vee\Upsilon_k^N(r^-)}Q_k(dr,d\lambda,d\gamma,du)\nonumber\\
&\quad+\frac{\lambda_*}{\sqrt{N}}\sum_{j=1}^{\ell-1}\sum_{k=1}^{N}\int_{0}^{s}\int_{\bD^2}\int_{\Upsilon_k(r^-)\wedge\Upsilon_k^N(r^-)}^{\Upsilon_k(r^-)\vee\Upsilon_k^N(r^-)}\mathds{1}_{s-r<\xi^j\leq t-r}Q_k(dr,d\lambda,d\gamma,du).
\end{align}
By Markov's inequality, 
\begin{multline*}
\mathbb{P}\left(\frac{\ell\delta^\alpha}{\sqrt{N}}\sum_{k=1}^{N}\int_{0}^{T}\int_{\bD^2}\int_{\Upsilon_k(r^-)\wedge\Upsilon_k^N(r^-)}^{\Upsilon_k(r^-)\vee\Upsilon_k^N(r^-)} Q_k(dr,d\lambda,d\gamma,du)\geq\theta\right)\\
\leq\frac{1}{\theta^2}\E\left[\left(\frac{\ell\delta^\alpha}{\sqrt{N}}\sum_{k=1}^{N}\int_{0}^{T}\int_{\bD^2}\int_{\Upsilon_k(r^-)\wedge\Upsilon_k^N(r^-)}^{\Upsilon_k(r^-)\vee\Upsilon_k^N(r^-)} Q_k(dr,d\lambda,d\gamma,du)\right)^2\right].
\end{multline*}
However, by exchangeability and H\"older's inequality, it follows that 
\begin{multline*}
\E\left[\left(\frac{\ell\delta^\alpha}{\sqrt{N}}\sum_{k=1}^{N}\int_{0}^{T}\int_{\bD^2}\int_{\Upsilon_k(r^-)\wedge\Upsilon_k^N(r^-)}^{\Upsilon_k(r^-)\vee\Upsilon_k^N(r^-)} Q_k(dr,d\lambda,d\gamma,du)\right)^2\right]\\
\begin{aligned}
&\leq2\E\left[\left(\frac{\ell\delta^\alpha}{\sqrt{N}}\sum_{k=1}^{N}\int_{0}^{T}\int_{\bD^2}\int_{\Upsilon_k(r^-)\wedge\Upsilon_k^N(r^-)}^{\Upsilon_k(r^-)\vee\Upsilon_k^N(r^-)} \overline Q_k(dr,d\lambda,d\gamma,du)\right)^2\right]\\
& \quad +2\E\left[\left(\frac{\ell\delta^\alpha}{\sqrt{N}}\sum_{k=1}^{N}\int_{0}^{T}\left|\Upsilon^N_k(r)-\Upsilon_k(r)\right|dr\right)^2\right]\\
&\leq2\ell^2\delta^{2\alpha}\int_{0}^{T}\E\left[\left|\Upsilon_1^N(r)-\Upsilon_1(r)\right|\right]dr+2\ell^2\delta^{2\alpha}N\int_{0}^{T}\E\left[\left|\Upsilon_1^N(r)-\Upsilon_1(r)\right|^2\right]dr\\
&\leq2\ell^2\delta^{2\alpha}\int_{0}^{T}\E\left[\left|\Upsilon_1^N(r)-\Upsilon_1(r)\right|\right]dr+2C_T\ell^2\delta^{2\alpha}\,,
\end{aligned}
\end{multline*}
where the second term in the last line follows from applying \eqref{fluc-2} with $k=2$. Note that 
 from \eqref{eqgam} the first term in the last line tends to 0 as $N\to\infty$, while the second term is  independent of $N$.

Then, thanks to $\alpha>\frac{1}{2}$ (Assumption~\ref{TCL-AS-lambda-2}), 
\begin{equation}
\lim_{\delta\to0}\limsup_{N\to\infty}\sup_{0\leq t\leq T}\frac{1}{\delta}\mathbb{P}\left(\sup_{0\le v\le \delta}\frac{\ell v^\alpha}{\sqrt{N}}\sum_{k=1}^{N}\int_{0}^{T}\int_{\bD^2}\int_{\Upsilon_k(r^-)\wedge\Upsilon_k^N(r^-)}^{\Upsilon_k(r^-)\vee\Upsilon_k^N(r^-)} Q_k(dr,d\lambda,d\gamma,du)\geq\theta\right)=0.\label{eqc5}
\end{equation}
On the other hand, for all $0<t\leq T$, since $(Q_k)_k$ are i.i.d, by H\"older's inequality and exchangeability, we obtain 
\begin{align*}
&\mathbb{P}\left(\frac{\lambda_*}{\sqrt{N}}\sum_{k=1}^{N}\int_{t}^{t+\delta}\int_{\bD^2}\int_{\Upsilon_k(r^-)\wedge\Upsilon_k^N(r^-)}^{\Upsilon_k(r^-)\vee\Upsilon_k^N(r^-)}Q_k(dr,d\lambda,d\gamma,du)>\theta\right)\\
&\leq\frac{1}{\theta^2}\E\left[\left(\frac{\lambda_*}{\sqrt{N}}\sum_{k=1}^{N}\int_{t}^{t+\delta}\int_{\bD^2}\int_{\Upsilon_k(r^-)\wedge\Upsilon_k^N(r^-)}^{\Upsilon_k(r^-)\vee\Upsilon_k^N(r^-)}Q_k(dr,d\lambda,d\gamma,du)\right)^2\right]\nonumber\\
&\leq\frac{2\lambda_*^2}{\theta^2}\left\{\E\left[\frac{1}{N}\sum_{k=1}^{N}\left(\int_{t}^{t+\delta}\int_{\bD^2}\int_{\Upsilon_k(r^-)\wedge\Upsilon_k^N(r^-)}^{\Upsilon_k(r^-)\vee\Upsilon_k^N(r^-)}\overline{Q}_k(dr,d\lambda,d\gamma,du)\right)^2\right]\right.\nonumber
\\&\hspace*{5cm}\left.+\E\left[\left(\frac{1}{\sqrt N}\sum_{k=1}^{N}\int_{t}^{t+\delta}\left|\Upsilon_k^N(r)-\Upsilon_k(r)\right|dr\right)^2\right]\right\}\nonumber\\
&\leq\frac{2\lambda_*^2}{\theta^2}\left\{\E\left[\int_{t}^{t+\delta}\left|\Upsilon_1^N(r)-\Upsilon_1(r)\right|dr\right]+N\E\left[\left(\int_{t}^{t+\delta}\left|\Upsilon_1^N(r)-\Upsilon_1(r)\right|dr\right)^2\right]\right\}\nonumber\\
&\leq\frac{2\lambda_*^2}{\theta^2}\left\{\int_0^{T+\delta}\E\left[\left|\Upsilon_1^N(r)-\Upsilon_1(r)\right|\right]dr+C_T\delta^2\right\},
\end{align*}
where the last line follows from H\"older's inequality and applying \eqref{fluc-2} with $k=2$.
Note that  the second term is independent of $N$ and from \eqref{eqgam} the first term tends to 0 as $N\to\infty$.
Consequently, 
\begin{equation}\label{TCL-in-eqc6}
\lim_{\delta\to0}\limsup_{N\to\infty}\sup_{0\leq t\leq T}\frac{1}{\delta}\mathbb{P}\left(\sup_{0\leq v\leq\delta}\frac{\lambda_*}{\sqrt{N}}\sum_{k=1}^{N}\int_{t}^{t+v}\int_{\bD^2}\int_{\Upsilon_k(r^-)\wedge\Upsilon_k^N(r^-)}^{\Upsilon_k(r^-)\vee\Upsilon_k^N(r^-)}Q_k(dr,d\lambda,d\gamma,du)>\epsilon\right)=0.
\end{equation}
Moreover, by Markov's inequality,
\begin{multline*}
\mathbb{P}\left(\frac{1}{\sqrt{N}}\sum_{k=1}^{N}\sum_{j=1}^{\ell-1}\int_{0}^{t}\int_{\bD^2}\int_{\Upsilon_k(r^-)\wedge\Upsilon_k^N(r^-)}^{\Upsilon_k(r^-)\vee\Upsilon_k^N(r^-)} \mathds{1}_{t-r<\xi^j\leq t+\delta-r}Q_k(dr,d\lambda,d\gamma,du)\geq\theta\right)\\
\begin{aligned}
&\leq\frac{1}{\theta^2}\E\left[\left(\frac{1}{\sqrt{N}}\sum_{k=1}^{N}\sum_{j=1}^{\ell-1}\int_{0}^{t}\int_{\bD^2}\int_{\Upsilon_k(r^-)\wedge\Upsilon_k^N(r^-)}^{\Upsilon_k(r^-)\vee\Upsilon_k^N(r^-)} \mathds{1}_{t-r<\xi^j\leq t+\delta-r}Q_k(dr,d\lambda,d\gamma,du)\right)^2\right]\\
&\leq\frac{2}{\theta^2}\E\left[\left(\frac{1}{\sqrt{N}}\sum_{k=1}^{N}\sum_{j=1}^{\ell-1}\int_{0}^{t}\int_{\bD^2}\int_{\Upsilon_k(r^-)\wedge\Upsilon_k^N(r^-)}^{\Upsilon_k(r^-)\vee\Upsilon_k^N(r^-)} \mathds{1}_{t-r<\xi^j\leq t+\delta-r}\overline{Q}_k(dr,d\lambda,d\gamma,du)\right)^2\right]\\
&\quad+\frac{2}{\theta^2}\E\left[\left(\frac{1}{\sqrt{N}}\sum_{j=1}^{\ell-1}\sum_{k=1}^{N}\int_{0}^{t}\left(F_j(s+\delta-r)-F_j(t-r)\right)\left|\Upsilon_k^N(r)-\Upsilon_k(r)\right|dr\right)^2\right].
\end{aligned}
\end{multline*}
Consequently, since $(Q_k)_k$ are i.i.d.,  from \eqref{hypF} and H\"older's inequality and from exchangeability, we obtain 
\begin{multline*}
\mathbb{P}\left(\frac{1}{\sqrt{N}}\sum_{k=1}^{N}\sum_{j=1}^{\ell-1}\int_{0}^{t}\int_{\bD^2}\int_{\Upsilon_k(r^-)\wedge\Upsilon_k^N(r^-)}^{\Upsilon_k(r^-)\vee\Upsilon_k^N(r^-)} \mathds{1}_{t-r<\xi^j\leq t+\delta-r}Q_k(dr,d\lambda,d\gamma,du)\geq\theta\right)\\
\begin{aligned}
&\leq\frac{2}{N\theta^2}\sum_{k=1}^{N}\E\left[\left(\sum_{j=1}^{\ell-1}\int_{0}^{t}\int_{\bD^2}\int_{\Upsilon_k(r^-)\wedge\Upsilon_k^N(r^-)}^{\Upsilon_k(r^-)\vee\Upsilon_k^N(r^-)} \mathds{1}_{t-r<\xi^j\leq t+\delta-r}\overline{Q}_k(dr,d\lambda,d\gamma,du)\right)^2\right]\\
&\quad+\frac{2\ell^2\delta^{2\rho}}{\theta^2}\sum_{k=1}^{N}\E\left[\left(\int_{0}^{t}\left|\Upsilon_k^N(r)-\Upsilon_k(r)\right|dr\right)^2\right]\\
&\leq\frac{2}{\theta^2}\sum_{j=1}^{\ell-1}\int_{0}^{t}\left(F_j(s+\delta-r)-F_j(t-r)\right)\E\left[\left|\Upsilon_1^N(r)-\Upsilon_1(r)\right|\right]dr
+\frac{2\ell^2\delta^{2\rho}}{\theta^2}N\int_{0}^{t}\E\left[\left|\Upsilon_1^N(r)-\Upsilon_1(r)\right|^2\right]dr\\
&\leq\frac{2\ell\delta^\rho}{\theta^2}\int_{0}^{t}\E\left[\left|\Upsilon_1^N(r)-\Upsilon_1(r)\right|\right]dr+\frac{2C_T\ell^2\delta^{2\rho}}{\theta^2}.
\end{aligned}
\end{multline*}
where the last line follows from H\"older's inequality and applying \eqref{fluc-2} with $k=2$.
Note that the second term is independent of $N$ and from \eqref{eqgam} the first term tend to 0 as $N\to\infty$.

Therefore, as $\rho>\frac{1}{2}$ (Assumption~\ref{TCL-AS-lambda-2}),
\begin{equation}\label{TCL-eqc7'}
\lim_{\delta\to0}\limsup_{N\to\infty}\sup_{0\leq t\leq T}\frac{1}{\delta}\mathbb{P}\left(\sup_{0\le v\le \delta}\frac{1}{\sqrt{N}}\sum_{j=1}^{\ell-1}\sum_{k=1}^{N}\int_{0}^{t}\int_{\bD^2}\int_{\Upsilon_k(r^-)\wedge\Upsilon_k^N(r^-)}^{\Upsilon_k(r^-)\vee\Upsilon_k^N(r^-)}\mathds{1}_{t-r<\xi^j\leq t+v-r}Q_k(dr,d\lambda,d\gamma,du)\geq\theta\right)=0.
\end{equation}
Thus, from \eqref{TCL-eqc4}, \eqref{eqc5}, \eqref{TCL-in-eqc6}  and \eqref{TCL-eqc7'}, we deduce that 
\begin{equation*}
\lim_{\delta\to0}\limsup_N\sup_{0\leq t\leq T}\frac{1}{\delta}\mathbb{P}\left(\sup_{0\le v\le \delta}|\Xi^N_1(t+v)-\Xi^N_1(t)|\geq\theta\right)=0,
\end{equation*}
and thanks to Lemme~\ref{TCL-Lem-20}, $(\Xi^N_1)_N$ is $\bC-$tight in $\bD$.

\subsubsection{Proof of Lemma~\ref{TCL-tight-F2}}
We recall that 
\begin{equation}\label{eqhG}
\hat{\mathfrak S}^N_{1,2}(t):=\frac{1}{\sqrt{N}}\sum_{k=1}^{N}\int_{0}^{t}\int_{\bD^2}\int_{\Upsilon_k(s^-)\wedge\Upsilon_k^N(s^-)}^{\Upsilon_k(s^-)\vee\Upsilon_k^N(s^-)}\gamma(t-s)\mathds{1}_{P_k\left(s,t,\gamma,\overline{\mathfrak F}\right)=0}sign(\Upsilon_k^N(s^-)-\Upsilon_k(s^-))Q_k(ds,d\lambda,d\gamma,du).
\end{equation}
\begin{lemma}
	$\hat{\mathfrak S}^N_{1,2}$ is $\bC-$tight in $\bD$.
\end{lemma}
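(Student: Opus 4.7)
The plan is to verify the $\bC$-tightness criterion of Lemma~\ref{TCL-Lem-20}: show that for any $T,\theta>0$,
$$\lim_{\delta\to0}\limsup_{N\to\infty}\sup_{0\le t\le T}\frac{1}{\delta}\P\Bigl(\sup_{0\le v\le\delta}\bigl|\hat{\mathfrak S}^N_{1,2}(t+v)-\hat{\mathfrak S}^N_{1,2}(t)\bigr|>\theta\Bigr)=0.$$
Using the identity $\mathds{1}_{P_k(s,t+v,\gamma,\overline{\mathfrak F})=0}=\mathds{1}_{P_k(s,t,\gamma,\overline{\mathfrak F})=0}\bigl(1-\mathds{1}_{P_k(t,t+v,\gamma,\overline{\mathfrak F})\ge 1}\bigr)$, I would split $\hat{\mathfrak S}^N_{1,2}(t+v)-\hat{\mathfrak S}^N_{1,2}(t)$ into three pieces: (i) the integral over the new window $s\in(t,t+v]$; (ii) the contribution over $s\in[0,t]$ coming from the factor $\gamma(t+v-s)-\gamma(t-s)$ weighted by $\mathds{1}_{P_k(s,t+v,\gamma,\overline{\mathfrak F})=0}$; and (iii) the contribution over $s\in[0,t]$ coming from $-\gamma(t-s)\mathds{1}_{P_k(s,t,\gamma,\overline{\mathfrak F})=0}\mathds{1}_{P_k(t,t+v,\gamma,\overline{\mathfrak F})\ge 1}$.

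For parts (i) and (ii), the crude bounds $\gamma\le 1$ and $\mathds{1}_{P_k=0}\le 1$ reduce the analysis to stochastic integrals of exactly the form handled in the proof of Lemma~\ref{TCL-tight-F}. For (i), writing $Q_k=\overline{Q}_k+{}$compensator and applying Markov's inequality, the $L^2$-orthogonality of the $(\overline{Q}_k)_k$, exchangeability, and the moment estimate $\E[|\Upsilon_k^N-\Upsilon_k|^2]\le CN^{-1}$ from Corollary~\ref{inr}, give a squared-increment bound of order $v$. For (ii), Lemma~\ref{TCL-lem-barlambda-inc-bound} applied to $\gamma$ yields $|\gamma(t+v-s)-\gamma(t-s)|\le v^\alpha+\sum_{j=1}^{\ell-1}\mathds{1}_{t-s<\zeta^j\le t+v-s}$, and Assumption~\ref{TCL-AS-lambda-2} (with $G_j(t)-G_j(s)\le C'(t-s)^\rho$) produces contributions of order $v^{2\alpha}$ and $v^{2\rho}$; both vanish after division by $\delta$ since $\alpha,\rho>1/2$.

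Part (iii) is the delicate piece, because the factor $\mathds{1}_{P_k(t,t+v,\gamma,\overline{\mathfrak F})\ge 1}$ depends on $Q_k$ restricted to $(t,t+v]$, hence is not predictable with respect to the time-$s$ filtration and cannot be handled by standard martingale moment identities. This is the situation for which Theorem~\ref{TCL-th-1} was designed: since $\overline{\mathfrak F}$ is deterministic in the definition of $P_k$, conditionally on $Q_k|_{[0,s]}$ the indicator has expectation bounded by $\int_t^{t+v}\gamma(r-s)\overline{\mathfrak F}(r)\,dr\le\lambda_*v$. Applying the theorem to replace the future-dependent indicator by this conditional expectation reduces the second-moment estimate of part (iii) to an integral of $\E[|\Upsilon_k^N-\Upsilon_k|]$ weighted by $v$, giving an $O(v)$ bound for the squared $L^2$ norm uniformly in $N$. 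Combining the three bounds and dividing by $\delta$, the criterion of Lemma~\ref{TCL-Lem-20} is satisfied.

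The main obstacle is the non-predictability of the integrand in part (iii); Theorem~\ref{TCL-th-1}, together with the independence of $Q_k|_{[0,s]}$ and $Q_k|_{(t,t+v]}$ granted by the fact that $\overline{\mathfrak F}$ is deterministic, is precisely what enables the moment estimate to go through and yields the $\bC$-tightness.
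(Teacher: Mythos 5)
Your decomposition of the increment into parts (i)--(iii) and the treatment of parts (i) and (ii) are sound and parallel what the paper does. The gap is in part (iii), and it is a substantial one.

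Theorem~\ref{TCL-th-1} is a first-moment identity: it computes $\E\bigl[\int h\,f(\cdot,Q|_{(s,t]\times E})\,Q(ds,du)\bigr]$. It does not allow you to replace the future-dependent indicator by its conditional expectation \emph{inside a square}, so it cannot by itself yield the second-moment estimate you claim. What is actually required is control of $\E\bigl[\bigl(\frac{1}{\sqrt{N}}\sum_k\chi^N_k\bigr)^2\bigr]$, which by exchangeability equals $\E[(\chi^N_1)^2]+(N-1)\E[\chi^N_1\chi^N_2]$; the factor $N-1$ on the cross term is the whole difficulty. The paper's route is to bound the future-dependent indicator by the Poisson count $C_k(t,t+\delta)=\int_t^{t+\delta}\int_0^{\lambda_*}Q_k$, which is independent of $Q_k|_{[0,t]}$, and to bound the remaining $Q_k$-integral by $D^N_k(t)=\int_0^t\int_{\Upsilon_k\wedge\Upsilon^N_k}^{\Upsilon_k\vee\Upsilon^N_k}Q_k$; independence across disjoint time windows and across $k$ then gives $\E[\chi^N_1\chi^N_2]\le\E[C_1]\E[C_2]\E[D^N_1D^N_2]=(\lambda_*\delta)^2\E[D^N_1D^N_2]$. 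The key estimate is that
\begin{equation*}
N\,\E\bigl[D^N_1(t)D^N_2(t)\bigr]\le C
\end{equation*}
uniformly in $N$ (inequality~\eqref{TCL-eq-in-1}). Cauchy--Schwarz alone only gives $\E[D^N_1D^N_2]\lesssim(\E[(D^N_1)^2]\,\E[(D^N_2)^2])^{1/2}\lesssim N^{-1/2}$, hence the divergent bound $N^{1/2}$. The processes $D^N_1$ and $D^N_2$ are correlated through $\overline{\mathfrak F}^N$, and they must be decoupled; this is exactly the role of the quarantine model of Subsection~\ref{TCL-sub-qua}, where $\Upsilon^N_k$ is replaced by the quarantined $\varTheta^N_k$ driven by $\overline{\mathfrak F}^N_{(1,2)}$ (independent of $Q_1,Q_2$), with Lemmas~\ref{TCL-A-9-lem_inq} and~\ref{TCL-A-9-lem-inq2} controlling the cost of the replacement. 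Your proposal omits this ingredient entirely, and it is not an optional refinement --- the paper's introduction singles it out as the key new technique for the tightness.

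A smaller but real issue: an ``$O(\delta)$ bound for the squared $L^2$ norm uniformly in $N$'' would not satisfy the criterion of Lemma~\ref{TCL-Lem-20}, since after dividing by $\delta$ and applying Markov's inequality it does not vanish. The bound the paper actually obtains for part (iii), and the one that is needed, is of the form $o_N(1)\cdot\delta+O(\delta^2)$.
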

\begin{proof}
	By the expression in \eqref{eqhG}, tightness of the processes $(\hat{\mathfrak S}^N_{1,2})_N$ can be deduced from the tightness of the following processes
	
	\begin{equation}\Xi^N_3(t)=\frac{1}{\sqrt{N}}\sum_{k=1}^{N}\int_{0}^{t}\int_{\bD^2}\int_{\Upsilon_k(s^-)\wedge\Upsilon_k^N(s^-)}^{\Upsilon_k(s^-)\vee\Upsilon_k^N(s^-)}\gamma(t-s)\mathds{1}_{P_k\left(s,t,\gamma,\overline{\mathfrak F}\right)=0}Q_k(ds,d\lambda,d\gamma,du).
	\end{equation}	
	
	For all $0\leq s\leq t\leq T$,
	\begin{align*}
	&\left|\Xi^N_3(t)-\Xi^N_3(s)\right|\\
	&\leq\frac{1}{\sqrt{N}}\sum_{k=1}^{N}\int_{s}^{t}\int_{\bD^2}\int_{\Upsilon_k(r^-)\wedge\Upsilon_k^N(r^-)}^{\Upsilon_k(r^-)\vee\Upsilon_k^N(r^-)}Q_k(dr,d\lambda,d\gamma,du)\\
	&+\frac{\ell(t-s)^\alpha}{\sqrt{N}}\sum_{k=1}^{N}\int_{0}^{s}\int_{\bD^2}\int_{\Upsilon_k(r^-)\wedge\Upsilon_k^N(r^-)}^{\Upsilon_k(r^-)\vee\Upsilon_k^N(r^-)}Q_k(dr,d\lambda,d\gamma,du)\\
	&+\frac{1}{\sqrt{N}}\sum_{j=1}^{\ell-1}\sum_{k=1}^{N}\int_{0}^{s}\int_{\bD^2}\int_{\Upsilon_k(r^-)\wedge\Upsilon_k^N(r^-)}^{\Upsilon_k(r^-)\vee\Upsilon_k^N(r^-)}\mathds{1}_{s-r<\zeta^j\leq t-r}Q_k(dr,d\lambda,d\gamma,du)\\
	&+\frac{1}{\sqrt{N}}\sum_{k=1}^{N}\int_{0}^{s}\int_{\bD^2}\int_{\Upsilon_k(r^-)\wedge\Upsilon_k^N(r^-)}^{\Upsilon_k(r^-)\vee\Upsilon_k^N(r^-)}\left(\mathds{1}_{P_k\left(r,s,\gamma,\overline{\mathfrak F}\right)=0}-\mathds{1}_{P_k\left(r,t,\gamma,\overline{\mathfrak F}\right)=0}\right)Q_k(dr,d\lambda,d\gamma,du),
	\end{align*}
	where we use Lemme~\ref{TCL-lem-barlambda-inc-bound}.
	
	From \eqref{eqc5}, \eqref{TCL-in-eqc6}  and \eqref{TCL-eqc7'} it remains to prove that, as $\delta\to0,$
	\begin{align}\label{TCL-A-7-eq22}
	&\limsup_N\sup_{0\leq t\leq T}\frac{1}{\delta}\mathbb{P}\left(\frac{1}{\sqrt{N}}\sum_{k=1}^{N}\int_{0}^{t}\int_{\bD^2}\int_{\Upsilon_k(r^-)\wedge\Upsilon_k^N(r^-)}^{\Upsilon_k(r^-)\vee\Upsilon_k^N(r^-)}\left(\mathds{1}_{P_k\left(r,t,\gamma,\overline{\mathfrak F}\right)=0}-\mathds{1}_{P_k\left(r,t+\delta,\gamma,\overline{\mathfrak F}\right)=0}\right)Q_k(dr,d\lambda,d\gamma,du)\geq\epsilon\right)\nonumber\\&\hspace*{12cm}\to0.
	\end{align}
	We recall that for $0 \le t\le T$, 
	\[\mathcal{F}^N_t=\sigma\left\{(\lambda_{k,i},\gamma_{k,i})_{\substack{1\leq k\leq N\\1\leq i\leq A^N_k(t)}},\,(Q_k|_{[0,t]})_{1\leq k\leq N}\right\}.\]  
	We set
	\[D^N_k(t)=\int_{0}^{t}\int_{\bD^2}\int_{\Upsilon_k(v^-)\wedge\Upsilon_k^N(v^-)}^{\Upsilon_k(v^-)\vee\Upsilon_k^N(v^-)}Q_k(dv,d\lambda,d\gamma,du),\]
	and $\overline{D}^N_k$ is given by the same expression as $D^N_k$ but where we replace $Q_k$ by its compensated measure.
	
	Since
	\[0\leq\mathds{1}_{P_k\left(v,t,\gamma,\overline{\mathfrak F}\right)=0}-\mathds{1}_{P_k\left(v,t+\delta,\gamma,\overline{\mathfrak F}\right)=0}\leq \int_{t}^{t+\delta}\int_{0}^{\lambda_*}Q_k(du_1,du_2)=C_k(t,t+\delta),\]
	by exchangeability and the fact that $(Q_k)_k$ are independent and $C_k(t,t+\delta)$ and $Q_k|_{[0,t]}$ are independent, it follows that 
	\begin{multline}\label{TCL-A-7-1}
	\E\left[\left(\frac{1}{\sqrt{N}}\sum_{k=1}^{N}\int_{0}^{t}\int_{\bD^2}\int_{\Upsilon_k(v^-)\wedge\Upsilon_k^N(v^-)}^{\Upsilon_k(v^-)\vee\Upsilon_k^N(v^-)}\left(\mathds{1}_{P_k\left(v,t,\gamma,\overline{\mathfrak F}\right)=0}-\mathds{1}_{P_k\left(v,t+\delta,\gamma,\overline{\mathfrak F}\right)=0}\right)Q_k(dv,d\lambda,d\gamma,du)\right)^2\right]\\
	\begin{aligned}
	&\leq\E\left[\left(\frac{1}{\sqrt{N}}\sum_{k=1}^{N}C_k(t,t+\delta)D^N_k(t)\right)^2\right]\\
	&=\E\left[C_1^2(t,t+\delta)\left(D^N_1(t)\right)^2\right]+(N-1)\E\left[C_1(t,t+\delta)C_2(t,t+\delta)D^N_1(t)D^N_2(t)\right]\\
	&=\E\left[C_1^2(t,t+\delta)\right]\E\left[\left(D^N_1(t)\right)^2\right]+(N-1)\E\left[C_1(t,t+\delta)\right]\E\left[C_2(t,t+\delta)\right]\E\left[D^N_1(t)D^N_2(t)\right]\\
	&\leq2(\lambda_*\delta+\lambda_*^2\delta^2)\left(\E\left[\left(\overline{D}^N_1(t)\right)^2\right]+\E\left[\left(\int_{0}^{t}|\Upsilon^N_1(v)-\Upsilon_1(v)|dv\right)^2\right]\right)+\lambda_*^2\delta^2N\E\left[D^N_1(t)D^N_2(t)\right]\\
	&\leq2(\lambda_*\delta+\lambda_*^2\delta^2)\left(\int_{0}^{t}\E\left[|\Upsilon_1(v)-\Upsilon_1^N(v)|\right]dv+T\int_{0}^{t}\E\left[|\Upsilon^N_1(v)-\Upsilon_1(v)|^2\right]dv\right) \\
	& \qquad +\lambda_*^2\delta^2N\E\left[D^N_1(t)D^N_2(t)\right],
	\end{aligned}
	\end{multline}
	where we use H\"older's inequality in the last inequality.
	 
	At this stage, we admit the following inequality holds (and we will show this immediately below):
	\begin{equation}\label{TCL-eq-in-1}
	N\E\left[D^N_1(t)D^N_2(t)\right]\leq C,
	\end{equation}
	for some $C>0$ independent of $N$.
	
	Consequently from Corollary~\ref{inr}, from \eqref{TCL-eq-in-1} and \eqref{TCL-A-7-1}, \eqref{TCL-A-7-eq22} follows.
\end{proof}

Now we establish the inequality \eqref{TCL-eq-in-1}. To do this, we define  the following process  for each $k\in\mathbb{N}$,
\[B^N_k(t)=\int_{0}^{t}\int_{0}^{\varTheta^N_k(r^-)}Q_k(dr,du),\]
where
\[\varTheta^N_k(t)=\gamma_{k,B^N_k(t)}(\vartheta^N_k(t))\overline{\mathfrak F}^N_{(1,2)}(t),\]
and $\vartheta_k^N$ is defined in the same manner as $\varsigma^N_1$ with $ B^N_k$ instead of $ A^N_1$ in \eqref{TCL-t-1}.
\begin{lemma}
	\label{TCL-A-9-lem_inq}For $k\in\mathbb{N}$ and $T\geq0$, 
	\begin{equation}
	\mathbb{E}\left[\sup_{t\in[0,T]}\left|A^N_k(t)-B^N_k(t)\right|\right]\leq\int_{0}^{T}\mathbb{E}\Big[\left|\Upsilon^N_k(t)-\varTheta^N_k(t)\right|\Big]dt=:\delta^N(T)\label{TCL-A-9-eqA}
	\end{equation}
	and 
	\begin{equation*}
	\mathbb{E}\left[\sup_{t\in[0,T]}\left|\varsigma^N_k(t)-\vartheta^N_k(t)\right|\right]\leq T\delta^N(T).
	\end{equation*}
	Moreover, 
	\begin{equation}\delta^N(T)\leq\frac{\E\left[Y\right]}{N}T\exp(2\lambda^*T).\label{TCL-A-9-eqdelta}\end{equation}
\end{lemma}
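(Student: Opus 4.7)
The plan is to adapt the proof of Lemma~\ref{lem_inq}, substituting the quarantine-model bound $|\overline{\mathfrak{F}}^N(t)-\overline{\mathfrak{F}}^N_{(1,2)}(t)|\le \lambda_* Y(t)/N$ from subsection~\ref{TCL-sub-qua} in place of the law-of-large-numbers bound on $|\overline{\mathfrak{F}}^N-\overline{\mathfrak{F}}|$. Since both $A^N_k$ and $B^N_k$ are driven by the same Poisson random measure $Q_k$, their paths can be compared directly via the synchronous coupling.

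For the first inequality, I would observe that $A^N_k$ and $B^N_k$ coincide except at atoms of $Q_k$ falling in the symmetric difference between the regions under the intensity curves $\Upsilon^N_k$ and $\varTheta^N_k$, so that
\[
|A^N_k(t)-B^N_k(t)|\le \int_0^t\int_{\Upsilon^N_k(r^-)\wedge\varTheta^N_k(r^-)}^{\Upsilon^N_k(r^-)\vee\varTheta^N_k(r^-)} Q_k(dr,du).
\]
The right-hand side is nondecreasing in $t$, so its supremum on $[0,T]$ is attained at $T$; taking expectation and using that the intensity of $Q_k$ is $dr\,du$ yields \eqref{TCL-A-9-eqA}. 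For the second inequality, I note that $\vartheta^N_k(t)=\varsigma^N_k(t)$ whenever $A^N_k$ and $B^N_k$ have the same set of jump times in $[0,t]$, so the difference vanishes outside the event $\{\sup_{s\le T}|A^N_k(s)-B^N_k(s)|\ge 1\}$. On that event the difference is bounded by $T$, and Markov's inequality gives $\mathbb{P}(\sup_{s\le T}|A^N_k(s)-B^N_k(s)|\ge 1)\le\delta^N(T)$, yielding the stated bound.

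For the third inequality, I would decompose
\[
|\Upsilon^N_k(t)-\varTheta^N_k(t)|\le \lambda_*\bigl|\gamma_{k,A^N_k(t)}(\varsigma^N_k(t))-\gamma_{k,B^N_k(t)}(\vartheta^N_k(t))\bigr|+|\overline{\mathfrak{F}}^N(t)-\overline{\mathfrak{F}}^N_{(1,2)}(t)|,
\]
bound the first term by $\lambda_*\mathbf{1}_{\{A^N_k(t)\neq B^N_k(t)\text{ or }\varsigma^N_k(t)\neq\vartheta^N_k(t)\}}$ (whose expectation is controlled by $\delta^N(t)$ via the first two inequalities already proved) and bound the second by $\lambda_* Y(t)/N$ using the quarantine coupling. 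This produces an integral inequality of Gronwall type, $\delta^N(T)\le C_1\int_0^T\delta^N(t)\,dt + C_2\,T\,\E[Y(T)]/N$, whose resolution gives \eqref{TCL-A-9-eqdelta} with the exponential factor $\exp(2\lambda_*T)$.

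The main technical point to watch is the bootstrapping in the third inequality: the bound on $\mathbb{E}[|\Upsilon^N_k-\varTheta^N_k|]$ already involves $\delta^N$, so one must be careful to close the Gronwall argument without circularity, and to ensure that $\mathbb{E}[Y(t)]$ (where $Y(t)$ is geometric with parameter $\exp(-2\lambda_* t)$) is correctly absorbed in the exponential prefactor. Apart from tracking constants, the proof is a direct transcription of the argument for Lemma~\ref{lem_inq}, with the quarantine coupling replacing the $N\to\infty$ deterministic limit coupling.
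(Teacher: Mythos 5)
Your proposal is correct and follows essentially the same path as the paper's proof: the first inequality is obtained exactly as you describe by expressing $|A^N_k-B^N_k|$ as a Poisson integral over the symmetric-difference region; the third is obtained by bounding $\E[|\Upsilon^N_k(t)-\varTheta^N_k(t)|]$ by a term $\le\E[Y]/N$ (coming from $|\overline{\mathfrak F}^N-\overline{\mathfrak F}^N_{(1,2)}|$) plus $\lambda_*\,\P(A^N_k(t)\neq B^N_k(t)\text{ or }\varsigma^N_k(t)\neq\vartheta^N_k(t))\le\lambda_*\delta^N(t)$, then Gronwall; and the second is obtained by the containment $\{\varsigma^N_k(t)\neq\vartheta^N_k(t)\text{ for some }t\le T\}\subset\{\sup_{t\le T}|A^N_k-B^N_k|\ge1\}$ plus Markov. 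The only cosmetic difference is that the paper splits the expectation by conditioning on the event $\{A^N_k(t)=B^N_k(t),\ \varsigma^N_k(t)=\vartheta^N_k(t)\}$ whereas you use a pointwise triangle inequality $|\Upsilon^N_k-\varTheta^N_k|\le\lambda_*|\gamma_{k,A^N_k}(\varsigma^N_k)-\gamma_{k,B^N_k}(\vartheta^N_k)|+|\overline{\mathfrak F}^N-\overline{\mathfrak F}^N_{(1,2)}|$, and then note $|\gamma_{k,A^N_k}(\varsigma^N_k)-\gamma_{k,B^N_k}(\vartheta^N_k)|\le\mathds{1}_{\{A^N_k\neq B^N_k\text{ or }\varsigma^N_k\neq\vartheta^N_k\}}$; both give the same Gronwall inequality. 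Your flagged concern about absorbing $\E[Y(t)]$ is a real but minor bookkeeping point — the paper simply fixes $Y$ at the terminal horizon so that the bound $|\overline{\mathfrak F}^N(t)-\overline{\mathfrak F}^N_{(1,2)}(t)|\le Y/N$ holds uniformly on $[0,T]$, avoiding a $t$-dependent $Y$ inside the Gronwall integral.
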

\begin{proof}
	Since \[\left|A^N_k(t)-B^N_k(t)\right|=\int_{0}^{t}\int_{0}^{+\infty}\mathds{1}_{\min\left(\Upsilon_k^N(r^-),\varTheta^N_k(r^-)\right)< u\leq\max\left(\Upsilon_k^N(r^-),\varTheta^N_k(r^-)\right)}Q_k(du,dr),\]
	we have
	\begin{equation*}
	\mathbb{E}\left[\sup_{t\in[0,T]}\left|A^N_k(t)-B^N_k(t)\right|\right]\leq\int_{0}^{T}\mathbb{E}\Big[\left|\Upsilon^N_k(t)-\varTheta_k^N(t)\right|\Big]dt=\delta^N(T).
	\end{equation*}
	We recall that \[\Upsilon^N_k(t)=\gamma_{k,A_k^N(t)}(\varsigma_{k}^N(t))\overline{\mathfrak{F}}^N(t) \quad \text{ and } \quad \varTheta^N_k(t)=\gamma_{k,B^N_k(t)}(\vartheta^N_k(t))\overline{\mathfrak F}^N_{(1,2)}(t).\]
	However, since $\gamma_{k,i}\leq1$ and  $0\leq\overline{\mathfrak{F}}^N(t),\,\overline{\mathfrak{F}}^N_{(1,2)}(t)\leq\lambda^\ast$, we obtain 
	\begin{align}\label{TCL-A-9-EL0}
	\mathbb{E}\Big[\left|\Upsilon^N_k(t)-\varTheta_k^N(t)\right|\Big]&\leq\mathbb{E}\left[\left|\Upsilon^N_k(t)-\varTheta_k^N(t)\right|\mathds{1}_{A^N_k(t)=B^N_k(t),\varsigma_{k}(t)=\vartheta^N_k(t)}\right]+\nonumber\\&\hspace*{3cm}\lambda^*\mathbb{P}\left(A^N_k(t)\neq B^N_k(t)\text{ or }\varsigma_{k}^N(t)\neq \vartheta^{N}_k(t)\right).
	\end{align}
	On the other hand, as $\gamma_{k,i}\leq1$, from \eqref{TCL-eq1}, we have 
	\begin{align}
	\mathbb{E}\left[\left|\Upsilon^N_k(t)-\varTheta_k^N(t)\right|\mathds{1}_{A^N_k(t)=B_k^N(t),\varsigma_{k}^N(t)=\vartheta^N_k(t)}\right]&\leq\mathbb{E}\left[\left|\overline{\mathfrak F}^N(t)-\overline{\mathfrak F}^N_{(1,2)}(t)\right|\right]\nonumber\\
	&\leq\frac{1}{N}\E\left[Y\right]\label{TCL-A-9-EL1}.
	\end{align}
	Moreover, since \[\left\{A^N_k(t)\neq B_k^N(t)\text{ or }\varsigma_{k}^N(t)\neq \vartheta^N_k(t)\right\}\subset\left\{\sup_{r\in[0,t]}|A^N_k(r)-B_k^N(r)|\geq1\right\},\]
	we have 
	\[\mathbb{P}\left(A_k^N(t)\neq B_k^N(t)\text{ or }\varsigma_{k}^N(t)\neq \vartheta_{k}^N(t)\right)\leq\E\left[\sup_{r\in[0,t]}|A^N_k(r)-B_k^N(r)|\right]\leq\delta^N(t).\]
	Thus, from \eqref{TCL-A-9-EL0} and \eqref{TCL-A-9-EL1}, we have 
	\begin{equation*}
	\mathbb{E}\Big[\left|\Upsilon^N_k(t)-\varTheta_k(t)\right|\Big]\leq\frac{\E\left[Y\right]}{N}+\lambda^*\delta^N(t).
	\end{equation*}
	Hence,  from \eqref{TCL-A-9-eqA}, we deduce that
	\[\delta^N(T)\leq\frac{\E\left[Y\right]}{N}T+\lambda^*\int_{0}^{T}\delta^N(t)dt,\]
	and by Gronwall's lemma, it follows that 
	\begin{equation*}\delta^N(T)\leq\frac{\E\left[Y\right]}{N}T\exp(\lambda^*T).\end{equation*}
	Moreover, 
	\begin{align*}
	\mathbb{E}\left[\sup_{t\in[0,T]}\left|\varsigma^N_k(t)-\vartheta_k^N(t)\right|\right]&=\mathbb{E}\left[\mathds{1}_{\{\exists t\in[0,T],\varsigma^N_k(t)\neq\vartheta_k^N(t)\}}\sup_{t\in[0,T]}\left|\varsigma^N_k(t)-\vartheta_k^N(t)\right|\right]\\
	&\leq T\mathbb{P}\left(\exists t\in[0,T],\varsigma^N_k(t)\neq\vartheta_k^N(t)\right)\\
	&=T\mathbb{P}\left(\sup_{t\in[0,T]}\left|A^N_k(t)-B_k^N(t)\right|\neq0\right)\\
	&\leq T\E\left[\sup_{t\in[0,T]}\left|A^N_k(t)-B_k^N(t)\right|\right]\\
	&\leq T\delta^N(T).
	\end{align*}
	This concludes the proof of the lemma.
\end{proof}
The same proof of Proposition~\ref{prop-c1} gives the following Lemma, where we replace $A_k$ by $B^N_k$ and $\Upsilon_k$ by $\varTheta^N_k$ and using Lemma~\ref{TCL-A-9-eqdelta} instead of Lemma~\ref{lem_inq}.

\begin{lemma}\label{TCL-A-9-lem-inq2}
	For all $p\in\mathbb{N},\,t\in[0,T]$,
	\begin{equation*}
	\mathbb{P}\left((\varsigma_{k'}^N(s))_{s\in[0,t]}\neq(\vartheta_{k'}^N(s))_{s\in[0,t]},\,\forall k'=1,\cdots, p\right)\leq\frac{C_T}{N^p},
	\end{equation*}
	and
	\[\E\left[\left|\Upsilon^N_k(t)-\varTheta_k^N(t)\right|^p\right]\leq\frac{C_T}{N^p}.\]	
\end{lemma}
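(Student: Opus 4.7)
The plan is to mirror the proof of Proposition~\ref{prop-c1}, substituting $(A_k,\Upsilon_k)$ by the quarantined coupling $(B^N_k,\varTheta^N_k)$, and exploiting the key gain: the quarantine construction in Subsection~\ref{TCL-sub-qua} yields $|\overline{\mathfrak F}^N(t)-\overline{\mathfrak F}^N_{(1,2)}(t)|\leq\lambda_* Y/N$ with $Y$ geometric and independent of $Q_1,Q_2$, which is a $1/N$ rather than a $1/\sqrt{N}$ control. This improved baseline is precisely why $N^{-k/2}$ in Proposition~\ref{prop-c1} is sharpened to $N^{-p}$ here.

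First, I would introduce the counting process
\[
\Delta^N_k(t):=\int_{0}^{t}\int_{\Upsilon_k^N(s^-)\wedge\varTheta^N_k(s^-)}^{\Upsilon_k^N(s^-)\vee\varTheta^N_k(s^-)}Q_k(ds,du),
\]
so that $(\varsigma^N_{k'}(s))_{s\in[0,t]}=(\vartheta^N_{k'}(s))_{s\in[0,t]}$ iff $\Delta^N_{k'}(t)=0$. Integer-valuedness yields
\[
\mathbb{P}\bigl(\forall k'\leq p,\;(\varsigma^N_{k'}(s))_{s}\neq(\vartheta^N_{k'}(s))_{s}\bigr)\leq\E\Bigl[\prod_{k'=1}^{p}\Delta^N_{k'}(t)\Bigr],
\]
and the problem is reduced to bounding $\varepsilon^{(k,p)}_N(t):=\E[\prod_{k'=1}^{k}(\Delta^N_{k'}(t))^{p}]\leq C_{T,k,p}N^{-k}$.

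Next I would run the same induction on $k$ as in Proposition~\ref{prop-c1}. For $k=1$, Lemma~\ref{TCL-A-9-lem_inq} gives $\E[\Delta^N_1(t)]\leq C_T/N$; the one-particle bound $\varepsilon^{(1,p)}_N(t)\leq C_{T,p}/N$ then follows from the binomial identity
\[
(\Delta^N_1(t))^p=\sum_{p'=0}^{p-1}\binom{p}{p'}\int_0^t(\Delta^N_1(s^-))^{p'}\Delta^N_1(ds)
\]
together with $|\Upsilon^N_1-\varTheta^N_1|\leq\lambda_*$ and Gronwall. For the induction step, I expand $\prod_{i=1}^{k}(\Delta^N_i(t))^p$ via the same binomial identity, use exchangeability to reduce matters to estimating $\E[\prod_{i=2}^{k}(\Delta^N_i(s))^p\,|\Upsilon^N_1(s)-\varTheta^N_1(s)|]$, and plug in the crucial pointwise bound
\[
|\Upsilon^N_1(s)-\varTheta^N_1(s)|\leq|\overline{\mathfrak F}^N(s)-\overline{\mathfrak F}^N_{(1,2)}(s)|+\lambda_*\mathds{1}_{\Delta^N_1(s)\geq1}\leq\tfrac{\lambda_* Y}{N}+\lambda_*\Delta^N_1(s).
\]
Splitting into "same history" and "different history", applying Young's inequality and the inductive hypothesis on $\varepsilon^{(k',kp)}_N$ for $k'<k$, together with the independence of $Y$ from $Q_k$, yields a linear Gronwall inequality for $\varepsilon^{(k,p)}_N$ delivering the desired $N^{-k}$ rate. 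The moment bound on $|\Upsilon^N_k-\varTheta^N_k|^p$ is then obtained by the same decomposition together with the joint probability estimate and the moment bounds $\E[Y^q]<\infty$.

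The main obstacle is the inductive step: one must verify that the cross-moment on the right-hand side can be genuinely decoupled into $\varepsilon^{(k-1,\cdot)}_N$ times an $N^{-1}$ contribution. The quarantine construction is precisely what allows this — since $\overline{\mathfrak F}^N_{(1,2)}$ (hence $\varTheta^N_\ell$ for $\ell\geq3$) is independent of $Q_1,Q_2$ up to the $Y/N$ correction, the Poisson measures $Q_1,\dots,Q_k$ act almost independently on the increments of $\Delta^N_1,\dots,\Delta^N_k$, which is the analogue of the i.i.d.\ coupling used in Proposition~\ref{prop-c1}. All other manipulations (Young's inequality, exchangeability over disjoint index blocks, Gronwall) carry over verbatim from that proof.
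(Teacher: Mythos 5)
Your strategy is the one the paper intends: rerun the proof of Proposition~\ref{prop-c1} with $(A_k,\Upsilon_k)$ replaced by the quarantined coupling $(B^N_k,\varTheta^N_k)$, the gain being the $O(1/N)$ baseline $|\overline{\mathfrak F}^N-\overline{\mathfrak F}^N_{(1,2)}|\leq\lambda_* Y/N$ from Lemma~\ref{TCL-A-9-lem_inq} instead of the $O(1/\sqrt N)$ rate behind Proposition~\ref{prop-c1}. For the joint probability estimate, $\chi_N^{(p)}\leq \E[\prod_{k'\leq p}\Delta^N_{k'}(t)]$, your induction (binomial identity, exchangeability over disjoint index blocks, Young, Gronwall) is the right skeleton, and the pointwise bound $|\Upsilon^N_1-\varTheta^N_1|\leq \lambda_* Y/N+\lambda_*\Delta^N_1$ is the correct replacement for $|\Upsilon_1-\Upsilon_1^N|\leq|\overline{\mathfrak F}^N-\overline{\mathfrak F}|+\lambda_*\Delta^N_1$. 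One technical caveat: $Y$ is only known to be independent of $Q_1,Q_2$, not of the remaining $Q_j$ or the $\gamma$'s, so the cross term $\E[Y\prod_{i=2}^{k}(\Delta^N_i)^p]$ cannot simply be factorised; you must decouple it via H\"older/Young, exactly as Proposition~\ref{prop-c1} decouples $|\overline{\mathfrak F}^N-\overline{\mathfrak F}|$ from the product.

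Where the proposal is too thin is the second assertion, $\E[|\Upsilon^N_k(t)-\varTheta^N_k(t)|^p]\leq C_T/N^p$, which you dispatch in one sentence. The natural decomposition on whether the $k$-th trajectories have separated by time $t$ gives
\begin{equation*}
\E\big[|\Upsilon^N_k(t)-\varTheta^N_k(t)|^p\big]\;\leq\;\frac{\lambda_*^p\,\E[Y^p]}{N^p}\;+\;\lambda_*^p\,\P\big(\Delta^N_k(t)\geq1\big),
\end{equation*}
and the second term — the only thing the first part of the lemma (with $p=1$) controls — is of order $1/N$, not $1/N^p$: on the order-$1/N$ event that $(\varsigma^N_k,\vartheta^N_k)$ differ, the rate gap is of size $\lambda_*$, not $Y/N$, and raising it to the $p$-th power gains nothing. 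So "the same decomposition together with the joint probability estimate and moments of $Y$'' does not deliver $N^{-p}$ for $p\geq 2$; you would need a genuinely different argument to show that the contribution of the divergence event decays faster (or the bound should be read as a product over $p$ \emph{distinct} individuals, $\E[\prod_{k'=1}^p|\Upsilon^N_{k'}-\varTheta^N_{k'}|]$, which your joint-probability machinery does give at rate $N^{-p}$). Spell this step out rather than asserting it — as stated, it is a gap.
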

We can now establish the following lemma for the inequality \eqref{TCL-eq-in-1}. 
\begin{lemma}There exists $C>0$ such that the inequality \eqref{TCL-eq-in-1} holds for all $N$. 
\end{lemma}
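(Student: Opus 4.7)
The plan is to exploit the two-individual quarantine coupling from Lemma~\ref{TCL-A-9-lem_inq} to manufacture approximate conditional independence between $D^N_1$ and $D^N_2$. The crucial structural fact is that $\varTheta^N_k$ depends only on $Q_k$ and $\overline{\mathfrak{F}}^N_{(1,2)}$, and $\overline{\mathfrak{F}}^N_{(1,2)}$ is a functional of $(Q_j)_{j\neq 1,2}$ only, hence independent of $(Q_1,Q_2)$. Consequently the pairs $(Q_1,\varTheta^N_1)$ and $(Q_2,\varTheta^N_2)$ are conditionally independent given $\overline{\mathfrak{F}}^N_{(1,2)}$. I introduce the auxiliary counts
\[
\tilde D^N_k(t)=\int_{0}^{t}\int_{\bD^2}\int_{\Upsilon_k(v^-)\wedge\varTheta^N_k(v^-)}^{\Upsilon_k(v^-)\vee\varTheta^N_k(v^-)}Q_k(dv,d\lambda,d\gamma,du),\quad R^N_k(t)=\int_{0}^{t}\int_{\bD^2}\int_{\Upsilon^N_k(v^-)\wedge\varTheta^N_k(v^-)}^{\Upsilon^N_k(v^-)\vee\varTheta^N_k(v^-)}Q_k(dv,d\lambda,d\gamma,du),
\]
and observe that $[\min(\Upsilon_k,\Upsilon^N_k),\max(\Upsilon_k,\Upsilon^N_k)]$ is contained in the union of the two intervals appearing in $\tilde D^N_k$ and $R^N_k$. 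This yields the pointwise inequality $D^N_k\le\tilde D^N_k+R^N_k$, and hence
\[
\E[D^N_1 D^N_2]\le\E[\tilde D^N_1\tilde D^N_2]+\E[\tilde D^N_1 R^N_2]+\E[R^N_1\tilde D^N_2]+\E[R^N_1 R^N_2].
\]

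For the leading term, conditional independence yields $\E[\tilde D^N_1\tilde D^N_2]=\E\bigl[\E[\tilde D^N_1\mid\overline{\mathfrak{F}}^N_{(1,2)}]\,\E[\tilde D^N_2\mid\overline{\mathfrak{F}}^N_{(1,2)}]\bigr]$. A Gronwall argument analogous to Lemma~\ref{TCL-A-9-lem_inq}, performed conditionally on $\overline{\mathfrak{F}}^N_{(1,2)}$, gives $\E[\tilde D^N_k\mid\overline{\mathfrak{F}}^N_{(1,2)}]\le C_T\int_{0}^{t}|\overline{\mathfrak{F}}(v)-\overline{\mathfrak{F}}^N_{(1,2)}(v)|\,dv$. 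Combining Proposition~\ref{prop-c1} with the bound $|\overline{\mathfrak{F}}^N-\overline{\mathfrak{F}}^N_{(1,2)}|\le\lambda_*\tilde Y/N$ from \eqref{TCL-eq1} yields $\E[|\overline{\mathfrak{F}}(v)-\overline{\mathfrak{F}}^N_{(1,2)}(v)|^2]\le C/N$, and Cauchy--Schwarz then gives $\E[\tilde D^N_1\tilde D^N_2]\le C/N$. For $\E[R^N_1 R^N_2]$, the Poisson variance formula combined with Lemma~\ref{TCL-A-9-lem-inq2} (both the $L^1$ and $L^2$ bounds) yields $\E[(R^N_k)^2]\le C/N$, whence $\E[R^N_1 R^N_2]\le C/N$ by Cauchy--Schwarz.

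The main obstacle is the cross terms $\E[\tilde D^N_1 R^N_2]$, since na\"ive Cauchy--Schwarz only gives $O(N^{-3/4})$: indeed $\E[(\tilde D^N_k)^2]$ is of order $N^{-1/2}$, dominated by the Poisson fluctuations in a region of expected width $O(N^{-1/2})$. To overcome this I plan to split on the good event $G=\{A^N_k=B^N_k\text{ on }[0,t],\,k=1,2\}$, which by Lemma~\ref{TCL-A-9-lem_inq} satisfies $\mathbb{P}(G^c)\le C/N$. On $G$ we have the identity $|\Upsilon^N_k-\varTheta^N_k|=\gamma_{k,A^N_k}(\varsigma^N_k)|\overline{\mathfrak{F}}^N-\overline{\mathfrak{F}}^N_{(1,2)}|\le\lambda_*\tilde Y/N$, so $R^N_k$ is dominated by a Poisson count $\hat R^N_k$ over a strip of width $\lambda_*\tilde Y/N$; since $\tilde Y$ is independent of $(Q_1,Q_2)$ and $\tilde D^N_1$ depends only on $(Q_1,\overline{\mathfrak{F}}^N_{(1,2)})$, conditioning on $(\overline{\mathfrak{F}}^N_{(1,2)},\tilde Y)$ factorizes the expectation and produces an $O(N^{-3/2})$ contribution once $\E[|\overline{\mathfrak{F}}-\overline{\mathfrak{F}}^N_{(1,2)}|]=O(N^{-1/2})$ is inserted. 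On $G^c$, H\"older's inequality using the higher $L^p$ moments from Lemma~\ref{TCL-A-9-lem-inq2}, together with $\mathbb{P}(G^c)\le C/N$, delivers a contribution of $o(1/N)$. Summing all four pieces gives $\E[D^N_1 D^N_2]\le C/N$, which is precisely the desired inequality \eqref{TCL-eq-in-1}.
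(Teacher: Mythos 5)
Your reduction to the quarantine coupling, the pointwise inequality $D^N_k\le \tilde D^N_k+R^N_k$, and the treatment of the two diagonal terms $\E[\tilde D^N_1\tilde D^N_2]$ and $\E[R^N_1R^N_2]$ are all sound and in the spirit of the paper (which also uses $\varTheta^N_k$, $\tilde D^N_k$, and conditional independence given $\overline{\mathfrak F}^N_{(1,2)}$). However, the cross term $\E[\tilde D^N_1R^N_2]$ is where the argument breaks down, and you correctly sense this is the bottleneck — but the fix you propose does not close it.

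First, the good event $G=\{A^N_k=B^N_k\text{ on }[0,t],\,k=1,2\}$ makes the "on $G$" contribution identically zero, not $O(N^{-3/2})$: the process $A^N_k-B^N_k$ jumps by $\pm1$ precisely at the atoms of $Q_k$ lying in the strip $[\Upsilon^N_k\wedge\varTheta^N_k,\,\Upsilon^N_k\vee\varTheta^N_k]$, so $A^N_k\equiv B^N_k$ on $[0,t]$ forces $R^N_k(t)=0$. Thus $R^N_2\mathds 1_G=0$, the split buys nothing, and the entire cross term sits on $G^c$. Second, the H\"older bound on $\E[\tilde D^N_1R^N_2\mathds 1_{G^c}]$ cannot reach $O(N^{-1})$. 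Since $\tilde D^N_k$ takes nonnegative integer values, $\E[(\tilde D^N_k)^p]\ge\E[\tilde D^N_k]=O(N^{-1/2})$ for every $p\ge1$; likewise $\E[(R^N_k)^q]\ge\E[R^N_k]=O(N^{-1})$. Choosing conjugate exponents $1/p+1/q+1/r=1$, the product $\|\tilde D^N_1\|_p\|R^N_2\|_q\,\P(G^c)^{1/r}$ is of order $N^{-\left(\frac{1}{2p}+\frac{1}{q}+\frac{1}{r}\right)}=N^{-\left(1-\frac{1}{2p}\right)}$, which is strictly worse than $N^{-1}$ for every finite $p$ and only asymptotes to $N^{-1}$ as $p\to\infty$ (where $\tilde D^N_1$ is unbounded, so $p=\infty$ is unavailable). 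So the plan delivers $O(N^{-1+\varepsilon})$ at best, not $O(N^{-1})$.

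The underlying difficulty is that the inequality $D^N_k\le\tilde D^N_k+R^N_k$ throws away the martingale structure, which is what the paper leans on. The paper instead writes $D^N_k=\overline D^N_k+\int_0^t|\Upsilon^N_k-\Upsilon_k|\,dv$ exactly (compensated part plus compensator), expands $\E[D^N_1D^N_2]$, and uses $\E[\overline D^N_1(t)\overline D^N_2(t)]=0$ — an orthogonality that holds because $\overline D^N_1,\overline D^N_2$ are stochastic integrals against independent Poisson random measures and hence have zero quadratic covariation. This kills the bad quadratic fluctuation term outright, leaving only $2\int_0^t\E[|\Upsilon^N_1(v)-\Upsilon_1(v)|\,\overline D^N_2(v)]\,dv$ (after the optional-stopping step replacing $\overline D^N_2(t)$ by $\overline D^N_2(v)$) plus a benign double integral; the first is then handled by your kind of $\varTheta^N_1$-split together with Cauchy--Schwarz and the conditional-independence factorization. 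If you want to keep your decomposition, the fix is to replace the one-sided inequality by the exact identity $D^N_k=\overline D^N_k+\text{compensator}$ (so you can exploit the vanishing of the mixed martingale expectation), rather than looking for finer moment bounds on the cross term.
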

\begin{proof}
	By exchangeability and the fact that $Q_1$ and $Q_2$ are independent, 
	\begin{align}
	\E\left[D^N_1(t)D^N_2(t)\right]&=\E\left[\overline{D}^N_1(t)\overline{D}^N_2(t)\right]+2\int_{0}^{t}\E\left[|\Upsilon_1^N(v)-\Upsilon_1(v)|\overline{D}^N_2(t)\right]dv
	\nonumber
	\\&\hspace{3cm}+\int_{0}^{t}\int_{0}^{t}\E\left[|\Upsilon_1^N(v)-\Upsilon_1(v)||\Upsilon_2^N(r)-\Upsilon_2(r)|\right]dvdr\nonumber\\
	&=2\int_{0}^{t}\E\left[|\Upsilon_1^N(v)-\Upsilon_1(v)|\overline{D}^N_2(v)\right]dv \nonumber \\
	& \quad +\int_{0}^{t}\int_{0}^{t}\E\left[|\Upsilon_1^N(v)-\Upsilon_1(v)||\Upsilon_2^N(r)-\Upsilon_2(r)|\right]dvdr,\label{TCL-A-9-eq1}
	\end{align}
	where the first term in the last equality follows from the fact that 
	\begin{align*}
		\E\left[\overline{D}^N_2(t)-\overline{D}^N_2(v)\Big|\mathcal{F}^N_v\right]&=\E\left[\int_{v}^{t}\int_{\bD^2}\int_{\Upsilon_2(r^-)\wedge\Upsilon_2^N(r^-)}^{\Upsilon_2(r^-)\vee\Upsilon_2^N(r^-)}\overline{Q}_2(dr,d\lambda,d\gamma,du)\Big|\mathcal{F}^N_v\right]=0.
	\end{align*}
	As 
	\[|\Upsilon_1^N(v)-\Upsilon_1(v)|\leq|\Upsilon_1^N(v)-\varTheta_1^N(v)|+|\varTheta_1^N(v)-\Upsilon_1(v)|,\]
	by H\"older's inequality, it follows that 
	\begin{align*}
		\E\left[|\Upsilon_1^N(v)-\Upsilon_1(v)||\overline{D}^N_2(v)|\right]&\leq\E\left[|\Upsilon_1^N(v)-\varTheta_1^N(v)||\overline{D}^N_2(v)|\right]+\E\left[|\varTheta_1^N(v)-\Upsilon_1(v)||\overline{D}^N_2(v)|\right]\\
		&\leq\left(\E\left[|\Upsilon_1^N(v)-\varTheta_1^N(v)|^2\right]\right)^{1/2}\left(\E\left[(\overline{D}^N_2(v))^2\right]\right)^{1/2}+\E\left[|\varTheta_1^N(v)-\Upsilon_1(v)||\overline{D}^N_2(v)|\right].
	\end{align*}
	Moreover, as
	\begin{align*}
		|\overline{D}^N_2(v)|&\leq |D^N_2(v)|+\int_0^v|\Upsilon_2^N(r)-\Upsilon_2(r)|dr\\
		&\leq|D^N_2(v)-\tilde{D}^N_2(v)|+\tilde{D}^N_2(v)+\int_0^v|\Upsilon_2^N(r)-\Upsilon_2(r)|dr,
	\end{align*}
		where
	\[\tilde{D}^N_k(t)=\int_{0}^{t}\int_{\bD^2}\int_{\Upsilon_k(v^-)\wedge\varTheta_k^N(v^-)}^{\Upsilon_k(v^-)\vee\varTheta_k^N(v^-)}Q_k(dv,d\lambda,d\gamma,du),\]
	it follows that,
	\begin{align*}
	\E\left[|\Upsilon_1^N(v)-\Upsilon_1(v)||\overline{D}^N_2(v)|\right]
	&\leq\left(\E\left[|\Upsilon_1^N(v)-\varTheta_1^N(v)|^2\right]\right)^{1/2}\left(\E\left[(\overline{D}^N_2(v))^2\right]\right)^{1/2}+\lambda_*\E\left[|D^N_2(v)-\tilde{D}^N_2(v)|\right]\\
	&\quad+\E\left[|\varTheta_1^N(v)-\Upsilon_1(v)|\E\left[\tilde{D}^N_2(v)\Big|\overline{\mathfrak F}^N_{(1,2)},Q_1\right]\right]\\&\qquad\quad+\int_{0}^{v}\E\left[|\varTheta_1^N(v)-\Upsilon_1(v)||\Upsilon_2^N(r)-\Upsilon_2(r)|\right]dr,
	\end{align*}
	where we use the fact that $|\varTheta_1^N(v)-\Upsilon_1(v)|\leq\lambda_*.$
	
	Consequently from Lemma~\ref{TCL-A-9-lem-inq2} and the fact that $Q_2$ and $\big(Q_1,\overline{\mathfrak F}^N_{(1,2)}\big)$ are independent, it follows that
	\begin{multline*}
	\E\left[|\Upsilon_1^N(v)-\Upsilon_1(v)||\overline{D}^N_2(v)|\right]\leq\frac{C_T}{N^{5/4}}+\lambda_*\E\left[\int_{0}^{v}\int_{\bD^2}\int_{\Upsilon_2^N(r^-)\wedge\varTheta_2^N(r^-)}^{\Upsilon_2^N(r^-)\vee\varTheta_2^N(r^-)}Q_2(dr,d\lambda,d\gamma,du)\right]\\
	\begin{aligned}
    & \hspace{2cm} +\int_0^v\E\left[|\varTheta_1^N(v)-\Upsilon_1(v)||\varTheta_2^N(r)-\Upsilon_2(r)|\right]dr+\int_{0}^{v}\E\left[|\varTheta_1^N(v)-\Upsilon_1(v)||\Upsilon_2^N(r)-\Upsilon_2(r)|\right]dr\\
	&= \frac{C_T}{N^{5/4}}+\lambda_*\int_{0}^{v}\E\left[|\varTheta_2^N(r)-\Upsilon_2^N(r)|\right]dr+\int_0^v\E\left[|\varTheta_1^N(v)-\Upsilon_1(v)||\varTheta_2^N(r)-\Upsilon_2(r)|\right]dr\\
	&\hspace*{2cm}+\int_{0}^{v}\E\left[|\varTheta_1^N(v)-\Upsilon_1(v)||\Upsilon_2^N(r)-\Upsilon_2(r)|\right]dr\\
	&\leq\frac{2C_T}{N^{5/4}}+\frac{C_T(\lambda_*+1)}{N},
	\end{aligned}
	\end{multline*}
	where the last line follows from Lemma~\ref{TCL-A-9-lem-inq2} and Corollary~\ref{inr}.
	
	Hence, from \eqref{TCL-A-9-eq1} and Corollary~\ref{inr} it follows that the inequality \eqref{TCL-eq-in-1} holds.
\end{proof}

\subsubsection{Proof of Lemma~\ref{TCL-tight-F3}}\label{TCL-sub-sec-tight-2}
We recall that 
\begin{equation}
\hat{\mathfrak S}^N_{1,1}(t)=\frac{1}{\sqrt{N}}\sum_{k=1}^{N}\int_{0}^{t}\int_{\bD^2}\int_0^{\Upsilon_k^N(r^-)}\gamma(t-r)\left(\mathds{1}_{P_k\left(r,t,\gamma,\overline{\mathfrak F}^N\right)=0}-\mathds{1}_{P_k\left(r,t,\gamma,\overline{\mathfrak F}\right)=0}\right)Q_k(dr,d\lambda,d\gamma,du).
\end{equation}

We set
\[H_k(s,t,\gamma,\phi_1,\phi_2)=\int_{s}^{t}\int_{{\gamma(v-s)(\phi_1\wedge\phi_2(v-)}}^{{\gamma(v-s)(\phi_1\vee\phi_2(v-)}}Q_k(dv,dw).\]
Using the fact that for all $A,\,B\in\mathbb{N},$
\[\left|\mathds{1}_{A=0}-\mathds{1}_{B=0}\right|\leq\left|A-B\right|.
\]
From Assumptions~\ref{TCL-AS-lambda-1} and~\ref{TCL-AS-lambda-2} and Lemma~\ref{TCL-lem-barlambda-inc-bound}, it follows that 
\begin{multline*}
\left|\hat{\mathfrak S}^N_{1,1}(t)-\hat{\mathfrak S}^N_{1,1}(s)\right|\\
\begin{aligned}
&\leq+\frac{2}{\sqrt{N}}\sum_{k=1}^{N}\int_{s}^{t}\int_{\bD^2}\int_0^{\lambda_*}H_k(r,t,\gamma,\overline{\mathfrak F}^N,\overline{\mathfrak F})Q_k(dr,d\lambda,d\gamma,du)\\
&\quad+\frac{2(t-s)^\alpha}{\sqrt{N}}\sum_{k=1}^{N}\int_{0}^{s}\int_{\bD^2}\int_0^{\lambda_*}H_k(r,t,\gamma,\overline{\mathfrak F}^N,\overline{\mathfrak F})Q_k(dr,d\lambda,d\gamma,du)\\
&\quad+\frac{2}{\sqrt{N}}\sum_{j=1}^{\ell-1}\sum_{k=1}^{N}\int_{0}^{s}\int_{\bD^2}\int_0^{\lambda_*}\mathds{1}_{s-r<\zeta^j\leq t-r}H_k(r,t,\gamma,\overline{\mathfrak F}^N,\overline{\mathfrak F})Q_k(dr,d\lambda,d\gamma,du)\\
&\quad+\frac{1}{\sqrt{N}}\sum_{k=1}^{N}\int_{0}^{s}\int_{\bD^2}\int_0^{\lambda_*}\left(H_k(r,t,\gamma,\overline{\mathfrak F}^N,\overline{\mathfrak F})+H_k(r,s,\gamma,\overline{\mathfrak F}^N,\overline{\mathfrak F})\right)Q_k(dr,d\lambda,d\gamma,du).
\end{aligned}
\end{multline*}
\begin{lemma}
	As $\delta\to0,$
	\begin{equation} \label{eqn-lm7.9}
	\limsup_{N\to\infty}\sup_{0\leq t\leq T}\frac{1}{\delta}\mathbb{P}\left(\frac{1}{\sqrt{N}}\sum_{k=1}^{N}\int_{t}^{t+\delta}\int_{\bD^2}\int_0^{\lambda_*} H_k(r,t+\delta,\gamma,\overline{\mathfrak F}^N,\overline{\mathfrak F}) Q_k(dr,d\lambda,d\gamma,du)\geq\epsilon\right)\to0.
	\end{equation} 
\end{lemma}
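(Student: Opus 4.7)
The plan is to reduce \eqref{eqn-lm7.9} via Markov's inequality to showing
\[
\limsup_{N\to\infty}\sup_{0\le t\le T}\frac{1}{\delta}\,\E[(X^N(t,\delta))^2]\longrightarrow 0\quad\text{as } \delta\to 0,
\]
where $X^N(t,\delta)$ denotes the sum inside the probability. Writing
\[
Z_k := \int_t^{t+\delta}\!\int_{\bD^2}\!\int_0^{\lambda_*} H_k(r,t+\delta,\gamma,\overline{\mathfrak F}^N,\overline{\mathfrak F})\,Q_k(dr,d\lambda,d\gamma,du),
\]
the mutual independence of the PRMs $Q_k$ together with exchangeability yield
\[
\E[(X^N(t,\delta))^2] = \E[Z_1^2]+(N-1)\,\E[Z_1 Z_2].
\]

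Each $Z_k$ involves two coupled sources of non-independence: the integrand $H_k$ is itself a stochastic integral against $Q_k$ over the \emph{future} interval $[r,t+\delta]$, so it is not predictable; and $\overline{\mathfrak F}^N$ couples $Q_k$ with every other $Q_j$. Both are resolved along the lines of the key estimate \eqref{TCL-eq-in-1} used for Lemma~\ref{TCL-tight-F2}: for the off-diagonal term I introduce $\tilde Z_k$ defined as $Z_k$ with $\overline{\mathfrak F}^N$ replaced by the quarantine approximation $\overline{\mathfrak F}^N_{(1,2)}$ from Section~\ref{TCL-sub-qua}, which is independent of $(Q_1,Q_2)$ and satisfies $|\overline{\mathfrak F}^N-\overline{\mathfrak F}^N_{(1,2)}|\le \lambda_*\tilde Y/N$ with $\tilde Y$ geometric and independent of $(Q_1,Q_2)$. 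After this replacement, $\tilde Z_1$ and $\tilde Z_2$ become conditionally independent given $\overline{\mathfrak F}^N_{(1,2)}$, and Theorem~\ref{TCL-th-1}, applied with $\tilde H_k$ as the future-measurable integrand, computes
\[
\E[\tilde Z_k\mid \overline{\mathfrak F}^N_{(1,2)}] \le \lambda_*\!\int_t^{t+\delta}\!\!\int_r^{t+\delta}\!\overline\gamma(v-r)\,|\overline{\mathfrak F}^N_{(1,2)}(v)-\overline{\mathfrak F}(v)|\,dv\,dr.
\]
Combined with $\E[|\overline{\mathfrak F}^N_{(1,2)}-\overline{\mathfrak F}|^2]=O(1/N)$ from Proposition~\ref{prop-c1}, this gives $(N-1)\E[\tilde Z_1\tilde Z_2]=O(\delta^4)$, so after division by $\delta$ its contribution is $O(\delta^3)$.

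The diagonal term $\E[Z_1^2]$ is handled analogously by quarantining only individual~$1$: conditional on $\overline{\mathfrak F}^N_{(1)}$, the second moment of $\tilde Z_1$ is obtained by a double application of Theorem~\ref{TCL-th-1}, equivalently by counting ordered pairs of $Q_1$-atoms in $[t,t+\delta]\times[0,\lambda_*]$ whose second coordinate falls in a thin strip of width $O(1/\sqrt N)$, and together with Proposition~\ref{prop-c1} yields $\E[\tilde Z_1^2]=O(\delta^2)$ after taking $\limsup_N$. The residual quarantine-substitution errors $Z_k-\tilde Z_k$ produce cross-terms in the expansion
\(
Z_1 Z_2 = \tilde Z_1\tilde Z_2 + \tilde Z_1(Z_2-\tilde Z_2)+(Z_1-\tilde Z_1)\tilde Z_2+(Z_1-\tilde Z_1)(Z_2-\tilde Z_2),
\)
each bounded through a further application of Theorem~\ref{TCL-th-1} using the pointwise bound $|\overline{\mathfrak F}^N-\overline{\mathfrak F}^N_{(1,2)}|\le \lambda_*\tilde Y/N$ and Cauchy--Schwarz; the net contribution to $\frac{1}{\delta}\E[(X^N)^2]$ is $o(1)$ after multiplication by $N-1$. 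The delicate step will be the simultaneous handling of the two distinct coupling mechanisms (the non-predictability of $H_k$ and the $Q_k$-dependence of $\overline{\mathfrak F}^N$), for which the bookkeeping template is precisely that used in establishing \eqref{TCL-eq-in-1}.
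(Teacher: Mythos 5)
Your proposal is correct and follows essentially the same route as the paper: Markov's inequality plus exchangeability to reduce to a diagonal term $\E[Z_1^2]$ and an off-diagonal term $(N-1)\E[Z_1 Z_2]$; the quarantine substitution $\overline{\mathfrak F}^N\mapsto\overline{\mathfrak F}^N_{(1,2)}$ to decouple $Q_1,Q_2$; Theorem~\ref{TCL-th-1} to evaluate conditional moments of the non-predictable integrands; and the four-term cross-expansion of $Z_1 Z_2-\tilde Z_1\tilde Z_2$, each piece controlled via Proposition~\ref{prop-c1}. One small discrepancy worth noting: the paper handles the diagonal term more simply by observing that $Z_1(t)\to 0$ in probability and is dominated by the square-integrable variable $\bigl(\int_t^{t+\delta}\int_0^{\lambda_*}Q_1(dv,dw)\bigr)^2$, so $\E[Z_1^2]\to 0$ as $N\to\infty$ for each fixed $\delta$; your claim that $\E[\tilde Z_1^2]=O(\delta^2)$ after $\limsup_N$ is not sharp (the quantity actually vanishes in the $N$-limit, being $O(\delta^2/\sqrt N)$-type), but it is still a valid upper bound and the factor $1/\delta$ renders it negligible, so the conclusion is unaffected.
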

\begin{proof}
	By exchangeability, we have
	\begin{multline}\label{TCL-A-7-30}
	\E\left[\left(\frac{1}{\sqrt{N}}\sum_{k=1}^{N}\int_{t}^{t+\delta}\int_{\bD^2}\int_0^{\lambda_*}H_k(r,t+\delta,\gamma,\overline{\mathfrak F}^N,\overline{\mathfrak F})Q_k(dr,d\lambda,d\gamma,du)\right)^2\right]\\
	\begin{aligned}
	&=\E\left[\left(\int_{t}^{t+\delta}\int_{\bD^2}\int_0^{\lambda_*}H_1(r,t+\delta,\gamma,\overline{\mathfrak F}^N,\overline{\mathfrak F})Q_1(dr,d\lambda,d\gamma,du)\right)^2\right]\\
	&\quad+(N-1)\E\left[\left(\int_{t}^{t+\delta}\int_{\bD^2}\int_0^{\lambda_*}H_1(r,t+\delta,\gamma,\overline{\mathfrak F}^N,\overline{\mathfrak F})Q_1(dr,d\lambda,d\gamma,du)\right)\right.\\
	&\hspace*{3cm}\left.\left(\int_{t}^{t+\delta}\int_{\bD^2}\int_0^{\lambda_*}H_2(r,t+\delta,\gamma,\overline{\mathfrak F}^N,\overline{\mathfrak F})Q_2(dr,d\lambda,d\gamma,du)\right)\right]
	\end{aligned}
	\end{multline}
	Let 
	\begin{equation*}
	\chi^N_k(t)=\int_{t}^{t+\delta}\int_{\bD^2}\int_0^{\lambda_*}H_k(r,t+\delta,\gamma,\overline{\mathfrak F}^N,\overline{\mathfrak F})Q_k(dr,d\lambda,d\gamma,du),
	\end{equation*}
	and 
	\begin{equation*}
	\tilde\chi^N_k(t)=\int_{t}^{t+\delta}\int_{\bD^2}\int_0^{\lambda_*}H_k(r,t+\delta,\gamma,\overline{\mathfrak F}_{(1,2)}^N,\overline{\mathfrak F})Q_k(dr,d\lambda,d\gamma,du).
	\end{equation*}
	Note that, as $\overline{\mathfrak F}_{(1,2)}^N$ and $Q_k$ for $k\in\{1,2\}$ are independent, from Theorem~\ref{TCL-th-1} we have
	\begin{align}\label{TCL-A-7-23}
	\E\left[\tilde\chi^N_k(t)\Big|\overline{\mathfrak F}^N_{(1,2)}\right]
	&=\E\left[\int_{t}^{t+\delta}\int_{\bD^2}\int_0^{\lambda_*}H_k(r,t+\delta,\gamma,\overline{\mathfrak F}_{(1,2)}^N,\overline{\mathfrak F})Q_k(dr,d\lambda,d\gamma,du)\Big|\overline{\mathfrak F}^N_{(1,2)}\right]\nonumber\\
	&=\lambda_*\E\left[\int_{t}^{t+\delta}\int_\bD\int_{r}^{t+\delta}\gamma(v-r)\left|\overline{\mathfrak F}_{(1,2)}^N(v)-\overline{\mathfrak F}(v)\right|dv\mu(d\gamma)dr\Big|\overline{\mathfrak F}^N_{(1,2)}\right]\nonumber\\
	&\leq\lambda_*\int_{t}^{t+\delta}\int_{t}^{t+\delta}\left|\overline{\mathfrak F}_{(1,2)}^N(v)-\overline{\mathfrak F}(v)\right|dvdr\nonumber\\
	&=\lambda_*\delta\int_{t}^{t+\delta}\left|\overline{\mathfrak F}_{(1,2)}^N(v)-\overline{\mathfrak F}(v)\right|dv.
	\end{align}
	Consequently, conditioning on $\overline{\mathfrak F}^N_{(1,2)},$ and using the fact that $Q_1,\,Q_2$ and $\overline{\mathfrak F}^N_{(1,2)}$ are independent, we have 
	\begin{align}\label{TCL-A-7-key}
	\E\left[\tilde\chi^N_1(t)\tilde\chi^N_2(t)\right]&=\E\left[\E\left[\tilde\chi^N_1(t)\tilde\chi^N_2(t)\Big|\overline{\mathfrak F}^N_{(1,2)}\right]\right]\nonumber\\
	&=\E\left[\E\left[\tilde\chi^N_1(t)\Big|\overline{\mathfrak F}^N_{(1,2)}\right]\E\left[\tilde\chi^N_2(t)\Big|\overline{\mathfrak F}^N_{(1,2)}\right]\right]\nonumber\\
	&\leq\lambda_*^2\delta^2\E\left[\left(\int_{t}^{t+\delta}\left|\overline{\mathfrak F}_{(1,2)}^N(v)-\overline{\mathfrak F}(v)\right|dv\right)^2\right]\nonumber\\
	&\leq\lambda_*^2\delta^3\int_{t}^{t+\delta}\E\left[\left|\overline{\mathfrak F}_{(1,2)}^N(v)-\overline{\mathfrak F}(v)\right|^2\right]dv\nonumber\\
	&\leq2\lambda_*^2\delta^4\left(\frac{1}{N^2}\E\left[Y^2\right]+\frac{C_T}{N}\right), 
	\end{align}
	where we use H\"older's inequality and the fact that $|\overline{\mathfrak F}_{(1,2)}^N(v)-\overline{\mathfrak F}(v)|\leq\frac{Y}{N}+|\overline{\mathfrak F}^N(v)-\overline{\mathfrak F}(v)|$, $(a+b)^2\leq2(a^2+b^2)$ and Proposition~\ref{prop-c1}.

	In addition for $k\in\{1,2\}$, using subsection~\ref{TCL-sub-qua}
	\begin{align}
	\left|\chi^N_k(t)-\tilde\chi^N_k(t)\right|&=\int_{t}^{t+\delta}\int_{\bD^2}\int_0^{\lambda_*}H_k(r,t+\delta,\gamma,\overline{\mathfrak F}_{(1,2)}^N,\overline{\mathfrak F}^N)Q_k(dr,d\lambda,d\gamma,du)\nonumber\\
	&\leq\int_{t}^{t+\delta}\int_{\bD^2}\int_{0}^{\lambda_*}\left(\int_{r}^{t+\delta}\int_{\gamma(v-r)(\overline{\mathfrak F}_{(1,2)}^N(v^-)-Y/N)}^{\gamma(v-r)(\overline{\mathfrak F}_{(1,2)}^N(v^-)+Y/N)}Q_k(dv,dw)\right)Q_k(dr,d\lambda,d\gamma,du)\,. 
	\label{TCL-m11}
	\end{align}
	Consequently, using Theorem~\ref{TCL-th-1}
	\begin{equation}\label{TCL-eqr-5}
	\E\left[\left|\chi^N_1(t)-\tilde\chi^N_1(t)\right|\Big|\overline{\mathfrak F}^N_{(1,2)}\right]\leq\frac{2Y\lambda_*\delta}{N}, 
	\end{equation}
	and from \eqref{TCL-m11} we deduce that as $N\to\infty,\,\chi^N_1(t)\to0$ in probability and as 
	\[\chi^N_1(t)\leq\left(\int_t^{t+\delta}\int_0^{\lambda_*}Q_1(dv,dw)\right)^2,\] it follows that,  as $N\to\infty$,
	\begin{equation}\label{TCL-A-7-29}
	\E\left[\left(\chi^N_1(t)\right)^2\right]\to0.
	\end{equation}
	We have 
	\begin{align}\label{TCL-A-7-27}
	\E\left[\chi^N_1(t)\chi^N_2(t)-\tilde\chi^N_1(t)\tilde\chi^N_2(t)\right]&=\E\left[\tilde\chi^N_1(t)\left(\chi^N_2(t)-\tilde\chi^N_2(t)\right)\right]+\E\left[\left(\chi^N_1(t)-\tilde\chi^N_1(t)\right)\tilde\chi^N_2(t)\right]\nonumber\\&\hspace*{3.5cm}+\E\left[\left(\chi^N_1(t)-\tilde\chi^N_1(t)\right)\left(\chi^N_2(t)-\tilde\chi^N_2(t)\right)\right]. 
	\end{align}
	As $\big(Q_1,\,Q_2\big)$ and $\big(\overline{\mathfrak F}^N_{(1,2)},Y\big)$ are independent,  from Theorem~\ref{TCL-th-1}, \eqref{TCL-m11}, \eqref{TCL-eqr-5} and \eqref{TCL-A-7-23} it follows that 
	\begin{multline}\label{TCL-A-7-24}
	\left|\E\left[\tilde\chi^N_1(t)\left(\chi^N_2(t)-\tilde\chi^N_2(t)\right)\right]\right|\\
	\begin{aligned}
	&\leq\E\left[\E\left[\tilde\chi^N_1(t)\int_{t}^{t+\delta}\int_{\bD^2}\int_{0}^{\lambda_*}\left(\int_{r}^{t+\delta}\int_{\gamma(v-r)(\overline{\mathfrak F}_{(1,2)}^N(v^-)-Y/N)}^{\gamma(v-r)(\overline{\mathfrak F}_{(1,2)}^N(v^-)+Y/N)}Q_2(dv,dw)\right)Q_2(dr,d\lambda,d\gamma,du)\Big|\overline{\mathfrak F}_{(1,2)}^N,Y\right]\right]\\
	&=\E\left[\E\left[\tilde\chi^N_1(t)\Big|\overline{\mathfrak F}_{(1,2)}^N,Y\right]\right.\\
	&\hspace{2cm}\left.\times\E\left[\int_{t}^{t+\delta}\int_{\bD^2}\int_{0}^{\lambda_*}\left(\int_{r}^{t+\delta}\int_{\gamma(v-r)(\overline{\mathfrak F}_{(1,2)}^N(v^-)-Y/N)}^{\gamma(v-r)(\overline{\mathfrak F}_{(1,2)}^N(v^-)+Y/N)}Q_2(dv,dw)\right)Q_2(dr,d\lambda,d\gamma,du)\Big|\overline{\mathfrak F}_{(1,2)}^N,Y\right]\right]\\
	&\leq\frac{2\lambda_*^2\delta^2}{N}\E\left[Y\int_{t}^{t+\delta}\left|\overline{\mathfrak F}_{(1,2)}^N(v)-\overline{\mathfrak F}(v)\right|dv\right]\\
	&\le\frac{2\lambda_*^3\delta^3}{N}\E\left[Y\right],
	\end{aligned}
	\end{multline}
	where the last line follows from the fact that $\left|\overline{\mathfrak F}_{(1,2)}^N(v)-\overline{\mathfrak F}(v)\right|\leq\lambda_*.$
	
	Similarly, we show that
	\begin{equation}\label{TCL-A-7-26}
	\left|\E\left[\left(\chi^N_1(t)-\tilde\chi^N_1(t)\right)\tilde\chi^N_2(t)\right]\right|\le\frac{2\lambda_*^3\delta^3}{N}\E\left[Y\right].
	\end{equation}
	From \eqref{TCL-m11}, and as in the setting of \eqref{TCL-A-7-24}, it follows that
	\begin{equation}\label{TCL-A-7-25}
	\E\left[\left|\chi^N_1(t)-\tilde\chi^N_1(t)\right|\left|\chi^N_2(t)-\tilde\chi^N_2(t)\right|\right]\leq\frac{4\lambda_*^4\delta^4}{N^2}\E\left[Y^2\right].\\
	\end{equation}
	Consequently from \eqref{TCL-A-7-25},\eqref{TCL-A-7-26}, \eqref{TCL-A-7-24},  \eqref{TCL-A-7-27}, and \eqref{TCL-A-7-key}, it follows that 
	\begin{equation*}
	\lim_{\delta\to0}\limsup_{N\to\infty}\sup_{0\leq t\leq T}\frac{1}{\delta}N\E\left[\chi^N_1(t)\chi^N_2(t)\right]=0.
	\end{equation*}
	
	Therefore, from \eqref{TCL-A-7-29} and \eqref{TCL-A-7-30}, it follows that \eqref{eqn-lm7.9} holds.
\end{proof}
Applying the same method as in the previous lemma, we establish the following Lemma, 
\begin{lemma}
	As $\delta\to0$,
	\begin{equation*}
	\limsup_{N\to\infty}\sup_{0\leq t\leq T}\frac{1}{\delta}\mathbb{P}\left(\frac{\delta^\alpha}{\sqrt{N}}\sum_{k=1}^{N}\int_{0}^{t}\int_{\bD^2}\int_0^{\lambda_*}H_k(r,t+\delta,\gamma,\overline{\mathfrak F}^N,\overline{\mathfrak F}) Q_k(dr,d\lambda,d\gamma,du)\geq\epsilon\right)\to0,
	\end{equation*}
	\begin{equation*}
	\limsup_{N\to\infty}\sup_{0\leq t\leq T}\frac{1}{\delta}\mathbb{P}\left(\frac{1}{\sqrt{N}}\sum_{k=1}^{N}\int_{0}^{t}\int_{\bD^2}\int_0^{\lambda_*} H_k(r,t,\gamma,\overline{\mathfrak F}^N,\overline{\mathfrak F})Q_k(dr,d\lambda,d\gamma,du)\geq\epsilon\right)\to0,
	\end{equation*}
	and
	\begin{equation*}
	\limsup_{N\to\infty}\sup_{0\leq t\leq T}\frac{1}{\delta}\mathbb{P}\left(\frac{1}{\sqrt{N}}\sum_{k=1}^{N}\int_{0}^{t}\int_{\bD^2}\int_0^{\lambda_*} H_k(r,t+\delta,\gamma,\overline{\mathfrak F}^N,\overline{\mathfrak F})Q_k(dr,d\lambda,d\gamma,du)\geq\epsilon\right)\to0,
	\end{equation*}
\end{lemma}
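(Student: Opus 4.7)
The plan is to bound each of the three probabilities by Markov's inequality applied to the second moment of the random quantity inside, reducing the analysis to the self-correlation $\E[(X^N_1)^2]$ and the cross-correlation $\E[X^N_1 X^N_2]$, where
\begin{equation*}
X^N_k(t,\tau) := \int_0^t \int_{\bD^2}\int_0^{\lambda_*} H_k(r,\tau,\gamma,\overline{\mathfrak F}^N,\overline{\mathfrak F})\, Q_k(dr,d\lambda,d\gamma,du),
\end{equation*}
with $\tau\in\{t,t+\delta\}$ according to the case. By exchangeability, the second moment of $N^{-1/2}\sum_k X^N_k$ equals $\E[(X^N_1)^2]+(N-1)\E[X^N_1 X^N_2]$.

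The diagonal term admits a direct bound: since $H_1$ is a Poisson count over a region of width $\gamma(v-r)|\overline{\mathfrak F}^N(v)-\overline{\mathfrak F}(v)|$, combining Cauchy--Schwarz with Poisson second-moment identities expresses $\E[(X^N_1)^2]$ in terms of $\E[|\overline{\mathfrak F}^N-\overline{\mathfrak F}|^2]=O(N^{-1})$ (from Proposition~\ref{prop-c1}) and $\E[|\overline{\mathfrak F}^N-\overline{\mathfrak F}|]=O(N^{-1/2})$ (from Corollary~\ref{TCL-cor-1}), yielding $\E[(X^N_1)^2]=o(1)$ as $N\to\infty$.

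The main obstacle is the cross term $\E[X^N_1 X^N_2]$, as $X^N_1$ and $X^N_2$ share a common dependence on $\overline{\mathfrak F}^N$. I would mirror the quarantine expansion used in the proof of Lemma~\ref{TCL-lem7}: introduce the two-individual quarantine $\overline{\mathfrak F}^N_{(1,2)}$ from Section~\ref{TCL-sub-qua}, which is independent of $(Q_1,Q_2)$ and satisfies $|\overline{\mathfrak F}^N-\overline{\mathfrak F}^N_{(1,2)}|\leq Y/N$ for a geometric random variable $Y$ independent of $Q_1,Q_2$. Define $\tilde X^N_k$ by replacing $\overline{\mathfrak F}^N$ with $\overline{\mathfrak F}^N_{(1,2)}$ in $X^N_k$, and expand
\begin{equation*}
\E[X^N_1 X^N_2]=\E[\tilde X^N_1\tilde X^N_2]+\E[\tilde X^N_1(X^N_2-\tilde X^N_2)]+\E[(X^N_1-\tilde X^N_1)\tilde X^N_2]+\E[(X^N_1-\tilde X^N_1)(X^N_2-\tilde X^N_2)].
\end{equation*}
Conditioning on $\overline{\mathfrak F}^N_{(1,2)}$ decouples $\tilde X^N_1$ and $\tilde X^N_2$, and Theorem~\ref{TCL-th-1} yields $\E[\tilde X^N_k\mid\overline{\mathfrak F}^N_{(1,2)}]\leq C\int_0^\tau|\overline{\mathfrak F}^N_{(1,2)}(v)-\overline{\mathfrak F}(v)|\,dv$; Jensen combined with Proposition~\ref{prop-c1} then gives $\E[\tilde X^N_1\tilde X^N_2]=O(N^{-1})$. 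Each $|X^N_k-\tilde X^N_k|$ is dominated by a Poisson count over a region of width $Y/N$, and Theorem~\ref{TCL-th-1} together with the geometric moment bounds $\E[Y^p]<\infty$ shows each remainder is $O(N^{-2})$, so $(N-1)\E[X^N_1 X^N_2]=O(1)$.

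Assembling: the Markov bound on the $i$-th probability is $\epsilon^{-2}c_i(\delta)^2\big(\E[(X^N_1)^2]+(N-1)\E[X^N_1 X^N_2]\big)$ with $c_i(\delta)\in\{\delta^\alpha,1,1\}$. For the first inequality the prefactor $\delta^{2\alpha}/\delta=\delta^{2\alpha-1}$ vanishes as $\delta\to 0$ because $\alpha>1/2$ by Assumption~\ref{TCL-AS-lambda-2}, closing the bound. For the second and third, where no $\delta$-prefactor is present, one refines the diagonal estimate using dominated convergence (since $X^N_1$ is uniformly dominated by an integrable Poisson functional and converges to $0$ pointwise because $|\overline{\mathfrak F}^N-\overline{\mathfrak F}|\to 0$) and uses the quarantine expansion in its sharpened $o(N^{-1})$ form to conclude that $\limsup_N\mathbb{P}(Z^N_i\geq\epsilon)=0$ for each fixed $\delta>0$, so dividing by $\delta$ and sending $\delta\to 0$ yields the result. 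The hardest technical step throughout is a careful application of Theorem~\ref{TCL-th-1} to integrands that depend on $Q_k$ simultaneously through the integration region (via $H_k$) and through $\overline{\mathfrak F}^N$, which is resolved by the decoupling Lemmas~\ref{TCL-A-9-lem_inq}--\ref{TCL-A-9-lem-inq2}.
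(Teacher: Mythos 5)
Your handling of the first inequality is correct and is exactly the intended argument: Chebyshev on the second moment, exchangeability, the quarantine decoupling of Subsection~\ref{TCL-sub-qua} with Theorem~\ref{TCL-th-1} as in the proof of \eqref{eqn-lm7.9}, a diagonal term that is $o(1)$, a cross term whose contribution $(N-1)\E[X^N_1X^N_2]$ is only $O(1)$, and the prefactor $\delta^{2\alpha}/\delta=\delta^{2\alpha-1}\to0$ thanks to $\alpha>1/2$ in Assumption~\ref{TCL-AS-lambda-2}. Since the paper itself gives no details beyond ``the same method as the previous lemma'', for that first claim your write-up is essentially the paper's proof.

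The gap is in the second and third inequalities. There the random quantity carries no $\delta$-factor, so the assertion amounts to $\limsup_N\sup_{t\le T}\P(Z^N\ge\epsilon)=0$, and your route to it is the claim that the quarantine expansion can be used ``in its sharpened $o(N^{-1})$ form''. That sharpening does not exist. After decoupling, $\E[\tilde X^N_1\tilde X^N_2]=\E\big[\big(\E[\tilde X^N_1\mid\overline{\mathfrak F}^N_{(1,2)}]\big)^2\big]$, and the computation analogous to \eqref{TCL-A-7-23} gives $\E[\tilde X^N_1\mid\overline{\mathfrak F}^N_{(1,2)}]=\lambda_*\int_0^t\int_\bD\int_r^{\tau}\gamma(v-r)\,\big|\overline{\mathfrak F}^N_{(1,2)}(v)-\overline{\mathfrak F}(v)\big|\,dv\,\mu(d\gamma)\,dr$ with $\tau\in\{t,t+\delta\}$. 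By Jensen, $\E[\tilde X^N_1\tilde X^N_2]\ge\big(\E[\tilde X^N_1]\big)^2$, and since $\sqrt N\,\E\big|\overline{\mathfrak F}^N(v)-\overline{\mathfrak F}(v)\big|$ does not vanish (it converges to $\E|\hat{\mathfrak F}(v)|>0$), the decoupled cross term is bounded below by $c/N$ with $c>0$: it is genuinely of order $N^{-1}$, not $o(N^{-1})$. Hence $(N-1)\E[X^N_1X^N_2]$ stays bounded away from $0$, the Chebyshev bound for the second and third probabilities is only $O(1)/\epsilon^2$, and refining the diagonal term by dominated convergence does not help. Worse, the same computation shows $\E[Z^N(t)]\to\lambda_*\int_0^t\int_r^t\E[\gamma(v-r)]\,\E|\hat{\mathfrak F}(v)|\,dv\,dr>0$, while conditionally on $\overline{\mathfrak F}^N_{(1,2)}$ the $N$ summands are essentially i.i.d.\ with conditional variance $O(N^{-1/2})$, so $Z^N(t)$ concentrates around a strictly positive limit rather than tending to $0$: no moment argument of the type you propose can deliver the second and third claims as written. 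The crucial feature of the previous lemma was that both the outer time integral and the strip integral ran over an interval of length $\delta$, which is what produced the $\delta$-powers in \eqref{TCL-A-7-23} and \eqref{TCL-A-7-key}; that source of smallness is absent from the second and third statements (the integrals run over all of $[0,t]$ and $[r,\tau]$), and neither your proposal nor the paper's one-line reference identifies a substitute for it, so the argument for those two inequalities cannot be completed along the proposed lines.
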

To conclude the tightness of $\hat{\mathfrak S}^N_{1,1}$, it remains to establish the following Lemma.

\begin{lemma}
	As $\delta\to0,$
	\begin{align} \label{eqn-lem7.12} 
	&\limsup_{N\to\infty}\sup_{0\leq t\leq T}\frac{1}{\delta}\mathbb{P}\Bigg(\frac{1}{\sqrt{N}}\sum_{j=1}^{\ell-1}\sum_{k=1}^{N}\int_{0}^{t}\int_{\bD^2}\int_0^{\lambda_*}\mathds{1}_{t-r<\zeta^j\leq t+\delta-r} H_k(r,t+\delta,\gamma,\overline{\mathfrak F}^N,\overline{\mathfrak F})Q_k(dr,d\lambda,d\gamma,du)\geq\epsilon\Bigg)\nonumber\\&\hspace*{12cm}\to0.
	\end{align} 	
\end{lemma}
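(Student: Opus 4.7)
The plan is to mirror the strategy used in the two preceding lemmas of Subsection~\ref{TCL-sub-sec-tight-2}. I would set
\[\chi^N_k(t,\delta) := \sum_{j=1}^{\ell-1}\int_{0}^{t}\int_{\bD^2}\int_0^{\lambda_*}\mathds{1}_{t-r<\zeta^j\leq t+\delta-r}\,H_k(r,t+\delta,\gamma,\overline{\mathfrak F}^N,\overline{\mathfrak F})\,Q_k(dr,d\lambda,d\gamma,du),\]
so that the expression inside the probability in \eqref{eqn-lem7.12} equals $N^{-1/2}\sum_{k=1}^N\chi^N_k(t,\delta)$. By Markov's inequality and exchangeability, the claim reduces to showing that $\delta^{-1}\E[(\chi^N_1(t,\delta))^2]\to0$ and $\delta^{-1}(N-1)\E[\chi^N_1(t,\delta)\chi^N_2(t,\delta)]\to0$, uniformly in $t\in[0,T]$, as $\delta\to0$ then $N\to\infty$.

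For the diagonal term I would bound $\chi^N_1(t,\delta)$ by $\ell$ times the product of the Poisson count of $Q_1$ in the strip $\{v\in[r,t+\delta],\,0\le w\le\lambda_*\}$ (which dominates $H_1$) and the mark-dependent indicator, then expand the square atom by atom and compute moments by means of Theorem~\ref{TCL-th-1}; that theorem is exactly what we need since $H_1$ depends on the future of $Q_1$ past time $r$. The resulting bound combines the estimate $G_j(t+\delta-r)-G_j(t-r)\le C'\delta^{\rho}$ from Assumption~\ref{TCL-AS-lambda-2} with the $O(N^{-1/2})$ decay of $\E[|\Upsilon^N_1-\Upsilon_1|]$ from Corollary~\ref{inr}, and delivers an upper bound of order $\delta^{\rho+1}+\delta^{2\rho+2}$; after division by $\delta$ this vanishes since $\rho>1/2$.

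For the cross term I would use the quarantine construction of Section~\ref{TCL-sub-qua}: define $\tilde\chi^N_k(t,\delta)$ by replacing $\overline{\mathfrak F}^N$ with $\overline{\mathfrak F}^N_{(1,2)}$ inside $H_k$. Since $Q_1$, $Q_2$ and $\overline{\mathfrak F}^N_{(1,2)}$ are mutually independent, conditioning on $\overline{\mathfrak F}^N_{(1,2)}$ decouples $\tilde\chi^N_1$ from $\tilde\chi^N_2$, and Theorem~\ref{TCL-th-1} applied to the defining Poisson integral yields
\[\E\big[\tilde\chi^N_k(t,\delta)\,\big|\,\overline{\mathfrak F}^N_{(1,2)}\big]\le C_T\,\delta^{\rho}\int_0^{T+\delta}\big|\overline{\mathfrak F}^N_{(1,2)}(v)-\overline{\mathfrak F}(v)\big|\,dv.\]
Squaring, using H\"older's inequality, the trivial bound $|\overline{\mathfrak F}^N_{(1,2)}-\overline{\mathfrak F}^N|\le Y/N$ and Proposition~\ref{prop-c1}, one obtains $\E[\tilde\chi^N_1\tilde\chi^N_2]=O(\delta^{2\rho}/N)$, so the $(N-1)$-prefactor leaves only $O(\delta^{2\rho})$ which divided by $\delta$ vanishes. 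The difference $|\chi^N_k-\tilde\chi^N_k|$ is controlled as in \eqref{TCL-m11} by a Poisson count in a strip of mean measure $O(\delta Y/N)$, so the analysis leading to \eqref{TCL-A-7-24}--\eqref{TCL-A-7-25} transposes verbatim, with an extra $\delta^{\rho}$ factor coming from the $\zeta^j$-indicator, and contributes at most $O(\delta^{\rho+2}/N)+O(\delta^{2\rho+2}/N^2)$.

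The main obstacle is that the integrand against $Q_k$ is not predictable: both the factor $H_k(r,t+\delta,\gamma,\overline{\mathfrak F}^N,\overline{\mathfrak F})$ (an integral of the same measure $Q_k$ over $(r,t+\delta]$) and the mark-dependent indicator $\mathds{1}_{t-r<\zeta^j\le t+\delta-r}$ look into the future, which is precisely the situation Theorem~\ref{TCL-th-1} was designed for. Once this tool is combined with the quarantine coupling that decouples the two tagged individuals, the remaining steps are a bookkeeping exercise based on the H\"older bound $G_j(t)-G_j(s)\le C'(t-s)^{\rho}$ with $\rho>1/2$ and the $N^{-1/2}$ decay of the difference $\overline{\mathfrak F}^N-\overline{\mathfrak F}$.
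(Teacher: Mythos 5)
Your overall strategy is the right one, and your treatment of the cross term $\E[\chi^N_1\chi^N_2]$ matches the paper's proof exactly: replace $\overline{\mathfrak F}^N$ by $\overline{\mathfrak F}^N_{(1,2)}$ inside $H_k$, use the conditional independence of $Q_1$, $Q_2$ given $\overline{\mathfrak F}^N_{(1,2)}$ together with Theorem~\ref{TCL-th-1} to get the $\delta^{\rho}$-factor from $G_j(t+\delta-r)-G_j(t-r)$, and control the replacement error $|\chi^N_k-\tilde\chi^N_k|$ by a Poisson count in a strip of width $O(Y/N)$, exactly as in the analogues of \eqref{TCL-A-7-24}--\eqref{TCL-A-7-25}.

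There is, however, a gap in your treatment of the diagonal term $\E[(\chi^N_1)^2]$. You propose to "expand the square atom by atom and compute moments by means of Theorem~\ref{TCL-th-1}," using the fact that $H_1$ depends on the future of $Q_1$. But Theorem~\ref{TCL-th-1} requires the non-predictable factor to be of the form $f(s,u,Q|_{(s,t]\times E})$ with $f$ a \emph{deterministic} function of the future restriction of the \emph{same} PRM. In $\chi^N_1$ the width of the $H_1$-strip is $\gamma(v-r)\,|\overline{\mathfrak F}^N(v^-)-\overline{\mathfrak F}(v^-)|$, and $\overline{\mathfrak F}^N$ is a functional of the whole family $(Q_k)_{k\le N}$, including $Q_1$ itself; it is not a deterministic function of $Q_1|_{(r,t+\delta]}$. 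So you cannot apply Theorem~\ref{TCL-th-1} directly to the diagonal term — this is precisely why the quarantine coupling is introduced for the cross term, but you do not invoke it for the diagonal one. Moreover, your claimed bound of order $\delta^{\rho+1}+\delta^{2\rho+2}$ is not justified: the outer $Q_1$-integral runs over $[0,t]$, not $[t,t+\delta]$, so there is no reason for an extra power of $\delta$ beyond $\delta^{\rho}$ from the $G_j$-increments. Fortunately none of this is needed: the paper handles the diagonal term much more cheaply by noting that $\chi^N_1(t)\to 0$ in probability as $N\to\infty$ (for fixed $\delta$, since $\overline{\mathfrak F}^N\to\overline{\mathfrak F}$ makes the strip width vanish) and that $\chi^N_1$ is dominated by a square-integrable random variable independent of $N$, so $\E[(\chi^N_1(t))^2]\to 0$; since the $N\to\infty$ limit is taken before $\delta\to 0$, this already kills the diagonal contribution. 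Replace your diagonal-term argument by this domination argument (or first pass to the quarantine model there too) and the proof is complete.
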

\begin{proof}
	By exchangeability it follows that 
	\begin{multline}\label{TCL-A-7-30-1}
	\E\left[\left(\frac{1}{\sqrt{N}}\sum_{j=1}^{\ell-1}\sum_{k=1}^{N}\int_{0}^{t}\int_{\bD^2}\int_0^{\lambda_*}\mathds{1}_{t-r<\zeta^j\leq t+\delta-r}H_k(r,t+\delta,\gamma,\overline{\mathfrak F}^N,\overline{\mathfrak F})Q_k(dr,d\lambda,d\gamma,du)\right)^2\right]\\
	\begin{aligned}
	&=\E\left[\left(\sum_{j=1}^{\ell-1}\int_{0}^{t}\int_{\bD^2}\int_0^{\lambda_*}\mathds{1}_{t-r<\zeta^j\leq t+\delta-r}H_1(r,t+\delta,\gamma,\overline{\mathfrak F}^N,\overline{\mathfrak F})Q_1(dr,d\lambda,d\gamma,du)\right)^2\right]\\
	&\quad+(N-1)\E\left[\left(\sum_{j=1}^{\ell-1}\int_{0}^{t}\int_{\bD^2}\int_0^{\lambda_*}\mathds{1}_{t-r<\zeta^j\leq t+\delta-r}H_1(r,t+\delta,\gamma,\overline{\mathfrak F}^N,\overline{\mathfrak F})Q_1(dr,d\lambda,d\gamma,du)\right)\right.\\
	&\hspace{2cm}\left.\left(\sum_{j=1}^{\ell-1}\int_{0}^{t}\int_{\bD^2}\int_0^{\lambda_*}\mathds{1}_{t-r<\zeta^j\leq t+\delta-r}H_2(r,t+\delta,\gamma,\overline{\mathfrak F}^N,\overline{\mathfrak F})Q_2(dr,d\lambda,d\gamma,du)\right)\right]\,. 
	\end{aligned}
	\end{multline}
	Let \[\chi^N_k(t)=\sum_{j=1}^{\ell-1}\int_{0}^{t}\int_{\bD^2}\int_0^{\lambda_*}\mathds{1}_{t-r<\zeta^j\leq t+\delta-r}H_k(r,t+\delta,\gamma,\overline{\mathfrak F}^N,\overline{\mathfrak F})Q_k(dr,d\lambda,d\gamma,du)\,,
	\]
	and 
	\[\tilde\chi^N_k(t)=\sum_{j=1}^{\ell-1}\int_{0}^{t}\int_{\bD^2}\int_0^{\lambda_*}\mathds{1}_{t-r<\zeta^j\leq t+\delta-r}H_k(r,t+\delta,\gamma,\overline{\mathfrak F}^N_{(1,2)},\overline{\mathfrak F})Q_k(dr,d\lambda,d\gamma,du).\]
	Since $\overline{\mathfrak F}_{(1,2)}^N$ and $Q_k$ for $k\in\{1,2\}$ are independent, from Theorem~\ref{TCL-th-1},  we have
	\begin{multline}\label{TCL-A-7-23-1}
	\E\left[\tilde\chi^N_k(t)\Big|\overline{\mathfrak F}^N_{(1,2)}\right]\\
	\begin{aligned}
	&=\sum_{j=1}^{\ell-1}\E\left[\int_{0}^{t}\int_{\bD^2}\int_0^{\lambda_*}\mathds{1}_{t-r<\zeta^j\leq t+\delta-r}H_k(r,t+\delta,\gamma,\overline{\mathfrak F}^N_{(1,2)},\overline{\mathfrak F})Q_k(dr,d\lambda,d\gamma,du)\Big|\overline{\mathfrak F}^N_{(1,2)}\right]\\
	&=\lambda_*\sum_{j=1}^{\ell-1}\E\left[\int_{0}^{t}\int_{\bD}\mathds{1}_{t-r<\zeta^j\leq t+\delta-r}\int_{r}^{t+\delta}\gamma(v-r)\left|\overline{\mathfrak F}_{(1,2)}^N(v)-\overline{\mathfrak F}(v)\right|dv\mu(d\gamma)dr\Big|\overline{\mathfrak F}^N_{(1,2)}\right]\\
	&\leq\lambda_*\sum_{j=1}^{\ell-1}\E\left[\int_{0}^{t}\left(G_j(t+\delta-r)-G_j(t-r)\right)\int_{r}^{t+\delta}\left|\overline{\mathfrak F}_{(1,2)}^N(v)-\overline{\mathfrak F}(v)\right|dvdr\Big|\overline{\mathfrak F}^N_{(1,2)}\right]\\
	&\leq\lambda_*\ell T\delta^{\rho}\int_{0}^{T}\left|\overline{\mathfrak F}_{(1,2)}^N(v)-\overline{\mathfrak F}(v)\right|dv,
	\end{aligned}
	\end{multline}
	where we use $\gamma\leq1$, to get the third line.
	
	Consequently, conditioning by $\overline{\mathfrak F}^N_{(1,2)},$ and using the fact that $Q_1,\,Q_2$ and $\overline{\mathfrak F}^N_{(1,2)}$ are independent, we have 
	\begin{align}\label{TCL-A-7-key-1}
	\E\left[\tilde\chi^N_1(t)\tilde\chi^N_2(t)\right]&=\E\left[\E\left[\tilde\chi^N_1(t)\tilde\chi^N_2(t)\Big|\overline{\mathfrak F}^N_{(1,2)}\right]\right]\nonumber\\
	&=\E\left[\E\left[\tilde\chi^N_1(t)\Big|\overline{\mathfrak F}^N_{(1,2)}\right]\E\left[\tilde\chi^N_2(t)\Big|\overline{\mathfrak F}^N_{(1,2)}\right]\right]\nonumber\\
	&\leq\lambda_*^2\ell^2 T^2\delta^{2\rho}\E\left[\left(\int_{0}^{T}\left|\overline{\mathfrak F}_{(1,2)}^N(v)-\overline{\mathfrak F}(v)\right|dv\right)^2\right]\nonumber\\
	&\leq\lambda_*^2\ell^2 T^3\delta^{2\rho}\int_{0}^{T}\E\left[\left|\overline{\mathfrak F}_{(1,2)}^N(v)-\overline{\mathfrak F}(v)\right|^2\right]dv\nonumber\\
	&\leq2\lambda_*^2\ell^2 T^3\delta^{2\rho}\left(\frac{1}{N^2}\E\left[Y^2\right]+\frac{C_T}{N}\right), 
	\end{align}
	where we use H\"older's inequality and the fact that $|\overline{\mathfrak F}_{(1,2)}^N(v)-\overline{\mathfrak F}(v)|\leq\frac{Y}{N}+|\overline{\mathfrak F}^N(v)-\overline{\mathfrak F}(v)|$, $(a+b)^2\leq2(a^2+b^2)$ and Proposition~\ref{prop-c1}.
	
	In addition for $k\in\{1,2\}$, using subsection~\ref{TCL-sub-qua}
	\begin{align}
	& \left|\chi^N_k(t)-\tilde\chi^N_k(t)\right|\nonumber\\
	&=\sum_{j=1}^{\ell-1}\int_{0}^{t}\int_{\bD^2}\int_0^{\lambda_*}\mathds{1}_{t-r<\zeta^j\leq t+\delta-r} H_k(r,t+\delta,\gamma,\overline{\mathfrak F}^N_{(1,2)},\overline{\mathfrak F}) Q_k(dr,d\lambda,d\gamma,du)\nonumber\\
	&\leq\sum_{j=1}^{\ell-1}\int_{0}^{t}\int_{\bD^2}\int_{0}^{\lambda_*}\mathds{1}_{t-r<\zeta^j\leq t+\delta-r}\left(\int_{r}^{t+\delta}\int_{\gamma(v-r)(\overline{\mathfrak F}_{(1,2)}^N(v^-)-Y/N)}^{\gamma(v-r)(\overline{\mathfrak F}_{(1,2)}^N(v^-)+Y/N)}Q_k(dv,dw)\right)Q_k(dr,d\lambda,d\gamma,du)\nonumber\\
	&=:\hat\chi^N_k(t).\label{TCL-m11-1}
	\end{align}
	Consequently, using Theorem~\ref{TCL-th-1}, it follows that,
	\begin{equation}
	\E\left[\left|\chi^N_1(t)-\tilde\chi^N_1(t)\right|\right]\leq\frac{2T^2\lambda_*\ell\delta^{\rho}}{N}\E\left[Y\right].
	\end{equation}
	and from \eqref{TCL-m11-1} we deduce that as $N\to\infty,\,\chi^N_1\to0$ in probability, and as it is bounded by a square-integrable process, it follows that,  as $N\to\infty$,
	\begin{equation}\label{TCL-A-7-29-1}
	\E\left[\left(\chi^N_1(t)\right)^2\right]\to0.
	\end{equation}
	We have 
	\begin{align}\label{TCL-A-7-27-1}
	\E\left[\chi^N_1(t)\chi^N_2(t)-\tilde\chi^N_1(t)\tilde\chi^N_2(t)\right]&=\E\left[\tilde\chi^N_1(t)\left(\chi^N_2(t)-\tilde\chi^N_2(t)\right)\right]+\E\left[\left(\chi^N_1(t)-\tilde\chi^N_1(t)\right)\tilde\chi^N_2(t)\right]\nonumber\\&\hspace*{1.5cm}+\E\left[\left(\chi^N_1(t)-\tilde\chi^N_1(t)\right)\left(\chi^N_2(t)-\tilde\chi^N_2(t)\right)\right],
	\end{align}
	As $(Q_1,\,Q_2)$ and $(\overline{\mathfrak F}^N_{(1,2)},Y)$ are independent from \eqref{TCL-m11-1} and \eqref{TCL-A-7-23-1},
	\begin{align}\label{TCL-A-7-24-1}
	\left|\E\left[\tilde\chi^N_1(t)\left(\chi^N_2(t)-\tilde\chi^N_2(t)\right)\right]\right|&\leq\E\left[\E\left[\tilde\chi^N_1(t)\hat{\chi}^N_k(t)\Big|\overline{\mathfrak F}_{(1,2)}^N,Y\right]\right]\nonumber\\
	&=\E\left[\E\left[\tilde\chi^N_1(t)\Big|\overline{\mathfrak F}_{(1,2)}^N,Y\right]\E\left[\hat\chi^N_1(t)\Big|\overline{\mathfrak F}_{(1,2)}^N,Y\right]\right]\nonumber\\
	&\leq\frac{2T^3\lambda_*^2\ell^2\delta^{2\rho}}{N}\E\left[Y\int_{0}^{T}\left|\overline{\mathfrak F}_{(1,2)}^N(v)-\overline{\mathfrak F}(v)\right|dv\right]\nonumber\\
	&\le\frac{2T^3\lambda_*^2\ell^2\delta^{2\rho}}{N^{3/2}}\E\left[Y\right].
	\end{align}
	Similarly we show that
	\begin{equation}\label{TCL-A-7-26-1}
	\left|\E\left[\left(\chi^N_1(t)-\tilde\chi^N_1(t)\right)\tilde\chi^N_2(t)\right]\right|\le\frac{2T^3\lambda_*^2\ell^2\delta^{2\rho}}{N^{3/2}}\E\left[Y\right].
	\end{equation}
	From \eqref{TCL-m11-1}, and as in the setting in \eqref{TCL-A-7-24-1}, it follows that 
	\begin{equation}\label{TCL-A-7-25-1}
	\E\left[\left|\chi^N_1(t)-\tilde\chi^N_1(t)\right|\left|\chi^N_2(t)-\tilde\chi^N_2(t)\right|\right]\leq\frac{4T^4\lambda_*^4\ell^4\delta^{4\rho}}{N^2}\E\left[Y^2\right].\\
	\end{equation}
	Consequently from \eqref{TCL-A-7-25-1},\eqref{TCL-A-7-26-1}, \eqref{TCL-A-7-24-1},  \eqref{TCL-A-7-27-1}, and \eqref{TCL-A-7-key-1}, it follows that 
	\begin{equation*}
	\lim_{\delta\to0}\limsup_{N\to\infty}\sup_{0\leq t\leq T}\frac{1}{\delta}N\E\left[\chi^N_1(t)\chi^N_2(t)\right]=0.
	\end{equation*}
	
	Therefore, \eqref{eqn-lem7.12} follows from \eqref{TCL-A-7-29-1} and \eqref{TCL-A-7-30-1}.
\end{proof}
\subsubsection{Proof of Lemma~\ref{TCL-tight-F4}}\label{TCL-sub-sec-10}
We recall that 
\begin{equation*}
\hat{\mathfrak S}_{1,0}^N(t)=\frac{1}{\sqrt{N}}\sum_{k=1}^{N}\gamma_{k,0}(t)\left(\mathds{1}_{P_{k}(0,t,\gamma_{k,0},\overline{\mathfrak F}^N)=0}-\mathds{1}_{P_{k}(0,t,\gamma_{k,0},\overline{\mathfrak F})=0}\right).
\end{equation*}
Using the fact that for all $A,\,B\in\mathbb{N},$
\[\left|\mathds{1}_{A=0}-\mathds{1}_{B=0}\right|\leq\left|A-B\right|,\]
from Assumption~\ref{TCL-AS-lambda-1} and~\ref{TCL-AS-lambda-2} and Lemma~\ref{TCL-lem-barlambda-inc-bound}, it follows that 
\begin{multline}
\left|\hat{\mathfrak S}_{1,0}^N(t)-\hat{\mathfrak S}_{1,0}^N(s)\right|\leq\frac{(t-s)^\alpha}{\sqrt{N}}\sum_{k=1}^{N}H_k(0,t,\gamma_{k,0},\overline{\mathfrak F}^N,\overline{\mathfrak F})\\
\begin{aligned}
&+\frac{1}{\sqrt{N}}\sum_{k=1}^{N}\sum_{j=1}^{\ell-1}\mathds{1}_{s-r<\zeta^j_k\leq t-r}H_k(0,t,\gamma_{k,0},\overline{\mathfrak F}^N,\overline{\mathfrak F})\\
&+\frac{1}{\sqrt{N}}\sum_{k=1}^{N}\left(H_k(0,t,\gamma_{k,0},\overline{\mathfrak F}^N,\overline{\mathfrak F})+H_k(0,s,\gamma_{k,0},\overline{\mathfrak F}^N,\overline{\mathfrak F})\right)\,.
\end{aligned}
\end{multline}
Hence as in the setting of \eqref{eqc5}, \eqref{TCL-in-eqc6}  and \eqref{TCL-eqc7'} respectively, we establish the tightness of $\hat{\mathfrak S}_{1,0}^N$.  

\bibliographystyle{plain}
\bibliography{Epidemic-Age,VIVS_raphael}
\end{document}